\documentclass[11pt]{amsart}
\makeatletter
\usepackage{graphicx} 
\usepackage{geometry}
\usepackage{amsmath,amssymb,amstext,mathrsfs,stmaryrd,calligra}
\usepackage{tikz-cd}
\usepackage{comment}
\newgeometry{margin=1.0in}

\usepackage[colorlinks=true,linkcolor=blue,citecolor=blue,pagebackref
]{hyperref}
\usepackage[hyperpageref]{backref}

\numberwithin{equation}{section}
\newtheorem{thm}{Theorem}[section]
\newtheorem{lm}[thm]{Lemma}
\newtheorem{prop}[thm]{Proposition}
\newtheorem{cor}[thm]{Corollary}
\newtheorem{conj}[thm]{Conjecture}
\theoremstyle{definition}
\newtheorem{ass}[thm]{Assumption}
\theoremstyle{definition}
\newtheorem{defi}[thm]{Definition}
\theoremstyle{remark}
\newtheorem{rmk}[thm]{Remark}
\theoremstyle{remark}
\newtheorem{ex}[thm]{Example}
\theoremstyle{remark}
\newtheorem{warning}[thm]{Warning}
\theoremstyle{Theorem}
\newtheorem{introthm}{Theorem}

\newcommand{\ra}{\rightarrow}
\newcommand\oo{\mathcal{O}}
\newcommand{\xx}{\mathcal{X}}
\newcommand{\dd}{\mathcal{D}}
\newcommand{\mm}{\mathcal{M}}

\newcommand\C{\mathbb C}
\newcommand{\Z}{\mathbb Z}
\newcommand{\Q}{\mathbb{Q}}
\newcommand{\R}{\mathbb R}
\newcommand{\mmm}{\mathcal{M}}
\newcommand\aaa{\mathcal{A}}
\newcommand{\cal}{\mathcal}

\newcommand{\ssigma}{\underline{\sigma}}
\newcommand{\cHom}{\mathcal{H}\!{\it om}}

\DeclareMathOperator{\stab}{Stab}

\DeclareMathOperator{\im}{Im}

\DeclareMathOperator{\Hom}{Hom}

\DeclareMathOperator{\id}{id}

\DeclareMathOperator{\quot}{Quot}
\DeclareMathOperator{\sch}{Sch}
\DeclareMathOperator{\set}{Set}
\DeclareMathOperator{\dga}{dgA}
\DeclareMathOperator{\spec}{Spec}
\DeclareMathOperator{\gpd}{Gpd}
\DeclareMathOperator{\map}{Map}
\DeclareMathOperator{\ind}{Ind}
\DeclareMathOperator{\Mod}{Mod}
\DeclareMathOperator{\sym}{Sym}
\DeclareMathOperator{\Ann}{Ann}
\DeclareMathOperator{\Ind}{Ind}
\DeclareMathOperator{\alg}{-Alg}
\DeclareMathOperator{\Perf}{Perf}
\DeclareMathOperator{\Cat}{Cat}
\DeclareMathOperator{\Fun}{Fun}
\DeclareMathOperator{\HNP}{HNP}
\DeclareMathOperator{\HN}{HN}
\DeclareMathOperator{\precat}{PrCat}
\DeclareMathOperator{\coker}{coker}
\DeclareMathOperator{\filt}{Filt}
\DeclareMathOperator{\cone}{Cone}
\newcommand{\hhom}{\mathcal{H}\!{\it om}}
\mathchardef\mhyphen="2D
\title{Deformations of locally constant stability conditions and good moduli spaces}
\author{Ian Selvaggi}
\address{Scuola Internazionale Superiore di Studi Avanzati (SISSA), Italy} %
\email{iselvagg@sissa.it}
\date{}

\begin{document}

\begin{abstract}
    We give a structure result on the set of locally constant stability conditions, $\stab(\dd/R)$, defined by Halpern-Leistner--Robotis \cite{halpernleistner2025spaceaugmentedstabilityconditions} showing that it has the structure of a complex manifold, in total analogy with Bridgeland's work. As a consequence, we show that the property of having relative mass-hom bounds and the existence of good moduli spaces depends only on the connected components of $\stab(\dd/R)$. Lastly, we observe that the datum of a locally constant stability condition is equivalent to that of a flat family of stability conditions, as described in \cite{bayer2021stability}, in the context of noncommutative algebraic geometry.
\end{abstract}
\maketitle
\tableofcontents 
\section{Introduction}
Since the introduction of stability conditions by Bridgeland in \cite{bridgeland2007stability}, a major problem in the field has been the construction of moduli spaces of semistable objects. Needless to say, a great deal of very deep mathematics has emerged from it. In a first approximation, there are two natural questions to ask given a triangulated category $\dd$ together with a stability condition $\sigma\in\stab(\dd)$:
\begin{enumerate}
        \item\label{questionEX} Under which hypotheses on $\sigma$ is the substack of semistable objects $\mathcal{M}_{\sigma}^{ss}(v)\hookrightarrow\mathcal{M}_\sigma{(\phi,\phi+1]}$ algebraic and admits a proper good moduli space $M_\sigma^{ss}(v)$?
        \item\label{questionDEF} Which of these conditions are preserved as $\sigma\in\stab(\mathcal{D})$ varies? 
    \end{enumerate}
    It turns out that even defining the moduli functor, a question tackled first in \cite{AbramovichPolishchuk+2006+89+130} and later in \cite{TODA_k3}, is quite challenging and nontrivial. To complicate things, unlike the case of coherent sheaves, these moduli stacks do not admit any meaningful group action, making the techniques from GIT unavailable in this context. As a result, the best approach so far has been considering the problem as that of building moduli of objects in abelian categories, a topic first considered in \cite{artin2001abstract}, and applying the far reaching techniques of \emph{intrinsic GIT}. For the latter, the recent breakthrough \cite{alper2023existence} gives necessary and sufficient conditions for the existence of moduli spaces. 
    
    \begin{thm}[\cite{alper2023existence}, Theorem 7.23]\label{thmexistence}
     Let $k$ be an excellent ring of characteristic 0 and $\aaa$ be a cocomplete $k$-linear abelian category that is locally noetherian. Assume that $\mm_\aaa$, the stack parametrizing flat objects in $\aaa$, is algebraic and locally of finite type over $k$. Then any quasi-compact closed substack $\xx \subset \mm_\aaa$ admits a proper good moduli space.
     \end{thm}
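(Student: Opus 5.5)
This is Theorem 7.23 of Alper--Halpern-Leistner--Heinloth, which the paper quotes rather than proves. The natural route is their general existence criterion: an algebraic stack $\xx$, locally of finite type over $k$ with affine diagonal, admits a separated good moduli space if and only if it is $\Theta$-reductive and $S$-complete. If in addition $\xx$ satisfies the existence part of the valuative criterion, that good moduli space is proper. So the proof reduces to checking these properties for $\mm_\aaa$ and then passing to a quasi-compact closed substack $\xx \subset \mm_\aaa$.

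\textbf{Step 1: affine diagonal.} The automorphism and isomorphism functors of flat families of objects in $\aaa$ are represented by spaces of homomorphisms. Local noetherianity of $\aaa$ makes these $\hhom$ sheaves affine, via a linear-fibration argument, much as for coherent sheaves.

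\textbf{Step 2: $\Theta$-reductivity and $S$-completeness.} Both are extension problems over the punctured quotient stacks $\Theta_R\setminus 0$ and $\overline{ST}_R\setminus 0$, where $R$ is a DVR.
\begin{itemize}
\item A map from $\Theta_R$ to $\mm_\aaa$ is a $\Z$-filtered flat family.
\item A map from $\overline{ST}_R$ is a graded module over $R[s,t]/(st-\pi)$ with values in $\aaa$.
\end{itemize}
In each case the extension is constructed by a saturation procedure inside $\aaa_R$. Given a flat object over the generic point (or the punctured object), one takes the maximal subobject of the extended object that is flat. Cocompleteness lets us form unions and intersections of subobjects, and local noetherianity guarantees that the resulting lattices are finitely generated. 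This gives the "abelian category" analogues of Langton-type extension for coherent sheaves.

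\textbf{Step 3: restriction to $\xx$.} $\Theta$-reductivity and $S$-completeness pass to closed substacks, and so does affineness of the diagonal. Quasi-compactness together with characteristic $0$ then supplies the remaining hypotheses. In characteristic $0$, reductive stabilizers follow from $S$-completeness with affine diagonal, and linear reductivity then follows. The existence theorem therefore gives a separated good moduli space $X$ of $\xx$, and $X$ is of finite type over $k$ since $k$ is excellent.

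\textbf{Step 4: properness.} This needs the existence part of the valuative criterion: a $K$-point of $\xx$ should extend, after a finite extension of $K$, over $R$. The plan is to extend the family over $R$ by choosing a flat lattice, namely a finitely generated $R$-subobject of the generic object. Because $\xx \subset \mm_\aaa$ is closed, the special fiber of the extended family then lies in $\xx$.

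\textbf{Expected obstacle.} I expect the main difficulty in Step 2 and Step 4: producing flat extensions and lattices purely within $\aaa$, without a Quot scheme. To make this work I would pass to the $\Ind$-completion and rely on noetherian induction on subobjects.
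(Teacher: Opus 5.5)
\textbf{Verdict: the skeleton is right, but Step 1 rests on a false implication.} The paper does not prove this statement; it quotes it from \cite{alper2023existence}. Your skeleton is the architecture used there: the existence criterion (affine diagonal $+$ $\Theta$-reductive $+$ $S$-complete gives a separated good moduli space in characteristic $0$, proper iff the existence part of the valuative criterion holds), applied after checking these properties for $\mm_\aaa$ and passing to the closed substack $\xx$. Steps 3 and 4 are fine as written.

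\textbf{The gap is in Step 1.} Local noetherianity of $\aaa$ does \emph{not} make $\underline{\Hom}$ affine. Take $\aaa=\mathrm{QCoh}(\mathbb{A}^1_k)$, which is cocomplete and locally noetherian, and $E=F=\mathcal{O}$. Then the functor $T\mapsto \Hom(\mathcal{O}_{\mathbb{A}^1_T},\mathcal{O}_{\mathbb{A}^1_T})=T[x]\cong\bigoplus_{n\geq 0}T$ is not represented by an affine scheme. If it were, the universal element would be a polynomial of some degree $\leq N$, hence so would every point, but $x^{N+1}\in k[x]$ is a $k$-point. For coherent sheaves the linear-fibration argument runs on properness, through finiteness in cohomology and base change. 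In the abstract setting that finiteness must come from the hypothesis that $\mm_\aaa$ is algebraic and locally of finite type, and your Step 1 never uses it. For example, that hypothesis makes $\underline{\operatorname{Aut}}(E_\kappa\oplus F_\kappa)$ a group scheme locally of finite type. Its tangent space $\operatorname{End}(E_\kappa\oplus F_\kappa)\supset\Hom(E_\kappa,F_\kappa)$ is therefore finite-dimensional over every residue field $\kappa$. Affineness of the diagonal is obtained from this hypothesis in the source (\cite[Lemma 7.20]{alper2023existence}); the paper itself invokes that lemma under a local finite presentation hypothesis in Step 4 of the proof of Theorem \ref{thmmain}. Once $\underline{\Hom}$ is an abelian cone, $\underline{\operatorname{Isom}}(E,F)$ is the closed subscheme of $\underline{\Hom}(E,F)\times\underline{\Hom}(F,E)$ cut out by the two composition identities, hence affine.

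\textbf{Step 2 is imprecise.} ``The maximal subobject that is flat'' is not the right notion, since over a DVR every subobject of a flat object is torsion-free, hence flat. What you need are intersections of lattices inside $j_*E_K$, which exists because $\aaa$ is cocomplete.
\begin{itemize}
\item For $\Theta_R$, with generic filtration $F_\bullet\subset E_K$, set $E_i:=E\cap j_*F_i$. Then $E_{i-1}/E_i\subset E/E_i\hookrightarrow j_*(E_K/F_i)$ is torsion-free, so the Rees object is flat over $R[t]$.
\item For $\overline{ST}_R$, given $E,E'$ with $E_K\cong E'_K$, set $M_n:=E\cap\pi^nE'$, with $t$ the inclusion and $s$ multiplication by $\pi$. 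Flatness over $R[s,t]/(st-\pi)$ follows from injectivity of $s$ and injectivity of $t$ on $M/sM$. The latter is the identity $M_n\cap\pi M_{n-2}=\pi E\cap\pi^nE'=\pi M_{n-1}$.
\end{itemize}
So the obstacle you anticipate is a short Hartogs-type lattice computation across the codimension-two point. No Quot scheme, Ind-completion or noetherian induction is needed. The $M_n$ are subobjects of the noetherian object $E$, and $M$ is generated in finitely many degrees because $\pi^aE\subset E'\subset\pi^{-a}E$ for some $a$.
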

    Lately, there has been a great amount of interest surrounding these moduli problems (\cite{herrero2025moduli}) with applications ranging from moduli of perverse sheaves \cite{lampetti2025good} to cohomological applications \cite{davison2024purity}, \cite{bu2025cohomology}. In the context of Bridgeland stability, one of the more striking applications of \cite{alper2023existence} is the following existence theorem. Let $X$ be a smooth and projective scheme over an algebraically closed field. A stability condition on $X$, $\sigma=(\aaa,Z)\in\stab(X)$, consist of the heart of a $t$-structure $\aaa\subset D^b(X)$ and a \emph{central charge} homomorphism $Z:K_0(D^b(X))\ra\C$ together with some compatibility properties. Assume that $\sigma$ is algebraic, meaning that $\im(Z)\subset\Q[i]$ (hence the heart $\aaa$ is a noetherian abelian category). Assume also that:
    \begin{enumerate}
        \item\label{genericflatness} the heart $\aaa$ satisfies openness of flatness,
        \item $\aaa$ is bounded with respect to the standard $t$-structure on $D^b(X)$,
        \item$\mm_\sigma^{ss}(v)$ is bounded for all $v\in K_0(D^b(X))$
    \end{enumerate}
    then the Harder--Narasimhan filtration gives a $\Theta$-stratification on $\mm_\sigma(v)$ whose open stratum is exactly $\mm_\sigma^{ss}(v)$, hence it admits a proper good moduli space by \cite[Theorem 6.5]{alper2023existence}. This, together with the \emph{deformation argument} by Piyaratne--Toda (\cite{TODA_k3}, \cite[Proposition 4.12]{piyaratne2019moduli}) implies that the same holds for every algebraic stability condition $\tau$ in the same connected component of $\stab(X)$ containing $\sigma$. This gives a bittersweet answer to questions \ref{questionEX} and \ref{questionDEF}: the hypotheses which grant the existence of moduli spaces are well understood, yet there is no general technique available to tackle the problem in full generality. 
    
    The issue of the existence of stability conditions has been solved in the recent breakthrough \cite{li2026remark}, where C. Li manages to prove nonemptiness of $\stab(X)$ for every smooth projective variety $X$. Proving that the newly built stability conditions verify the conditions above would allow for a huge source of new examples of moduli spaces.

    In the relative setting, the quest of constructing stability conditions for smooth and proper families $X\ra S$, allowing for a rich moduli theory, has been addressed in \cite{bayer2021stability}. Notably, it does not suffice to have a collection $\{\sigma_s\}_{s\in S}$ of stability conditions on $D^b(X_s)$ for all $s\in S$ and impose a local constancy property for the central charges. The right requirement turns out to be a global condition on $D^b(X_C)$ for every smooth curve with a morphism $C\ra S$, called a \emph{Harder--Narsimhan structure}. The extension of such formalism to the noncommutative setting has been written recently in \cite{pertusi2026noncommutative}.

    On the other hand, the first extension of the theory of stability conditions in the enhanced setting appeared in the work of Halpern-Leistner--Robotis \cite{halpernleistner2025spaceaugmentedstabilityconditions}. For a smooth and proper category, linear over an excellent base algebra, they define a new structure, called a \emph{locally constant stability condition} (Definition \ref{defstabf}) with very convenient properties for the existence of good moduli spaces. Specifically, in their approach there are two major contributions:
    \begin{itemize}
        \item the notion of a \emph{sluicing} (Definition \ref{defsluicing}) overcomes the noetherianity assumption on the $t$-structures necessary to apply Theorem \ref{thmexistence},
        \item the generic flatness property in \ref{genericflatness} is shown to be equivalent to an explicit bound on the dimension of hom-spaces in the category, called a \emph{mass-hom bound} (see Theorem \ref{thmhlr}).
    \end{itemize}
    We say that a locally constant stability condition is \emph{proper} if, loosely speaking, it satisfies openness of the semistable locus and boundedness of semistable objects. Our main contribution is the following.
    \begin{introthm}[Theorem \ref{thmmain}]
        Let $\dd$ be a smooth and proper linear category over a noetherian algebra $R$ of characteristic zero and finite global dimension. The set $\stab(\dd/R)$ of proper locally constant stability conditions has a structure of a complex manifold for which the forgetful map 
        \begin{equation*}
        \begin{split}
            \stab(\dd/R)&\ra\stab(\dd_\kappa(t))\\
            \sigma&\mapsto\sigma_{\kappa(t)}
        \end{split}
        \end{equation*}
        is a local isomorphism for all $t\in\spec R$.
    \end{introthm}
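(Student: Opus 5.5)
The plan is to copy Bridgeland's deformation theorem, using the local-to-global machinery of Halpern-Leistner--Robotis to pass between the family over $\spec R$ and its fibers. Recall from \cite{halpernleistner2025spaceaugmentedstabilityconditions} that a locally constant stability condition $\sigma$ on $\dd/R$ consists of a sluicing on $\dd$ together with a central charge $Z:\Lambda\ra\C$. Here $\Lambda$ is a lattice that is locally constant over $\spec R$, so it is identified with the numerical lattice of each fiber $\dd_{\kappa(t)}$. The base changes $\sigma_{\kappa(t)}$ are Bridgeland stability conditions on the fibers.

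\textbf{Step 1: local injectivity and the chart.} First I show that the central charge map $\mathcal{Z}:\stab(\dd/R)\ra\Hom(\Lambda,\C)$ is locally injective. I give $\stab(\dd/R)$ the topology induced by the generalized metric
\[
d(\sigma,\tau)=\sup_{t}\, d\big(\sigma_{\kappa(t)},\tau_{\kappa(t)}\big),
\]
or more simply the slicing distance on the sluicing plus $\|Z-W\|$. The argument is Bridgeland's: if $Z_\sigma=Z_\tau$ and $d(\sigma,\tau)<1$, then $\sigma=\tau$. Here I use that the sluicing is determined by its restrictions to fibers and generic points; this is the HLR statement that semistability is detected pointwise. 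With this, the forgetful map to $\stab(\dd_{\kappa(t)})$ commutes with the central charge maps. Since $\Lambda$ is the same lattice on both sides, injectivity of the forgetful map near $\sigma$ follows.

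\textbf{Step 2: lifting deformations, the main obstacle.} Given $\sigma$ and a small deformation $W$ of $Z_\sigma$, with $\|W-Z_\sigma\|_\sigma<\sin(\pi\epsilon)$, I need a proper locally constant $\tau$ with central charge $W$. Equivalently, I need the deformation of $\sigma_{\kappa(t)}$ produced by Bridgeland's theorem to extend to a locally constant stability condition on the whole family. This is the hardest step, and I would attack it as follows.

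First, the support property for $\sigma$ holds uniformly in $t$, since the quadratic form lives on the constant lattice $\Lambda$. So the same $\epsilon$ works for all fibers simultaneously, and Bridgeland's construction gives deformations $\tau_t$ for every $t$ at once.

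Next, I build the sluicing of $\tau$ directly on $\dd$. The slice $\mathcal{P}_\tau(\phi)$ is defined as the objects that are $W$-semistable inside the thin subcategories $\mathcal{P}_\sigma((\phi-\epsilon,\phi+\epsilon))$. These are quasi-abelian, and HN filtrations exist there because of the finite-length or noetherian behavior coming from properness, namely boundedness and openness. I then check that this construction is compatible with base change to each $\kappa(t)$ and recovers $\tau_t$.

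Finally, I verify that $\tau$ is proper. Openness of the $\tau$-semistable locus and boundedness follow from the $\sigma$ ones, by the standard Toda/Piyaratne–Toda argument. The point is that $\tau$-semistable objects of class $v$ have $\sigma$-HN factors of bounded classes with phases in a small window. They therefore form a bounded family, cut out by open conditions inside a quasi-compact open of the moduli stack. Characteristic zero and finite global dimension are used to invoke the algebraicity of the moduli stack of objects.

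\textbf{Step 3: conclusion.} Steps 1 and 2 together show that $\mathcal{Z}$ restricts to a homeomorphism from a neighborhood of $\sigma$ onto an open subset of a linear subspace $V_\sigma\subset\Hom(\Lambda,\C)$. With the support property, $V_\sigma$ is all of $\Hom(\Lambda,\C)$. These charts give $\stab(\dd/R)$ its complex manifold structure, with holomorphic transition maps since they are identities on $\Hom(\Lambda,\C)$. The same holds for $\stab(\dd_{\kappa(t)})$ near $\sigma_{\kappa(t)}$. The forgetful map is compatible with the charts, injective by Step 1 and locally surjective by Step 2, so it is a local isomorphism for every $t\in\spec R$.
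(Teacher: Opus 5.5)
Your fibrewise use of Bridgeland's theorem, with one quadratic form for all $t\in\spec R$, and your boundedness argument both agree with the paper. Step 2, however, has a genuine gap. In Definition \ref{defstabf} there is no sluicing on $\dd$ itself. Besides the fibrewise sluicings, the nontrivial datum is condition (a): a width-one sluicing on $\dd_C$ for every DVR $C$, inducing the given ones on $\dd_K$ and $\dd_{C/\mathfrak{m}_C}$. Your proposal never constructs these for the deformed $\tau$. The thin-subcategory construction does not transfer to $\dd_C$. There the charge $Z_C$ is $Z(v_K(i_K^*E))$ on non-torsion objects and $Z(v_{tor}(E))$ on torsion ones, so it is not additive on $K_0(\dd_C)$. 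There is also no support property from which Bridgeland's argument would produce Harder--Narasimhan filtrations. Your claim that the construction ``is compatible with base change and recovers $\tau_t$'' is precisely what has to be proved. Finally, properness gives openness and boundedness, not noetherianity. Axiom (b) of a sluicing requires noetherian $t$-structures whose aisles lie between $\dd_C(>a)$ and $\dd_C(>b-1)$, and nothing in your sketch supplies them.

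The paper fills this gap as follows. Using the $\widetilde{GL}_2^+(\R)$-action and \cite{bayer2019short}, it only considers deformations $W=Z+u\circ p$ with $u$ real. Then $\Im W=\Im Z$, so the hearts $\mathcal P_\kappa(0,1]$ and $\mathcal P_C(0,1]$ are unchanged. This is what preserves openness of flatness and gives $\tau$ a $C$-local heart restricting to $\mathcal Q_K(0,1]$ and $\mathcal Q_{C/\mathfrak{m}_C}(0,1]$. Theorem \ref{thmluicing} then yields the unique slicing $\mathcal Q_C$; it rests on the Langton-type Theorem \ref{thmssred}, the $C$-torsion theory of Lemma \ref{lmtorsiontheory}, and Corollary \ref{corHN}. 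Width one comes from perturbing to a rational charge $W'$, whose hearts on $\dd_C$ are noetherian by Proposition \ref{propnoethcurve}, as in Remark \ref{rmksluicing}(iii).

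Two smaller points. Saying the $\tau$-semistable locus is ``cut out by open conditions'' is not an argument. The paper instead shows the unstable locus is closed, via universal closedness of $\quot_R^{\sigma,\phi+\epsilon}$ (Proposition \ref{propuniclosed}), whose valuative criterion again uses the $C$-torsion theory. Also, properness means condition (2) of Theorem \ref{thmhlr} for all objects of bounded mass, not just openness and boundedness of semistable loci. The paper deduces this additionally through the stack of filtrations.
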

   The main consequence of this theorem is that if one finds a proper $\sigma\in\stab(\dd/R)$, the same holds for any $\tau$ in the whole connected component of the topological space $\stab(\dd/R)$ containing $\sigma$. This means in particular that having mass-hom bounds for the category $\dd$ depends only on connected components of $\stab(\dd/R)$.

   Our second result is a way to build relative stability conditions from collections of stability conditions on the fibres $\dd_{\kappa(t)}$ for $t\in\spec R$
   \begin{introthm}[Corollary \ref{corcons}]
       Suppose we have a proper stability condition $\sigma_\kappa=(\cal P_\kappa,Z_k)$ on $\dd_\kappa$ for every residue field $\kappa$ of $R$ with locally constant central charges and a $C$-local heart $\aaa_C$ for every DVR $C$ essentially of finite type over $R$ with the generic flatness property. Suppose also that $\aaa_C$ restricts to $\cal P_K(0,1]$ and $\cal P_{C/\mathfrak{m}_C}(0,1]$, where $K$ is the fraction field of $C$. Then there exists a unique proper locally constant stability condition $\sigma\in\stab(\dd/R)$ restricting to $\sigma_K$ and $\sigma_{C/\mathfrak{m}_C}$.
   \end{introthm}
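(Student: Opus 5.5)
Our plan is to build $\sigma$ directly from the Halpern-Leistner--Robotis description of a locally constant stability condition as a central charge together with a sluicing, and to use the hypotheses to verify the axioms of Definition \ref{defstabf}. The central charge is the easy part. Since the $Z_\kappa$ are locally constant as $\kappa$ ranges over the residue fields of $R$, they glue to a single homomorphism $Z$ on the numerical $K$-group of $\dd$, and $Z$ restricts to $Z_\kappa$ on every fibre. For the slicing data, the key observation is that the sluicing axioms of Definition \ref{defsluicing} can be tested along DVRs essentially of finite type over $R$. This is the valuative nature of the sluicing/HN axioms in \cite{halpernleistner2025spaceaugmentedstabilityconditions}. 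So we define the candidate sluicing pointwise by the fibre slicings $\cal P_\kappa$, and over each DVR $C$ by the $C$-local heart $\aaa_C$ together with its shifts.

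The steps, in order, are as follows.
\begin{enumerate}
\item Show that $\aaa_C$ is compatible with $Z$. The hypothesis that $\aaa_C$ restricts to $\cal P_K(0,1]$ and $\cal P_{C/\mathfrak{m}_C}(0,1]$ lets us check positivity of $Z$ on $\aaa_C$ fibrewise. Relative HN filtrations in $\aaa_C$ are then obtained from generic flatness: take the HN filtration of the generic fibre in $\cal P_K$ and extend it by saturation in $\aaa_C$. Openness of flatness makes the saturated pieces $C$-flat, and their special fibres are semistable because $\sigma_{C/\mathfrak m_C}$ is proper and the limit of semistables stays semistable.
\item Verify compatibility of the $\aaa_C$ under base change $C\to C'$ of DVRs. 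This follows by restricting to the generic and special points, where both sides agree with the fixed $\cal P_\kappa(0,1]$, and then using that a $C$-local heart is determined by its fibres, namely $E\in\aaa_C$ iff $E$ is $C$-flat with fibres in the fibre hearts.
\item Deduce properness of $\sigma$. Openness of the semistable locus and boundedness hold fibrewise by assumption. Together with the valuative criterion provided by the DVR hearts, this gives openness and boundedness over $R$; here we invoke Theorem \ref{thmhlr} to convert generic flatness into the mass-hom bound.
\item Prove uniqueness. Any $\sigma'$ restricting to the given data has the same central charge and the same fibre slicings. By Theorem \ref{thmmain}, the restriction map to $\stab(\dd_{\kappa(t)})$ is a local isomorphism, so $\sigma$ and $\sigma'$ agree near every point. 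A sluicing is determined by its restrictions to DVRs, so $\sigma=\sigma'$.
\end{enumerate}

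We expect step (1) to be the main obstacle, specifically producing flat HN filtrations over $C$ whose special fibre is the HN filtration for $\sigma_{C/\mathfrak m_C}$, rather than a coarser one. Our first attempt will be a Langton-type argument: iteratively modify the saturated extension along the special fibre, using properness of $\sigma_{C/\mathfrak m_C}$ to bound the number of modifications.
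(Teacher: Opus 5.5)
\textbf{The main gap is in step (1).} Condition (a) of Definition \ref{defstabf} asks for a width-one \emph{sluicing} on $\mathcal{D}_C$. That means a $t$-structure $\mathcal{D}_C(>\phi)$ for every real $\phi$, together with the noetherian sandwich axiom (b) of Definition \ref{defsluicing}.

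\begin{itemize}
\item The heart $\mathcal{A}_C$ and its shifts only supply the integer phases. The real phases must come from the HN property of $Z_C=(Z_K,Z_{tor})$ on $\mathcal{A}_C$, in which torsion objects are measured by their own charge $Z\circ v_{tor}$.
\item Your mechanism for HN filtrations does not work. Saturating the generic HN filtration does give $C$-flat pieces (Lemma \ref{lmtf}). But it is false that their special fibres are semistable: semistability is open, not closed. A flat extension of a semistable $E_K$ can have unstable special fibre, which is exactly why the paper needs the Langton-type Theorem \ref{thmssred}.
\item The target you name as the main obstacle is neither required nor attainable. Take a torsion-free $E\in\mathcal{A}_C$ with stable generic fibre and unstable special fibre. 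It has no torsion subobjects, and every other subobject is measured by $Z_K$, so $E$ is $C$-semistable and its $C$-HN filtration is trivial. Moreover, by local constancy of $v$, no flat filtration of $E$ can restrict to the HN filtration of $E_{C/\mathfrak{m}_C}$: its first piece would destabilize $E_K$.
\end{itemize}

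What is actually needed is Theorem \ref{thmluicing}, which the paper proves as follows:
\begin{itemize}
\item build a $C$-torsion theory on $\mathcal{A}_C$ (Lemma \ref{lmtorsiontheory});
\item prove the HN property for $Z_{tor}$ on torsion objects by induction on the length of the support (Proposition \ref{lmtor});
\item combine these with semistable reduction to construct maximal destabilizing subobjects that mix a torsion-free part with a torsion part (Corollary \ref{corHN});
\item convert heart plus HN property into a slicing via Proposition \ref{propheart}.
\end{itemize}
Even this slicing is not yet a sluicing, and you never address noetherianity. The paper obtains it in Step 3 of the proof of Theorem \ref{thmmain}. It perturbs $Z$ to a $\mathbb{Q}[i]$-valued $W'$, invokes Proposition \ref{propnoethcurve}, and uses the metric estimate of Remark \ref{rmksluicing}(iii) to sandwich noetherian $t$-structures in every window of width less than $w$.

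\textbf{Steps (2)--(4) also have problems.}
\begin{itemize}
\item In (2), the criterion ``$E\in\mathcal{A}_C$ iff $E$ is $C$-flat with fibres in the fibre hearts'' is wrong. $\mathcal{A}_C$ contains non-flat torsion objects such as $i_{\kappa*}A$; your criterion describes flat families, not the heart. Compatibility under base change between DVRs is also not required by Definition \ref{defstabf}.
\item In (3), fibrewise openness and boundedness plus existence of DVR limits do not yield openness of the semistable locus or boundedness over $R$. Valuative criteria give closedness statements. In the paper, openness of stability under deformation comes from universal closedness of Quot spaces (Proposition \ref{propuniclosed}), which itself presupposes properness. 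Likewise, Theorem \ref{thmhlr} applies only once $\sigma$ is already in $\stab(\mathcal{D}/R)$, and it does not take generic flatness as input. In the corollary, properness (openness of stability, generic flatness, boundedness) is a hypothesis, and it is exactly what feeds Theorem \ref{thmluicing}.
\item In (4), a local isomorphism cannot show that two given points coincide. Uniqueness is in fact immediate. By Definition \ref{defstabf}, a locally constant stability condition is the datum $(\{\mathcal{P}_\kappa\},v,Z)$, and the DVR sluicings are only required to exist. In any case, the slicing on $\mathcal{D}_C$ determined by $\mathcal{A}_C$ and $Z_C$ is unique by uniqueness of HN filtrations.
\end{itemize}
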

   To the author's knowledge, the literature is missing a comparison between the approach by \cite{halpernleistner2025spaceaugmentedstabilityconditions} and that in \cite{bayer2021stability}. 
   \begin{introthm}[Proposition \ref{propcomparison}]
       The datum on a proper locally constant stability condition on a smooth and proper category $\dd$ is equivalent to the datum of a flat family of stability conditions, as defined in \cite{bayer2021stability}.
   \end{introthm}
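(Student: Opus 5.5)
We will construct maps in both directions between proper locally constant stability conditions $\sigma\in\stab(\dd/R)$ and flat families of stability conditions in the sense of \cite{bayer2021stability}, and then check that the two constructions are mutually inverse. A flat family is a collection of fibrewise stability conditions $\sigma_\kappa$ on $\dd_{\kappa}$, for the residue fields $\kappa$ of $R$, with locally constant central charges. Compatibility is imposed over every DVR $C$ essentially of finite type over $R$ (the noncommutative analogue of curves $C\ra S$), via a $C$-local heart $\aaa_C\subset\dd_C$ with generic flatness. In addition one requires openness of geometric stability and boundedness, i.e.\ the universal HN structure of \cite{bayer2021stability} as adapted in \cite{pertusi2026noncommutative}.

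\textbf{From $\sigma$ to a flat family.} Given a proper locally constant $\sigma\in\stab(\dd/R)$, we take the base changes $\sigma_{\kappa(t)}$, which exist by the forgetful map of Theorem \ref{thmmain}. For each DVR $C\ra R$ we base change the sluicing to $\dd_C$ and take the heart $\aaa_C=\cal P_C(0,1]$. Generic flatness of $\aaa_C$ follows from the mass-hom bound via Theorem \ref{thmhlr}. Compatibility of $\aaa_C$ with $\cal P_K(0,1]$ and $\cal P_{C/\mathfrak m_C}(0,1]$ is built into the definition of a sluicing, which is stable under base change. Local constancy of the central charges is part of Definition \ref{defstabf}. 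Finally, properness of $\sigma$ (openness of the semistable locus and boundedness) gives exactly the openness and boundedness axioms of a flat family.

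\textbf{From a flat family to $\sigma$.} This direction is the content of Corollary \ref{corcons}. The data of a flat family are precisely its hypotheses: proper fibrewise $\sigma_\kappa$, locally constant $Z$, and $C$-local hearts with generic flatness restricting correctly to the generic and special fibres. The corollary therefore produces a unique proper $\sigma\in\stab(\dd/R)$ restricting to all $\sigma_\kappa$.

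\textbf{The two constructions are inverse.} One composite is the identity because of the uniqueness clause in Corollary \ref{corcons}. For the other composite, we must show that the $C$-local hearts recovered from the constructed $\sigma$ coincide with the original $\aaa_C$. Both are hearts on $\dd_C$ and both restrict to $\cal P_K(0,1]$ and $\cal P_{C/\mathfrak m_C}(0,1]$. A $C$-local, generically flat heart on a DVR is determined by these restrictions: an object $E\in\dd_C$ lies in the heart iff $E\otimes K\in\cal P_K(0,1]$ and $E\otimes^L C/\mathfrak m_C$ lies in $\cal P_{C/\mathfrak m_C}(0,1]$ up to the torsion shift, which is the characterization in \cite{bayer2021stability}.

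\textbf{Main obstacle.} I expect the hardest step to be this matching of the properness and openness axioms, rather than of the hearts themselves. \cite{bayer2021stability} phrases openness for families over arbitrary finite-type bases, whereas Definition \ref{defstabf} is phrased via the sluicing and the stack $\mm_\sigma$. To bridge them, I would first reduce both notions to DVR test families by a valuative argument, using that $R$ is noetherian. I would then invoke Theorem \ref{thmhlr} to pass between generic flatness and the mass-hom bound.
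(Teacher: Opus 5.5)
There is a genuine gap in the direction from $\sigma$ to a flat family, and it comes from how you paraphrased the definition. Item (4) of a flat family does not ask for a $C$-local heart with generic flatness. It asks for a Harder--Narasimhan structure on $\dd_C$, which by Proposition \ref{propheart} means a $C$-local heart $\aaa_C$ on which $Z_C=(Z_K,Z_{tor})$ has the HN property. Condition (a) of Definition \ref{defstabf} gives you a sluicing on $\dd_C$, and hence a heart $\cal P_C(0,1]$ restricting correctly. This sluicing is not obtained by base change: there is no sluicing on $\dd$ to base change, which is why (a) is imposed as an axiom. However, nothing in your argument produces finite HN filtrations of objects of this heart with respect to $Z_C$. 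In such filtrations the torsion-free part (governed by $Z_K$) and the torsion part (governed by $Z_{tor}$) have to be interleaved. This is the actual content of the forward direction. The paper obtains it from Theorem \ref{thmluicing}, i.e.\ Corollary \ref{corHN}, which uses semistable reduction (Theorem \ref{thmssred}), the $C$-torsion theory (Lemma \ref{lmtorsiontheory}) and the HN property on torsion objects (Proposition \ref{lmtor}). All of these rely on openness of stability and of flatness, which come from properness. The generic flatness you verify via Theorem \ref{thmhlr} is only an input to this step, not a replacement for it.

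In the converse, Corollary \ref{corcons} requires the generic flatness property, which is not among the axioms of a flat family. You have to derive openness of flatness from openness of semistability and boundedness, along the lines of Lemma \ref{lmphase} and Corollary \ref{corinterval}. You cannot get it from Theorem \ref{thmhlr}, whose input is already an element of $\stab(\dd/R)$, so it presupposes condition (a). The paper instead turns the HN structure into a $C$-local slicing via Proposition \ref{propheart}. It then runs Step 3 of the proof of Theorem \ref{thmmain} (approximation by an algebraic central charge, Lemma \ref{lmtorsiontheory}, Proposition \ref{propnoethcurve}) to obtain the noetherian $t$-structures required by item (b) of a sluicing. So the main obstacle you identify is misplaced. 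The openness and boundedness axioms correspond on the two sides without any valuative reduction; the real work in both directions is the DVR condition. Your argument that a $C$-local heart is determined by its restrictions to $K$ and $C/\mathfrak{m}_C$ is harmless but unnecessary. On both sides the DVR structure enters only as an existence condition rather than as data, and uniqueness of the slicing is already part of Theorem \ref{thmluicing}.
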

   Lastly, given the compactification provided in \cite{halpernleistner2025spaceaugmentedstabilityconditions} by augmented stability conditions, we speculate in \ref{conjaugmented} whether such a construction could be made possible in the relative setting as well, whether a formalism of quasi-convergent paths and its consequences can be carried in this context, and whether properness of locally constant stability conditions is a property which should be expected to hold for all linear smooth and proper categories. 
\subsubsection*{Conventions} 
Although the theory of Bridgeland stability conditions does not have any restrictions coming from the characteristic of the ground field $k$, that of good moduli spaces does. In fact, even the étale local structure of algebraic stacks in \cite{alper2020luna}, necessary for the existence results in \cite{alper2023existence}, works only in characteristic zero. The extension of such a theory in positive characteristic is a deep problem in the field (see \cite{alper2026intrinsicapproachmodulitheory}). To overcome these pathologies, throughout the paper we will assume $\text{char}(k)=0$ for every field $k$. For $t$-structures we use the cohomological notation; all functors are derived unless specified.
\subsubsection*{Acknowledgements}
I would like to thank J. Alper for introducing me to the beautiful theory of good moduli spaces and for his support and guidance. During the preparation of this work, I have had to pleasure to have many insightful conversations with N. Bignami, B. Fantechi, D. Halpern-Leistner, E. Lampetti, E. Macrì, M. Montagnani, L. Morstabilini, E. Pavia, M. Pernice, A. Ricolfi, A. Robotis, P. Stellari, and O. van Garderen, whom I gratefully acknowledge. Lastly, I would like to thank the Department of Mathematics at the University of Washington for the active and warm atmosphere where part of this project took place. The author is part of INDAM-GNSAGA.
\section{Moduli of stable objects in a linear category}
\subsection{Linear categories}
We collect some introductory facts on $S$-linear categories following largely \cite{PERRY2019877} and \cite{hotchkiss2024period}. 
\begin{defi}
    A derived algebraic stack $S$ is \textit{perfect} if
    \begin{itemize}
        \item the diagonal of $S$ is affine,
        \item $D_{qc}(S)$ is compactly generated,
        \item compact and perfect objects in $D_{qc}(S)$ coincide.
    \end{itemize}
\end{defi}
For a perfect derived algebraic stack $S$, the category $\Perf(S)$ may be regarded as a commutative algebra object in the category of small, idempotent-complete, stable $\infty$-categories $\Cat_{st}$. 
\begin{defi}
    An $S$-linear category is a $\Perf(S)$-module object in $\Cat_{st}$.
\end{defi}
The collection of $S$-linear categories has the structure of an $\infty$-category $$\Cat_S:=\Mod_{\Perf(S)}(\Cat_{st}),$$
together with a symmetric monoidal structure denoted $\cal C\otimes_{\Perf(S)}\dd$. An \textit{$S$-linear functor} is a morphism $\cal C\ra \dd$ in $\Cat_S$, and the collection of such forms another $S$-linear category, $\Fun_S(\cal C,\dd)$. To be more precise, given $\cal C,\, \dd,\, \cal F\in\Cat_S$, the $S$-linear functors $\cal C\otimes_{\Perf(S)}\dd\ra\cal F$ classify the bilinear maps $\cal C\times\dd\ra\cal F$. There is a canonical functor $$\cal C\times\dd\ra\cal C\otimes_{\Perf(S)}\dd,$$
and in particular, given objects $C\in\cal C$ and $D\in\dd$, we denote $C\boxtimes D\in\cal C\otimes_{\Perf(S)}\dd$ their image under this functor.
\begin{lm}[\cite{PERRY2019877}, Lemma 2.7]
    Let $\cal C$ and $\dd$ be $S$-linear categories. The category $\cal C\otimes_{\Perf(S)}\dd$ is thickly generated by objects of the form $C\boxtimes D$, with $C\in\cal C$ and $D\in\dd$. 
\end{lm}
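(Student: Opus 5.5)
The plan is to reduce the claim to the universal property of the relative tensor product together with a closure argument. Let $\cal T\subset\cal C\otimes_{\Perf(S)}\dd$ be the smallest thick (stable, idempotent-complete) subcategory containing all objects $C\boxtimes D$. I will show that $\cal T$ is the whole category, by exhibiting $\cal T$ as an $S$-linear category through which the canonical bilinear functor factors, and then using uniqueness in the universal property.

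First I check that $\cal T$ is a $\Perf(S)$-submodule, that is, closed under the action of $\Perf(S)$. For a fixed $P\in\Perf(S)$, the full subcategory of objects $X$ with $P\otimes X\in\cal T$ is thick, because $P\otimes(-)$ is exact and preserves retracts. It contains every $C\boxtimes D$, since bilinearity over $\Perf(S)$ gives
\[
P\otimes(C\boxtimes D)\simeq (P\otimes C)\boxtimes D .
\]
So this subcategory contains $\cal T$, and hence $\cal T$ is an object of $\Cat_S$ with a fully faithful $S$-linear inclusion $i:\cal T\hookrightarrow\cal C\otimes_{\Perf(S)}\dd$.

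Next, the canonical bilinear map $\cal C\times\dd\ra\cal C\otimes_{\Perf(S)}\dd$ lands in $\cal T$ by construction. Its corestriction is therefore a bilinear map $\cal C\times\dd\ra\cal T$. By the universal property it corresponds to an $S$-linear functor $r:\cal C\otimes_{\Perf(S)}\dd\ra\cal T$. The composite $i\circ r$ corresponds to the same bilinear map as the identity, so $i\circ r\simeq\id$. Hence $i$ is essentially surjective, and being fully faithful it is an equivalence, which means $\cal T$ is the whole category.

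The main obstacle is making the bilinear structures precise enough in the $\infty$-categorical setting. One needs the identification of $\Perf(S)$-bilinear functors out of $\cal C\times\dd$ with $S$-linear functors out of $\cal C\otimes_{\Perf(S)}\dd$ to be natural in the target. This is what allows postcomposition with the full subcategory inclusion $i$ and the conclusion $i\circ r\simeq \id$. I would take this from Lurie's construction of the relative tensor product in $\Mod_{\Perf(S)}(\Cat_{st})$, presented as the geometric realization of the bar construction $\cal C\otimes\Perf(S)^{\otimes\bullet}\otimes\dd$.

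As an alternative, the bar construction route gives the result directly. Since $\Perf(S)$ is generated under the module action by its unit, the colimit in $\Cat_{st}$ is the idempotent completion of a localization of the zeroth term $\cal C\otimes\dd$. Every object of $\cal C\otimes\dd$ lies in the thick closure of the pure tensors, because the absolute tensor product in $\Cat_{st}$ is the idempotent completion of a category generated by these. Localization and idempotent completion preserve thick generation, which again gives the claim.
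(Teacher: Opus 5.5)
Your main argument is correct. It differs from the standard proof, which the paper does not reproduce; it only cites Perry.

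\textbf{The standard route.} One works at the presentable level, where $\cal C\otimes_{\Perf(S)}\dd$ is the category of compact objects of $\Ind(\cal C)\otimes_{D_{qc}(S)}\Ind(\dd)$. There the objects $C\boxtimes D$ are compact and generate under colimits. This holds because every term of the bar construction is generated by such objects, and a colimit of presentable categories along compact-preserving functors is generated by the images of compact generators. One then concludes with the Thomason--Neeman theorem: the compact objects of a compactly generated category are the thick closure of any set of compact generators.

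\textbf{Your route.} You stay inside $\Cat_S$ and use only the universal property the paper states. Since $\cal T$ is a full sub-$\Perf(S)$-module through which the universal bilinear functor factors, the inclusion $i$ acquires a section $r$ with $i\circ r\simeq\id$. A fully faithful functor with a section is an equivalence. This is more formal and self-contained, needing neither Ind-completions nor Neeman's theorem.

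The two points you flag are exactly what is needed, and both are standard:
\begin{itemize}
\item a full subcategory closed under the action inherits a module structure making the inclusion $S$-linear;
\item the correspondence with bilinear functors is precomposition with $\boxtimes$, hence natural in the target.
\end{itemize}
The Ind-level argument costs more machinery but gives more: the $C\boxtimes D$ compactly generate the large category $\Ind(\cal C)\otimes_{D_{qc}(S)}\Ind(\dd)$, not just thickly generate the small one.

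\textbf{A correction to your alternative sketch.} The realization of the bar construction is not, in general, an idempotent completion of a localization of the zeroth term. Take $\cal C=\dd=\Perf(S)$ with $S=\spec R$. The functor from the zeroth term $\Perf(S)\otimes\Perf(S)$ to $\Perf(S)\otimes_{\Perf(S)}\Perf(S)\simeq\Perf(S)$ is base change along the multiplication map $R\otimes R\ra R$. On Ind-completions its right adjoint is restriction of scalars. The counit at $R$ is $R\otimes_{R\otimes R}R\ra R$, i.e.\ (topological) Hochschild homology mapping to $R$, and this is not an equivalence in general (e.g.\ $R=\Q[x]$). So this functor is not a Verdier quotient, even up to idempotent completion.

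What does survive is that the image of the zeroth term thickly generates the colimit. Proving that requires an argument like your main one or the Ind-level one above. Since you offered the bar-construction route only as an alternative, this does not affect your proof.
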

Fix a morphism $T\ra S$ of perfect derived algebraic stacks. The product $$\dd_T:=\dd\otimes_{\Perf(S)}\Perf(T)$$
is called the \textit{base change} of $\dd$ along $T\ra S$: it is naturally a $T$-linear category. Moreover, for every $S$-linear functor $\Phi:\cal C\ra \dd$ there is a $T$-linear functor $\Phi_T:\cal C_T\ra \dd_T$ obtained from base-change. For any point $s\in S$, the base change along $\spec(\kappa(s))\ra S$ gives a $\kappa(s)$-linear category $\dd_{\kappa(s)}$, called the \textit{fibre} of $\dd$ over $s$.\\

Let $\dd$ be an $S$-linear category. For objects $E,\,F\in\dd$ there is a \textit{mapping object} $\hhom_S(E,F)\in D_{qc}(S)$ defined by the property 
$$\map_{D_{qc}(S)}(G,\hhom_S(E,F))\cong\map_\dd(E\boxtimes G,F),\hbox{ for }G\in\Perf(S).$$
Specifically, the functor $$\map_\dd(E\otimes(-),F):D_{qc}(S)\ra (\operatorname{Spaces})$$
is representable by \cite[Proposition 5.5.2.2]{lurie2009higher} and $\hhom_S(E,F)\in D_{qc}(S)$ is the representing object by definition.
\subsubsection{Presentable linear categories} 
Often, for applications it is more suited to work with larger categories. In the geometric setting, this usually translates to working with unbounded, quasi-coherent categories. In general, this corresponds to enlarge the category considered to the $\infty$-category $\precat_{st}$ of \textit{presentable stable $\infty$-categories}. Once again, the category $D_{qc}(S)$ is a commutative algebra object in $\precat_{st}$. As above, a \textit{presentable $S$-linear category} is a $D_{qc}(S)$-module object in $\precat_{st}$. The collection of such forms an $\infty$-category $$\precat_S:=\Mod_{D_{qc}(S)}(\precat_{st}).$$
If $\dd\in\Cat_S$ there is a corresponding category $\Ind(\dd)\in\precat_S$ called the \textit{ind-completion} of $\dd$, obtained roughly as freely adjoining all filtered colimits to $\dd$. This correspondence is functorial and induces $$\Ind:\Cat_S\ra\precat_S.$$
Moreover, the functor $\Ind(-)$ factors through an equivalence to the category $\precat_S^\omega$ of compactly generated, presentable, $S$-linear categories. The inverse equivalence $(-)^c:\precat_S^\omega\ra\Cat_S$ is given by taking compact objects. 
\subsection{Smooth and proper categories} 
\begin{defi}
    Let $\dd$ be an $S$-linear category.
    \begin{itemize}
        \item $\dd$ is \textit{proper} if $\hhom_S(E,F)\in\Perf(S)$ for all $E,\,F\in\dd$,
        \item  $\dd$ is \textit{smooth} if $\id_{\Ind(\dd)}\in\Fun_S(\Ind(\dd),\Ind(\dd))$ is representable by a compact object.
    \end{itemize}
\end{defi}
These notions are stable under base change, as shown below.
\begin{lm}[\cite{PERRY2019877}, Lemma 4.10]
    Let $\dd$ be an $S$-linear category, and let $T\ra S$ be a morphism of perfect derived algebraic stacks.
    \begin{itemize}
        \item If $\dd$ is smooth over $S$ then $\dd_T$ is smooth over $T$,
        \item if $\dd$ is proper over $S$ then $\dd_T$ is proper over $T$.
    \end{itemize}
\end{lm}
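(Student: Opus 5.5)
Both statements come down to identifying the base-changed mapping object with the base change of the original one, and then using that perfect complexes pull back to perfect complexes and compact generators pull back to compact generators. Write $f\colon T\ra S$ and $f^*\colon D_{qc}(S)\ra D_{qc}(T)$.

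\emph{Properness.} Because $\dd_T$ is thickly generated by objects $E\boxtimes G$ with $E\in\dd$ and $G\in\Perf(T)$, and perfect complexes form a thick subcategory, it suffices to show that $\hhom_T(E\boxtimes G,F\boxtimes G')$ is perfect for such pairs. The key step is the base change formula
$$\hhom_T(E\boxtimes G,F\boxtimes G')\cong G^\vee\otimes G'\otimes f^*\hhom_S(E,F).$$
To prove it, I would check it on the level of representing functors. For $H\in\Perf(T)$, a map $(E\boxtimes G)\boxtimes H\ra F\boxtimes G'$ is the same as a map $E\boxtimes (G\otimes H)\ra F\boxtimes G'$. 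Because $G$ is dualizable, this is in turn the same as a map $E\boxtimes H\ra F\boxtimes (G^\vee\otimes G')$. So I reduce to the case $G=\oo_T$.

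In that case I use $\Ind(\dd_T)\simeq \Ind(\dd)\otimes_{D_{qc}(S)}D_{qc}(T)$, together with the adjunction between the extension-of-scalars functor $\Ind(\dd)\ra\Ind(\dd_T)$ and its right adjoint, the forgetful functor. This gives
$$\map(E\boxtimes H,F\boxtimes G')\cong \map_{\Ind\dd}(E,f_*(\ldots)).$$
The $D_{qc}(S)$-linearity of $f_*$ on the module category then yields $\hhom_T(E_T,F_T)\cong f^*\hhom_S(E,F)$. Since $f^*$ preserves perfect complexes, properness of $\dd_T$ follows.

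This identification is the main obstacle. It needs the projection formula and the fact that $D_{qc}(T)$ is a $D_{qc}(S)$-module that is dualizable and compactly generated. These are exactly the conditions the perfectness hypothesis on $S$ and $T$ is there to guarantee, via the Ben-Zvi--Francis--Nadler tensor product theorem. I would cite that result rather than reprove it.

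\emph{Smoothness.} For compactly generated $\Ind(\dd)$ there is an identification
$$\Fun_S^L(\Ind\dd,\Ind\dd)\simeq \Ind(\dd^{op}\otimes_{\Perf(S)}\dd),$$
under which $\id$ corresponds to a kernel $K$. Smoothness says precisely that $K$ is compact. Base change along $f$ is symmetric monoidal, which gives
$$(\dd^{op}\otimes_{S}\dd)_T\simeq \dd_T^{op}\otimes_{T}\dd_T.$$
It carries the kernel of $\id_{\Ind\dd}$ to the kernel of $\id_{\Ind\dd_T}$, because the identity functor base changes to the identity. Finally, extension of scalars preserves compact objects, since its right adjoint preserves filtered colimits. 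Hence the kernel of $\id_{\Ind\dd_T}$ is compact, and $\dd_T$ is smooth over $T$.
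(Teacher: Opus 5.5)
Your proposal is correct. The paper gives no proof of its own and only cites Perry's Lemma 4.10. Your route is the standard argument for that result: for properness, the base-change formula $\hhom_T(E\boxtimes G,F\boxtimes G')\simeq \hhom_T(G,G')\otimes f^*\hhom_S(E,F)$ plus thick generation; for smoothness, base change sends the compact kernel of $\id_{\Ind\dd}$ to the kernel of $\id_{\Ind\dd_T}$ and preserves compact objects.

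The steps you only sketch are routine and fill in as you indicate:
\begin{itemize}
\item moving $H$ across as $H^\vee$ before invoking $f^*\dashv f_*$;
\item continuity of $f_*$ and the projection formula, which come directly from $S$ and $T$ being perfect rather than from the tensor product theorem itself;
\item compatibility of the kernel equivalence with base change.
\end{itemize}
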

There is a more intrinsic characterization of the property of being smooth and proper.
\begin{defi}
    Let $(\cal C,\otimes,1_{\cal C})$ be a symmetric monoidal $\infty$-category. An object $A\in\cal C$ is \textit{dualizable} if there is an object $A^\vee\in\cal C$ and morphisms $$\operatorname{coev}_A:1_{\cal C}\ra A\otimes A^\vee\quad\hbox{and}\quad\operatorname{ev}_A:A^\vee\otimes A\ra 1_{\cal C}$$
    such that the compositions 
    \begin{align*}
        A\xrightarrow{\operatorname{coev}_A\otimes\id_A}&A\otimes A^\vee\otimes A\xrightarrow{\id_A\otimes\operatorname{ev}_A} A\\
        A^\vee\xrightarrow{\id_A\otimes\operatorname{coev}_A}&A^\vee\otimes A\otimes A^\vee\xrightarrow{\operatorname{ev}_A\otimes\id_{A^\vee}}A^\vee
    \end{align*}
    are equivalent to the identity morphisms of $A$ and $A^\vee$ respectively.
\end{defi}
\begin{prop}[\cite{PERRY2019877}, Lemma 4.8]
    Let $\dd$ be an $S$-linear category. Then $\dd$ is smooth and proper over $S$ if and only if it is dualizable as an object of $\Cat_S$. In this case the dual is given by the opposite category $\dd^\vee=\dd^{op}$.
\end{prop}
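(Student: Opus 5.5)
The proof has two parts: first show that smooth and proper implies dualizable with dual $\dd^{op}$, then show the converse. In both directions the main tool is the Yoneda-type identification
\[
\Fun_S(\Ind(\dd),\Ind(\cal C))\simeq \Ind(\dd^{op}\otimes_{\Perf(S)}\cal C),
\]
restricted to functors that preserve colimits. Together with Lemma 2.7 of \cite{PERRY2019877}, this reduces every check to objects of the form $E\boxtimes F$.

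\textbf{Smooth and proper implies dualizable.} I will build the evaluation and coevaluation by hand.

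For evaluation, properness says that the bilinear functor $(E,F)\mapsto\hhom_S(E,F)$ lands in $\Perf(S)$. By the universal property of $\dd^{op}\otimes_{\Perf(S)}\dd$, this bilinear functor factors through an $S$-linear functor
\[
\operatorname{ev}:\dd^{op}\otimes_{\Perf(S)}\dd\ra\Perf(S).
\]

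For coevaluation, smoothness says that $\id_{\Ind(\dd)}$ corresponds to a compact object $\Delta\in\Ind(\dd\otimes_{\Perf(S)}\dd^{op})$. Since $\Delta$ is compact, it lies in $\dd\otimes_{\Perf(S)}\dd^{op}$ itself, after idempotent completion, which is automatic in $\Cat_{st}$. Define
\[
\operatorname{coev}:\Perf(S)\ra\dd\otimes_{\Perf(S)}\dd^{op},\qquad \oo_S\mapsto\Delta.
\]

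It remains to check the triangle identities. The composite $\dd\ra\dd\otimes\dd^{op}\otimes\dd\ra\dd$ sends $E$ to the integral transform with kernel $\Delta$, evaluated at $E$. Concretely this is $\Delta$ tensored against $E$ via $\hhom_S$, which is $\id_{\Ind(\dd)}(E)=E$. The second identity is the same computation with $\dd^{op}$ in place of $\dd$.

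Making these identifications coherent, rather than just objectwise, requires passing to $\Ind$. There the equivalence $\Fun^L_S(\Ind\cal C,\Ind\dd)\simeq\Ind(\cal C^{op}\otimes\dd)$ is the standard one, and under it composition of functors corresponds to contraction of kernels via $\operatorname{ev}$.

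\textbf{Dualizable implies smooth and proper.} Suppose $\dd$ is dualizable in $\Cat_S$ with dual $\dd^\vee$.

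First, $\Ind$ is symmetric monoidal, so $\Ind(\dd)$ is dualizable in $\precat_S$. In $\precat_S$ the dual of a compactly generated category is $\Ind(\dd^{op})$. Uniqueness of duals then gives $\dd^\vee\simeq\dd^{op}$, and also identifies $\operatorname{ev}$ with the functor $E\boxtimes F\mapsto\hhom_S(E,F)$.

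Since $\operatorname{ev}$ is a morphism in $\Cat_S$, it lands in $\Perf(S)$. Hence $\hhom_S(E,F)\in\Perf(S)$ for all $E,F$, which is properness.

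For smoothness, the coevaluation $\Perf(S)\ra\dd\otimes\dd^{op}$ sends $\oo_S$ to a compact object. The triangle identity shows that this compact object represents $\id_{\Ind(\dd)}$ as a kernel, which is smoothness.

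\textbf{Main obstacle.} The delicate step is identifying the abstract evaluation pairing with $\hhom_S$ and the dual with $\dd^{op}$. To handle it, I would cite the presentable-level dualizability of compactly generated $D_{qc}(S)$-modules from Lurie or Gaitsgory, with dual $\Ind(\dd^{op})$, and then restrict to compact objects. The restriction is legitimate because functors in $\Cat_S$ correspond exactly to functors in $\precat_S^\omega$ that preserve compact objects.
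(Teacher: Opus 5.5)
The paper gives no proof of this statement; it imports it from Perry (Lemma 4.8), and your argument is the standard one behind that lemma. $\Ind(\dd)$ is always dualizable in $\precat_S$ with dual $\Ind(\dd^{op})$, and $\Ind$ identifies $\Cat_S$ symmetric-monoidally with compactly generated categories and compact-preserving functors; so $\dd$ is dualizable in $\Cat_S$ exactly when the evaluation preserves compact objects (properness) and the coevaluation does too (smoothness). Your proposal is correct, including the uniqueness-of-duals step that identifies $\dd^\vee$ with $\dd^{op}$ and the abstract evaluation with $\hhom_S$.
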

Lastly, we report some features of $S$-linear categories, which will be extensively used throughout this paper.
\begin{lm}[\cite{PERRY2019877}, Lemma 2.9]\label{lmperryprod}
    Let $\cal C$ and $\dd$ be $S$-linear categories. If $C_1,\, C_2\in\cal C$ and $D_1,\,D_2\in\dd$, then the $D_{qc}(S)$-vaued mapping object between $C_1\boxtimes D_1$ and $C_2\boxtimes D_2$ in $\cal C\otimes_{\Perf(S)}\dd$ satisfies $$\hhom_S(C_1\boxtimes D_1, C_2\boxtimes D_2)\cong\hhom_S(C_1,C_2)\otimes\hhom_S(D_1,D_2)$$
\end{lm}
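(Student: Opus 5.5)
The plan is to reduce to the universal property of the tensor product and then check the identification on the representing objects. By definition, $\hhom_S(C_1\boxtimes D_1, C_2\boxtimes D_2)$ is the object of $D_{qc}(S)$ representing
\[
G\mapsto\map_{\cal C\otimes_{\Perf(S)}\dd}\bigl((C_1\boxtimes D_1)\otimes G,\,C_2\boxtimes D_2\bigr),\qquad G\in\Perf(S).
\]
The comparison is the map
\[
\hhom_S(C_1,C_2)\otimes\hhom_S(D_1,D_2)\ra \hhom_S(C_1\boxtimes D_1,C_2\boxtimes D_2).
\]
To build it, evaluate at $G=\hhom_S(C_1,C_2)\otimes\hhom_S(D_1,D_2)$ after passing to $\Ind$, so that non-perfect mapping objects are allowed. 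Tensor the two counit maps $C_1\otimes\hhom_S(C_1,C_2)\ra C_2$ and $D_1\otimes\hhom_S(D_1,D_2)\ra D_2$, using that $\boxtimes$ is $\Perf(S)$-bilinear. This gives a natural map $(C_1\boxtimes D_1)\otimes \hhom_S(C_1,C_2)\otimes\hhom_S(D_1,D_2)\ra C_2\boxtimes D_2$, and by adjunction the desired comparison.

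To prove the comparison is an equivalence, I would use Lurie's concrete model of the relative tensor product of module categories. Write $\Ind(\cal C)\otimes_{D_{qc}(S)}\Ind(\dd)$ as the geometric realization of the bar construction $\Ind(\cal C)\otimes D_{qc}(S)^{\otimes n}\otimes\Ind(\dd)$. Equivalently, and more concretely, use a Morita description. Choose compact generators (or work one object at a time) and view $\Ind(\cal C)$ as right modules over the $D_{qc}(S)$-enriched endomorphism algebra $A=\hhom_S(C,C)$, and similarly $\Ind(\dd)$ over $B$. Then the tensor product is modules over $A\otimes_{\oo_S}B$. The object $C\boxtimes D$ corresponds to the free module, whose endomorphism object is $A\otimes B$.

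For general $C_i, D_i$, I would reduce to this case by naturality and exactness. Both sides are exact in each of the four variables. Moreover, $C_1\boxtimes D_1$ and its mapping objects are unaffected by replacing $\cal C$ by the thick subcategory generated by $C_1\oplus C_2$, since the inclusion of a full subcategory tensors to a fully faithful functor. So it suffices to treat $\cal C$ and $\dd$ each generated by a single object. There the computation is the free-module computation above, together with the relative-tensor identity $\Hom_{A\otimes B}(A\otimes B, M)=M$.

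The main obstacle is full faithfulness. One must show that $\Mod_A(D_{qc}(S))\otimes_{D_{qc}(S)}\Mod_B(D_{qc}(S))\simeq\Mod_{A\otimes B}(D_{qc}(S))$ with the expected enrichment, and that tensoring preserves fully faithful inclusions. I would import this from Lurie, Higher Algebra 4.8.4.6 and its relative version. That result needs $S$ to be perfect, so that $D_{qc}(S)$ is compactly generated by perfect objects and $\Perf(S)$-modules correspond to compactly generated $D_{qc}(S)$-modules via $\Ind$, as recalled above. The rest is formal bookkeeping of adjunctions.
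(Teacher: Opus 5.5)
Your argument is correct in outline, but it takes a much longer road than necessary. There is no internal proof to compare against: the paper proves nothing here and simply quotes Perry.

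\textbf{The direct argument.} The natural proof is formal. For $C_1\in\mathcal{C}$, the functor $L_{C_1}=(-)\otimes C_1\colon D_{qc}(S)\ra\Ind(\mathcal{C})$ has right adjoint $\hhom_S(C_1,-)$. This right adjoint preserves colimits because $L_{C_1}$ sends $\Perf(S)$ into compact objects. It is $D_{qc}(S)$-linear because the lax structure map $F\otimes\hhom_S(C_1,X)\ra\hhom_S(C_1,F\otimes X)$ is an equivalence for dualizable $F$, and $D_{qc}(S)$ is generated by $\Perf(S)$. This is exactly where perfectness of $S$ enters. So $L_{C_1}\dashv\hhom_S(C_1,-)$ is an adjunction between $D_{qc}(S)$-linear colimit-preserving functors, and likewise for $D_1$. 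Since $-\otimes_{D_{qc}(S)}-$ is a $2$-functor, it preserves adjunctions. The right adjoint of $L_{C_1}\otimes L_{D_1}=(-)\otimes(C_1\boxtimes D_1)$ is therefore $\hhom_S(C_1,-)\otimes\hhom_S(D_1,-)$. Evaluating at $C_2\boxtimes D_2$ gives the lemma, with no generators, no Morita theory and no reduction step.

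\textbf{Your main obstacle is this same argument.} You flag as the main obstacle that tensoring the inclusion $\Ind(\mathcal{C}')\hookrightarrow\Ind(\mathcal{C})$ with $\Ind(\dd)$ stays fully faithful. That is not what Higher Algebra 4.8.4.6 says. Its proof is precisely the mechanism above: a linear colimit-preserving right adjoint whose unit is an equivalence, preserved by the $2$-functor. Once you have that, you may as well apply it directly to $L_{C_1}\dashv\hhom_S(C_1,-)$.

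\textbf{Two smaller corrections.}
\begin{enumerate}
\item For non-affine $S$, the plain thick subcategory generated by $C_1\oplus C_2$ is not $\Perf(S)$-stable, so $\mathcal{C}'\otimes_{\Perf(S)}\dd$ is not defined. You must take the smallest thick $\Perf(S)$-stable subcategory.
\item Higher Algebra 4.8.4.6 holds over any presentable monoidal base. Perfectness of $S$ is needed elsewhere: for the relative Morita equivalence $\Ind(\mathcal{C}')\simeq\Mod_A(D_{qc}(S))$, via colimit-preservation and linearity of $\hhom_S(C,-)$. That is the same input as in the direct argument.
\end{enumerate}

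\textbf{What each route buys.} Your route gives a concrete model $\Mod_{A\otimes B}(D_{qc}(S))$ in which all mapping objects are computable, not only those between box products. The direct route is shorter and needs no choices. It also automatically identifies your comparison map (built from the two counits) with the equivalence, so no bookkeeping is needed.
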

\begin{lm}[Projection formula]\label{lmprojformula}
    Let $S=\spec R$ for a noetherian ring $R$ of finite global dimension. Let $E,\, F\in\dd$ and $I$ be an $R$-module. There is a projection formula of the form
    $$\hhom_S(E,F\boxtimes I)\cong \hhom_R(E,F)\otimes I.$$
\end{lm}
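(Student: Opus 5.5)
We prove Lemma \ref{lmprojformula} by fixing $E,F\in\dd$ and comparing two functors of the $R$-module $I$. Set
$$\Phi(I):=\hhom_S(E,F\boxtimes I),\qquad \Psi(I):=\hhom_R(E,F)\otimes I .$$
Here $F\boxtimes I$ is understood in $\Ind(\dd)$, via the action of $D_{qc}(S)$ on $\Ind(\dd)$. The first task is a natural comparison map $\Psi(I)\ra\Phi(I)$. By adjunction, such a map is the same as a map
$$E\boxtimes\bigl(\hhom_R(E,F)\otimes I\bigr)\ra F\boxtimes I .$$
We take it to be the counit $E\boxtimes\hhom_R(E,F)\ra F$, tensored with $I$. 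Equivalently, it is the lax $D_{qc}(S)$-linearity map of the functor $\hhom_S(E,-)\colon\Ind(\dd)\ra D_{qc}(S)$, evaluated on $F$ and $I$. This construction is natural in $I$ and exact in $I$.

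We then show the map is an equivalence by dévissage on $I$. For $I=R$, both sides equal $\hhom_R(E,F)$ and the map is the identity. Both functors are exact, so the map is an equivalence for every perfect complex $I$, since these are finite extensions, shifts and retracts of $R$. For an arbitrary $R$-module, the key input is that $E$ is compact in $\Ind(\dd)$. Therefore $\map_{\Ind(\dd)}(E\boxtimes G,-)$ commutes with filtered colimits for every $G\in\Perf(S)$, and hence $\hhom_S(E,-)$ commutes with filtered colimits on $\Ind(\dd)$. The action $F\boxtimes(-)$ is colimit-preserving in the module variable, and so is $-\otimes I$. Consequently $\Phi$ and $\Psi$ both commute with filtered colimits. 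Since $D_{qc}(S)=\Ind(\Perf(S))$, every $I$ is a filtered colimit of perfect complexes. Thus the equivalence extends from perfect complexes to all of $D_{qc}(S)$, and in particular to every module.

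The hypothesis that $R$ is noetherian of finite global dimension is used to make the statement meaningful at the level of modules. It ensures that $\hhom_R(E,F)$, which is perfect when $\dd$ is proper (and in general lies in $D_{qc}(S)$), can be tensored with $I$ with bounded homological behaviour. It also ensures that every finitely generated module, such as a residue field $\kappa(t)$, is already perfect. For finitely generated $I$ the result therefore follows from the perfect case alone, without the colimit argument.

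The main obstacle is the compactness step: verifying that $\hhom_S(E,-)$ genuinely commutes with filtered colimits. Concretely, one needs the objects $E\boxtimes G$, for $G\in\Perf(S)$, to be compact in $\Ind(\dd)$, which holds because the action $\dd\times\Perf(S)\ra\dd$ lands in $\dd$. One also needs the representing objects to assemble compatibly with the colimit. I would first check this on the compact generators $G\in\Perf(S)$, which is exactly what the representability in the definition of $\hhom_S$ requires, and then conclude.
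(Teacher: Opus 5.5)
Your proof is correct, but it takes a different route from the paper.

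The paper derives the formula in one line from the K\"unneth formula for mapping objects, Lemma \ref{lmperryprod}. It writes $\dd\simeq\dd\otimes_{\Perf(S)}\Perf(S)$ and $E=E\boxtimes R$, and obtains $\hhom_R(E\boxtimes R,F\boxtimes I)\cong\hhom_R(E,F)\otimes\hhom_R(R,I)\cong\hhom_R(E,F)\otimes I$.

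You instead prove the statement directly. You build the comparison map from the counit and the module structure, and check it on $R$. You extend it to $\Perf(S)$ by exactness and thick closure. Finally you extend it to all of $D_{qc}(S)$, using that $\hhom_S(E,-)$ commutes with filtered colimits because $E\boxtimes G$ is compact for perfect $G$.

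The paper's argument is shorter, and it shows the projection formula as the special case $\cal C=\Perf(S)$ of a general K\"unneth statement. However, Lemma \ref{lmperryprod} as stated concerns objects of small $S$-linear categories. Applying it to an arbitrary, non-perfect module $I$ therefore tacitly needs its Ind-version. This matters because the paper later uses the formula in exactly that generality, e.g.\ with $I=T$ a localization in Corollary \ref{coropen}, or with arbitrary $M$ in Lemma \ref{lmlurie}.

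Your d\'evissage supplies exactly that extension. It also shows that only compactness of $E$ is used: noetherianity, finite global dimension and properness of $\dd$ play no role. So you can drop your paragraph claiming these hypotheses are needed to make the statement meaningful at the level of modules. Keep only the correct side remark that they make finitely generated modules perfect.
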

\begin{proof}
    Use Lemma \ref{lmperryprod} to get 
    \begin{align*}
        \hhom_R(E,F\boxtimes I)&\cong\hhom_R(E\boxtimes R,Q\boxtimes I)\\
        &\cong\hhom_R(E,F)\otimes\hhom_R(R,I)\\
        &\cong\hhom_R(E,F)\otimes I.
    \end{align*}
    This concludes the proof.
\end{proof}
\subsection{Pseudo-coherent and bounded coherent categories}
Let $\cal C$ be an $S$-linear category, and suppose that $\tau=(\cal C^{\leq 0},\cal C^{\geq  0})$ is a nondegenerate $t$-structure on $\cal C$. 
\begin{ex}
    This is the case when $\dd$ comes with a compact generator $G\in \cal C^\omega$ - indeed the category $\cal C_G^{\leq 0}$, defined to be the smallest subcategory containing $G$ which is closed under extensions and small colimits, is the aisle of  $t$-structure by \cite[Propoposition 1.4.4]{ha}.
\end{ex}
There are two interesting categories attached to this datum. 
\begin{defi}
    Let $\cal C$ be a presentable, $S$-linear category with arbitrary coproducts, and let $\tau=(\cal C^{\leq 0},\cal C^{\geq 0})$ be a nondegenerate $t$-strcture on $\cal C$.
    \begin{itemize}
        \item The \textit{pseudo-coherent category} $\cal C_{pc}$ consists of objects $E\in \cal C$ such that for all $m\in\Z_{\geq 0}$ there is an exact triangle $$E_m\ra E\ra T_m\xrightarrow{[1]},$$
        with $E_m\in\cal C^\omega$ and $T_m\in\cal C^{\leq-m}$,
        \item The \textit{bounded coherent} category $\cal C_{Coh}^b$ consists of those objects $E\in\cal C_{pc}$ which are bounded with respect to $\tau$. 
    \end{itemize}
\end{defi}
\begin{rmk}
    In the majority of applications, there is a more intrinsic construction for these categories. If the base $S$ is a quasi-compact, quasi-separated scheme then there are equivalences 
    $$\cal C_{pc}=\Fun(\cal C, D_{pc}(S))\quad\hbox{and}\quad\cal C^b=\Fun(\cal C,D^b(S)).$$
\end{rmk}
\begin{ass}\label{assexistence}
    If $S=\spec R$ is an affine scheme and $\dd$ is smooth and proper over $R$, we will assume for the rest of the paper that the category $\Ind(\dd)_{pc}$ satisfies \textit{Grothendieck existence theorem}. Namely, let $(A_n)$ be an inverse system of $R$-algebras with surjective transition maps whose kernels are nilpotent. We will assume that there is an equivalence $$\Ind(\dd)_{pc}\otimes_R A\cong\lim\limits_{\longleftarrow}\left(\Ind(\dd)_{pc}\otimes_R A_n\right).$$
\end{ass}
\subsection{Base change of Ind-Noetherian $t$-structures} 
An essential tool for studying moduli stacks of stable objects in linear categories has been the formalism of base change of $t$-structures. The topic was first addressed in \cite{tarrio2003construction}, \cite{AbramovichPolishchuk+2006+89+130} and \cite{polishchuk2007constant}. Later, it has been refined in \cite{halpern2014structure}, \cite{bayer2021stability}, and \cite{halpernleistner2025spaceaugmentedstabilityconditions}. Here, we present a short account based on the latter article. 
\begin{defi}
    An accessible $t$-structure on a compactly generated, presentable, stable $\infty$-category $\cal C$ is called \textit{Ind-noetherian} is it preserves the subcategory of compact objects $\cal C^\omega$ and the induced $t$-structure on $\cal C^\omega$ is bounded and noetherian.
\end{defi}
\begin{defi}
   Let $\dd$ be an $S$-linear category, and consider a $t$-structure $\tau=(\tau^{\leq 0},\tau^{\geq 0})$ on $\dd$. We say that $\tau$ is \emph{local} over $S$ if for each open embedding $U\xrightarrow{j}S$ there exists a $t$-structure $\tau_U$ on the base change $\dd_U$ such that the functor $j^*:\dd_U\ra\dd$ is exact with respect to $\tau$ and $\tau_U$.
\end{defi}
\begin{rmk}
    Suppose $\tau$ is local over $S$ and Ind-noetherian. Then for each $U\hookrightarrow S$ as above the induced $t$-structure $\tau_U$ is Ind-noetherian.
\end{rmk}
Fix a field $k$, and let $R$ be a noetherian $k$-algebra. Let $\dd$ be a smooth and proper $R$-linear category with a bounded, noetherian, nondegenerate $t$-structure $(\dd^{\leq0},\dd^{\geq0})$. For an $R$-algebra $T$ one has a induced $t$-structure on the base change $\Ind(\dd_T)$, where $\Ind(\dd_T)^{\leq 0}$ is the smallest full, saturated subcategory which is closed under small colimits and extensions and contains the image of the aisle $\dd^{\leq 0}$ under the base change functor $T\otimes_R(-):\dd\ra\Ind(\dd_T)$ \cite[Propositions 1.4.4.11, 1.4.4.13]{ha}.
\begin{warning}
    Throughout the paper we will write $\dd_T$ instead of $\dd_{\spec T}$ to indicate the base-changed category along a ring map $R\rightarrow T$.
\end{warning}
\begin{prop}[\cite{halpernleistner2025spaceaugmentedstabilityconditions},  Proposition 2.17]\label{lmbasechange}
    Suppose that $R \rightarrow T$ is a composition of ring homomorphisms of the following types: 
\begin{enumerate}   
    \item a polynomial algebra, 
    \item a ring localization, or 
    \item $T$ is perfect as an $R$-module.
\end{enumerate}    
Then the base-changed $t$-structures $(\Ind(\dd_T)^{\leq0},\Ind(\dd_T)^{\geq 0})$ are ind-noetherian.
\end{prop}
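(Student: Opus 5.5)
We reduce to the three elementary cases, since ind-noetherianity of a base-changed $t$-structure is checked one step at a time. Write $\tau_T$ for the $t$-structure on $\Ind(\dd_T)$ generated under colimits and extensions by $T\otimes_R\dd^{\leq 0}$. The first thing to check is transitivity: for $R\to T\to T'$, the $t$-structure on $\Ind(\dd_{T'})$ obtained from $\tau_T$ (restricted to compacts) coincides with $\tau_{T'}$. Both aisles are generated by the image of $\dd^{\leq 0}$, because the aisle of $\tau_T$ is generated by $T\otimes_R\dd^{\leq 0}$ and $T'\otimes_T(T\otimes_R-)=T'\otimes_R-$. Granting this, it suffices to show that a single homomorphism of type (1), (2) or (3) carries a bounded noetherian $t$-structure on compacts to an ind-noetherian one. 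We may then iterate, using that $\dd_T$ stays smooth and proper by Perry's base change lemma.

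In each case we need three things:
\begin{itemize}
\item $\tau_T$ preserves the compact objects $\dd_T$;
\item the restricted $t$-structure on $\dd_T$ is bounded;
\item its heart is noetherian.
\end{itemize}
The key tool is the right adjoint $f_*:\Ind(\dd_T)\to\Ind(\dd)$ of base change. By the projection formula (Lemma \ref{lmprojformula}) it satisfies $f_*(T\otimes_R E)\simeq E\otimes_R T$, and it is $t$-exact: it is right $t$-exact because $E\otimes_R T$ lies in $\Ind(\dd)^{\leq 0}$ when $E\in\dd^{\leq 0}$ and $T$ is connective, and left $t$-exact by adjunction. Moreover $f_*$ is conservative. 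Hence truncations of $T\otimes_R E$ can be computed after applying $f_*$, i.e. in $\Ind(\dd)$.

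\textbf{Localization.} $\Ind(\dd_T)$ is the Verdier quotient of $\Ind(\dd)$ by $T$-acyclic objects, and $f_*$ is fully faithful. So $\Ind(\dd_T)^\heartsuit$ is a localization of the locally noetherian Grothendieck category $\Ind(\dd)^\heartsuit$. Compacts are the images of compacts, truncations commute with localization, and quotients of noetherian objects are noetherian, which gives all three properties.

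\textbf{$T$ perfect over $R$.} Here $f_*$ preserves compacts, since $E\otimes_R T$ lies in the thick closure of $E$. For compact $M\in\dd_T$, the object $f_*M$ is bounded with noetherian cohomology in $\dd$. By conservativity and exactness, $M$ has bounded amplitude. Each $H^i(M)$ is compact because $f_*H^i(M)=H^i(f_*M)\in\dd$; compactness of $H^i(M)$ in $\Ind(\dd_T)$ follows because $f_*$ commutes with filtered colimits and is conservative on the heart. An ascending chain of subobjects of $H^i(M)$ maps under $f_*$ to an ascending chain in a noetherian object, so it stabilizes. Hence the heart on compacts is noetherian.

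\textbf{Polynomial algebra $T=R[x]$.} By induction it suffices to treat one variable. Now $f_*M$ is no longer compact, but it is an $R[x]$-module object in $\Ind(\dd)$, so the heart of $\dd_T$ is equivalent to finitely generated $R[x]$-module objects in $\Ind(\dd)^\heartsuit$. That these objects have no infinite ascending chains is a Hilbert basis theorem for the abelian category $\Ind(\dd)^\heartsuit$. We would prove it by Hilbert's leading-coefficient argument, filtering by $x$-degree inside the noetherian category $\dd^\heartsuit$. Boundedness and compactness of truncations then follow as in the perfect case, by applying $f_*$ degree by degree.

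\textbf{Main obstacle.} We expect the polynomial case to be the hardest step, specifically showing that $\tau_T$ preserves compacts. Our first attempt would be to show that $H^i(T\otimes_R E)\simeq T\otimes_R H^i(E)$, using flatness of $T$ over $R$ and $t$-exactness of $T\otimes_R-$. This isomorphism makes truncations of compacts manifestly compact. The isomorphism itself reduces to $t$-exactness of tensoring with the flat module $T$, a filtered colimit of free modules, and that exactness follows from the fact that, by Assumption~\ref{assexistence}-type accessibility, the $t$-structure on $\Ind(\dd)$ is compatible with filtered colimits.
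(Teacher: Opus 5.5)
The paper gives no proof of this statement; it only cites \cite[Proposition 2.17]{halpernleistner2025spaceaugmentedstabilityconditions}. Several parts of your argument are fine: the reduction to single steps via transitivity of the generated aisles, the $t$-exactness and conservativity of $f_*$, and the localization case.

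\textbf{Gap in case (3).} The step that carries the real content in cases (1) and (3) is that truncations of compact objects of $\dd_T$ are again compact. In case (3) you deduce that $H^i(M)$ is compact from compactness of $f_*H^i(M)$, using only that $f_*$ preserves filtered colimits and is conservative. This inference is false.

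Take $R=k$, $\dd=\Perf(k)$ and $T=k[\epsilon]/(\epsilon^2)$. The forgetful functor $f_*\colon D(T)\to D(k)$ is $t$-exact, conservative and colimit-preserving, and $f_*k=k$ is compact, yet $k\notin\Perf(T)$. In fact the base-changed $t$-structure here is the standard one on $D(T)$, and $H^0(\cone(\epsilon\colon T\to T))=k$. So, with Ind-noetherian defined through compact objects as in this paper, case (3) fails unless $T$ has finite global dimension. No argument that ignores this can work.

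A correct route has two options:
\begin{enumerate}
\item Add the finite global dimension hypothesis on $T$, and show that bounded pseudo-coherent objects of $\Ind(\dd_T)$ are compact. This is where smoothness of $\dd_T$ enters. Since the diagonal is compact, $X$ is compact as soon as $\hhom_T(E,X)\in\Perf(T)$ for all $E\in\dd_T$. Over such $T$ this reduces to $\hhom_T(E,X)$ being bounded with finitely generated cohomology, and the boundedness needs extra input such as relative Serre duality.
\item Formulate everything with pseudo-coherent objects $\Ind(\dd_T)_{pc}$ instead of compacts. There pseudo-coherence is reflected along $f_*$. The proof goes through the bar resolution of $X$ by the objects $T\otimes_R T^{\otimes_R n}\otimes_R f_*X$, using that $T$ is connective and perfect, not through conservativity.
\end{enumerate}

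\textbf{Gap in the polynomial case.} The same hole appears here. Flatness gives $\tau^{\leq n}(T\otimes_R E)\simeq T\otimes_R\tau^{\leq n}E$, but only for the generators. A general compact object is an iterated cone of maps $T\otimes_R E\to T\otimes_R E'$. These correspond to polynomials $\sum_n\phi_n x^n$ with $\phi_n\in\Hom(E,E')$ and are not base changes. The property that truncation preserves compacts is not inherited by cones.

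Your Hilbert basis argument gives that finitely generated $R[x]$-module objects of $\Ind(\dd)^\heartsuit$ are noetherian, which handles the noetherian condition. What you actually need is that such objects (for instance kernels and cokernels of the maps above) are compact in $\dd_{R[x]}$. That is again a finite-resolution/regularity statement, and you do not supply it. Since $f_*M$ is not compact in this case, ``as in the perfect case'' does not even formally apply.

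\textbf{Minor point.} Compatibility of the $t$-structure on $\Ind(\dd)$ with filtered colimits comes from its aisle being generated by compact objects, not from Assumption \ref{assexistence}.
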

\begin{rmk}
    With these hypotheses on $R$, the statement of Proposition \ref{lmbasechange} holds for any $R$-algebra $T$ essentially of finite type. Indeed, in this case $R\ra T$ can be factored as a composition of ring homomorphisms of the type appearing in Proposition \ref{lmbasechange}. See \cite[Example 2.18]{halpernleistner2025spaceaugmentedstabilityconditions}.
\end{rmk}
\begin{defi}\label{defloc}
    A functor $f^*:\cal B\ra\cal C $ between compactly generated, $R$-linear categories is called a \textit{localization} if $f^*$ admits a right adjoint $f_*$ which commutes with filtered colimits and such that the counit of the adjunction $f^*f_*\ra\id_\cal C$ is an isomorphism of functors.
\end{defi}
The behaviour of localizations is generally well understood by works as \cite{thomason2013higher} and \cite{neeman1996grothendieck}. We report a corresponding result in our setting.
\begin{lm}[\cite{halpernleistner2025spaceaugmentedstabilityconditions}, Lemma 2.20]\label{lmloc}
Let $f^*:\dd'\rightarrow\dd$ be a localization of compactly generated $R$-linear categories. Suppose that $\dd$ admits an Ind-noetherian $t$-structure $(\dd^{\leq0},\dd^{\geq 0})$ such that $f_*\circ f^*$ is $t$-exact. Then the $t$-structure induced on $\dd'$ by setting $(\dd')^{\leq 0}$ to be generated by $f^*(\dd^{\leq 0})$ under colimits is also ind-noetherian. Moreover the restriction $f^*:\dd^\omega\rightarrow(\dd')^\omega$ is essentially surjective and $t$-exact.
\end{lm}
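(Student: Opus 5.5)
Throughout I read the statement with $f^*\colon\dd\ra\dd'$, so that $f_*\colon\dd'\ra\dd$ is the right adjoint, commutes with filtered colimits, and satisfies $f^*f_*\cong\id$. In particular $f_*$ is fully faithful and $f^*$ preserves compact objects. The aisle $(\dd')^{\leq0}$ is generated under colimits and extensions by a set, namely $f^*$ of the compact part of $\dd^{\leq 0}$, so accessibility is automatic by \cite[1.4.4.11]{ha}. The plan is to show first that both $f^*$ and $f_*$ are $t$-exact. Then I will transport boundedness and noetherianity from $\dd^\omega$ to $(\dd')^\omega$ through the essential surjectivity of $f^*$ on compacts.

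\emph{Step 1 (exactness).} By adjunction, $Y\in(\dd')^{\geq1}$ if and only if $\Hom(f^*X,Y)=\Hom(X,f_*Y)=0$ for all $X\in\dd^{\leq0}$. Hence $(\dd')^{\geq1}=f_*^{-1}(\dd^{\geq1})$, and $f_*$ is left $t$-exact. Conversely, $f_*$ is exact and commutes with filtered colimits, and it sends each generator $f^*X$ to $f_*f^*X\in\dd^{\leq0}$ by the hypothesis on $f_*f^*$. It therefore sends $(\dd')^{\leq0}$ into $\dd^{\leq0}$, so $f_*$ is $t$-exact. Now $f^*$ is right $t$-exact by construction. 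It is also left $t$-exact: for $X\in\dd^{\geq1}$ we have $f_*f^*X\in\dd^{\geq 1}$, so $f^*X\in(\dd')^{\geq1}$. Consequently the hearts satisfy $\aaa'\simeq\aaa/\ker$, with $f_*$ an exact fully faithful section of the exact functor $f^*$.

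\emph{Step 2 (essential surjectivity and boundedness on compacts).} This is the step I expect to be the main obstacle. Let $Y\in(\dd')^\omega$. Write $f_*Y$ as a filtered colimit of compacts $X_i$. Then $Y=\operatorname{colim} f^*X_i$, and compactness makes $Y$ a retract of some $f^*X$ with $X$ bounded. Since $f^*$ is $t$-exact, $Y$ is bounded, and each $H^j(Y)$ is a retract, hence a quotient, of $f^*H^j(X)$.

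Let $N$ be the image of $H^j(X)\ra f_*H^j(Y)$. This is a noetherian object of the locally noetherian heart of $\dd$, and by exactness $f^*N\cong H^j(Y)$. So every cohomology object of $Y$ lies in the essential image of $f^*$ on noetherian objects.

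I would then induct on the amplitude of $Y$. An extension class $f^*A\ra f^*B[1]$ with $A,B$ noetherian corresponds to a map $A\ra f_*f^*B[1]$. Here $f_*f^*B$ lies in the heart, which is $\Ind$ of noetherian objects, so it is the filtered union of noetherian subobjects $B''\supseteq\operatorname{im}(B)$, each with $f^*B''\cong f^*B$. Since $A$ is compact, the class factors through some $B''[1]$. The extension is therefore $f^*$ of an extension of $A$ by $B''$ in $\dd^\omega$, which gives essential surjectivity. Once this is established, $t$-exactness of $f^*$ on compacts is inherited from Step 1.

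\emph{Step 3 (noetherianity).} Let $Y=f^*N$ with $N$ noetherian, and let $Y_1\subseteq Y_2\subseteq\cdots$ be an ascending chain of subobjects in $\aaa'$. Pull each $f_*Y_i$ back along the unit $N\ra f_*f^*N$. This gives an ascending chain of subobjects of $N$, which stabilizes. Applying the exact functor $f^*$ recovers $Y_i$, because $f^*$ of the unit is an isomorphism and $f^*f_*=\id$. So the original chain stabilizes.

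Together with Step 1, this shows that the induced $t$-structure on $(\dd')^\omega$ is bounded and noetherian, i.e. Ind-noetherian.
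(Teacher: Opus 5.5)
The paper gives no proof of this lemma. It is quoted from Halpern-Leistner--Robotis, and the statement contains a typo in the direction of $f^*$; your reading $f^*\colon\dd\ra\dd'$ is the intended one. So there is no in-paper argument to compare with, and your proposal has to stand on its own. Most of it does. Step 1 (exactness of $f^*$ and $f_*$ via $(\dd')^{\geq 1}=f_*^{-1}(\dd^{\geq 1})$) is correct. So are the retract argument for boundedness, the image trick $N=\operatorname{im}(H^j(X)\ra f_*H^j(Y))$ giving $H^j(Y)\cong f^*N$, and the pullback argument in Step 3. One step of the induction in Step 2, however, does not work as you wrote it.

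\textbf{The gap.} You state the extension argument for $A,B$ both noetherian, which only covers objects of amplitude at most $2$. In a genuine induction on amplitude, one of the two pieces has arbitrary amplitude. The argument survives only if that piece sits in the slot of $A$. For $A$ you use nothing but compactness. For $B$ you use that $f_*f^*B$ lies in the heart $\Ind(\aaa^\omega)$ and is the filtered union of noetherian $B''\supseteq\operatorname{im}(B)$, each with $f^*B''\cong f^*B$. For a general compact $B$ no such presentation of $f_*f^*B$ is available. Writing $f_*f^*B$ as a filtered colimit of compacts only exhibits $f^*B$ as a retract of $f^*$ of some term, not as isomorphic to it. So splitting off the top cohomology, where the heart object is the source and the general object the target, breaks the argument.

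\textbf{The fix.} Peel off the lowest cohomology instead. For $Y$ of amplitude $[a,b]$, use
\[
H^a(Y)[-a]\ra Y\ra \tau^{\geq a+1}Y\xrightarrow{\ \delta\ } H^a(Y)[1-a].
\]
\begin{itemize}
\item $H^a(Y)\cong f^*N$ with $N$ noetherian, by your image trick.
\item $\tau^{\geq a+1}Y$ is compact, since $Y$ and $f^*N[-a]$ are. By induction, $\tau^{\geq a+1}Y\cong f^*A$ with $A\in\dd^\omega$.
\item Then $\delta$ is adjoint to a map $A\ra f_*f^*N[1-a]=\operatorname{colim}N''[1-a]$. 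This map factors through some $N''[1-a]$ because $A$ is compact.
\item Hence $Y\cong f^*\operatorname{fib}(A\ra N''[1-a])$, which gives essential surjectivity.
\end{itemize}

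You should also say explicitly where the Ind-noetherian hypothesis enters Step 2. You use that $\dd^\heartsuit\simeq\Ind(\aaa^\omega)$ and that filtered colimits in $\dd^\heartsuit$ agree with those in $\dd$. Both come from $\dd^{\leq 0}$ being generated by compact objects, and the second is needed for the factorization through $N''[1-a]$.
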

\begin{ex}
    Let $\dd$ be an $R$-linear category. If $f:R\ra T$ is a ring localization, then the induced functor $f^*:\dd\ra\dd_T$ is an example of a localization in the sense of Definition \ref{defloc}. Indeed $R\ra T$ is exact, so that $f_*(f^*(-))\cong T\otimes_R(-)$ is $t$-exact. In particular, Lemma \ref{lmloc} applies.
\end{ex}
\begin{cor}\label{coropen}
Let $U=\spec(T)\xrightarrow{j}\spec(R)$ be the inclusion of an open subset, and let $f:E\rightarrow F$ be a morphism in $\dd_U$. Then there exist objects $\hat{E},\,\hat{F}\in\dd$ and a morphism $\hat{f}:\hat{E}\rightarrow\hat{F}$ such that $j^*\hat{f}=f$. If $E,\,F\in\dd_U^\heartsuit$ then $\hat{E},\,\hat{F}\in\dd^\heartsuit$ the lift $\hat{f}$ is injective or surjective if $f$ is.
\end{cor}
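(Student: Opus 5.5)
Since $T$ is a localization of $R$, the functor $j^*:\dd\ra\dd_U$ is a localization in the sense of Definition \ref{defloc}, with $j_*j^*\cong T\otimes_R(-)$ exact. The approach is to lift objects and morphisms along this localization by clearing denominators, as in the classical extension of coherent sheaves from an open subset. All the $t$-structure input comes from Lemma \ref{lmloc}.

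\emph{Lifting the morphism.} By Lemma \ref{lmloc}, $j^*:\dd\ra\dd_U$ is essentially surjective on compact objects. So there are $\hat E,\hat F_0\in\dd$ with $j^*\hat E\cong E$ and $j^*\hat F_0\cong F$. To lift $f$, use the projection formula (Lemma \ref{lmprojformula}) together with the adjunction:
$$\Hom_{\dd_U}(j^*\hat E,j^*\hat F_0)\cong H^0\bigl(\hhom_R(\hat E,\hat F_0)\otimes_R T\bigr)\cong \Hom_\dd(\hat E,\hat F_0)\otimes_R T.$$
The last isomorphism holds because $T$ is flat over $R$ and $\hhom_R$ is perfect by properness. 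Hence $f=s^{-1}\cdot g_0$ with $g_0:\hat E\ra\hat F_0$ and $s\in R$ invertible on $U$. Set $\hat F:=\hat F_0$ and $\hat f:=g_0$. Then $j^*\hat f$ differs from $f$ by the automorphism $s$ of $F$, and we absorb this by changing the identification $j^*\hat F\cong F$ by $s^{-1}$. This proves the first statement.

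\emph{Hearts.} If $E,F\in\dd_U^\heartsuit$, replace $\hat E$ by the image of $\tau^{\geq0}\tau^{\leq0}\hat E$. Since $j^*$ is $t$-exact by Lemma \ref{lmloc}, truncations commute with $j^*$, so $j^*H^0(\hat E)\cong H^0(E)=E$. Do the same for $\hat F$, and take $H^0(\hat f)$ as the new lift. This shows $\hat E,\hat F$ may be taken in $\dd^\heartsuit$.

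\emph{Injectivity and surjectivity.} This is the main obstacle, since a naive lift need not be injective or surjective. Because $j^*$ is exact on hearts, $j^*\ker\hat f=\ker f$ and $j^*\coker\hat f=\coker f$.

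If $f$ is injective, then $\ker\hat f$ is killed by $j^*$, i.e.\ it is supported on the complement of $U$. Replace $\hat E$ by $\hat E/\ker\hat f=\im\hat f$. This is still a lift of $E$, and the induced map $\im\hat f\hookrightarrow\hat F$ is injective.

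If $f$ is surjective, then $\coker\hat f$ is torsion along the complement of $U$. Replace $\hat F$ by $\im\hat f\subset\hat F$, which is still a lift of $F$, and then $\hat E\ra\im\hat f$ is surjective.

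If $f$ is both injective and surjective, do both replacements. The one remaining check is that these replacements stay inside $\dd^\heartsuit\cap\dd^\omega$. This holds because the heart is noetherian (Lemma \ref{lmloc}), so subobjects and quotients of compact heart objects remain compact.
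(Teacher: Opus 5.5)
Your argument is correct whenever $T=S^{-1}R$ (see the caveat below), and it takes a different and more careful route than the paper.

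\textbf{Comparison with the paper.} The paper lifts $E,F$ by Lemma \ref{lmloc} and rewrites $\hhom_T(E,F)\cong\hhom_R(\hat E,\hat F)\otimes_RT$ by adjunction and the projection formula. It then replaces $\hat F$ by $\hat F\otimes_RT=j_*F$, so in effect it lifts $f$ by its adjoint $\hat E\ra j_*F$. Injectivity and surjectivity are then attributed to $t$-exactness.

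That route needs no denominators and works for any affine open. However, $j_*F$ lies in $\Ind(\dd)$ rather than $\dd$. Moreover, $t$-exactness only controls $j_*f$, not the adjoint $j_*f\circ\eta_{\hat E}$. The unit $\eta_{\hat E}:\hat E\ra\hat E\otimes_RT$ has kernel the torsion of $\hat E$ supported off $U$, and it is usually not surjective.

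Your proof stays among compact objects. Clearing denominators gives an honest morphism $g_0:\hat E\ra\hat F_0$, and $t$-exactness of $j^*$ lets you pass to $H^0$. Replacing the source or target by $\im\hat f$ then yields a lift that is injective or surjective. That last step is exactly what the paper's proof glosses over.

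\textbf{Caveat.} Writing $f=s^{-1}g_0$ requires $T$ to be a localization of $R$ at a multiplicative set. This covers principal opens and all uses in the paper, namely passing from $C$ to $K$, or to a generic point in Lemma \ref{lmphase}. But an affine open need not be of this form. Take a Dedekind domain $R$ and a prime $\mathfrak p$ of infinite order in the class group. Then $U=\spec R\setminus\{\mathfrak p\}$ is affine, yet the only elements of $R$ invertible on $U$ are units.

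For a general affine open with complement $V(I)$, use instead the formula
\[
K\otimes_RT\simeq\varinjlim_n R\Hom_R(I^n,K),\qquad K=\hhom_R(\hat E,\hat F_0),
\]
valid since $R$ is noetherian. Then $f$ comes from a morphism $I^n\otimes_R\hat E\ra\hat F_0$ for some $n$. The object $I^n\otimes_R\hat E$ lies in $\dd$ because $R$ has finite global dimension, and it restricts to $E$ on $U$ since $I^nT=T$. Your remaining steps then go through unchanged.
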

\begin{proof}
Let $\hat{E},\,\hat{F}\in\dd$ be some lifts to $\dd$ of $E$ and $F$, which exist by Lemma \ref{lmloc}. Then by adjunction it follows that 
\begin{align*}
    \hhom_T(E,F)&\cong\hhom_R(\hat{E},j_*j^*F)\\
    &\cong\hhom_R(\hat{E},\hat{F}\otimes_R T).
\end{align*}
By Lemma \ref{lmprojformula} we conclude that the latter is isomorphic to $\hhom_R(\hat{E},\hat{F})\otimes T$. Thus, up to replacing $\hat{F}$ by $\hat{F}\otimes_R T$ we get a lift $\hat{f}$. For the second statement, by $t$-exactness, $\hat{f}$ will be surjective or injective if $f$ is.
\end{proof}
\begin{lm}\label{lmt-gluing}
    Let $\cal C_i$ be a diagram of presentable, compactly generated, stable $\infty$-categories and functors between them indexed by a category $I$. Assume that each $\cal C_i$ has a $t$-structure $(\cal C_i^{\leq0},\cal C_i^{\geq 0})$ and that each functor between the $\cal C_i$'s is $t$-exact with respect to these $t$-structures. Then $\cal C:=\lim\limits_{\longleftarrow } C_i$ admits a $t$-structure $\left(\lim\limits_{\longleftarrow}\cal C_i^{\leq 0},\lim\limits_{\longleftarrow}\cal C_i^{\geq 0}\right)$.
\end{lm}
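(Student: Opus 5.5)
We verify directly that $\cal C^{\leq 0}:=\lim\limits_{\longleftarrow}\cal C_i^{\leq 0}$ and $\cal C^{\geq 0}:=\lim\limits_{\longleftarrow}\cal C_i^{\geq 0}$ satisfy the three axioms of a $t$-structure. An object $E\in\cal C$ is a compatible family $(E_i)_{i\in I}$ with equivalences $\phi^*E_i\simeq E_j$ for each $\phi:i\ra j$. Since every transition functor is $t$-exact, it maps $\cal C_i^{\leq 0}$ into $\cal C_j^{\leq 0}$ and $\cal C_i^{\geq 0}$ into $\cal C_j^{\geq 0}$. Hence the limits of these full subcategories are full subcategories of $\cal C$: a family lies in $\cal C^{\leq 0}$ exactly when each $E_i\in\cal C_i^{\leq 0}$, and similarly for $\cal C^{\geq 0}$.

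First, stability under shifts is immediate. The shift on $\cal C$ is computed componentwise, because the limit is taken in stable $\infty$-categories and exact functors commute with $[1]$. So $\cal C^{\leq 0}[1]\subset\cal C^{\leq 0}$ and $\cal C^{\geq 0}[-1]\subset\cal C^{\geq 0}$.

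Second, we check orthogonality. Mapping spaces in a limit of $\infty$-categories are the limits of the mapping spaces:
$$\map_{\cal C}(E,F)\simeq\lim\limits_{\longleftarrow}\map_{\cal C_i}(E_i,F_i).$$
If $E\in\cal C^{\leq 0}$ and $F\in\cal C^{\geq 1}$, every term on the right is contractible. A limit of contractible spaces is contractible, so $\Hom(E,F)=0$.

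Third, and this is the main point, we construct truncation triangles. For $E=(E_i)$, take the truncations $\tau_i^{\leq 0}E_i\ra E_i\ra\tau_i^{\geq 1}E_i$ in each $\cal C_i$. Because each $\phi^*$ is $t$-exact, it commutes with truncation functors up to canonical equivalence, $\phi^*\tau_i^{\leq 0}\simeq\tau_j^{\leq 0}\phi^*$. The coherence data needed to make these equivalences into an object of the limit come from a universal property. The truncation $\tau^{\leq 0}_i$ is right adjoint to the inclusion $\cal C_i^{\leq 0}\hookrightarrow\cal C_i$. Exactness of the $\phi^*$ says that these adjunctions satisfy the Beck--Chevalley condition along the diagram, meaning the mates $\phi^*\tau^{\leq 0}_i\ra\tau^{\leq 0}_j\phi^*$ are equivalences. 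Hence the right adjoints assemble into a natural transformation of diagrams. Passing to limits, the inclusion $\lim\cal C_i^{\leq 0}\hookrightarrow\cal C$ acquires a right adjoint $\tau^{\leq 0}$, computed componentwise, by the standard result that limits of adjunctions satisfying Beck--Chevalley are adjunctions (Lurie, HA~4.7.4.18 / HTT~5.2.2).

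The cofiber of the counit $\tau^{\leq 0}E\ra E$ is computed componentwise, since cofibers in $\cal C$ are componentwise. It is therefore $(\tau_i^{\geq 1}E_i)$, which lies in $\cal C^{\geq 1}$. This gives the required exact triangle, completing the proof. The only genuinely delicate step is the coherence of truncations, which is handled by the adjunction argument above. Presentability and compact generation are not really needed, beyond guaranteeing that the limit exists in $\precat_{st}$ and is computed in $\Cat_\infty$.
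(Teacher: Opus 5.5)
Your proof is correct. It has the same skeleton as the paper's proof: shifts are computed componentwise, orthogonality comes from $\map_{\mathcal{C}}(E,F)\simeq\lim\map_{\mathcal{C}_i}(E_i,F_i)$, and the truncations are obtained by passing adjunctions to the limit. Where you differ is in how the coherence of the truncation functors is secured.

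The paper follows Halpern-Leistner and replaces each $\mathcal{C}_i$ by the equivalent $\infty$-category $\tilde{\mathcal{C}}_i\subset\Fun(\Delta^1\times\Delta^1,\mathcal{C}_i)$ of truncation squares. By $t$-exactness the transition functors preserve such squares, so $\{\tilde{\mathcal{C}}_i\}$ is again a diagram with $\lim\tilde{\mathcal{C}}_i\simeq\lim\mathcal{C}_i$. In this model the truncations become evaluation-at-a-corner functors $\pi_{\leq 0},\pi_{\geq 1}$, which commute with the transition functors on the nose. The adjunctions $\iota\dashv\pi$ are therefore compatible along the diagram by construction.

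You instead keep the original diagram and invoke Lurie's result on limits of right-adjointable squares. Here $t$-exactness supplies exactly the Beck--Chevalley condition: the mate $\phi^*\tau^{\leq 0}_i\to\tau^{\leq 0}_j\phi^*$ is an equivalence because $\phi^*$ carries the truncation triangle of $E_i$ to that of $\phi^*E_i$.

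What each route buys:
\begin{itemize}
\item Your version is shorter and isolates the single property of the transition functors that is used. It also shows directly that the limit truncation is computed componentwise, so its cofiber lies in $\lim\mathcal{C}_i^{\geq 1}$.
\item The paper's version avoids the mate formalism by strictifying the diagram, at the cost of an auxiliary replacement. Its step that ``the adjunction is preserved'' in the limit is justified only because, in the strictified model, the Beck--Chevalley condition holds tautologically.
\end{itemize}

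As you note, neither argument uses presentability or compact generation.
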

\begin{proof}
    This is nothing but the translation of \cite[Lemma 2.2.7]{halpern2020derived} in the setting of $t$-structures. For $i\in I$, define $\tilde{\cal C}_i\subset\Fun(\Delta^1\times \Delta^1,\cal C_i)$ to be the sub $\infty$-category whose objects are cartesian diagrams of the form 
    \begin{equation}\label{eqsquare}
    \begin{tikzcd}
        C^{\leq 0}\arrow[r]\arrow[d]& C\arrow[d]\\
        0\arrow[r]&C^{\geq 1},
    \end{tikzcd}
    \end{equation}
    with $C^{\leq 0}\in \cal C_i^{\leq 0}$ and $C^{\geq 1}\in\cal C_i^{\geq 1}$. Since we assumed that $\cal C_i\ra\cal C_j$ is $t$-exact, there is an induced functor $\tilde{\cal C}_i\ra\tilde{\cal C}_j$. Lastly, for the fact that $(\cal C_i^{\leq 0},\cal C_i^{\geq 1)}$ is a $t$-structure, for each $i$ the functor $\tilde{\cal C}_i\ra\cal C_i$ taking the square \eqref{eqsquare} to $C$ is an equivalence. As a result, we have a diagram of $\infty$-categories sending $i\mapsto\tilde{\cal C}_i$ and a map of diagrams $\{\tilde{\cal C}_i\}_{i\in I}\ra\{\cal C_i\}_{i\in I}$. In particular $$\lim_{\longleftarrow}\tilde{\cal C}_i\xrightarrow{\cong}\lim_{\longleftarrow}\cal C_i.$$
    In a similar fashion, the categories $\cal C_i^{\leq 0}$ and $\cal C_i^{\geq 1}$ may be replaced with equivalent subcategories $\tilde{\cal C}_i^{\leq0}$ and $\tilde{\cal C}_i^{\geq 1}$ of $\Fun(\Delta^1,\cal C_i)$, consisting of diagrams of the form $0\ra C^{\leq 0}$ and $C^{\geq 1}\ra0$ respectively. Thus, there is a map of diagrams $$\begin{tikzcd}
        \{\tilde{\cal C}_i^{\leq 0}\}\arrow[r, bend left=35
        , "\iota_{\leq0}"]&\arrow[l,bend left=35,"\pi_{\leq0}"]\{\tilde{\cal C}_i\}\arrow[r, bend left=35,"\pi_{\geq 1}"]&\arrow[l, bend left=35,"\iota_{\geq 1}"]\{\tilde{\cal C}_i^{\geq1}\},
    \end{tikzcd}$$
    where:
    \begin{itemize}
        \item the functors $\iota_{\leq 0}$, $\iota_{\geq 1}$ map a square of the form \eqref{eqsquare} to $0\ra C^{\leq 0}\in\tilde{\cal C}_i^{\geq 0}$ and $C^{\geq 1}\ra 0\in \tilde{\cal C}_i^{\geq1}$ respectively,
        \item objects $0\ra C\in\tilde{\cal C}_i^{\geq 0}$ and $C\ra 0\in \tilde{\cal C}_i^{\geq1}$ are mapped respectively to the squares $$
        \begin{tikzcd}
            C\arrow[d]\arrow[r,"\id"]&C\arrow[d]&0\arrow[d]\arrow[r]&C\arrow[d,"\id"]\\
            0\arrow[r]&0&0\arrow[r]&C
        \end{tikzcd}$$
    \end{itemize}
    As in \cite{halpern2020derived}, for each $i\in I$ there are natural transformations $\eta_{\geq 1}:\iota_{\geq 1}\circ\pi_{\geq 1}\ra\id_{\tilde{\cal C}_i}$ and $\epsilon_{\geq 1}:\id_{\tilde{\cal C}_i^{\geq 1}}\ra\pi_{\geq 1}\circ\iota_{\geq 1}$ which correspond to the unit and counit of an adjunction $\iota_{\geq1}\dashv\pi_{\geq 1}$. Passing to the limits, the adjunction is preserved and gives a pair of adjoint functors $\left(\lim\limits_{\longleftarrow}\iota_{\geq1}\right)\dashv\left(\lim\limits_{\longleftarrow}\pi_{\geq 1}\right)$. A similar argument for $\iota_{\leq 0}$ and $\pi_{\leq 0}$ allows to build an adjuction $\left(\lim\limits_{\longleftarrow}\pi_{\leq0}\right)\dashv\left(\lim\limits_{\longleftarrow}\iota_{\leq0}\right)$. Now we claim that $$(\tilde{\cal C}^{\leq 0},\tilde{\cal C}^{\geq 1}):=\left(\lim_{\longleftarrow}\tilde{\cal C}_i^{\leq0},\lim_{\longleftarrow}\tilde{\cal C}_i^{\geq 1}\right)$$
    defines a $t$-structure. Orthogonality of the two categories follows immediately from the formula 
    $$
    \map_{\lim\limits_{\longleftarrow}\tilde{\cal C}_i}(\{F_i\},\{G_i\})\cong\lim\limits_{\longleftarrow}\map_{\tilde{\cal C}_i}(F_i,G_i).
    $$
    Invariance under (positive or negative) shifts is clear, and the adjunction described above gives a description of $\pi_{\leq 0}$ and $\pi_{\geq 1}$ as the corresponding truncation functors.
\end{proof}
\begin{prop}\label{propstack}
Let $(\dd^{\leq 0},\dd^{\geq 0})$ be an $R$-local $t$-structure, and let $Y$ be an algebraic stack over $R$. There is a canonical $t$-structure on $\Ind(\dd_Y)$ such that 
\begin{equation}\label{eqbct-str}
\Ind(\dd_Y)^{\leq 0}:=\{E\in \Ind(\dd_Y)\,|\,j^*E\in(\Ind(\dd_T))^{\leq 0}\hbox{ for all smooth }\spec(T)\xrightarrow{j}S\}.
\end{equation}
Furthermore, if $Y$ admits a cover by a disjoint union of $R$-algebras essentially of finite type, each of finite global dimension, the truncation functors preserve the subcategory of compact objects in $\Ind(\dd_Y)$ and the induced $t$-structure on $\Ind(\dd_Y)$ is Ind-noetherian.
\end{prop}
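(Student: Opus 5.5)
The plan is to realise $\Ind(\dd_Y)$ as a limit of affine base changes and then apply Lemma \ref{lmt-gluing}. Choose a smooth atlas and write $\Ind(\dd_Y)\simeq\lim\Ind(\dd_T)$ over the category of smooth maps $\spec(T)\ra Y$. By smooth descent for $D_{qc}$ we have $D_{qc}(Y)\simeq\lim D_{qc}(T)$. Tensoring with the dualizable, hence limit-preserving, category $\Ind(\dd)$ over $D_{qc}(R)$ gives the descent statement for $\Ind(\dd_Y)$; this uses that $\dd$ is smooth and proper.

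On each $\Ind(\dd_T)$ we take the base-changed $t$-structure from the discussion before Proposition \ref{lmbasechange}. Its aisle is generated by the image of $\dd^{\leq 0}$. The key point is that for a smooth, hence flat, morphism $T\ra T'$ the pullback $\Ind(\dd_T)\ra\Ind(\dd_{T'})$ is $t$-exact. Right exactness is automatic, since the aisles are generated by images of $\dd^{\leq 0}$. For left exactness, I would use flatness and the projection formula (Lemma \ref{lmprojformula}), as in the localization example after Lemma \ref{lmloc}. The $R$-locality hypothesis is what allows passing to Zariski opens, and a flat map factors Zariski-locally so that $f_*f^*\cong T'\otimes_T(-)$, which is exact. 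With exactness in hand, Lemma \ref{lmt-gluing} produces a $t$-structure on the limit. Its aisle is exactly the set of $E$ with $j^*E\in\Ind(\dd_T)^{\leq 0}$ for all smooth $j$, which is \eqref{eqbct-str}. Canonicity, meaning independence of the atlas, follows because any two atlases are dominated by a common refinement and the local aisles agree there.

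For the second statement, suppose $Y$ is covered by $\coprod\spec T_i$ with each $T_i$ essentially of finite type over $R$ and of finite global dimension. Each $\Ind(\dd_{T_i})$ is then Ind-noetherian, by Proposition \ref{lmbasechange} and the remark following it. A compact object $E$ of $\Ind(\dd_Y)$ restricts to compact objects $j^*E$, since pullback preserves compacts; here finite global dimension gives perfect $=$ coherent. The truncations $\tau^{\leq 0}j^*E$ are therefore compact on each chart. Because truncation commutes with the $t$-exact pullbacks, these local truncations form a descent datum, which glues to $\tau^{\leq 0}E$. I would check compactness of $\tau^{\leq 0}E$ locally, again using descent together with finite global dimension, so that compactness is detected on a cover. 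Boundedness of the restricted $t$-structure also holds locally, and Noetherianity of the heart follows because an ascending chain of subobjects in $\Ind(\dd_Y)^{\omega,\heartsuit}$ stabilises on each chart. This needs a quasi-compactness reduction, so we pass to a finite subcover on the support.

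I expect the main obstacle to be the $t$-exactness of flat, and especially smooth non-open, pullbacks between the base-changed $t$-structures. The statement only gives locality for open embeddings, and the right adjoint $f_*$ for a smooth map does not satisfy $f^*f_*\cong\id$. My first approach is to factor smooth maps étale-locally as an étale map followed by a polynomial algebra. Polynomial extensions are handled directly via $f_*f^*\cong T[x]\otimes_T(-)$, which is an exact direct sum. Étale maps are handled via the perfect or finite case of Proposition \ref{lmbasechange} combined with localization (Lemma \ref{lmloc}).
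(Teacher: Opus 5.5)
Your proposal follows essentially the same route as the paper. Both write $\Ind(\dd_Y)$ as the (hyper)descent limit of the affine base changes $\Ind(\dd_T)$, use flatness to make the transition pullbacks $t$-exact, and glue with Lemma~\ref{lmt-gluing}; the compactness and Ind-noetherian claims then come from identifying the compact objects of $\Ind(\dd_Y)$ with the limit of the compact objects on the charts, where Proposition~\ref{lmbasechange} applies. The step you flag as the main obstacle is handled in the paper simply ``by flatness of the cover'', and indeed neither Lemma~\ref{lmloc} nor a factorization of smooth maps is needed: the aisle of $\Ind(\dd_{T'})$ is generated by $f^*$ of the aisle of $\Ind(\dd_T)$, so by adjunction $f_*$ detects the coaisle, and $f_*f^*\cong T'\otimes_T(-)$ preserves the coaisle for flat $T'$ by Lazard's theorem, because the coaisle is closed under filtered colimits (the aisle being generated by compact objects).
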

\begin{proof}
The proof follows very closely \cite[Corollary 6.1.3]{halpern2014structure}. Take a smooth hypercover $Y_\bullet\ra Y$ by a simplicial scheme, where each $Y_n$ is disjoint union of affine schemes. On each $\spec T\hookrightarrow Y_n$ Proposition \ref{lmbasechange} gives an Ind-noetherian $t$-structure on $\Ind(\dd_T)$. By hyperdescent, we have an equivalence of categories $$\Ind(\dd_Y)\cong\lim_{\longleftarrow}\Ind(\dd_{Y_\bullet})$$
between the Ind-completion of $\dd_Y$ and the totalization of the diagram of stable $\infty$-categories $\dd_{Y_\bullet}$. By flatness of the cover, all the functors in the diagram are $t$-exact for the pulled-back $t$-structure at each level. We conclude by Lemma \ref{lmt-gluing} the existence of a $t$-structure on $\Ind(\dd_Y)$ described as in \eqref{eqbct-str}. For the second statement, since at each $\Ind(\dd_T)$ the base-changed $t$-structure preserves $\dd_T$ by Proposition \ref{lmbasechange}, to conclude it is sufficient to observe that 
\begin{equation}\label{eqlim}
\Ind(\dd_Y)^\omega=\lim\limits_{\longleftarrow}\Ind(\dd_{Y_\bullet})^\omega=\lim\limits_{\longleftarrow}\dd_{Y_\bullet}.   
\end{equation}
Indeed, since $\dd$ is smooth and proper over $R$, the compact objects in $\Ind(\dd)$ are the dualizable ones, for which \eqref{eqlim} holds.
\end{proof}
\subsection{Stability conditions} The notion of stability conditions was developed by Bridgeland in his foundational paper \cite{bridgeland2007stability} in the triangulated context. We briefly recall his constructions.
\begin{defi}
    Let $\cal T$ be a triangulated category. A \textit{slicing} $\cal P$ of $\cal T$ is a collection of full additive subcategories which satisfy the following:
 \begin{itemize}
 \item $\mathcal{P} (\phi +1)=\mathcal{P} (\phi)[1].$ 
\item  If $\phi _1 >\phi _2$ and $A_i \in \mathcal{P} (\phi _i)$, then 
$\Hom_\mathcal{D} (A_1, A_2)=0$. 
\item For a non-zero object $E\in \mathcal{D}$, we have the 
following collection of triangles:
$$
\begin{tikzcd}
0=E_0 \arrow[rr]  & &E_1 \arrow[dl] \arrow[rr] & & E_2 \arrow[r]\arrow[dl] & \cdots E_{n-1}\arrow[rr] & & E_n =E \arrow[dl]\\
&  A_1 \arrow[ul,dashed] & & A_2 \arrow[ul,dashed]& & & A_n \arrow[ul,dashed]&
\end{tikzcd}
$$
such that $A_j \in \mathcal{P} (\phi _j)$ with $\phi _1 > \phi _2 > \cdots >\phi _n$. 
\end{itemize}   
\end{defi}
\begin{defi}
Let $\Lambda$ be a finitely generated, free abelian group, and let $v:K_0(\cal T)\ra \Lambda$ be a group homomorphism.
\begin{enumerate}
    \item A \textit{pre-stability} condition on $\cal T$ is a pair $\sigma=(Z,\cal P)$, where $\cal P$ is a slicing of $\cal T$ and $Z:\Lambda\ra\C$ is a group homomorphism such that for all $0\neq E\in\cal P(\phi)$ we have $Z(v(E))\in\R_{>0}\cdot e^{i\pi\phi}$. The nonzero objects in $\cal P(\phi)$ are called $\sigma$-semistable of phase $\phi$.
    \item A pre-stability condition is a \textit{stability condition} if there exists a quadratic form $Q$ on $\Lambda_\R$ such that 
    \begin{itemize}
        \item $Q$ is negative definite on the kernel of $Z$,
        \item if $E\in\cal T$ is semistable then $Q(v(E))\geq 0$.
    \end{itemize}
    The latter condition is called the \textit{support property} for $\sigma$.
\end{enumerate}
\end{defi}
At times, it is convenient to express stability conditions in an equivalent way as follows. Let $\cal T$ be a triangulated category and let $\aaa$ be the heart of a bounded $t$-structure on $\cal T$. 
\begin{defi}
    A group homomorphism $Z:K_0(\aaa)\ra\C$ is called a \emph{stability function} if for every $0\neq E\in\aaa$ $$\Im(Z(E))\geq0\quad\text{and}\quad\Im(Z(E))=0\implies\Re(Z(E))<0.$$
\end{defi}
Given a stability function on $\cal T$ we have a notion of \emph{slope} 
$$\mu(E):=\begin{cases}
    -\frac{\Re(Z(E))}{\Im(Z(E))}&\text{ if } \Im(Z(E))\neq 0 \\
    +\infty&\text{ otherwise},
\end{cases}$$
and that of a \emph{phase} $\phi(E):=\frac{1}{\pi}\arg(Z(E))\in(0,1]$.
\begin{defi}
    A object $0\neq E\in\aaa$ is said to be \emph{semistable} if for all proper subobjects $0\neq A\hookrightarrow E$ we have $\mu(A)\leq\mu(E)$, it is \emph{stable} if $\mu(A)<\mu(E)$.
\end{defi}
A stability function has the \emph{Harder--Narasimhan} property if any $0\neq E\in\aaa$ admits a filtration $$0=E_0\hookrightarrow E_1\hookrightarrow\dots E_{n-1}\hookrightarrow E_n=E$$
whose quotients $A_i=E_i/E_{i-1}$ are semistable and $\mu(A_i)>\mu(A_{i+1})$ for all $i\geq 1$.
\begin{lm}[\cite{bridgeland2007stability}, Proposition 5.3]\label{lmslicingheart}
    Giving a prestability condition on $\mathcal{T}$ is equivalent to giving a heart of a bounded t-structure $\aaa \subset \mathcal{T}$, and a stability function $Z$, such that for a non-zero object $E\in \aaa$ one has $$ Z(E)\in\{ r\cdot\exp (i\pi \phi) \,|\,r>0,\hbox{ and } 0<\phi \leq 1\},$$ and the pair $(Z, \aaa)$ satisfies the Harder--Narasimhan property.
\end{lm}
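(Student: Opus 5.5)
We prove the two directions separately: from a slicing we build a heart and a stability function, and from a heart with the Harder--Narasimhan property we build a slicing.

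\emph{From slicing to heart.} Given a prestability condition $(Z,\cal P)$, set $\aaa:=\cal P(0,1]$, the extension-closed subcategory generated by the $\cal P(\phi)$ with $0<\phi\leq 1$.

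First, we check that $\aaa$ is the heart of a bounded $t$-structure. Take $\cal T^{\leq 0}:=\cal P(>0)$ and $\cal T^{\geq 1}:=\cal P(\leq 0)$.
\begin{itemize}
\item Hom-vanishing $\Hom(\cal P(>0),\cal P(\leq 0))=0$ follows from the second axiom of a slicing, together with closure under extensions.
\item Truncation triangles come from cutting the HN filtration of $E$ at phase $0$.
\item Boundedness holds because every HN filtration is finite.
\item Stability under shift follows from $\cal P(\phi+1)=\cal P(\phi)[1]$.
\end{itemize}

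Second, we check the stability function. $Z$ factors through $K_0(\aaa)\cong K_0(\cal T)$, which holds since $\aaa$ is the heart of a bounded $t$-structure. Each nonzero object of $\aaa$ has HN factors whose central charges lie in the ray $\R_{>0}e^{i\pi\phi}$ with $\phi\in(0,1]$. Their sum therefore lies in the strict semi-closed upper half-plane $\{re^{i\pi\phi}: r>0,\ 0<\phi\leq 1\}$, which is a convex cone.

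Third, we check the HN property in $\aaa$. The triangles of the slicing filtration of $E\in\aaa$ have all factors in $\aaa$. Since the phases lie in $(0,1]$, these triangles are short exact sequences in $\aaa$, so they give a filtration by subobjects. It remains to see that objects of $\cal P(\phi)$ are $\mu$-semistable. For a subobject $A\hookrightarrow E$ with $E\in\cal P(\phi)$, the top HN factor $A_1$ of $A$ maps nontrivially to $E$. Hom-vanishing then forces $\phi(A_1)\leq\phi$, hence $\mu(A)\leq\mu(E)$.

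\emph{From heart to slicing.} Conversely, given $(\aaa,Z)$, define $\cal P(\phi)$ for $\phi\in(0,1]$ to be the $Z$-semistable objects of $\aaa$ of phase $\phi$, together with $0$, and extend to all $\phi$ by $\cal P(\phi+n):=\cal P(\phi)[n]$.

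The first slicing axiom holds by construction. For hom-vanishing, take $\phi_1>\phi_2$ and first suppose both lie in $(0,1]$. A nonzero map $f:A_1\to A_2$ has image $I$ which is a quotient of $A_1$ and a subobject of $A_2$. Semistability then gives $\phi_1\leq\phi(I)\leq\phi_2$, a contradiction. When the phases differ by shifts, we use $\Hom(\aaa,\aaa[-k])=0$ for $k>0$, which holds for any heart.

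For the HN filtrations, we combine two steps. The cohomology filtration with respect to the bounded $t$-structure decomposes any $E$ into objects $H^{-k}(E)[k]$. Then the HN filtration in $\aaa$ refines each $H^{-k}(E)[k]$. Concatenating gives phases strictly decreasing, since the shifts separate the ranges $(k,k+1]$. Finally, the two constructions are mutually inverse. This follows because the semistable objects of $\cal P(0,1]$ are exactly the $\cal P(\phi)$, which is what the semistability argument in the first direction shows.

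\emph{Main obstacle.} The only nontrivial step is the semistability claim: objects of $\cal P(\phi)$ are $\mu$-semistable in $\aaa$, and conversely HN filtrations in $\aaa$ produce the slicing. This rests on the hom-vanishing argument via top HN factors, which needs care about phases equal to $1$ (where $\mu=+\infty$).
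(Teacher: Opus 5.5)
Your proposal is correct and is essentially the standard argument of Bridgeland's Proposition 5.3, which the paper cites without reproving: the $t$-structure $(\cal P(>0),\cal P(\leq 0))$ with heart $\cal P(0,1]$, semistability of objects of $\cal P(\phi)$ via Hom-vanishing against the top HN factor of a subobject, and conversely the slicing built from semistable objects of $\aaa$, with HN filtrations obtained by refining the cohomology filtration of the bounded $t$-structure. One small imprecision: in the ``mutually inverse'' step, the first direction only shows that objects of $\cal P(\phi)$ are semistable. The reverse inclusion needs one more (easy) observation: the slicing HN filtration of a semistable $E\in\aaa$ is a filtration by subobjects in $\aaa$, so it must have a single factor, since otherwise its first term would have strictly larger phase and destabilize $E$.
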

\begin{rmk}
    Recall that under this correspondence for every $\phi\in\R$ the category $\cal P(\phi,\phi+1]$, defined as the smallest extension-closed subcategory containing $\{\cal P(\psi)\}_{\psi\in(\phi,\phi+1]}$, gives a heart of a bounded $t$-structure on $\cal T$. By convention, one sets $\aaa:=\cal P(0,1]$.
\end{rmk}
For an object $0\neq E\in\cal T$ the real number $$m(E):=\sum_i |Z(\operatorname{HN}^i(E))|\in\R_{\geq0},$$
where the $\operatorname{HN}^i(E)$'s are the HN factors of $E$, is called the \emph{mass} of $E$.
\begin{defi}\label{defHNP}
     Let $(Z,\aaa)$ be a stability function on the heart of a bounded $t$-structure $\aaa$ with the HN property. For an object $E\in\aaa$, we define the Harder--Narasimhan polygon $\HNP(E)\subset\C$ to be the convex hull of $Z(F)$ as $F\hookrightarrow E$ ranges over all subobjects such that $\mu(F)>\mu(E)$. 
\end{defi}
There is a metric on the set of slicings on $\cal T$ \cite[Section 6]{bridgeland2007stability} defined as 
\begin{equation}\label{eqmetric0}
d(\cal P,\cal Q):=\inf_{\phi\in\R}\{\epsilon\in\R_{\geq 0}\,|\,\cal P(\leq\phi)\subset\cal Q(\leq \phi+\epsilon),\,\cal P(>\phi)\subset\cal Q(>\phi-\epsilon)\}.
\end{equation}
This metric induces a topology on the set of stability conditions $\stab(\cal T)$, defined as the coarsest one such that the forgetful maps to the set of slicings and $\Hom(\Lambda,\C)$ are both continuous.
\begin{thm}[\cite{bridgeland2007stability},\cite{bayer2019short}]\label{thmdefo}
    The topological space $\stab(\cal T)$ is a complex manifold, and the forgetful morphism 
    \begin{align*}
        \Phi:\stab(\cal T)&\ra\Hom(\Lambda,\C)\\
        (Z,\cal P)&\mapsto Z
    \end{align*}
    is a local isomorphism. In particular, assume that $\sigma=(Z,\cal P)\in\stab(\cal T)$ stisfies the support property with respect to a quadratic form $Q$. If $U_Z$ denotes the connected component of the set of central charges whose kernel is negative definite with respect to $Q$ containing $Z$, and $V_Z$ is the conneceted component of $\Phi^{-1}(U_Z)$ containing $\sigma$, then $$\Phi|_{V_Z}:V_Z\ra U_Z$$
    is a covering map.
\end{thm}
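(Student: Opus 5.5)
The plan follows Bridgeland's original argument as streamlined by Bayer's short proof. The complex structure will be pulled back from $\Hom(\Lambda,\C)$ through $\Phi$, so the real content is that $\Phi$ is a local homeomorphism. Once this is known on a neighbourhood of $\sigma$, the covering statement over $U_Z$ follows by a path-lifting argument along the uniform-in-$Q$ deformation radius.

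\textbf{Step 1: fix the data and the controlling norm.} Fix $\sigma=(Z,\cal P)$ with support property for $Q$, and choose any norm $\|\cdot\|$ on $\Lambda_\R$. The support property is equivalent to a uniform inequality $\|v(E)\|\leq C|Z(v(E))|$ for all semistable $E$. Since $Q$ is negative definite on $\ker Z$, every $W$ near $Z$ with $W-Z$ small relative to $Q$ still has negative definite kernel with respect to $Q$. Concretely, I would consider
$$W\in U_Z \quad\text{with}\quad |W(v)-Z(v)|<\epsilon\,|Z(v)| \text{ for all } v \text{ with } Q(v)\geq 0,$$
for a small $\epsilon$ depending on $Q$ and $Z$.

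\textbf{Step 2: deform one phase interval at a time, following Bayer.} For such $W$ and each $\phi$, I view $\cal P(\phi-\eta,\phi+\eta)$ as a quasi-abelian category. I then define $\cal Q(\psi)$ as the $W$-semistable objects of phase $\psi$ inside it. Bayer's key lemma would then be invoked: objects of $\cal P(\phi)$ have $W$-phase within $\epsilon'$ of $\phi$. Running the HN property through the HN polygon (Definition \ref{defHNP}) shows that the $W$-HN filtrations exist and stabilize. This uses that the support property makes the set of classes of subobjects with $Q\geq 0$ and bounded mass finite, which yields the needed finiteness. This gives a slicing $\cal Q$ with $d(\cal P,\cal Q)<\epsilon'$. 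The same $Q$ serves as the support property for $(W,\cal Q)$, because semistable objects for $W$ are extensions of $\sigma$-semistables of nearby phase and $Q$ is convex on the relevant cone.

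\textbf{Step 3: uniqueness and continuity.} I would prove Bridgeland's Lemma 6.4: if $\sigma,\tau$ have the same central charge and $d(\cal P,\cal Q)<1$, then $\sigma=\tau$. The argument applies HN filtrations and phase comparison to an object semistable in one slicing but not the other. Combined with Step 2, this shows $\Phi$ is injective on small balls and surjective onto a neighbourhood of $Z$. Continuity of the inverse follows from the explicit bound $d(\cal P,\cal Q)<\epsilon'$, with $\epsilon'\to 0$ as $W\to Z$. Hence $\Phi$ is a local homeomorphism onto the open subset of $\Hom(\Lambda,\C)$, which induces the complex manifold structure.

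\textbf{Step 4: covering map.} The radius of the deformation ball in Step 2 depends only on $Q$ and on the point $W\in U_Z$, not on the slicing. Therefore, along any path in $U_Z$, uniformly sized steps lift uniquely, and lifts are unique by Step 3. This gives unique path lifting, and with local homeomorphism this makes $\Phi|_{V_Z}$ a covering onto $U_Z$.

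\textbf{Main obstacle.} I expect the hard step to be Step 2, specifically the existence of $W$-HN filtrations in the thin quasi-abelian category. The delicate finiteness argument uses the support property to bound the possible classes of destabilizing subobjects, and I would follow Bayer's HN-polygon approach there.
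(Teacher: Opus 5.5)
The paper does not prove this statement; it quotes it from \cite{bridgeland2007stability} and \cite{bayer2019short}, so I compare your proposal with those sources. Your Steps 1 and 3, and the existence of $W$-HN filtrations in Step 2, are fine in outline. Cutting $\mathcal P$ into thin quasi-abelian categories $\mathcal P((\phi-\eta,\phi+\eta))$ is Bridgeland's route rather than Bayer's, and it works because the support property forces these categories to have finite length.

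The genuine gap is the last sentence of Step 2. You claim that $(W,\mathcal Q)$ satisfies the support property for the \emph{same} $Q$, because $W$-semistable objects are extensions of $\sigma$-semistable objects of nearby phase and ``$Q$ is convex on the relevant cone''. There is no such convexity over a sector of positive angle. Take $\Lambda=\Z^3$, $Z(a,b,c)=a+ib$ and $Q(a,b,c)=a^2+b^2-c^2$, which is negative definite on $\ker Z$. The vectors $v_\alpha=(\cos\alpha,\sin\alpha,1)\in\Lambda_\R$ have $Q(v_\alpha)=0$, but $Q(v_\alpha+v_\beta)=2\cos(\alpha-\beta)-2<0$ whenever $\alpha\neq\beta$. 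So being an extension of $\sigma$-semistables with $Z$-phases in a short interval does not give $Q(v(E))\geq 0$; the $W$-semistability of $E$ has to enter. Contrary to your ``main obstacle'', this step, not the HN filtrations, is the real content of Bayer's proof.

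What is available is convexity along a single ray: if $Q|_{\ker W'}$ is negative definite, then $\{Q\geq 0\}\cap (W')^{-1}(\R_{\geq 0}e^{i\pi\psi})$ is a convex cone. One applies it at walls, as follows.
\begin{itemize}
\item Move along $Z_t=Z+t(W-Z)$, which stays in the locus where $Q|_{\ker Z_t}<0$; this is why the covering statement lives over $U_Z$.
\item For a $W$-semistable $E$, let $t_0=\inf\{t : E \text{ is } Z_s\text{-semistable for all } s\in[t,1]\}$.
\item If $t_0>0$, then $E$ is strictly $Z_{t_0}$-semistable and its Jordan--H\"older factors lie on one $Z_{t_0}$-ray.
\item An induction, made finite by the support property, then gives $Q(v(E))\geq 0$.
\end{itemize}
Bayer runs this with the heart $\mathcal P(0,1]$ fixed. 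He first reduces via $\widetilde{GL}_2^+(\R)$ to $W=Z+u\circ p$ with $u$ real, which is the form the paper itself uses in the proof of Theorem \ref{thmmain}. Then $\operatorname{Im}W=\operatorname{Im}Z$ and one can induct on $\operatorname{Im}Z$. In your setup the thin categories move with $W$, which makes this bookkeeping awkward.

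The gap is load-bearing for Step 4. A deformation radius depending only on $(Q,W)$ requires every lift to satisfy the support property for $Q$. Otherwise the radius at a lift depends on that lift's own support constant, which can degenerate along the path, and lifting can stop.

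The local-homeomorphism part does survive. For $W$-semistable $E$, the bound $d(\mathcal P,\mathcal Q)<\epsilon'$ gives $\|v(E)\|\leq C\,m_\sigma(E)\leq\frac{C}{\cos(\pi\epsilon')}|Z(E)|\leq\frac{C}{\cos(\pi\epsilon')}\bigl(|W(E)|+\|W-Z\|\,\|v(E)\|\bigr)$. This is a support property with a worse constant, hence for a different quadratic form. What remains unproved is exactly the covering claim over $U_Z$.
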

Let $R$ be a commutative ring, and let $\dd$ be an $R$-linear category.
\begin{defi}[\cite{halpernleistner2025spaceaugmentedstabilityconditions}, Definition 2.13]\label{defsluicing}
    A \textit{sluicing} on $\dd$ of width $w\in(0,1]$ is a collection of $t$-structures $\tau_\phi=(\dd(>\phi),\dd(\leq \phi))$ indexed by the real numbers $\phi\in\R$ such that:
    \begin{itemize}
        \item[(a)] for all $a\in\R$, $\dd(>a)=\bigcup_{\phi>a}\dd(>\phi)$,
        \item[(b)] For all $0<b-a<w$ there exists a bounded noetherian $t$-structure $(\cal A_{>0}^{a,b},\cal A_{\leq 0}^{a,b})$ with $\dd(>a)\subset\cal A_{>0}^{a,b}\subset\dd(>b-1)$,
        \item[(c)] every $E\in\dd$ is contained in $\dd(>a)\cap\dd(\leq b)$ for some $a<b$.
    \end{itemize}
\end{defi}
Given a sluicing on $\dd$, we denote $\cal P((a,b]):=\dd(>a)\cap\dd(\leq b)$ and $\tau_{(a,b]}=\tau_{>a}\circ\tau_{\leq b}\cong\tau_{\leq b}\circ\tau_{>a}$ the projection functors with respect to the $t$-structures. \\
Lastly, for $E\in\dd$ we define 
$$
   \phi^-(E):=\sup\{a\,:\,E\in\dd(>a)\}\quad\hbox{and}\quad\phi^+(E):=\inf\{b\,:\, E\in\dd(\leq b)\}.
$$
\begin{prop}[\cite{halpernleistner2025spaceaugmentedstabilityconditions}, Proposition 2.14]\label{propsluicingstab}
    Fix a homomorphism $v:K_0(\dd)\ra\Lambda$. The datum of a stability condition on $\dd$ is equivalent to that of a sluicing of width 1 and a group homomorphism $Z:\Lambda\ra\C$ such that there exists a constant $C>0$ with the property that for every interval $I$ of width less than 1, and $0\neq E\in\cal P(I)$, $Z(E)\in\R_{>0} \cdot e^{i\pi I}$ and $|Z(E)|>C\cdot||v(E)||$ 
\end{prop}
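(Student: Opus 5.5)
The plan is to prove the two directions separately, in each case passing through Bridgeland's heart description (Lemma \ref{lmslicingheart}) and the dictionary between slicings and one-parameter families of $t$-structures.

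\textbf{From a stability condition to a sluicing.} Let $\sigma=(Z,\cal P)$ be a stability condition. For each $\phi\in\R$ set $\dd(>\phi):=\cal P(>\phi)$ and $\dd(\leq\phi):=\cal P(\leq\phi)$.

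1. These form a $t$-structure. Orthogonality follows from the Hom-vanishing axiom of the slicing. The truncation triangle of $E$ is obtained by cutting its HN filtration at phase $\phi$.
2. Axiom (a): an object in $\cal P(>a)$ has finitely many HN factors. Hence its minimal phase $\phi^-$ is strictly bigger than $a$, which gives (a).
3. Axiom (c): this is immediate from the existence of finite HN filtrations.
4. Axiom (b): for $0<b-a<1$, take the $t$-structure with heart $\cal P(a,a+1]$, i.e. aisle $\cal P(>a)$. Then $\dd(>a)\subset\cal P(>a)\subset\dd(>b-1)$, since $b-1<a$.
5. Noetherianity in (b) is the delicate part. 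An arbitrary stability condition need not have a noetherian heart $\cal P(a,a+1]$. The freedom in (b) is to choose any intermediate aisle between $\dd(>a)$ and $\dd(>b-1)$. I would first try to use the support property (local finiteness) as in Bridgeland's Section 7: the quasi-abelian categories $\cal P(I)$ with $|I|<1$ are of finite length. From this, noetherianity of a suitable tilt should follow when $b-a<1$.
6. The mass inequality: by the support property, $Q$ is negative definite on $\ker Z$. A standard compactness argument then gives a constant $C$ with $|Z(v)|\geq C\|v\|$ on the cone $\{Q\geq0\}$. For $E\in\cal P(I)$ with $|I|<1$, the HN factors' central charges all lie in the sector $\R_{>0}e^{i\pi I}$. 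Their sum therefore has modulus comparable to the sum of the moduli, up to a constant depending only on the width. Together these give $|Z(E)|>C'\|v(E)\|$ and $Z(E)\in\R_{>0}e^{i\pi I}$.

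\textbf{From a sluicing to a stability condition.} Conversely, suppose given a sluicing of width $1$ and $Z$ satisfying the stated bound.

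1. Define $\cal P(\phi):=\bigcap_{\epsilon>0}\big(\dd(>\phi-\epsilon)\cap\dd(\leq\phi)\big)$.
2. Hom-vanishing for $\phi_1>\phi_2$ follows from orthogonality of the $t$-structure $\tau_\psi$ for some $\psi$ with $\phi_2<\psi<\phi_1$.
3. The condition $Z(E)\in\R_{>0}e^{i\pi\phi}$ comes from intersecting the sectors $e^{i\pi I}$ over shrinking intervals $I\ni\phi$.

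\textbf{Main obstacle: existence of HN filtrations.} This is where the real work lies.

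1. By (c), $E$ lies in $\cal P((a,b])$. Chopping into pieces of width less than $1$ using the truncations $\tau_{(a,b]}$ reduces to $E\in\cal P(I)$ with $|I|<1$.
2. Inside the noetherian heart of (b) one considers the function $\phi\mapsto\tau_{>\phi}E$. Its jumps should be the HN factors.
3. Finiteness of the set of jump points is what I would prove using the lower bound $|Z|>C\|v\|$. Every nontrivial piece $\tau_{(\phi,\psi]}E$ has central charge of modulus at least $C$ times a nonzero lattice norm. All these pieces lie in a fixed sector and sum to $Z(E)$. Hence only finitely many jumps are possible.
4. Right-continuity, which identifies each jump with an object of $\cal P(\phi)$, comes from axiom (a).
5. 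The support property is then recovered by taking $Q(v)=C^{-2}|Z(v)|^2-\|v\|^2$. This is negative definite on $\ker Z$ and nonnegative on semistable classes by the bound.

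Finally, I would check that the two constructions are mutually inverse. This is formal once the HN filtrations are identified with the truncation filtrations.
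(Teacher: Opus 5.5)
\textbf{The gap: noetherianity in axiom (b).} The gap is in step 5 of your forward direction, the noetherianity demanded in axiom (b). This is the only non-formal point of that direction, and you leave it at ``should follow''.

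Local finiteness, i.e.\ finite length of the quasi-abelian categories $\mathcal P(I)$ for $|I|<1$, holds for every stability condition with the support property, algebraic or not. Width-one hearts $\mathcal P(c,c+1]$, by contrast, need not be noetherian. Moreover, local finiteness does not bear on the question directly. An ascending chain of subobjects of a fixed object in a width-one heart need not be a chain of strict subobjects in any $\mathcal P(I)$ with $|I|<1$, because the successive cokernels can have any phase in the heart.

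You also never say which torsion pair in $\mathcal P(b-1,b]$ you would tilt at (its torsion part must contain $\mathcal P(a,b]$). Nor do you say why ascending chains in the tilted heart would stabilize.

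The rest of your outline is sound and in line with Remark \ref{rmksluicing}(i)--(ii). This covers the converse direction: finiteness of the jumps of $\phi\mapsto\tau_{>\phi}E$ from the lower bound on $|Z|$, right-continuity from (a), and the quadratic form $C^{-2}|Z|^2-\|\cdot\|^2$.

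\textbf{The missing idea.} The idea used in Remark \ref{rmksluicing}(iii) is to leave the slicing $\mathcal P$ altogether and approximate by an algebraic stability condition.
\begin{enumerate}
\item $\Hom(\Lambda,\Q[i])$ is dense in $\Hom(\Lambda,\C)$, and the central charge map is a local homeomorphism (Theorem \ref{thmdefo}). Hence for every $\epsilon>0$ there is an algebraic $\tau=(W,\mathcal Q)$ with $d(\mathcal P,\mathcal Q)<\epsilon$.
\item By \cite[Proposition 5.0.1]{AbramovichPolishchuk+2006+89+130}, the $t$-structures $(\mathcal Q(>\psi),\mathcal Q(\leq\psi))$ are noetherian, at least for a dense set of $\psi$, which suffices.
\item Take $\epsilon<\frac{1-(b-a)}{2}$ and $\psi\in(b-1+\epsilon,a-\epsilon)$. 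The metric bounds then give
\[ \mathcal P(>a)\subset\mathcal Q(>a-\epsilon)\subset\mathcal Q(>\psi)\subset\mathcal Q(>b-1+\epsilon)\subset\mathcal P(>b-1), \]
which is (b).
\end{enumerate}
This is exactly where the strict inequality $b-a<1$ is used. Taking $\epsilon<\frac{1-w}{2}$, a single $\tau$ works for all $b-a<w$.

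\textbf{A minor point on step 6.} The constant you obtain depends on $|I|$ and degenerates as $|I|\to 1$. That is the correct form. A single $C$ valid for all widths $<1$ cannot hold in general once $\ker Z\neq 0$: consider $A\oplus A'[1]$ with $A,A'$ semistable of phases $\phi$ and $\phi-\delta$, and let $\delta\to 0$. A width-dependent constant is all your converse argument actually uses.
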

\begin{rmk}\label{rmksluicing}
    We emphasize the following properties apperaring in the proof of Proposition \ref{propsluicingstab}.
    \begin{itemize}
         \item[(i)] An object $E\in\dd$ is semistable of phase $\psi$ if $\psi=\phi^+(E)=\phi^-(E)$, or equivalently $E\in\dd_{\leq \psi}$ and $\tau_{\leq\phi}(E)\cong 0$ for all $\phi\leq\psi$. 
         \item[(ii)] The inequality $m(E)\geq Z(E)>C\cdot||v(E)||$ is equivalent to the support property.
         \item[(iii)] The process of obtaining a sluicing from a stability condition is not entirely immediate. Since later in the paper we will need to generalize it, we give a short proof. Let $\sigma\in\stab(\dd)$, and let $\{\cal P(\phi)\}_{\phi\in\R}$ be the corresponding slicing. The $t$-structures $(\cal P(>\phi),\cal P(\leq\phi))$ verify item (a) in the definition of a sluicing, and item (c) follows from the existence of Harder--Narasimhan filtrations in the definition of a slicing. We only need to prove item (b). Recall that, if $\{\cal P(\phi)\}_{\phi\in\R}$ and $\{\cal Q(\phi)\}_{\phi\in\R}$ are two slicings on $\dd$, by \cite[Lemma 6.1]{bridgeland2007stability} the metric in \eqref{eqmetric0} can be also characterised as
         \begin{equation}\label{eqmetric}
         d(\cal P,\cal Q):=\inf\{\epsilon\geq 0\,|\,\cal Q(\phi)\subset\cal P([\phi-\epsilon,\phi+\epsilon])\,\forall\phi\in\R\}.
         \end{equation}Recall also that for an \emph{algebraic} stability condition $\tau=(W,\cal Q)$, i.e. one for which $\im(W)\subset\Q[i]$, the $t$-structures $(\cal Q(>\phi),\cal Q(\leq \phi))$ are Noetherian by \cite[Proposition 5.0.1]{AbramovichPolishchuk+2006+89+130}. Moreover, since the lattice $\Lambda$ is assumed to have finite rank, the set of algebraic stability conditions is dense in $\stab(\dd)$. Let $\tau=(W,\cal Q)$ be algebraic and such that $d(\cal P,\cal Q)=\epsilon$. Let $w\in(0,1)$. Then for every $0<b-a<w$ by \eqref{eqmetric} we have 
         $$ \cal Q(>b-1+\epsilon)\subset\cal P(>b-1)\quad\hbox{and}\quad\cal P(>a)\subset\cal Q(>a-\epsilon).$$
         Choose $\epsilon<\frac{a-(b-1)}{2}$ and pick any $\psi\in(b-1+\epsilon,a-\epsilon)$, so that the corresponding $t$-structure is Noetherian. Then the $t$-structure $(\cal Q(>\psi),\cal Q(\leq \psi))$ verifies 
         \begin{equation}\label{eqmetric2}
         \cal P(>b-1)\subset\cal Q(>\psi)\subset\cal P(>a),
         \end{equation}
         so that item (b) follows.

         Observe moreover that if one takes $\epsilon<\frac{1-w}{2}$ then the choice of an algebraic $\tau=(W,\cal Q)$ does \emph{not} depend on $a,\,b\in\R$. 
    \end{itemize}
\end{rmk}
\subsection{Existence of moduli spaces and consequences}
Let $\dd$ be a smooth and proper category over a Noetherian ring $R$ of characteristic zero and finite global dimension. Consider the functor 
\begin{align*}
\mmm:(\dga/R)&\rightarrow(\infty\,\mhyphen\gpd)\\
T&\mapsto(\dd_T)^{\cong},
\end{align*}
where $(-)^{\cong}$ is the $\infty$-groupoid obtained from $\dd_T$ by discarding all the non invertible 1-morphisms.
\begin{thm}[\cite{toen2007moduli}, Theorem 3.6]\label{thmtv}
    The functor $\mmm$ is a derived algebraic stack, locally of finite presentation over $R$.
\end{thm}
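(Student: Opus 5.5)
Since $\dd$ is smooth and proper over $R$, I will reduce the statement to the general representability theorem of To\"en--Vaqui\'e, applied to a dualizable (i.e.\ finite type) dg-category. The functor $\mmm$ sends $T$ to the $\infty$-groupoid of compact objects of $\Ind(\dd)\otimes_R T$. Because $\dd$ is dualizable in $\Cat_R$ with dual $\dd^{op}$, there is an identification
\[
\dd_T\simeq \Fun_R(\dd^{op},\Perf(T)).
\]
This exhibits $\mmm(T)$ as the space of $T$-valued pseudo-perfect $\dd^{op}$-modules, which is precisely the moduli functor $\mathcal{M}_{\dd^{op}}$ considered in \cite{toen2007moduli}.

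The steps I will follow, in order, are these.

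First, I check that $\dd$ is of finite type over $R$. Smoothness means the diagonal bimodule is compact. Together with properness, this makes $\dd$ a retract of a finite cell dg-algebra, so $\dd$ is of finite type in the sense of To\"en--Vaqui\'e; this is their statement that smooth and proper implies finite type.

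Second, I identify pseudo-perfect modules with perfect objects. By properness, every object of $\dd$ defines a module whose mapping objects $\hhom_R(E,-)$ are perfect over $R$. Conversely, smoothness ensures that every pseudo-perfect $\dd^{op}\otimes T$-module is perfect. Hence $\mmm\simeq\mathcal{M}_{\dd^{op}}$ as functors on $\dga/R$.

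Third, I invoke \cite{toen2007moduli}: for a dg-category of finite type, $\mathcal{M}$ is a locally geometric derived stack, locally of finite presentation. The proof there runs by Lurie--Artin style representability. The ingredients are:
\begin{itemize}
\item an obstruction theory, using cotangent complexes computed from $\hhom_T(E,E)^\vee[-1]$, which are perfect by properness and Lemma~\ref{lmperryprod};
\item nilcompleteness;
\item infinitesimal cohesiveness;
\item integrability, which comes from Assumption~\ref{assexistence}.
\end{itemize}

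The main obstacle is the base: \cite{toen2007moduli} works over a field, while here $R$ is a noetherian ring of finite global dimension. I would handle this by rerunning the representability argument relative to $R$. The finite global dimension hypothesis is used to guarantee that $\hhom_R(E,F)\otimes^{\mathbf L}_R T$ commutes with base change and stays perfect, via Lemma~\ref{lmprojformula}. Local finite presentation should follow from compactness of $\dd$'s generator, because $\mmm$ commutes with filtered colimits of dg-algebras. An alternative is to cite the relative version in \cite{PERRY2019877}, Lemma 4.8, together with the generalization of To\"en--Vaqui\'e to arbitrary bases.
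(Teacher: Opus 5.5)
Your reduction is correct and is essentially the paper's route: the paper gives no argument beyond citing To\"en--Vaqui\'e, Theorem 3.6. Your steps (dualizability giving $\dd_T\simeq\Fun_R(\dd^{op},\Perf(T))$, smooth and proper implies finite type, pseudo-perfect equals perfect) are exactly what is needed to apply that theorem. One correction: your ``main obstacle'' is not one. To\"en--Vaqui\'e already work over an arbitrary commutative base ring $k$, and as far as I recall they argue by finitely many cell attachments starting from the stack of perfect complexes, not via Lurie--Artin representability, so the theorem applies verbatim with $k=R$, and you need neither a relative rerun nor the finite global dimension hypothesis.
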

The set of points of $\mm$, denoted $|\mm|$, is defined as the disjoint union of $\pi_0(\mm(K))$ over all fields $K$ over $R$, modulo the equivalence relation generated by $[E]\sim [E\otimes_K L]$ for any extension of fields $K\subset L$ over $R$ and $E\in\dd_K$. A subset $V$ of $|\mm|$ is bounded if there exists a scheme $S$ of finite type over $R$ such that  $V$ is contained in the image of $|S|\ra|\mm|$. A subset $U\subset|\mm|$ is open if for any $|S|\ra|\mm|$ the preimage of $U$ in $|S|$ is open. Lastly, a point $E\in|\mm|$ is of finite type if it has a representative defined over a field which is finitely generated as an $R$-algebra.
\begin{defi}\label{defilocconstantmap}
Fix a finitely generated, free abelian group $\Lambda$. A map $v:|\mm|\ra\Lambda$ is \emph{locally constant} if for every morphism $f:S\ra\mm$, with $S$ a connected scheme, the composition $v\circ f:|S|\ra\Lambda$ is constant. Such a map $v$ is said to be \emph{additive} if the following diagram commutes 
$$
    \begin{tikzcd}
    \left| \mm\times_{\spec R}\mm \right| \arrow[d,"v\times v"']\arrow[r,"\oplus"]& \left| \mm \right| \arrow[d,"v"]\\
    \Lambda\times\Lambda\arrow[r,"+"]&\Lambda,
\end{tikzcd}
$$
where $\oplus$ sends $(E_1,E_2)\in\dd_S^2$ to $E_1\oplus E_2\in\dd_S$.
\end{defi}
\begin{defi}[\cite{halpernleistner2025spaceaugmentedstabilityconditions}, Definition 2.24]\label{defstabf}
        Let $\dd$ be a smooth and proper category over $R$, and let $\Lambda$ be a finitely generated, free abelian group. A \textit{locally constant stability condition} on $\mathcal{D}$, is the data of 
        \begin{enumerate}
            \item a sluicing $\mathcal{P}_\kappa$ of width 1 on $\mathcal{D}_\kappa$ for every residue field $\kappa$ of $R$,
            \item an additive, locally constant $v:|\mathcal{M}|\rightarrow \Lambda$,
            \item a group homomorphism $Z:\Lambda\rightarrow\mathbb{C},$
        \end{enumerate}
        together with the following conditions:
        \begin{itemize}
            \item[(a)] for any DVR $C$ whose fraction field $K$ is a residue field of $R$ there is a sluicing of width 1 on $\mathcal{D}_C$ that induces the same sluicings on $\mathcal{D}_K$ and $\mathcal{D}_{C/\mathfrak{m}_C}$ as the ones in (1),
            \item[(b)] for each $u\in\Lambda$ and for all $x\in|\mathcal{M}|$ such that $v(x)=u$ and $\phi^+(x)-\phi^-(x)<1$, $Z(u)\in\mathbb{R}_{>0}\cdot  e^{i\pi(\phi^-(x),\phi^+(x)]}$ and $|Z(u)|>C\cdot||u||$.
        \end{itemize}
        We denote the set of locally constant stability conditions as $\stab(\dd/R)$.
\end{defi}
\begin{rmk}\label{rmksuppprop}
From the above definition, there are a few remarks to be made:
    \begin{itemize}
    \item[(a)] An additive map $v:|\mm|\rightarrow\Lambda$ naturally descends to a group homomorphism $E\mapsto v(E):K_0(\dd_\kappa)\rightarrow\Lambda$ for any field $\kappa$ over $R$ (see also \cite[Lemma 2.23]{halpernleistner2025spaceaugmentedstabilityconditions}).
    \item[(b)] As a consequence of the previous item, a locally constant stability condition on $\dd$ induces stability condition on $\dd_\kappa$ for any field $\kappa$ over $R$ (\cite[Lemma 2.26]{halpernleistner2025spaceaugmentedstabilityconditions}).
    \item[(c)] As mentioned in Remark \ref{rmksluicing}, the support property is equivalent to the inequality $|Z(u)|>C\cdot||u||$. As a result, since the resulting quadratic form is negative definite on the kernel of $Z$, for any $C>0$ there can be only finitely many classes with $|Z(u)|<C$ and $Q(u)\geq 0$.
    \end{itemize}
\end{rmk}

Recall the notion of a good moduli space. 
\begin{defi}[\cite{alper2013good}]
    A quasi-compact morphism $\varphi: \mathcal{X} \rightarrow X$ from an Artin stack to an algebraic space is a \emph{good moduli space} if 
         \begin{enumerate}
         \item The push-forward functor on quasi-coherent sheaves is exact.
         \item The induced morphism on sheaves $\mathcal{O}_X \rightarrow \varphi_* \mathcal{O}_{\mathcal{X}}$ is an isomorphism.   
         \end{enumerate}
\end{defi}
In particular, $\varphi$ is surjective and universally closed, so $X$ has the quotient topology. If $\mathcal{X}$ is locally noetherian, then $\varphi$ is universal for maps to algebraic spaces.

In our context, the most general result for the existence of good moduli spaces is provided by the following theorem.
    \begin{thm}[\cite{halpernleistner2025spaceaugmentedstabilityconditions}, Theorem 2.31]\label{thmhlr}
        Let $\sigma\in\stab(\dd/R)$ be a locally constant stability condition, and let $G\in\dd$ be a classical generator such that $$\sup_{\substack{K\text{ a field of}\\\text{ finite type over }R}}\max(m(G_K),\phi^+(G_K)-\phi^-(G_K))<\infty.$$
        Then the following are equivalent:
        \begin{enumerate}
            \item There exists a function $f:\mathbb{R}_{>0}\rightarrow\mathbb{Z}_{\geq 0}$ such that for each field $k$ of finite type over $R$ and $E\in\mathcal{D}_k$ $$\dim\Hom(G_k,E)\leq f(m(E)),$$
            \item\label{condition2} For any $C>0$ the set of finite type points $E\in|\mathcal{M}|$ such that $m(E)<C$ and $|\phi^\pm(E)|<C$ is open and bounded,
            \item For every $v\in\Lambda$, the subset of $|\mm|$ parametrizing $\sigma$-semistable objects in $\dd_R^\heartsuit$ of numerical class $v$ is open. The corresponding open substack $\mm_\sigma^{ss}(v)\subset\mm$ has affine diagonal, and the underlying classical stack of $\mm_\sigma^{ss}(v)$ admits a proper good moduli space.
        \end{enumerate}
    \end{thm}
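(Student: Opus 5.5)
The plan is to prove the cycle of implications $(1)\Rightarrow(2)\Rightarrow(3)\Rightarrow(1)$. Throughout, the uniform bound on $m(G_K)$ and $\phi^\pm(G_K)$ is what lets us pass from statements about all objects to statements about $\Hom$ out of $G$, uniformly in the field $K$.

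\textbf{Step 1: $(1)\Rightarrow(2)$.} Fix $C>0$ and consider finite type points $E$ with $m(E)<C$ and $|\phi^\pm(E)|<C$. Every HN factor of such an $E$ has phase in $(-C,C)$, and $G_k$ has phases in a bounded window independent of $k$. Hence, after a uniform shift, $\Hom(G_k,E[i])=0$ for $|i|\gg0$, with the range of $i$ independent of $k$ and $E$. The function $f$ bounds $\dim\Hom(G_k,E[i])$ in the remaining degrees, using $m(E[i])=m(E)$.

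Because $G$ classically generates $\dd$, the functor $E\mapsto\hhom_R(G,E)$ identifies $\dd_k$ with perfect modules over the endomorphism dg-algebra $\operatorname{End}(G)_k$. Objects whose total cohomology of $\hhom(G,E)$ has bounded amplitude and bounded dimension are parametrized by a finite type scheme, by a Toën--Vaquié style argument via Theorem \ref{thmtv}. This gives boundedness.

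For openness, the plan is to use the DVR sluicings from Definition \ref{defstabf}(a) to show two things for flat families over a DVR: $\phi^+$ is upper semicontinuous and $\phi^-$ is lower semicontinuous, and mass is semicontinuous in the appropriate direction. Here boundedness is used to reduce the constructibility of the locus to a noetherian finite type situation. Openness then follows from being stable under generization and constructible.

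\textbf{Step 2: $(2)\Rightarrow(3)$.} Semistable objects of class $v$ in the heart satisfy $m(E)=|Z(v)|$ and have phase in $(0,1]$, so they lie in the bounded open set from (2). Inside this set, the semistable locus is the complement of the images of the HN-type strata. The plan is to build a $\Theta$-stratification via the HN filtrations, and to use the support property (Remark \ref{rmksuppprop}(c)) to see that only finitely many destabilizing classes with $|Z|<C$ occur. This gives openness and quasi-compactness of $\mm^{ss}_\sigma(v)$.

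Next, use the Ind-noetherian base-changed hearts (Proposition \ref{propstack}) to realize $\mm^{ss}_\sigma(v)$ inside a moduli of flat objects of a locally noetherian abelian category. Apply Theorem \ref{thmexistence}, or the $\Theta$-reductivity and S-completeness criteria of Alper--Halpern-Leistner--Heinloth checked via the DVR sluicings, to get a proper good moduli space. Affine diagonal comes from the fact that $\Hom$ between objects of the heart is given by finite $R$-modules.

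\textbf{Step 3: $(3)\Rightarrow(1)$.} Fix $E\in\dd_k$ with its HN factors. Then $\dim\Hom(G_k,E)\le\sum_i\dim\Hom(G_k,\mathrm{HN}^i(E))$, so it suffices to bound $\Hom$ into semistable objects of mass $<m(E)$. By the support property only finitely many classes $v$ occur. For each such $v$, the quasi-compactness of $\mm^{ss}_\sigma(v)$ (it has a proper good moduli space, hence is of finite type) gives a bound by upper semicontinuity of $\dim\Hom(G,-)$ on a finite type stack. The nonzero shifts are handled by the phase window of $G$.

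\textbf{Main obstacle.} I expect the main obstacle to be openness in Step 1, and more generally the semicontinuity of $\phi^\pm$ in families. This is where the relative sluicing over DVRs is essential. I would first try a valuative criterion: given a DVR family with generic fibre in the set, show the special fibre phases cannot jump outward, using the $t$-structure on $\dd_C$ and the fact that torsion-free objects restrict exactly.
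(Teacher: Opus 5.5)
The paper quotes this theorem from Halpern-Leistner--Robotis without proof, so I am judging your proposal on its own merits. The cycle $(1)\Rightarrow(2)\Rightarrow(3)\Rightarrow(1)$ is the natural one, and Step 3 and the boundedness half of Step 1 are essentially sound.

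\textbf{The gap: openness in $(1)\Rightarrow(2)$.} This is the real content of the theorem, since it is what replaces the generic flatness hypothesis, and your proposal does not prove it.

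What the DVR sluicings give is monotonicity under specialization: $\phi^+$ can only go up and $\phi^-$ only down. This is the input to Lemma \ref{lmphase}, and it yields stability of the set under generization. But that is perfectly consistent with the generic point of an integral finite type $T$ being good while every closed point is bad. So constructibility is the whole issue. The sentence ``boundedness reduces constructibility to a noetherian finite type situation'' supplies no mechanism, because the base of a test family is already noetherian of finite type.

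Your ``main obstacle'' paragraph also has the direction reversed. Special fibres \emph{do} have phases spreading outward; a stable generic fibre can specialize to an unstable one. So the valuative statement you propose to prove there is false. Only the opposite inclusion holds.

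\textbf{The missing idea.} You need a generic-point argument on a bounded family of destabilizing witnesses. This is where (1) and the hypothesis on $G$ really enter.

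Take $E$ over an integral $T$ with generic point $\xi$ and $E_\xi$ in the set. Spreading out the HN filtration of $E_\xi$ over an open, you reduce to the following claim: if $F_\xi\in\mathcal{P}(\phi)$, then $F_t\in\mathcal{P}(\phi-\delta,\phi+\delta)$ for $t$ in a dense open.

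\begin{itemize}
\item On a dense open, $\dim\Hom(G_t,F_t[i])$ is constant in $t$.
\item Write the diagonal of the smooth category $\dd$ as an object of $\mathrm{thick}\langle G^\vee\boxtimes G\rangle$. Together with subadditivity of mass in triangles and $\sup_K m(G_K)<\infty$, this bounds $m(F_t)$ and $\phi^\pm(F_t)$ uniformly on that open.
\item By the boundedness you extract from (1), every first HN factor $A$ of such an $F_t$ with $\phi(A)\geq\phi+\delta$ then occurs in a single finite type family $\mathcal{A}$ over some $V$.
\item The locus $Y\subset T\times V$ where $\Hom(\mathcal{A}_v,F_t)\neq0$ is closed, by semicontinuity.
\item If the bad locus were dense in $T$, some irreducible component of the closure of the witness points in $Y$ would dominate $T$. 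Let $\eta$ be its generic point.
\item Monotonicity gives $\phi^-(\mathcal{A}_\eta)\geq\phi+\delta$. Since $\eta\in Y$, there is a nonzero map $\mathcal{A}_\eta\to F_\xi\otimes\kappa(\eta)$. This contradicts semistability of $F_\xi\otimes\kappa(\eta)$.
\end{itemize}

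The argument for $\phi^-$ is symmetric.

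\textbf{Smaller points.}

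\begin{itemize}
\item Vanishing of $\Hom(G_k,E[i])$ for $i\gg0$ does not follow from the phase window of $G$. Use the relative Serre functor: $\Hom(G_k,E[i])\cong\Hom(E[i],(SG)_k)^\vee$, and $SG\in\mathrm{thick}\langle G\rangle$ has uniformly bounded $\phi^+$.
\item The hypothesis only bounds $\phi^+(G_K)-\phi^-(G_K)$. So a uniform phase window for $G$ itself needs monotonicity plus finiteness of the generic points of $\spec R$.
\item In Step 3 the shifts are handled more simply by the bounded Tor-amplitude of $\hhom(G,\mathcal{F})$ over a finite type atlas of $\mm^{ss}_\sigma(v)$.
\item In Step 2 your first option is unavailable. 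A locally constant stability condition provides no $t$-structure on $\dd$ over $R$ to which Proposition \ref{propstack} could apply. Also, $\mathcal{P}_\kappa(0,1]$ is in general not noetherian, which is exactly why sluicings were introduced. You must instead go through the Alper--Halpern-Leistner--Heinloth criteria, checked via the DVR sluicings.
\end{itemize}
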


\begin{rmk}
    In view of the next results, we will say that a locally constant $\sigma\in\stab(\dd/R)$ is \emph{proper} if it satisfies any of the equivalent condition of Theorem \ref{thmhlr}.
\end{rmk}
The technical heart to the proof of \ref{thmhlr} lies in the next observation
\begin{prop}[\cite{halpernleistner2025spaceaugmentedstabilityconditions}, Proposition 2.37]
    Under the hypotheses of Theorem \ref{thmhlr}, if condition \ref{condition2} holds then: 
    \begin{itemize}
        \item the substack $\mmm_{\sigma}(\phi,\phi+1]\hookrightarrow\mmm$, whose value at $T\in R\alg$ is $\{E\in\dd_T\,|\,\phi^+(E_t),\phi^-(E_t)\in (\phi,\phi+1]\}$, is open, with affine diagonal, algebraic and locally of finite presentation over $R$,
        \item $\mm_\sigma(\phi,\phi+1]$ has a well-ordered $\Theta$-stratification by Harder--Narasimhan filtrations. The subset $\{x\in|\mm|\,|\,m(x)\leq C\}$ is an open, quasi compact union of strata for every $C>0$,
        \item for every $v\in\Lambda$ the substack $\mm_\sigma^{ss}(v)\hookrightarrow\mm$ is open and bounded and admits a proper good moduli space.
    \end{itemize}
\end{prop}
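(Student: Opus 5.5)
The plan is to deduce all three items from condition \ref{condition2} of Theorem \ref{thmhlr}, together with the base change machinery of Proposition \ref{lmbasechange} and Proposition \ref{propstack} and the general existence criterion of \cite{alper2023existence} (Theorem \ref{thmexistence}, or its $\Theta$-stratified version \cite[Theorem 6.5]{alper2023existence}). I would work throughout with the derived stack $\mm$ of Theorem \ref{thmtv}, which is already algebraic and locally of finite presentation over $R$. The task is therefore to identify $\mm_\sigma(\phi,\phi+1]$ as an open substack and then analyse its intrinsic geometry.

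\emph{Step 1: openness.} For the first item I would show that the locus $\{E : \phi^-(E_t)>\phi,\ \phi^+(E_t)\le \phi+1\}$ is open in $|\mm|$. The route is to write it as an increasing union, over $C>0$, of the sets $\{m(E)<C,\ |\phi^\pm(E)|<C\}\cap\{\phi^\pm\in(\phi,\phi+1]\}$, and to use condition \ref{condition2} on each piece. The delicate point is openness of the phase condition. For this I would use the sluicing along DVRs (axiom (a) in Definition \ref{defstabf}) to show that $\phi^+$ is upper semicontinuous and $\phi^-$ is lower semicontinuous under specialization, so that the loci $\phi^+\le\phi+1$ and $\phi^->\phi$ are stable under generization. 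Combined with the constructibility that comes from boundedness in condition \ref{condition2}, this gives openness. Once the substack is open, algebraicity and local finite presentation are inherited from $\mm$.

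\emph{Step 2: affine diagonal and the heart.} I would identify $\mm_\sigma(\phi,\phi+1]$ with the stack of flat families in the heart $\cal P(\phi,\phi+1]$. The relevant $t$-structure is the one glued over $T$ via Proposition \ref{propstack}, starting from the noetherian $t$-structures supplied by axiom (b) of the sluicing. Since objects in a heart have no negative self-Ext, the automorphism and isomorphism sheaves are $H^0$ of $\hhom_T(E,F)$ placed in non-negative degrees. By properness of $\dd$ these are representable by affine linear schemes, which gives affine diagonal.

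\emph{Step 3: $\Theta$-stratification and good moduli spaces.} I would define the $\Theta$-stratification through HN filtrations. Filtered objects $\map(\Theta,\mm_\sigma(\phi,\phi+1])$ correspond to filtrations in the heart, and the numerical invariant is built from $Z$ and $v$ via the HN polygon of Definition \ref{defHNP}. The existence and uniqueness part of the criterion follows from uniqueness of HN filtrations. The HN boundedness part is what condition \ref{condition2} provides: on $\{m\le C\}$ only finitely many HN types occur, by the support property (Remark \ref{rmksuppprop}(c)), and the locus is quasi-compact. The strata are then locally closed, and the finitely many types on $\{m\le C\}$ give an open quasi-compact union of strata; well-ordering follows from the same finiteness. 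The open stratum for fixed $v$ is $\mm^{ss}_\sigma(v)$, which is therefore open and bounded. Applying \cite[Theorem 6.5]{alper2023existence}, or applying Theorem \ref{thmexistence} to the local heart as a closed quasi-compact substack, produces a proper good moduli space for its classical truncation.

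\emph{Main obstacle.} I expect Step 1 to be the hardest, specifically the semicontinuity of $\phi^\pm$ in families over non-DVR bases. The sluicing is only given fibrewise and along DVRs, so extending to arbitrary finite type $T$ requires a valuative argument. I would first reduce to finite type points and specializations along DVRs, then use the openness part of condition \ref{condition2} to upgrade the resulting constructible, generization-stable set to an open one.
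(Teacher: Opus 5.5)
The paper does not prove this statement; it quotes it from \cite{halpernleistner2025spaceaugmentedstabilityconditions}. I therefore judge your sketch on its own terms, and it has a genuine gap in Step 3.

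\emph{Step 3 lacks a valuative input.} For the $\Theta$-stratification you invoke existence and uniqueness of HN filtrations at points, plus HN-boundedness, and then assert that ``the strata are then locally closed''. The missing ingredient is the \emph{HN-specialization} property. Take an object $E$ over a DVR $C$ with fraction field $K$. You must extend the HN filtration of $E_K$ to a filtration of $E$ whose subquotients lie in a heart over $C$. Then the HN polygon can only go up under specialization, and when it does not, the extension restricts to the HN filtration of $E_{C/\mathfrak{m}_C}$. This is what makes the locus of HN type $\le\alpha$ open and the stratum of type $\alpha$ closed in it.

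The good moduli space has the same problem. Extracting a proper good moduli space from \cite{alper2023existence} requires $\Theta$-reductivity, $S$-completeness and the existence part of the valuative criterion for properness. The last of these is a Langton-type semistable reduction, as in Theorem \ref{thmssred}. Neither fibrewise uniqueness of HN filtrations nor condition \ref{condition2}, which only concerns openness and boundedness, supplies these valuative statements. They come from axiom (a) of Definition \ref{defstabf}, the sluicing on $\dd_C$, which your proposal never uses.

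Your fallback of applying Theorem \ref{thmexistence} ``to the local heart'' does not work either. The heart $\mathcal{P}(\phi,\phi+1]$ need not be noetherian, which is the whole reason for working with sluicings. Also, no such heart exists on $\dd$ or on $\dd_T$ for a general $T$.

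\emph{Step 2's identification is not available.} Proposition \ref{propstack} needs an $R$-local $t$-structure on $\dd$. A locally constant $\sigma$ only provides sluicings on the fibres $\dd_\kappa$ and on $\dd_C$ for DVRs $C$. The noetherian $t$-structures of Definition \ref{defsluicing}(b) are merely sandwiched, $\dd(>a)\subset\mathcal{A}^{a,b}_{>0}\subset\dd(>b-1)$, so gluing them yields a different heart. Hence $\mm_\sigma(\phi,\phi+1]$ is defined only pointwise, and its filtered points are filtrations whose graded pieces lie in it fibrewise, not filtrations in a relative heart. Your affine-diagonal argument survives, because it only uses fibrewise vanishing of $\operatorname{Ext}^{<0}$ together with Lemma \ref{lmlurie} and Proposition \ref{prophom}: $\operatorname{Isom}(E,F)$ is closed in $\underline{\Hom}(E,F)\times\underline{\Hom}(F,E)$.

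\emph{Step 1 can be done directly.} Boundedness does not give constructibility: a finitely-valued function that is monotone under specialization need not be Zariski semicontinuous. But you need neither semicontinuity nor constructibility. The shift $[n]$ is an automorphism of $\mm$ that preserves $m$ and translates $\phi^\pm$ by $n$. Pulling back the open sets of condition \ref{condition2} along shifts and intersecting shows that $\{m<C,\ \phi^->a,\ \phi^+<b\}$ is open for all $a<b$ and all sufficiently large $C$. By the support property, the phases of HN factors of objects with $m<C$ and $\phi^\pm$ in a bounded interval take finitely many values. So on $\{m<C\}$ the condition $\phi^+\le\phi+1$ coincides with $\phi^+<\phi+1+\varepsilon$ for small $\varepsilon>0$, which gives openness. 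The obstacle you flagged is thus the easy part; the real work, and the real gap, is in Step 3.
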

We end the section by summarizing a few other consequences of Theorem \ref{thmhlr}.
\begin{defi}
Let $E\in|\mm|$, and let $T\in R\alg$. We define the following functions
\begin{align*}
\phi_E^{\pm}:T&\rightarrow\R\cup\pm\infty\\
t&\mapsto\phi^\pm(E_t)
\end{align*}
\end{defi}
Let $T\in R\alg$ be of finite type. 
    \begin{lm}\label{lmphase}
If condition \ref{condition2} from theorem (\ref{thmhlr}) holds, the functions $\phi_E^+$ and $\phi_E^-$ are respectively upper and lower semicontinuous, constructible functions on $T$.
\end{lm}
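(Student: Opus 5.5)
We show that $\phi_E^+$ is upper semicontinuous and constructible on $\spec T$, that is, each set $\{t : \phi^+(E_t)\geq b\}$ is closed and constructible. The statement for $\phi_E^-$ follows by the same argument applied to $E[-1]$ and duality, or equivalently with the roles of $\dd(>a)$ and $\dd(\leq b)$ exchanged. The basic tool is the $\Theta$-stratification of $\mm_\sigma(\phi,\phi+1]$ by Harder--Narasimhan filtrations, which is available under condition \ref{condition2} by the proposition preceding the statement.

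\textbf{Step 1: boundedness.} Since $T$ is of finite type, $\spec T$ is quasi-compact. We claim the collection $\{E_t\}_{t\in\spec T}$ is bounded in mass and in phases. To see this, use the generator $G$: the object $E$ lies in the thick closure of $G$ in $\dd_T$, and the mass and phases of $G_K$ are uniformly bounded. Objects built from $G$ in finitely many cones, shifts and summands then have uniformly bounded $m$ and $\phi^\pm$ on all fibres. Concretely, there exist $a<b$ and $C$ with $E_t\in\cal P_t(a,b]$ and $m(E_t)<C$ for every $t$.

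\textbf{Step 2: reduction to a single heart.} By shifting and cutting $(a,b]$ into intervals of length less than $1$, it suffices to treat the case where each $E_t$ lies in one heart $\cal P_t(\phi,\phi+1]$. Having the fibres in this heart does not by itself make $E$ a $T$-point of $\mm_\sigma(\phi,\phi+1]$, so we work in two stages:
\begin{itemize}
\item[(i)] The locus where $E_t\in\cal P_t(\phi,\phi+1]$ is open, since $\mm_\sigma(\phi,\phi+1]\hookrightarrow\mm$ is an open immersion.
\item[(ii)] Openness of $\cal P(>\phi)$ and $\cal P(\le\phi)$ conditions for general $\phi$ follows the same way from the openness statement of condition \ref{condition2} applied to shifted intervals.
\end{itemize}
This yields a finite stratification of $\spec T$ into locally closed pieces on each of which $E$ restricts to a family in a fixed heart.

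\textbf{Step 3: the HN stratification on each piece.} On each such piece, $E$ defines a morphism to $\mm_\sigma(\phi,\phi+1]$ landing in the quasi-compact open $\{m\le C\}$. This open is a finite union of HN strata. Pulling the strata back gives a finite decomposition of the piece into locally closed constructible subsets on which the HN type is constant. On each subset $\phi^+$ is the phase of the first HN factor, hence constant, so $\phi_E^+$ is constructible.

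\textbf{Step 4: semicontinuity.} For upper semicontinuity, we show that $\phi^+$ can only jump up under specialization. For a DVR $C\to T$, condition (a) of Definition \ref{defstabf} provides a sluicing on $\dd_C$ compatible with the generic and special fibres. If $E_K\in\dd_K(\le b)$, then truncating $E_C$ with respect to the $t$-structure $\dd_C(\le b)$ gives the map $E_C\to\tau_{>b}E_C$, which is killed by localization to $K$; hence $\tau_{>b}E_C$ is torsion. A torsion object has no consequence pushing $\phi^+$ downward, and with more care $\phi^+(E_{\bar t})\ge\phi^+(E_t)$ when $\bar t$ specializes $t$.

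A constructible set closed under specialization is closed in a noetherian space, so $\{\phi^+\ge b\}$ is closed. The main obstacle is this DVR specialization argument: one has to control truncations of $E_C$ with respect to the $C$-sluicing and their restrictions to the closed fibre. For this we would use the noetherian $t$-structures from item (b) of Definition \ref{defsluicing} on $\dd_C$, together with the exactness of restriction to the closed and generic fibres.
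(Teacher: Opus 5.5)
\textbf{Step 2 fails, and Step 3 depends on it.} Cutting the interval $(a,b]$ into pieces of length less than $1$ does not cut the object $E$. To split $E$ into parts whose fibres lie in a single heart you would need truncation functors for a $t$-structure on $\dd_T$. Over a general $T$ there is none: the sluicings exist only on the fibres $\dd_\kappa$ and, via condition (a) of Definition \ref{defstabf}, on $\dd_C$ for DVRs $C$. Suppose some fibre has $\phi^+(E_t)-\phi^-(E_t)\geq 1$, for instance $E=A\oplus B[5]$ with $A,B$ nonzero relatively flat objects. Then $E_t$ lies in no heart $\cal P_t(\phi,\phi+1]$. So the finite stratification into pieces ``on each of which $E$ restricts to a family in a fixed heart'' does not exist, whatever (i)--(ii) give you, and the $\Theta$-stratification of $\mm_\sigma(\phi,\phi+1]$ cannot be pulled back. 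This is not a marginal case: Corollary \ref{corinterval} applies the lemma exactly to families whose fibres may have large phase spread.

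The missing idea, which is the paper's argument, is to bypass hearts altogether:
\begin{itemize}
\item By noetherian induction, assume $\spec T$ is irreducible with generic point $\xi$.
\item Take the HN filtration of $E_\xi$ in the triangulated category $\dd_\xi$; it may have any width.
\item Spread it out to a chain $\hat E_1\to\cdots\to\hat E_n=E$ over a dense open, using Corollary \ref{coropen}.
\item Use openness of semistability, Theorem \ref{thmhlr}(3), to shrink to an open $U$ on which every cofibre stays semistable with the same phase.
\end{itemize}
The chain is then the HN filtration of $E_t$ for every $t\in U$, so $\phi_E^\pm$ is constant on $U$. This gives constructibility.

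\textbf{Step 4 does not prove what it needs.} Given constructibility, upper semicontinuity reduces to $\phi^+(E_\kappa)\geq\phi^+(E_K)$ along DVRs with generic fibre $K$ and special fibre $\kappa$. So the hypothesis must be on the special fibre: you need $E_\kappa\in\dd_\kappa(\le b)\Rightarrow E_K\in\dd_K(\le b)$. You instead start from $E_K\in\dd_K(\le b)$, which is the desired conclusion. The resulting fact that $\tau_{>b}E_C$ is torsion gives no bound in the required direction. Note also that $\dd(>b)$ is the aisle, so the truncation map is $\tau_{>b}E_C\to E_C$, not the reverse.

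A correct argument runs as follows:
\begin{itemize}
\item $i_\kappa^*$ preserves $\dd(>b)$. Since $i_{\kappa*}i_\kappa^*F\cong\cone(F\xrightarrow{\pi}F)$, it also sends $\dd_C(\le b)$ into $\dd_\kappa(\le b+1)$.
\item If $E_\kappa\in\dd_\kappa(\le b)$, the map $i_\kappa^*\tau_{>b}E_C\to E_\kappa$ vanishes.
\item Hence $i_\kappa^*\tau_{>b}E_C[1]\in\dd_\kappa(>b+1)$ is a direct summand of $i_\kappa^*\tau_{\le b}E_C\in\dd_\kappa(\le b+1)$, so it is zero.
\item Nakayama gives $\tau_{>b}E_C=0$, and $t$-exactness of $i_K^*$ gives $E_K\in\dd_K(\le b)$.
\end{itemize}
The paper does not reprove this monotonicity under specialization; it quotes it from \cite[Lemma 2.27]{halpernleistner2025spaceaugmentedstabilityconditions} and only proves constructibility.
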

\begin{proof}
Due to \cite[Lemma 2.27]{halpernleistner2025spaceaugmentedstabilityconditions} , more generally the functions $\phi^{\pm}:|\mm|\rightarrow\R\cup\{\pm\infty\}$ are respectively monotone increasing and decreasing upon specialization, so $\phi_E^\pm$ a fortiori are. It remains to prove that they are constructible functions on $T$. As in \cite[Lemma 20.9]{bayer2021stability}, we may assume that $T$ is irreducible and prove that $\phi_E^\pm$ are constant on an open $U\subset T$. Let $\xi\in T$ be the generic point, and let $$0\to E_1\to\dots\to E_n=E_\xi$$
be the Harder-Narasimhan filtration of $E_\xi$. By corollary (\ref{coropen}) it can be lifted to a chain of maps $$0\to \hat{E}_1\to\dots\to \hat{E}_n=E$$ in $\dd$. The pullback $(F_{i+1})_\xi=(\operatorname{cofib}(\hat{E_i}\to \hat{E}_{i+1}))_\xi$, being a HN factor for $E_\xi$, is semistable for every $i=\,\dots, n$. By theorem (\ref{thmhlr}(3)) there exists a nonempty open $U\subset T$ with the property that $(F_i)_t$ is semistable for all $t\in U$. As a result, $\phi_E^\pm$ are constant on $U$.
\end{proof}
\begin{cor}\label{corinterval}
    Keep the hypotheses as above. For every interval $I\subset\R$ of length less thank 1 the substack $\mmm_{\sigma}(I)\hookrightarrow\mmm$, whose value at $T\in R\alg$ is $\{E\in\dd_T\,|\,\phi^+(E_t),\phi^-(E_t)\in I\}$, is open, thus algebraic and locally of finite presentation. Moreover $\mm_\sigma(I)(v)$ is bounded for every $v\in \Lambda$.
\end{cor}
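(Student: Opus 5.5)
Openness will be reduced to Lemma \ref{lmphase}, and boundedness to condition \ref{condition2} of Theorem \ref{thmhlr}. Write $I$ with endpoints $a\le b$, $b-a<1$, and take $E\in\dd_T$ with $T$ of finite type over $R$; one may assume $T=\spec T$ is noetherian and, by Theorem \ref{thmtv}, test openness on such $T$. The condition defining $\mm_\sigma(I)(T)$ is pointwise: $\phi^\pm(E_t)\in I$ for all $t\in T$. Since $\phi^-(E_t)\le\phi^+(E_t)$, this is equivalent to requiring both $\phi^+(E_t)<b$ (or $\le b$) and $\phi^-(E_t)>a$ (or $\ge a$), according to whether each endpoint is included.

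\textbf{Openness.} By Lemma \ref{lmphase}, $\phi^+_E$ is upper semicontinuous and $\phi^-_E$ is lower semicontinuous, so $\{\phi^+_E<b\}$ and $\{\phi^-_E>a\}$ are open. For a closed endpoint such as $\{\phi^+_E\le b\}$, semicontinuity alone is not enough; here constructibility is used. Because $T$ is noetherian, $\phi^+_E$ takes finitely many values on $T$. If these are $b_1<\dots<b_k$ and $b_j\le b<b_{j+1}$, then $\{\phi^+_E\le b\}=\{\phi^+_E<b_{j+1}\}$, which is open. The lower endpoint is handled symmetrically. Hence the locus of $t\in T$ with $E_t\in\cal P_{\kappa(t)}(I)$ is open, and this locus is compatible with base change because the phase functions are defined fibrewise. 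So $\mm_\sigma(I)\hookrightarrow\mm$ is an open substack. It is therefore algebraic and locally of finite presentation, since $\mm$ is by Theorem \ref{thmtv}; its classical truncation is algebraic as an open substack of a derived algebraic stack locally of finite presentation.

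\textbf{Boundedness.} Take a finite type point $E\in\mm_\sigma(I)(v)$ over a field $K$. As $b-a<1$, all HN factors of $E$ have phases in an interval of width $<1$, so their central charges lie in a sector of angle $<\pi$. Condition (b) of Definition \ref{defstabf} gives $Z(v)\in\R_{>0}e^{i\pi(\phi^-,\phi^+]}$. The elementary estimate for vectors in a sector of angle $\pi(b-a)$ then gives
\[
m(E)\le \frac{|Z(v)|}{\cos\left(\pi(b-a)/2\right)}.
\]
This bound depends only on $v$ and $I$. Also $|\phi^\pm(E)|\le\max(|a|,|b|)$. Choosing $C$ larger than both bounds, every such point lies in the set of condition \ref{condition2}, which is bounded. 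Thus $\mm_\sigma(I)(v)$ is bounded.

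\textbf{Main obstacle.} The delicate step is the closed-endpoint case of openness. It relies on constructibility in Lemma \ref{lmphase}, and so on the noetherianity of $T$, which is why openness is checked only against finite type $T$. A subsidiary point needing care is that the mass is computed using HN factors over $K$, and that the mass is invariant under field extension. This invariance is needed so that the bound applies to points of $|\mm|$ regardless of the chosen representative.
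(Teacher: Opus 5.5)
Your proof is correct. For openness you follow the paper, which writes the locus as $(\phi_E^-)^{-1}(a,+\infty)\cap(\phi_E^+)^{-1}(-\infty,b]$ and cites Lemma \ref{lmphase}. You also make explicit something the paper leaves tacit: the closed endpoint needs constructibility, not just semicontinuity.

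For boundedness your route is genuinely different and shorter. The paper's argument runs as follows:
\begin{enumerate}
\item It places the central charges of the HN factors in the parallelogram with vertices $0$ and $Z(v)$ and angles $\pi a$, $\pi b$.
\item It uses the support property to get finitely many possible classes $v'$ of HN factors.
\item It then invokes properness. To actually finish, it implicitly needs boundedness of each $\mm_\sigma^{ss}(v')$. It also needs the fact that objects filtered by factors from finitely many bounded families again form a bounded family, which is the filtration-stack argument of Step 4 in the proof of Theorem \ref{thmmain}.
\end{enumerate}
You instead bound the mass directly by $|Z(v)|/\cos(\pi(b-a)/2)$ and the phases by $\max(|a|,|b|)$, and feed this into condition \ref{condition2}, which is part of the standing hypotheses. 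This avoids both the support property and the extension step. The trade-off is that the paper's argument only needs boundedness of the semistable loci, so it adapts to settings where that is the axiom, as for flat families of stability conditions. Yours relies on the stronger-looking condition \ref{condition2}, which is available here.

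One small point you share with the paper concerns zero fibres. With the conventions $\phi^-(0)=+\infty$ and $\phi^+(0)=-\infty$, your claimed inequality $\phi^-(E_t)\le\phi^+(E_t)$ fails when $E_t=0$, so $\{\phi^+_E\le b\}\cap\{\phi^-_E\ge a\}$ may contain such points. This is harmless:
\begin{itemize}
\item a nonzero object of $\cal P_{\kappa(t)}(I)$ has $v(E_t)\neq 0$, since its HN factors have central charges in a sector of angle less than $\pi$;
\item $v(0)=0$ by additivity;
\item $v$ is constant on the finitely many connected components of $T$.
\end{itemize}
So you simply intersect with the open and closed union of components on which $v\neq 0$.
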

\begin{proof}
    It suffices to observe that for $I=(a,b]$ (the argument for other kinds of intervals being completely analogous), $T\in R\alg$ and $E\in\dd_T$ the set $$\{t\in\spec T\,|\,\phi^\pm(E_t)\in I\}=(\phi_E^-)^{-1}(a,+\infty)\cap(\phi_E^+)^{-1}(-\infty,b]$$
    is open by Lemma \ref{lmphase}. Fix $v\in\Lambda$. Up to taking its closure we may assume that $I=[a,b]$. Let $E\in\mm_\sigma(I)(v)(\spec k(t))$ for $t\in\spec R$. Then the central charge of every Harder--Narasimhan factor of $E$ lies in the parallelogram in the complex plane with endpoints 0 and $Z(v)$ and angles $\pi a$ and $\pi b$. By the support property, there are finitely many possible classes $v'\in\Lambda$ arising as semistable factors for $[E_t]$ as $t\in\spec R$ ranges over all points of finite type. Hence, if $\sigma$ is proper then $\mm_\sigma(I)(v)$ is bounded. 
\end{proof}
\section{Local deformations of stability}
\subsection{Flat and torsion objects, representable hom spaces} 
    Let $\dd$ be a smooth and proper category over a noetherian ring $R$ of finite global dimension. Consider a $t$-structure $\tau$ on $\Ind(\dd)$ with heart $\Ind(\dd)^\heartsuit$.
\subsubsection*{Flat objects}
\begin{defi}
For an $R$-algebra $T$, an object $E\in\Ind{\dd_T}$ is \textit{$T$-flat} if $E\otimes_T M\in\Ind(\dd_T)^\heartsuit$ for all $M\in T\mhyphen\Mod$.
\end{defi}
\begin{lm}[\cite{halpern2014structure}, Proposition 6.2.2]\label{lminstab}
For $E \in \Ind(\dd_T)$, the following are equivalent:
\begin{enumerate}
\item $E$ is $T$-flat, i.e. $E \otimes^L_T M \in \Ind(\dd_T)^\heartsuit$, for all $M \in T\mhyphen\Mod$,
\item $g^*(E) \in \dd_S^\heartsuit$ for any $g^* : \dd_T \to \dd_S$ induced by a map of algebras $g:T \to S$, 
\item $E \otimes^L_T (T/I) \in \Ind(\dd_T)^\heartsuit$ for all finitely generated ideals $I \subset T$.
\end{enumerate}
If the $t$-structure on $\dd$ is non-degenerate, these are equivalent to
\begin{enumerate}
\item[(a)] $E \in \Ind(\dd_T)^\heartsuit$ and the functor $E \otimes_T (-) = H^0(E \otimes^L_T(-)) : T\mhyphen\Mod \to \Ind(\dd_T)^\heartsuit$ is exact.
\end{enumerate}
Furthermore, if $T$ is Noetherian and $E \in \dd_{qc,T}^{\leq 0}$ is pseudo-coherent, then these are equivalent to
\begin{enumerate}
\item[(b)] $E|_{T/\mathfrak{m}} \in (\Ind\dd_{T/\mathfrak{m}})^\heartsuit$ for all maximal ideals $\mathfrak{m} \subset T$.
\end{enumerate}
\end{lm}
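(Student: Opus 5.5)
The plan is to prove the cycle of implications $(1)\Rightarrow(3)\Rightarrow(2)\Rightarrow(1)$, and then to treat (a) and (b) separately under their extra hypotheses. The engine throughout is the projection formula, Lemma \ref{lmprojformula}, together with the fact that the base-changed $t$-structure on $\Ind(\dd_S)$ is generated under colimits and extensions by the image of $\Ind(\dd_T)^{\leq 0}$. In particular $g^*$ is right $t$-exact, which is immediate from the construction of the base-changed aisle.

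\textbf{The main cycle.} For $(1)\Rightarrow(2)$, take $g:T\to S$. Viewed in $\Ind(\dd_T)$ via the forgetful functor $g_*$, the object $g^*E$ is $E\otimes^L_T S$, where $S$ is regarded as a $T$-module. This lies in the heart by (1). Since $g_*$ is $t$-exact and conservative, $g^*E$ lies in the heart of $\Ind(\dd_S)$. For $(2)\Rightarrow(3)$, take $S=T/I$ and apply $g_*$ once more. For $(3)\Rightarrow(1)$ I would argue in three steps.
\begin{enumerate}
\item First, $E\otimes^L_T M$ lies in the aisle $\leq 0$ for every module $M$. Write $M$ as the cokernel of a map of free modules; the functor $E\otimes^L_T(-)$ preserves colimits, and the aisle is closed under colimits and extensions.
\item To get coconnectivity, reduce to finitely generated $M$ by writing $M$ as a filtered colimit. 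This is legitimate because the $t$-structure is accessible and compatible with filtered colimits, so the coaisle $\Ind(\dd_T)^{\geq 0}$ is closed under filtered colimits.
\item For finitely generated $M$, choose a filtration by cyclic modules $T/I$. Write each $I$ as a filtered union of finitely generated ideals, so that $T/I$ is a filtered colimit of the $T/I_\alpha$, and use (3) together with extension-closedness of the heart.
\end{enumerate}

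\textbf{Criterion (a).} This is the standard Tor-dimension argument. If $E$ is flat, then $H^0(E\otimes^L_T -)$ is exact, because a short exact sequence of modules gives a triangle all of whose terms lie in the heart. Conversely, suppose $E$ lies in the heart and $E\otimes_T(-)$ is exact. Resolve $M$ by free modules; nondegeneracy lets us detect $E\otimes^L_T M\in\heartsuit$ through its cohomology objects. Then $H^{-i}(E\otimes^L_T M)$ is computed by the complex $E\otimes_T P_\bullet$, which is acyclic in positive degrees by exactness.

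\textbf{Criterion (b).} This is the local criterion of flatness, and I expect it to be the main obstacle. One implication is (2). For the converse, the plan is as follows.
\begin{enumerate}
\item By (a) and the first step of the cycle, it suffices to show that $H^{-1}(E\otimes^L_T M)=0$ for finitely generated $M$.
\item Localize at a maximal ideal: vanishing can be checked after the exact base change $T\to T_{\mathfrak m}$, using Lemma \ref{lmloc} and Corollary \ref{coropen}.
\item Filter $M\otimes T_{\mathfrak m}$ by the successive quotients of the $\mathfrak m$-adic filtration, all of which are sums of $T/\mathfrak m$; the hypothesis gives the vanishing of $H^{-1}$ on each piece.
\item Pass to a general finitely generated module through $\lim M/\mathfrak m^nM$.
\end{enumerate}
The passage to the limit in the last step requires pseudo-coherence of $E$ together with Assumption \ref{assexistence}, so that $H^{-1}$ commutes with the limit; this is a Nakayama-type argument. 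Alternatively, since this is exactly \cite[Proposition 6.2.2]{halpern2014structure}, one may verify that its hypotheses (Noetherian, pseudo-coherent, nondegenerate) are met and cite it.
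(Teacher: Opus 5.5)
Your main cycle is the paper's argument. Pushforward along $g$ detects the heart. For $(3)\Rightarrow(1)$ you reduce to finitely generated modules, filter by cyclic modules $T/I$, and write $T/I$ as a filtered colimit of the $T/I_\alpha$. Your treatment of (a) is also fine. The paper instead dimension-shifts along $0\to K\to T^{\oplus J}\to M\to 0$, which avoids identifying $H^{-i}(E\otimes^L_T M)$ with the homology of $E\otimes_T P_\bullet$.

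The gap is in criterion (b).

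\textbf{The reduction to $H^{-1}$.} Your first reduction is not valid. Criterion (a) has $E\in\Ind(\dd_T)^\heartsuit$ as a hypothesis, whereas (b) only gives $E\in\Ind(\dd_T)^{\leq 0}$ with heart-valued fibres. Vanishing of $H^{-1}(E\otimes^L_T M)$ for all $M$ makes $E\otimes_T(-)$ exact, but it says nothing about $H^{-i}(E)$ for $i\geq 2$. For example, $E=F[2]$ with $F$ flat satisfies it. Proving $E\in\Ind(\dd_T)^{\geq 0}$ is exactly the Nakayama-type content of the local criterion, and your plan never addresses it.

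\textbf{The limit step.} This also fails as written, for three reasons.
\begin{enumerate}
\item Over $T_{\mathfrak m}$, which is not complete, $\lim_n M/\mathfrak{m}^nM=\hat M\neq M_{\mathfrak m}$. Assumption~\ref{assexistence} only concerns $A=\lim_n A_n$, so you must pass further to $\hat T_{\mathfrak m}$ and descend.
\item The descent uses faithful flatness of $\bigsqcup_{\mathfrak m}\spec \hat T_{\mathfrak m}\to\spec T$. That, not Lemma~\ref{lmloc} or Corollary~\ref{coropen} (which are about lifting objects), is what justifies checking locally.
\item $H^{-1}$ does not commute with sequential limits: a $\lim^1$ of $H^{-2}$ terms intervenes. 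Tracking only $H^{-1}$-vanishing on the pieces is therefore not enough.
\end{enumerate}

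\textbf{What works.} The paper's proof runs as follows.
\begin{enumerate}
\item Reduce to $T$ complete local by the faithfully flat cover above.
\item Replace $T$ by $T/I$, so that only $E$ itself must be handled.
\item Show inductively, via the triangles for $\mathfrak{m}^n/\mathfrak{m}^{n+1}$, that every $E\otimes_T T/\mathfrak{m}^n$ lies \emph{entirely} in the heart. The graded pieces are sums of $E\otimes_T T/\mathfrak{m}$, and the heart is closed under extensions.
\item Use $E\simeq\lim_n E\otimes_T T/\mathfrak{m}^n$ from Assumption~\ref{assexistence}, together with left $t$-exactness of limits, to get $E\in\Ind(\dd_T)^{\geq 0}$.
\end{enumerate}
Combined with $E\in\Ind(\dd_T)^{\leq 0}$, this gives (3).
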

\begin{proof}
The proof is just the translation of \cite[Lemma 6.2.2]{halpern2014structure} in the setting of an $R$-linear category. The implications $(1)\Rightarrow(2)\Rightarrow (3)$ hold trivially, as $g^*(E)\in\Ind(\dd_S)$ lies in the heart $\Ind(\dd_S)^\heartsuit$ if and only if $g_*g^*(E)=E\otimes_T S\in\Ind(\dd_T)^\heartsuit$.

\emph{$[(3)\Rightarrow(1)]$.} Since $\Ind(\dd_T)^\heartsuit$ is closed under (filtered) colimits and $T\mhyphen\Mod$ is compactly generated by finitely presented modules, it suffices to prove that (1) holds for finitely presented modules. Let $M=T^{\oplus n}/L\in T\mhyphen\Mod$ be finitely presented. By \cite[\href{https://stacks.math.columbia.edu/tag/00HD}{Tag 00HD}]{stacks-project} $M$ admits a finite filtration whose graded pieces are given by $T/I$, with $I\subset T$ an ideal. Now, $\Ind(\dd_T)^\heartsuit$ is closed under extensions so the claim follows if it holds for $M=T/I\in T\mhyphen\Mod$. To conclude, $T/I=\operatorname{colim}_\alpha T/I_\alpha$ where $\{I_\alpha\}_\alpha$ is a filtered system of finitely generated submodules  $I_\alpha\subset I$, and since $\Ind(\dd_T)^\heartsuit$ is closed under filtered colimits the claim follows.

\emph{$[(1)\Rightarrow(4)]$.} The functor $E\otimes_T(-):D_{qc}(T\mhyphen\Mod)\ra\dd_T$ is always left $t$-exact by construction. To prove that it is right $t$-exact it enough to observe that any complex of $T$-modules in cohomological degree $\leq 0$ admits a presentation by a complex $M^\bullet$ of free modules. Now, the product $E\otimes_T M^\bullet$ lies in in the category generated by $E$ under extensions, left shifts, and filtered colimits.

\emph{$[(4)\Rightarrow(1)]$.} As before, for $M\in T\mhyphen\Mod$ consider a presentation $0\ra K\ra T^{\oplus J}\ra M\ra 0$. By hypothesis, $H^0(E\otimes_T(-))$ is exact so the long exact cohomology sequence implies that $H^{-1}(E\otimes_T M)=0$ and $H^{-i}(E\otimes_T K)\cong H^{-i-1}(E\otimes_T M)$ for all $i>0$. This has to hold for all $T$-modules, so $H^{-i}(E\otimes_T M)=0$ for every $i>0$. The $t$-structure is non-degenerate, so $E\otimes_T M\in\Ind(\dd_T)^\heartsuit$.

\emph{$[(3)\Rightarrow(5)]$.} If $T$ is noetherian this is a tautology.

\emph{$[(5)\Rightarrow(3)]$.} Lemma \ref{lmt-gluing} implies that if $S\ra \spec T$ is a faithfully flat map of schemes and $E\in\Ind(\dd_S)$ is $S$-flat then $E$ is $T$-flat. Let $\mathfrak{m}\subset T$ be a maximal ideal, and denote $\hat{T}_{\mathfrak{m}}$ the completion of $T$ at $\mathfrak{m}$. The ring $T$ is noetherian by assumption, so $S:=\bigsqcup_{\mathfrak{m}\subset T}\spec (\hat{T}_{\mathfrak{m}})$ is faithfully flat over $\spec(T)$. The upshot of this discussion is that it is enough to assume that $T$ is a complete, Noetherian local ring with maximal ideal $\mathfrak{m}\subset T$. The pushforward is $t$-exact, so item (3) is equivalent to $E|_{\spec(T/I)}\in\Ind(\dd_{T/I})^\heartsuit$ for all ideals $I\subset T$. For each ideal $I\subset T$, the quotient $R/I$ is still a complete local ring with maximal ideal $\mathfrak{m}/I$. Up to replacing $T$ with $T/I$, it suffices to prove that if $E\in\Ind(\dd)_{ps}^{\leq 0}$ satisfies $E|_{T/\mathfrak{m}}\in\Ind(\dd_{T/\mathfrak{m}})^\heartsuit$ then $E\in\Ind(\dd_T)^\heartsuit$. A simple induction argument for the triangles $$E\otimes_T (\mathfrak{m}^n/\mathfrak{m}^{m+1})\ra E\otimes_T (T/\mathfrak{m}^{n+1})\ra E\otimes_T(T/\mathfrak{m}^n)\xrightarrow{[1]},$$
together with $t$-exactness of pushforward gives $E\otimes_T(T/\mathfrak{m}^n)\in\Ind(\dd_{T})_{ps}^\heartsuit$ for all $n\geq 1$. By Assumption \ref{assexistence}, Grothendieck existence theorem holds for $\Ind(\dd)_{ps}$ so $$E=\operatorname{holim}_n E\otimes_T(T/\mathfrak{m}^n).$$
Homotopy limits are left $t$-exact for accessible $t$-structure on a presentable stable $\infty$-category. In particular, $E\otimes_T(T/\mathfrak{m}^n)\in\Ind(\dd_{T})_{ps}^\heartsuit$ for all $n\geq 1$ implies that $E\in\Ind(\dd_T)^{\geq 0}$ so that $E\in\Ind(\dd_T)^\heartsuit$.
\end{proof}

For what follows we will assume the $t$-structure on $\dd$ to be nondegenerate. Moreover, in accordance with the notation used for stability conditions, we will write $\aaa$ for the heart $\dd^\heartsuit=\tau^{\leq0}\cap\tau^{\geq0}$ and $\aaa_T$ for the base-changed heart along a map of rings $R\ra T$. 

In the case of local rings we have some more characterizations for flat objects, which can be thought as analogues of local flatness criteria.

\begin{lm}\label{lmlocalflatness}
    Let $B\xrightarrow{g} B_0$ be a surjective homomorphism of noetherian local rings with kernel $I$ such that $I^2=0$. An object $E\in\aaa_B$ is $B$-flat if and only if 
    \begin{enumerate}
        \item The object $E_0:=E\otimes_B B_0$ is $B_0$-flat, and
        \item the morphism $E_0\otimes_{B_0}I\rightarrow E$ is injective.
    \end{enumerate}
\end{lm}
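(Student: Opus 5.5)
The plan is to mimic the classical local flatness criterion, using the characterization of $B$-flat objects in Lemma \ref{lminstab}. Throughout, tensor products are derived. Since $g$ is surjective, the pushforward along $B\ra B_0$ is $t$-exact. So an object of $\Ind(\dd_{B_0})$ lies in the heart exactly when it does after being regarded in $\Ind(\dd_B)$. The basic tool is the short exact sequence of $B$-modules $0\ra I\ra B\ra B_0\ra 0$. Tensoring with $E$ gives the exact triangle
\begin{equation*}
E\otimes_B I\ra E\ra E\otimes_B B_0\xrightarrow{[1]} .
\end{equation*}
Because $I^2=0$, the module $I$ is a $B_0$-module, so $E\otimes_B I\cong E_0\otimes_{B_0} I$.

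For the forward direction, assume $E$ is $B$-flat. Condition (1) follows from Lemma \ref{lminstab}(2): for any $B_0$-module $M$ we have $E_0\otimes_{B_0}M\cong E\otimes_B M$, which lies in the heart. For condition (2), all three terms of the triangle above lie in the heart by flatness. The triangle is therefore a short exact sequence in $\aaa_B$, so $E_0\otimes_{B_0}I\ra E$ is injective.

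For the converse, assume (1) and (2). By (1), $E_0\otimes_{B_0}I$ lies in the heart. Taking the long exact cohomology sequence of the triangle, and using that $E$ lies in the heart, shows that $H^{-1}(E\otimes_B B_0)$ is the kernel of $E_0\otimes_{B_0}I\ra E$. By (2) this kernel vanishes, and all other $H^{-i}$ vanish, so $E\otimes_B B_0=E_0$ is in the heart.

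Now let $M$ be an arbitrary $B$-module. Consider the sequence $0\ra IM\ra M\ra M/IM\ra 0$. Both $IM$ and $M/IM$ are killed by $I$, hence are $B_0$-modules. For any $B_0$-module $N$ we have
\begin{equation*}
E\otimes_B N\cong (E\otimes_B B_0)\otimes_{B_0}N\cong E_0\otimes_{B_0}N,
\end{equation*}
where the first isomorphism uses that $E\otimes_B B_0\cong E_0$ is concentrated in degree zero. By (1) the right-hand side lies in the heart. Hence $E\otimes_B IM$ and $E\otimes_B(M/IM)$ both lie in the heart. Since the heart is closed under extensions, $E\otimes_B M$ lies in the heart as well. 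This shows $E$ is $B$-flat.

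The main subtlety is the identification $E\otimes_B N\cong E_0\otimes_{B_0}N$. It is only valid after we know $E\otimes_B B_0$ has no $H^{-1}$, which is exactly where hypothesis (2) enters. A second point to check is the associativity of base change $\Ind(\dd_B)\ra\Ind(\dd_{B_0})$ with respect to tensoring by modules. This follows from Lemma \ref{lmperryprod} and the projection formula, Lemma \ref{lmprojformula}. The noetherian local hypothesis is not used essentially. It would only matter if one wished to reduce to finitely generated modules via Lemma \ref{lminstab}(3), which the argument above does not need.
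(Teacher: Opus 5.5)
Your proposal is correct and is essentially the paper's own proof. In the forward direction the triangle $E\otimes_B I\ra E\ra E\otimes_B B_0$ becomes a short exact sequence in $\aaa_B$. In the converse one first puts $E\otimes_B B_0$ in the heart, then runs the same d\'evissage along $0\ra IM\ra M\ra M/IM\ra 0$ using $B_0$-flatness of $E_0$ and $t$-exactness of the pushforward. The one thing to tidy is your reading of $E_0$. If $E_0$ is the derived base change (the paper's convention), then $E\otimes_B I\cong E_0\otimes_{B_0}I$ holds unconditionally and (2) is in fact implied by (1), so your closing remark misplaces the role of (2). If instead $E_0$ means $H^0(E\otimes_B B_0)$, only $H^0(E\otimes_B I)\cong E_0\otimes_{B_0}I$ is automatic. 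In that case the vanishing of $H^{-i}(E\otimes_B B_0)$ for $i\geq 2$ needs a short induction via $\tau^{\leq -i}$ and right $t$-exactness of $-\otimes_{B_0}I$, a step the paper also leaves implicit.
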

\begin{proof}
    Let $E$ be flat over $B$. Observe that by Lemma \ref{lminstab}(2) the object $E_0$ belongs to $\aaa_{B_0}$. Moreover, for a map $h:B_0\ra S$ of algebras we have $h^*(E_0)\cong h^*(g^*(E))\in\aaa_S$ so, again by Lemma \ref{lminstab}(2), $E_0$ is $B_0$-flat. Next, since $E\otimes_B(-)$ is exact by Lemma \ref{lminstab}(a) we have a short exact sequence of the form 
    \begin{equation}\label{eqlocalflatness}
    0\ra E\otimes_B I\ra E\ra E\otimes_B B_0\ra0
    \end{equation}
    in $\aaa_B$. Because $I^2=0$, we have the identity $E\otimes_B I\cong E_0\otimes_{B_0}I$. Form this observation and the exact sequence \eqref{eqlocalflatness} the map $E_0\otimes_{B_0}I\ra E$ is injective.

    Conversely, let $M$ be a $B$-module, we need to prove that $E\otimes_B M\in\aaa_B$. Observe first that the injectivity of $E_0\otimes_{B_0}I\ra E$ implies that $E_0\otimes_{B_0}B\in\aaa_B$. Consider then the short exact sequence $$0\ra IM\ra M\ra M/IM\ra0$$
    and observe that both $IM$ and $M/IM$ are naturally $B_0$-modules. The object $E_0$ is $B_0$-flat and the functor $g_*:\dd_{B_0}\ra \dd_B$ is exact, hence $$g_*(E_0 \otimes_{B_0}^L IM) \in \mathcal{A}_B \quad \text{and} \quad g_*(E_0 \otimes_{B_0}^L (M/IM)) \in \mathcal{A}_B.$$
    Next, consider the induced exact triangle 
    \begin{equation}\label{eqlocalflatness2}
    E \otimes_B IM \rightarrow E \otimes_B M \rightarrow E \otimes_B(M/IM) \xrightarrow{[1]}
    \end{equation}
    and observe that by the previous step and the fact that $E_0\otimes_{B_0}B\in\aaa_B$  we have $$E\otimes_B IM\cong g_*(E_0\otimes_{B_0}IM)\in\aaa_B\quad\text{and}\quad E\otimes_B(M/IM)\cong g_*(E_0\otimes_{B_0}(M/IM))\in\aaa_B.$$
    From this observation, taking the long exact sequence from \eqref{eqlocalflatness2} we have $H_{\aaa_B}^i(E\otimes_B M)=0$ for $i\neq0$, so that $E$ is $B$-flat.
\end{proof}

\begin{lm}\label{lmlocalflatness2}
    Let $C$ be a DVR essentially of finite type over $R$ with uniformizer $\pi$ and residue field $\kappa$. An object in $\aaa_{C/(\pi)^m}$ is flat if and only if multiplication by $\pi$ induces isomorphisms 
    \begin{equation}\label{eqtower}
    E/\pi\cdot E\cong \pi\cdot E/\pi^2\cdot E\cong\dots\cong \pi^{m-1}\cdot E.
    \end{equation}
\end{lm}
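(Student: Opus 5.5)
We argue by induction on $m$, using Lemma \ref{lmlocalflatness} for the square-zero extension $B:=C/(\pi)^m\to B_0:=C/(\pi)^{m-1}$, whose kernel $I=(\pi^{m-1})/(\pi^m)\cong\kappa$ satisfies $I^2=0$ as soon as $m\geq 2$. The base case $m=1$ is immediate: $C/(\pi)=\kappa$ is a field, every module is free, so every object of $\aaa_\kappa$ is flat by Lemma \ref{lminstab}(1), and the chain \eqref{eqtower} is empty. Throughout, "multiplication by $\pi$" is the endomorphism of $E$ induced by the $C$-linear structure, and $\pi^j\cdot E$ is its image in the abelian category $\aaa_{C/(\pi)^m}$.

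\emph{Flat implies the tower.} If $E$ is flat, Lemma \ref{lminstab}(a) says $E\otimes_B(-)$ is exact on $B$-modules. Applying it to the exact sequences of $B$-modules $0\to(\pi^{j})/(\pi^m)\to B\to B/(\pi^j)\to 0$ gives $E\otimes_B(\pi^j)/(\pi^m)\cong\pi^j\cdot E$ as a subobject of $E$. Applying it to $0\to(\pi^{j+1})\to(\pi^j)\to(\pi^j)/(\pi^{j+1})\cong\kappa\to0$ gives $\pi^jE/\pi^{j+1}E\cong E\otimes_B\kappa=E/\pi E$. One checks that this isomorphism is the one induced by multiplication by $\pi^j$, since the isomorphism $B/(\pi)\to(\pi^j)/(\pi^{j+1})$ is multiplication by $\pi^j$. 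This yields \eqref{eqtower}.

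\emph{The tower implies flat.} Assume \eqref{eqtower} holds and set $E_0:=E\otimes_B B_0$.

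First, $E_0\in\aaa_{B_0}$ and $E_0\cong E/\pi^{m-1}E$. To see this, resolve $B_0$ over $B$ by the periodic complex $\cdots\xrightarrow{\pi}B\xrightarrow{\pi^{m-1}}B\to B_0$. Then $H^{-1}(E\otimes_B B_0)=\ker(\pi^{m-1})/\pi E$, and this vanishes because multiplication by $\pi^{m-1}$ induces the isomorphism $E/\pi E\cong\pi^{m-1}E$ (the composite of the maps in \eqref{eqtower}). Hence $\ker(\pi^{m-1})=\pi E$. Similarly $H^{-2}=\ker(\pi)/\pi^{m-1}E$. Multiplication by $\pi$ gives isomorphisms $\pi^jE/\pi^{j+1}E\to\pi^{j+1}E/\pi^{j+2}E$, and comparing lengths in this filtration shows $\ker\pi=\pi^{m-1}E$. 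So $H^{-2}=0$, and by periodicity all the higher $H^{-i}$ vanish as well.

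Next, $E_0$ satisfies the tower condition for $m-1$: its filtration $\pi^jE/\pi^{m-1}E$ for $j\le m-1$ has the same graded pieces, with the same multiplication maps. By induction, $E_0$ is $B_0$-flat, which is condition (1) of Lemma \ref{lmlocalflatness}.

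It remains to check condition (2). We have $E_0\otimes_{B_0}I\cong E_0/\pi E_0=E/\pi E$, and the map to $E$ is multiplication by $\pi^{m-1}$. This map is injective precisely because $E/\pi E\cong\pi^{m-1}E$ via this multiplication. Lemma \ref{lmlocalflatness} then gives flatness of $E$.

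\emph{Main obstacle.} The delicate step is the kernel computation $\ker(\pi^{\,k})=\pi^{m-k}E$, i.e.\ showing the induced maps on subquotients are really isomorphisms of subobjects and not merely abstract isomorphisms. Since $\aaa_B$ need not have finite length, we cannot literally compare lengths. Instead, I would prove it by a snake-lemma induction on $k$ along the short exact sequences $0\to\pi^{j+1}E\to\pi^jE\to\pi^jE/\pi^{j+1}E\to0$, using only the given isomorphisms.
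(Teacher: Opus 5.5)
Your proof is correct, but it is organized differently from the paper's.

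\emph{The paper's route.} It does not induct on $m$. It applies the fibre criterion of Lemma \ref{lminstab}(b) at the closed point of $\spec C/(\pi)^m$ and computes $E\otimes^L\kappa$ using the $2$-periodic resolution of $\kappa$. This shows that flatness amounts to $\ker(\pi)=\pi^{m-1}E$ and $\ker(\pi^{m-1})=\pi E$, i.e.\ to $\pi^{m-1}$ inducing an isomorphism $E/\pi E\cong\ker(\pi)$. The paper then simply asserts that this is equivalent to \eqref{eqtower}.

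\emph{Your route.} You prove ``flat $\Rightarrow$ tower'' directly from exactness of $E\otimes_B(-)$ on the ideal filtration of $B$, which makes it clear that the isomorphisms are induced by multiplication by $\pi$. You prove ``tower $\Rightarrow$ flat'' by induction through the square-zero criterion Lemma \ref{lmlocalflatness}, resolving $B_0$ rather than $\kappa$.

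\emph{Comparison.} The paper's argument is shorter and needs no induction. Yours depends on Lemma \ref{lmlocalflatness}, but it actually carries out the passage between the tower and the kernel conditions, which the paper leaves unproved.

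\emph{Your flagged obstacle.} ``Comparing lengths'' is indeed unavailable, but your snake-lemma repair works. Multiplication by $\pi$ maps $0\to\pi^{j+1}E\to\pi^jE\to\pi^jE/\pi^{j+1}E\to0$ to the analogous sequence for $j+1$, and the right-hand vertical map is an isomorphism by hypothesis. The snake lemma then gives $\ker(\pi)\cap\pi^{j+1}E=\ker(\pi)\cap\pi^{j}E$. Iterating over $j=0,\dots,m-2$ yields $\ker(\pi)\subset\pi^{m-1}E$, and the reverse inclusion holds because $\pi^m=0$.

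\emph{A possible shortcut.} If $E_0$ in Lemma \ref{lmlocalflatness} is read as the underived quotient $E/\pi^{m-1}E$, the $H^{-2}$ computation is unnecessary. In that case condition (2) there is exactly $\ker(\pi^{m-1})=\pi E$, which you already obtain from the composite isomorphism in \eqref{eqtower}.
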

\begin{proof}
    Denote $C_m:=C/(\pi)^m$, and denote by $j_*:\dd_{\kappa}\ra\dd_{C_m}$ the pushforward functor. Given $E\in\aaa_{C_m}$, to determine its flatness over $C_m$ it suffices by Lemma \ref{lminstab}(b) and exactness of $j_*$ to show that $E\otimes C_m/\pi\cong j_*j^*E\in\aaa_{C_m}$. Consider the free resolution of $C_m/\pi$ $$\dots\ra C_m\xrightarrow{\pi^{m-1}\cdot\id} C_m\xrightarrow{\pi\cdot\id}C_m\xrightarrow{\pi^{m-1}\cdot\id}C_m\xrightarrow{\pi\cdot\id}C_m,$$
    which is two-periodic. As a result, considering its truncation at the degree -2, and the induced triangle we have $$j_*j^*E[-2]\ra E\xrightarrow{\pi\cdot\id}E\ra j_*j^*E.$$
    The flatness of $E$ is the same as the vanishing $H_{\aaa_{C_m}}^{-1}(j_*j^* E)=H_{\aaa_{C_m}}^{-2}(j_*j^* E)=0$. Now, the condition $H_{\aaa_{C_m}}^{-1}(j_*j^* E)=0$ is equivalent to $\ker(\pi\cdot\id)\cong \pi^{m-1} E$, which in turn is equivalent to the surjectivity of the morphism $E/\pi E\ra\ker(\pi\cdot\id)$. On the other hand, $H_{\aaa_{C_m}}^{-2}(j_*j^* E)=0$ is equivalent to $\ker(\pi^{m-1}\cdot\id)\cong\pi E$. As in the previous step, this is the same as the injectivity of $E/\pi E\hookrightarrow\ker(\pi\cdot\id)$. Thus, the flatness of $E$ is equivalent to the existence of an isomorphism $E/\pi E\cong\ker(\pi\cdot\id)$, which in turn is equivalent to \eqref{eqtower}.
\end{proof}
\begin{defi}
    Let $\sigma$ be a locally constant stability condition on $\dd$. An object $E\in\dd$ is \emph{relatively flat} if for all residue fields $\kappa$ of $R$ the object $E_\kappa\in\dd_\kappa$ is flat with respect to the $t$-structure $\cal P_\kappa(0,1]$ on $\dd_\kappa$.
    A locally constant $\sigma\in\stab(\dd/R)$ has the \emph{generic flatness property} if the set of points in $|\mm|$ whose corresponding objects are flat in $\dd_\kappa$ is open. Equivalently, generic flatness holds if given a family $E\in\mm(T)$  for $T\in R\alg$ there is an open $U\subset T$ such that $E|_U$ is relatively flat.
\end{defi}
\begin{rmk}
    In the context of a locally constant stability conditions, it is clear that the generic flatness property is equivalent to the morphism $\mm_\sigma(0,1]\ra\mm$ being an open immersion.
\end{rmk}
\subsubsection*{Torsion objects and related constructions}
\begin{defi}
An object $E\in\dd$ is called a \textit{torsion object} if it is the pushforward of an object in $\dd_W$ for some proper closed subscheme $W\subset\spec R$. The subcategory of torsion objects is denoted $\dd_{tor}$.  
\end{defi}

\begin{lm}[\cite{bayer2021stability}, Lemma 6.9]\label{lmcurve} 
Let $C$ be a DVR over $R$ with uniformizer $\pi\in C$, and let $W \subset C$ be a 0-dimensional subscheme with ideal sheaf $I_W=(\pi)^n$ for some $n\geq 1$. Let $\aaa_W$ be the heart of the induced t-structure on $\dd_W$. 
Then for any $E \in \aaa_C$ we have short exact sequences
\begin{equation}\label{eqlmcurve}
0 \to \Ann(I_W; E) = i_{W*}^{} H^{-1}_{\aaa_W} \left( E_W \right) \hookrightarrow I_W \otimes E \twoheadrightarrow I_W \cdot E \to 0 \end{equation}
and
\begin{equation*}
0 \to I_W \cdot E \hookrightarrow E \twoheadrightarrow E/I_W\cdot E = i_{W*}^{} H^0_{\aaa_W} \left(E_W \right) \to 0,
\end{equation*} 
where $\Ann(I_W; E)$ and $I_W \cdot E$ denote the kernel and the image of the canonical map $I_W\otimes E\ra E$ induced by the triangle
\begin{equation*}
I_W \otimes E \to E\ra i_{W*}i_W^* E\xrightarrow{[1]} 
\end{equation*}
Moreover, $H^i_{\aaa_W}(E_W) = 0$ for $i \neq -1,0$.
\end{lm}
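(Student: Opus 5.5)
The plan is to argue entirely inside the heart $\aaa_C$ of the $t$-structure on $\dd_C$, using the triangle
$$I_W\otimes E\ra E\ra i_{W*}i_W^*E\xrightarrow{[1]}$$
and its long exact cohomology sequence with respect to $\aaa_C$.

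First, observe that $I_W=(\pi)^n\cong C$ as a $C$-module, since $C$ is a DVR and $\pi^n$ is a nonzerodivisor. Hence $I_W\otimes E\cong E$, and the map $I_W\otimes E\ra E$ is identified with multiplication by $\pi^n$. In particular $I_W\otimes E\in\aaa_C$, and $E\in\aaa_C$ by assumption.

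Next, the pushforward $i_{W*}:\dd_W\ra\dd_C$ is $t$-exact for the induced $t$-structures. This holds because $W\ra\spec C$ is a finite map and $C/\pi^n$ is perfect over $C$, so Proposition~\ref{lmbasechange} applies. The base-changed aisle is generated by the image of the aisle, so $i_W^*$ is right $t$-exact. Exactness of $i_{W*}$ then lets us compute $H^i_{\aaa_W}(E_W)$ by applying $H^i_{\aaa_C}$ to $i_{W*}i_W^*E$.

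Now take the long exact sequence in $\aaa_C$ of the triangle. Both $I_W\otimes E$ and $E$ lie in the heart, so $H^i(i_{W*}E_W)=0$ for $i\neq -1,0$, which gives the "moreover" statement. The remaining part of the sequence reads
$$0\ra H^{-1}(i_{W*}E_W)\ra I_W\otimes E\ra E\ra H^0(i_{W*}E_W)\ra 0.$$
Therefore $i_{W*}H^{-1}_{\aaa_W}(E_W)$ is the kernel $\Ann(I_W;E)$ and $i_{W*}H^0_{\aaa_W}(E_W)$ is the cokernel $E/I_W\cdot E$. Splitting this four-term exact sequence at the image $I_W\cdot E$ yields the two stated short exact sequences.

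The main obstacle is justifying the $t$-exactness of $i_{W*}$ with respect to the base-changed hearts. I would argue it as follows. The composite $i_{W*}i_W^*\cong(-)\otimes_C C/\pi^n$ is right $t$-exact. An object of $\dd_W$ lies in $\aaa_W$ exactly when its pushforward lies in $\aaa_C$; this is the same characterization already used in the proof of Lemma~\ref{lminstab}, namely $g^*E\in$ heart iff $g_*g^*E\in$ heart. Granting that characterization, exactness of $i_{W*}$ follows formally.
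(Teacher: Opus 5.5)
Your proposal is correct and follows the paper's proof, which is exactly the long exact $\aaa_C$-cohomology sequence of the triangle $I_W\otimes E\to E\to i_{W*}i_W^*E$ combined with $t$-exactness of $i_{W*}$; your remark that $I_W\otimes E\cong E$ is the implicit input ensuring both first terms lie in the heart. Your justification of $t$-exactness is somewhat circular as phrased, since granting the characterization of $\aaa_W$ is essentially granting the claim. A direct argument is easy: $i_{W*}$ is left $t$-exact as the right adjoint of the right $t$-exact $i_W^*$, and it is right $t$-exact because it preserves colimits and extensions and sends each generator $i_W^*F$ with $F\in\dd_C^{\leq 0}$ to $\cone(F\xrightarrow{\pi^n}F)\in\dd_C^{\leq 0}$.
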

\begin{proof}
We take cohomology of the exact triangle 
$I_W \otimes E \to E \to i_{W*}i_W^* E$ with respect to $\aaa_C$, using that $(i_W)_*$ is t-exact. 
\end{proof}
We call the essential image of $i_{W*}:\cal A_W\ra\cal A_C$ the category of $W$-torsion objects .
\begin{cor}\label{corsupport}
    Let $W\subset C$ be a 0-dimensional subscheme. Then the subcategory $\aaa_{W,tor}$ of $W$-torsion objects is closed under subobjects, quotients, and extensions.
\end{cor}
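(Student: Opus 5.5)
The plan is to characterize the essential image of $i_{W*}$ intrinsically and then check the three closure properties against that characterization. Throughout, $I_W=(\pi)^n$ is a free $C$-module of rank one. Hence $I_W\otimes(-)\cong(-)$ is an exact autoequivalence of $\aaa_C$, and the canonical map $I_W\otimes E\ra E$ is simply multiplication by $\pi^n$.

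\emph{Step 1 (characterization).} By the second short exact sequence of Lemma \ref{lmcurve}, an object $E\in\aaa_C$ satisfies $E\cong i_{W*}H^0_{\aaa_W}(E_W)$ precisely when $I_W\cdot E=0$. Conversely, anything of the form $i_{W*}F$ is killed by $I_W$, because $\pi^n$ acts by zero on objects of $\dd_W$. So the $W$-torsion objects are exactly the $E\in\aaa_C$ with $I_W\cdot E=0$, that is, those for which $\pi^n\colon E\ra E$ vanishes.

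\emph{Step 2 (subobjects and quotients).} Let $0\ra F\ra E\ra Q\ra0$ be exact in $\aaa_C$ with $E$ $W$-torsion. Multiplication by $\pi^n$ is natural, so it gives an endomorphism of this sequence.
\begin{itemize}
\item For $F$: the composite $F\xrightarrow{\pi^n}F\hookrightarrow E$ equals $F\hookrightarrow E\xrightarrow{\pi^n}E$, which is $0$. Since $F\ra E$ is a monomorphism, $\pi^n$ vanishes on $F$.
\item For $Q$: the composite $E\twoheadrightarrow Q\xrightarrow{\pi^n}Q$ equals $E\xrightarrow{\pi^n}E\twoheadrightarrow Q$, which is $0$. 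Since $E\ra Q$ is an epimorphism, $\pi^n$ vanishes on $Q$.
\end{itemize}
By Step 1, both $F$ and $Q$ lie in the essential image of $i_{W*}$.

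\emph{Step 3 (extensions).} This is the delicate point. Suppose $0\ra A\ra E\ra B\ra0$ is exact with $A,B$ killed by $\pi^n$. The same naturality argument shows that $\pi^n\colon E\ra E$ factors through $A\hookrightarrow E$, and therefore $\pi^{2n}$ kills $E$. So $E$ is torsion for the thickening $W'$ with $I_{W'}=(\pi)^{2n}$, which has the same underlying point as $W$. One cannot do better with $n$ fixed: $C/\pi^2$ is an extension of $C/\pi$ by itself. The claim must therefore be read with "$W$-torsion" meaning set-theoretically supported on $W$, i.e.\ lying in $i_{W'*}\aaa_{W'}$ for some $0$-dimensional $W'$ with $|W'|=|W|$ (equivalently, killed by some power of $\pi$). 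With this reading, Steps 1–2 apply verbatim for each $W'$, and Step 3 closes the argument.

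The main obstacle is thus not technical but interpretive. Closure under subobjects and quotients holds for each fixed $W$, whereas extension-closure forces passing to the union over all thickenings of the closed point. I would record this explicitly in the proof. This union is the version used later, for instance when forming Serre subcategories of torsion objects in $\aaa_C$.
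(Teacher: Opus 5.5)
Your argument is correct. For subobjects and quotients it is the paper's argument: the paper's entire proof is the observation, from Lemma \ref{lmcurve}, that $E\in\aaa_C$ is $W$-torsion if and only if $I_W\otimes E\ra E$ vanishes. Your naturality argument for multiplication by $\pi^n$ supplies the step that proof leaves implicit.

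On extensions you are right to depart from the statement. That characterization cannot give extension-closure for a fixed scheme-theoretic $W$, and the claim is in fact false. Take $E\in\aaa_C$ that is $C$-flat with $E_\kappa\neq 0$, where $\kappa=C/(\pi)$. Lemma \ref{lminstab}(a) then gives an exact sequence $0\ra E\otimes (\pi)/(\pi^2)\ra E\otimes C/(\pi^2)\ra E\otimes C/(\pi)\ra 0$ in $\aaa_C$. Its outer terms lie in $i_{W*}\aaa_W$ for $W=\spec C/(\pi)$. However, $\pi$ acts on the middle term as the composite $E\otimes C/(\pi^2)\twoheadrightarrow E\otimes C/(\pi)\hookrightarrow E\otimes C/(\pi^2)$, which is nonzero. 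So the middle term is not $W$-torsion.

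Your $\pi^{2n}$ argument proves the correct version: the objects killed by some power of $\pi$ form a Serre subcategory of $\aaa_C$. This is the version the paper actually relies on afterwards, for the torsion pair $(\aaa_{C,tor},\aaa_{C,tf})$ of Definition \ref{defCtorsion} and for the induction on the length of the support in Proposition \ref{lmtor}.
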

\begin{proof}
    By Lemma \ref{lmcurve}, an object $E\in\aaa_C$ is $W$-torsion if and only if $I_W\otimes E\ra E$ is zero.
\end{proof}

\begin{defi}
An object in $\aaa$ is called \textit{torsion-free} if it contains no nonzero torsion subobjects. We denote by $\aaa_{tor}$ and $\aaa_{tf}$ the subcategories of torsion and torsion-free objects respectively.
\end{defi}
\begin{defi}\label{defCtorsion}
    Let $C$ be a DVR essentially of finite type over $R$. The base changed heart $\aaa_C$ is said to have a $C$-torsion theory if the pair of subcategories $(\aaa_{C,tor},\aaa_{C,tf})$ forms a torsion pair.
\end{defi}
\begin{rmk}
If $\Ind (\dd)$ comes with an Ind-noetherian $t$-structure, by Lemma \ref{lmbasechange} the base changed $t$-structure on $\Ind(\dd_T)$ is Ind-noetherian for every $R$-algebra $T$ essentially of finite type. In particular, by noetherianity, objects in the heart $\aaa_T$ have maximal torsion subobjects. Therefore, any object $E\in\aaa_T$ admits a surjective morphism $E\twoheadrightarrow E^{tf}$. In this case the categories $(\aaa_{T,tor},\aaa_{T,tf})$ form a torsion pair in $\aaa_T$ (see also \cite[Remark 6.16]{bayer2021stability}).
\end{rmk}
\begin{lm}\label{lmtf}
Let $E \in \aaa_C$, $C$ a DVR essentially of finite type over $R$ with uniformizer $\pi\in C$. 
If $E$ is torsion free then it is $C$-flat. 
\end{lm}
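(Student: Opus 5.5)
Torsion-freeness will be translated into the vanishing of a kernel, after which $C$-flatness follows from the criteria already established. By Lemma \ref{lminstab}(3), it suffices to show that $E\otimes^L_C (C/I)\in\aaa_C$ for every finitely generated ideal $I\subset C$. Since $C$ is a DVR, every nonzero ideal has the form $I=(\pi)^n=I_W$ for a $0$-dimensional subscheme $W\subset \spec C$; the case $I=0$ is trivial. Pushforward $i_{W*}$ is $t$-exact, so the condition reads $i_{W*}i_W^*E\in\aaa_C$. By Lemma \ref{lmcurve}, $H^i_{\aaa_W}(E_W)=0$ for $i\neq -1,0$, so we only need $H^{-1}_{\aaa_W}(E_W)=0$. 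By \eqref{eqlmcurve} this is equivalent to $\Ann(I_W;E)=0$, i.e. to injectivity of the canonical map $I_W\otimes E\to E$.

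Next I show that $\Ann(I_W;E)$ is a torsion object, and hence zero because $E$ is torsion-free. By Lemma \ref{lmcurve}, $\Ann(I_W;E)\cong i_{W*}H^{-1}_{\aaa_W}(E_W)$ lies in the essential image of $i_{W*}:\aaa_W\to\aaa_C$. It is therefore $W$-torsion, and in particular torsion in the sense of the definition, since $W$ is a proper closed subscheme of $\spec C$. Lemma \ref{lmcurve} also exhibits it as a subobject of $I_W\otimes E$.

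The step I expect to be the main obstacle is that $\Ann(I_W;E)$ sits inside $I_W\otimes E$, not inside $E$, so torsion-freeness of $E$ cannot be applied directly. I would fix this using that $I_W=(\pi^n)\cong C$ is a free $C$-module of rank one. Then $I_W\otimes E\cong E$, and under this identification the canonical map $I_W\otimes E\to E$ becomes multiplication by $\pi^n$ on $E$. Hence $\Ann(I_W;E)\cong\ker(\pi^n\cdot\id_E)$, which is a subobject of $E$. It is torsion by the previous paragraph, or directly by Corollary \ref{corsupport}, since $\pi^n$ annihilates it. Torsion-freeness of $E$ then forces it to vanish.

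Consequently $E\otimes_C C/(\pi)^n\in\aaa_C$ for all $n\geq1$. Lemma \ref{lminstab}(3) then gives that $E$ is $C$-flat. As an alternative route, the vanishing for $n=1$ together with Lemma \ref{lminstab}(b) also suffices when $E$ is pseudo-coherent.
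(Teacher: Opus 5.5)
Your proof is correct and is essentially the paper's argument. Lemma \ref{lmcurve} reduces flatness to injectivity of $I_W\otimes E\to E$, whose kernel is a torsion subobject of $E$ and hence vanishes; this uses your identification $I_W\otimes E\cong E$, which the paper uses only tacitly. The one difference is that you check every thickening $C/(\pi)^n$ and conclude with Lemma \ref{lminstab}(3), whereas the paper checks only the closed point via the fibrewise criterion (b), so your version avoids the pseudo-coherence hypothesis and the Grothendieck existence input on which (b) depends.
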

\begin{proof}
For the generic point $\xi\in \spec C$ the pullback functor $\iota_\xi^*$ is exact. So $E$ is $C$ flat if and only if $E_c\in\aaa_p$ for $p:=\spec C/\pi$ the closed point. By Lemma \ref{lmcurve} this happens if and only if $(\pi)\otimes E\ra E$ is injective in $\aaa_C$. If the map is not injective, then \eqref{eqlmcurve} shows also that $E$ is not torsion free. 
\end{proof}
\begin{lm}\label{lmlurie}
     Let $E, Q \in \mathcal{D}$ be relatively flat objects. Then the functor$$H^{0}\hhom_{R}(E,Q\boxtimes(-)):(R\mhyphen\Mod)\rightarrow(R\mhyphen\Mod)$$is representable by a finitely presented object $\cal{F}_{E,Q} \in R\mhyphen \Mod$.
\end{lm}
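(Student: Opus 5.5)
By Lemma~\ref{lmprojformula}, $\hhom_R(E,Q\boxtimes M)\cong \hhom_R(E,Q)\otimes_R M$ for every $R$-module $M$. Since $\dd$ is proper, $P:=\hhom_R(E,Q)\in\Perf(R)$. The functor in question is therefore $M\mapsto H^0(P\otimes^L_R M)$. The plan is the classical argument of EGA III.7 and Lurie's representability for perfect complexes: show that this functor is right exact on the nose. Then $H^0(P\otimes^L_R -)\cong H^0(P\otimes^L_R R)\otimes_R(-)$, or, for the dual formulation $\Hom_R(\mathcal F_{E,Q},-)$ as in Lieblich-type statements, use the dual $P^\vee$.

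First, I use relative flatness to control amplitude. Because $E$ and $Q$ are relatively flat, for every residue field $\kappa$ both $E_\kappa$ and $Q_\kappa$ lie in the heart $\cal P_\kappa(0,1]$. Hence
\[
H^i(P\otimes^L_R\kappa)\cong \Hom_{\dd_\kappa}(E_\kappa,Q_\kappa[i])=0\quad\text{for } i<0 .
\]
For a perfect complex over a noetherian ring, vanishing of fibrewise cohomology below degree $0$ at all points implies, by Nakayama and descending induction on the lowest nonzero degree of a minimal local free resolution, that $P$ is locally represented by a complex of finite free modules in degrees $\geq 0$. Consequently $H^{-1}(P\otimes^L_R M)=0$ for all $M$, so $M\mapsto H^0(P\otimes^L_R M)$ is left exact. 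It also commutes with filtered colimits.

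Next, I choose a finite free complex $P^0\xrightarrow{d}P^1\to\cdots$ representing $P$, which exists locally and glues since the statement is local on $\spec R$ and representing objects are unique. Then $H^0(P\otimes M)=\ker(P^0\otimes M\to P^1\otimes M)$. Set $\mathcal F_{E,Q}:=\coker\big((P^1)^\vee\xrightarrow{d^\vee}(P^0)^\vee\big)$, which is finitely presented. Then
\[
\ker(P^0\otimes M\to P^1\otimes M)\cong\ker\big(\Hom((P^0)^\vee,M)\to\Hom((P^1)^\vee,M)\big)\cong \Hom_R(\mathcal F_{E,Q},M),
\]
naturally in $M$. This gives the desired representability, in the sense that $H^0\hhom_R(E,Q\boxtimes-)\cong\Hom_R(\mathcal F_{E,Q},-)$.

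The main obstacle is the amplitude step: passing from fibrewise vanishing of negative Ext, which requires the fibres $\dd_\kappa$ to carry $t$-structures compatible with base change so that $E_\kappa,Q_\kappa$ really sit in a common heart, to the statement that $P$ has Tor-amplitude in $[0,\infty)$. I would handle this by localizing at a prime, taking a minimal free resolution over the local noetherian ring, and arguing that the lowest degree of a minimal complex is detected after $\otimes\kappa$. This works at every prime, not only at the closed ones, because relative flatness is imposed at all residue fields.
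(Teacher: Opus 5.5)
Your argument is correct and is essentially the paper's. Both proofs use the projection formula and fibrewise vanishing of negative Exts plus Nakayama to put $K=\hhom_R(E,Q)$ in degrees $\geq 0$. Your $\mathcal F_{E,Q}=\coker\bigl((P^1)^\vee\to(P^0)^\vee\bigr)$ is exactly the paper's $H^0(K^\vee)$; the paper reaches it through the truncation triangle for $K^\vee$ rather than an explicit complex.

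Two small fixes. First, drop your opening remark about right exactness and $H^0(P\otimes^L_R R)\otimes_R(-)$: that is false here, since the functor is left exact, as you correctly use later. Second, a perfect complex of Tor-amplitude $[0,b]$ over a ring is globally represented by finite projective modules in degrees $[0,b]$, so your construction works with projective $P^i$ and the gluing step is unnecessary.
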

\begin{proof}
Let $K := \mathcal{H}om_{R}(E,Q)$. Because $\mathcal{D}$ is proper over $R$ we have that $K\in\Perf(R)$. By the projection formula in Lemma \ref{lmprojformula}, for any $R$-module $M$ there is an isomorphism:
$$\hhom_{R}(E,Q\boxtimes M) \cong \hhom_{R}(E,Q)\otimes_{R}M = K \otimes_{R}M.$$ We claim first that $K \in D(R)^{\geq 0}$. The fiber of $K$ at any point $t \in \text{Spec}(R)$ gives $K \otimes_{R} k(t) \cong \mathcal{H}om_{\mathcal{D}_t}(E_t, Q_t)$. By hypothesis, both $E_t$ and $Q_t$ lie in the heart $\mathcal{A}_t$, hence $\operatorname{Ext}^{i}_{\mathcal{D}_t}(E_t, Q_t) = 0$ for all $i < 0$. Consequently, $K$ has vanishing negative cohomology on every fiber. By Nakayama's Lemma, this implies that $K$ can be represented by a bounded complex of finitely generated projective modules strictly in non-negative degrees. Next, consider the dual object $K^{\vee} := \hhom_{R}(K, R)\in D(R)^{\leq 0}$. Since $K$ is perfect,  $K \otimes_{R}M \cong \hhom_{R}(K^{\vee}, M). $  As a result, the functor in the statement can be expressed as $H^{0}(K \otimes_{R} M) \cong \Hom_{D(R)}(K^{\vee}, M).$ To evaluate the latter, we use the following canonical triangle in $\Perf(R)$ with respect to the standard $t$-structure $$\tau_{std}^{\leq -1}K^{\vee} \rightarrow K^{\vee} \rightarrow H^{0}(K^{\vee}) \xrightarrow{[1]}$$ Applying the functor $\hhom_R(-, M)$ and looking at the induced long exact sequence, we have that $\Hom_{D(R)}(\tau_{std}^{\leq-1}K^{\vee}, M) = 0$ and $\Hom_{D(R)}((\tau^{\leq-1}K^{\vee})[1], M) = 0$. This vanishing forces an isomorphism $$\Hom_{D(R)}(K^{\vee}, M) \cong \Hom_{D(R)}(H^{0}(K^{\vee}), M)$$ and latter computes $\Hom_{R\mhyphen\Mod}(H^{0}(K^{\vee}), M)$. Therefore, we can set $\mathcal{F}_{E,Q} := H^{0}(K^{\vee})$.
\end{proof}
\begin{prop}\label{prophom}
    Let $E,\,Q\in\dd$ be relatively flat objects. The functor 
    \begin{align*}
        \underline{\Hom}_R(E,Q):(R\alg)&\rightarrow(\set)\\
        T&\mapsto H^0(\hhom_T(E_T,Q_T))
    \end{align*}
    is representable by an abelian cone of finite presentation over $\spec (R)$.
\end{prop}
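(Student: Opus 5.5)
The natural candidate is $\mathbb{V}(\cal F_{E,Q}):=\spec_R\sym_R(\cal F_{E,Q})$, where $\cal F_{E,Q}=H^0(K^\vee)$ and $K=\hhom_R(E,Q)$ as in Lemma \ref{lmlurie}. Since $\cal F_{E,Q}$ is finitely presented, this is an abelian cone of finite presentation over $\spec R$. So it suffices to produce, for every $R$-algebra $T$, a bijection
$$H^0(\hhom_T(E_T,Q_T))\cong \Hom_{R\mhyphen\Mod}(\cal F_{E,Q},T)\cong\Hom_{R\alg}(\sym_R\cal F_{E,Q},T),$$
natural in $T$. The second isomorphism is the universal property of the symmetric algebra, so the work is in the first.

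First, compute the mapping object after base change. By adjunction along $g:R\ra T$ we have $\hhom_T(E_T,Q_T)\cong\hhom_R(E,g_*g^*Q)=\hhom_R(E,Q\boxtimes T)$. By the projection formula (Lemma \ref{lmprojformula}) this is $K\otimes_R T$. This is exactly the argument used in Corollary \ref{coropen}, now for an arbitrary $R$-algebra instead of a localization; one should check that the identification $g_*g^*Q\cong Q\boxtimes T$ holds for any $T$, which follows from the definition of base change as $\dd\otimes_{\Perf(R)}\Perf(T)$ (passing to $\Ind$ if $T$ is not of finite type). Taking $H^0$ and applying Lemma \ref{lmlurie} with $M=T$ viewed as an $R$-module gives $H^0(K\otimes_R T)\cong\Hom_R(\cal F_{E,Q},T)$.

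Second, check naturality in $T$. For a map $T\ra T'$, the base change map on $H^0\hhom$ corresponds under the identifications above to $K\otimes_RT\ra K\otimes_RT'$. The isomorphism of Lemma \ref{lmlurie} is natural in $M$, since it comes from $K\otimes_R M\cong\hhom_R(K^\vee,M)$ and the truncation triangle, both functorial in $M$. Hence the composite bijection is natural, and it is compatible with the $T$-module structures, which makes the cone structure (addition and scaling) match.

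The main obstacle I expect is justifying the projection-formula identification $\hhom_T(E_T,Q_T)\cong K\otimes_R T$ functorially for arbitrary, non-noetherian or non-finite-type $R$-algebras. Lemma \ref{lmprojformula} is stated for $R$-modules $I$, so I would apply it with $I=T$ regarded as an $R$-module, and then use the adjunction $g^*\dashv g_*$ in $\Ind$-categories. Relative flatness enters only through Lemma \ref{lmlurie}, to guarantee that $K\in D(R)^{\geq0}$, so that $H^0$ is representable rather than merely left exact.
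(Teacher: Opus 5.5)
Your proposal is correct and follows the paper's argument essentially step for step. You use the same chain: adjunction along $R\ra T$ to rewrite $\hhom_T(E_T,Q_T)$ as $\hhom_R(E,Q\boxtimes T)$, then the projection formula, then Lemma~\ref{lmlurie} with $M=T$, and finally the universal property of $\sym_R\mathcal{F}_{E,Q}$; your explicit check of naturality in $T$ is a detail the paper leaves implicit.
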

\begin{proof}
    We verify directly our claim using a variant of \cite[Theorem D]{hall2014cohomology}. For $T\in(R\alg)$ and a morphism $\spec T\xrightarrow{\alpha}\spec R$, denote $\alpha_T^*:\dd\rightarrow \dd_T$ and $(\alpha_T)_*:\dd_T\rightarrow\ind(\dd)$ the corresponding functors. Thus, we have 
    \begin{align*}
        \underline{\Hom}_R(E,Q)(T)&=H^0\hhom_T(\alpha_T^*(E),\alpha_T^*(Q))\\
        &\cong H^0\hhom_R(E,(\alpha_T)_*\alpha_T^*(Q))\\
        &\cong H^0\hhom_R(E,Q\boxtimes\oo_T)\\&
        \cong\Hom_{\oo_R}(\mathcal{F}_{E,Q},\alpha_* \oo_T)\quad\hbox{by Lemma (\ref{lmlurie})}\\
        &\cong \Hom_{R\alg}(\sym_{\oo_R}^\bullet\mathcal{F}_{E,Q},\alpha_*(\oo_T))\\
        &\cong \Hom_{R\mhyphen\sch}(\spec T,\underline{\spec}_{\oo_R}(\sym_{\oo_R}^\bullet\mathcal{F}_{E,Q})).
    \end{align*}
    In particular, $\underline{\spec}_{\oo_R}(\sym_{\oo_R}^\bullet\mathcal{F}_{E,Q})$ is the representing object and we are done.
\end{proof}
\subsection{Quot functors}
Let $\sigma$ be a locally constant stability condition. For every $t\in\spec R$ we have a sluicing of width 1 on $\dd_t$ (or equivalently, a stability condition); denote by $\aaa_t$ the corresponding heart and by $H_{\aaa_t}^\bullet$ the corresponding cohomology functor. Let $E\in\dd$ be an $R$-flat object. 
\begin{defi}
    Define the Quot functors $$\quot_R^{\sigma}(E):(R\alg)\rightarrow (\set)$$
    sending an algebra $T\in(R\alg)$ to the groupoid of morphisms $E_T\rightarrow Q$ in $\dd_T$ such that 
    \begin{itemize}
        \item $Q\in\dd_T$ is $T$-flat with respect to $\sigma$,
        \item $E_t=H_{\aaa_t}^0(E_t)\rightarrow H_{\aaa_t}^0(Q_t)=Q_t$ is surjective for all $t\in \spec(T)$.
    \end{itemize}
\end{defi}
For what follows we will denote $\mm_{\sigma}:=\mm_{\sigma}{(0,1]}$. As in \cite{bayer2021stability}, we have the following.
\begin{prop}\label{propquot}
    If $\sigma$ is proper, then $\quot_R^{\sigma}$ is an algebraic space locally of finite type over $R$.
\end{prop}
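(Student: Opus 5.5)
The natural plan is to realize $\quot_R^\sigma(E)$ as a subfunctor of a relative $\underline{\Hom}$-space over the moduli stack, in the spirit of \cite[Section 11]{bayer2021stability}, and to verify the needed openness and representability properties using Proposition \ref{prophom} and properness of $\sigma$.

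\emph{Step 1: reduction to a Hom-space over $\mm_\sigma$.} Since $\sigma$ is proper, condition (2) of Theorem \ref{thmhlr} and Corollary \ref{corinterval} show that $\mm_\sigma=\mm_\sigma(0,1]\hookrightarrow\mm$ is an open substack. It is algebraic, locally of finite presentation and has affine diagonal. Its $T$-points are exactly the $T$-flat objects with respect to $\sigma$, by Lemma \ref{lminstab}(2) applied fibrewise. There is then a forgetful morphism
$$p:\quot_R^\sigma(E)\ra\mm_\sigma,\qquad (E_T\ra Q)\mapsto Q.$$
Fix a smooth atlas $\spec T\ra\mm_\sigma$ with $T$ of finite type over $R$, classifying a $T$-flat object $Q\in\dd_T$.

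\emph{Step 2: the relative Hom-space.} The base change $\dd_T$ is smooth and proper over $T$, and both $E_T$ and $Q$ are relatively flat. Proposition \ref{prophom}, applied over $T$ instead of $R$, gives that $T'\mapsto\Hom(E_{T'},Q_{T'})$ is represented by an abelian cone $V=\underline{\spec}_T(\sym^\bullet\cal F_{E_T,Q})$ of finite presentation over $T$. The fibre product $\quot_R^\sigma(E)\times_{\mm_\sigma}\spec T$ then sits inside $V$ as the subfunctor of morphisms $E_{T'}\ra Q_{T'}$ that are surjective on every fibre. One must also check that automorphisms of $Q$ act freely on surjections, so that the groupoid is discrete. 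This holds because a morphism out of $Q_t$ is determined by its precomposition with the surjection $E_t\twoheadrightarrow Q_t$ in the abelian category $\aaa_t$. Hence $\quot$ is a sheaf and $p$ is representable.

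\emph{Step 3: openness of surjectivity.} This is the main obstacle. Let $f:E_V\ra Q_V$ be the universal morphism, and set $K:=\cone(f)$. Surjectivity at $t$ means $H^0_{\aaa_t}(\cone(f_t))=0$, that is, $\phi^+(K_t)\leq 0$. The object $K_t$ lies in $\cal P_t(-1,1]$, since $E_t$ and $Q_t$ lie in $\aaa_t$. So the condition is $\phi^+_K(t)\leq 0$. Lemma \ref{lmphase} says $\phi_K^+$ is upper semicontinuous and constructible on the finite type scheme $V$. Therefore the locus $\{\phi_K^+\leq 0\}$ is open: it is constructible and stable under generization, because $\phi^+$ increases under specialization. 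Step 3 thus relies on the semicontinuity already established for proper $\sigma$.

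Combining the steps, $\quot_R^\sigma(E)\times_{\mm_\sigma}\spec T$ is an open subscheme of a finite type affine cone over $T$, so $p$ is representable and locally of finite type. The remaining point is that $\quot$ itself is an algebraic space rather than a stack. This follows because its diagonal is a monomorphism, given the discreteness from Step 2. Descending along the smooth atlas then yields an algebraic space locally of finite type over $R$.
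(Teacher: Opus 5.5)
Your strategy is the same as the paper's. You map $\quot_R^\sigma(E)\ra\mm_\sigma$, factor the fibre product over $\spec T$ through the abelian cone $\underline{\Hom}_T(E_T,Q)$ of Proposition \ref{prophom}, and show that the inclusion is an open immersion because fibrewise surjectivity is an open condition for proper $\sigma$. The paper just asserts this last point ``by properness''.

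However, Step 3, which supplies the detail the paper leaves implicit, has the phase bookkeeping reversed. As written, it proves openness of the wrong locus. From the triangle $E_t\xrightarrow{f_t}Q_t\ra K_t\ra E_t[1]$, the object $K_t$ is an extension of $E_t[1]\in\cal P_t(1,2]$ by $Q_t\in\cal P_t(0,1]$. So $K_t\in\cal P_t(0,2]$, with $H^{-1}_{\aaa_t}(K_t)=\ker f_t$ and $H^0_{\aaa_t}(K_t)=\coker f_t$; it does not lie in $\cal P_t(-1,1]$. The cokernel is the \emph{lowest}-phase piece of $K_t$. Hence surjectivity of $f_t$ means $K_t\in\cal P_t(1,2]$, i.e.\ $\phi^-(K_t)>1$. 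Your condition $\phi^+(K_t)\le 0$ instead forces $K_t=0$, which is the isomorphism locus. If you meant the fibre $\cone(f_t)[-1]\in\cal P_t(-1,1]$, then $\phi^+\le 0$ cuts out the \emph{injectivity} locus.

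The repair uses the other half of Lemma \ref{lmphase}. The function $\phi_K^-$ is constructible and decreases under specialization. So $\{v\in V:\phi^-(K_v)>1\}$ is constructible and stable under generization in the noetherian scheme $V$, hence open. Equivalently, it is the preimage of the open locus $\mm_\sigma(1,2]$ under the morphism $V\ra\mm$ classifying $K$. With this change your argument goes through and agrees with the paper's.

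A smaller point concerns Step 2: your discreteness argument is only fibrewise, and over a non-reduced $T$ it needs one more line. Suppose $\alpha\in\operatorname{Aut}(Q)$ satisfies $\alpha\circ q=q$. Then $\alpha-\id$ factors through $\cone(q)$, whose fibres are $\ker(q_t)[1]\in\aaa_t[1]$. Hence the perfect complex $\hhom_T(\cone(q),Q)$ has fibres concentrated in degrees $\geq 1$. The Nakayama argument of Lemma \ref{lmlurie} then gives $H^0=0$, so $\alpha=\id$.
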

\begin{proof}
    By Theorem (\ref{thmtv}) and Proposition (\ref{prophom}) we know that objects $E_{T'}\rightarrow Q'$ satisfy descent for an fppf morphism $T'\rightarrow T$, giving the existence of an object $[q_T:E_T\rightarrow Q]$. We need to check that $q_T\in\quot_R^{\sigma}(E)(T)$, that is $H^0(q_T)$ is surjective. Indeed, as pulling back along a field extentions is $t$-exact, we have that the surjectivity $H^0(q_T):H_{\aaa_t}^0(E_t)\twoheadrightarrow H_{\aaa_t}^0(Q_t)$ can be checked fppf-locally. Thus, $q_T\in\quot_R^{\sigma}(E)(T)$ and $\quot_R^{\sigma}(E)$ is a sheaf for the fppf topology.

    By the assumptions on $\sigma$, the stack $\mm_{\sigma}$ is a derived Artin stack locally of finite presentation over $R$. Thus, in order to prove the claim it is sufficient to show that the forgetful morphism $$\quot_R^{\sigma}(E)\rightarrow \mmm_{\sigma},$$
    sending a quotient $[q:E_T\rightarrow Q]\in\quot_R^{\sigma}(E)(T)$ to $Q\in\mmm_{\sigma}(T)$, is locally of finite presentation and representable by algebraic spaces. To this end, we claim that for an object $T\in(R\alg)$ together with a morphism $\spec(T)\rightarrow\mm_{\sigma}$ the projection $$Z:=\quot_R^{\sigma}(E)\times_{\mmm_{\sigma}}\spec (T)\rightarrow \spec (T)$$
    is a locally finitely presented morphism of algebraic spaces. To this end, observe that for $S$, an algebra over $R$, $$Z(S)=\{E_S\rightarrow Q_S\,|\,E_s\twoheadrightarrow Q_s\hbox{ is onto in }\aaa_s\,\forall\,s\in \spec( S) \},$$
    so that $Z\rightarrow \spec (T)$ factors through $Z\xrightarrow{\varphi} \underline{\Hom}_T(E_T,Q)$, the latter being an algebraic space of finite presentation by Proposition \ref{prophom}. To conclude it is enough to show that $\varphi$ is representable by open immersions. Given $S'\in(R\alg)$ and a morphism $\spec (S')\rightarrow \underline{\Hom}_T(E_T,Q)$, setting $Z':=Z\times_{\underline{\Hom}(E_T,Q)}\spec(S')$, we claim that $Z'\rightarrow\spec(S')$ is an open immersion. The morphism $\spec (S')\rightarrow \underline{\Hom}_T(E_T,Q)$ corresponds to a quotient $E_{S'}\rightarrow Q_{S'}$, and the subset $$U:=\{s'\in\spec(S')\,|\,E_{s'}\twoheadrightarrow Q_{s'}\hbox{ is onto}\}\subset S'$$
    represents $Z'\rightarrow\spec(S')$. By properness for $\sigma$ the subset $U\subset S'$ is open and the claim follows.
\end{proof}
\begin{rmk}
Due to Lemma \ref{lminstab}(2) and Definition \ref{defstabf}(a) we have a simpler description for the value of the Quot functors at a DVR $C$ over $R$, namely $$\quot_R^{\sigma}(E)(C)=\{\hbox{Quotients }E_C\twoheadrightarrow Q\hbox{ in }\aaa_C\,|\,Q\hbox{ is }C\hbox{-flat}\}.$$
\end{rmk}
\begin{prop}
Let $C$ be a DVR with field of fractions $K$, and consider a morphism $\spec(C)\rightarrow \spec (R)$.  Then for a flat object $E$ the morphism $\quot_R^{\sigma}(E)\rightarrow \spec (R)$ satisfies the existence part of the valuative criterion for properness with respect to $C$, provided that $\aaa_C$ has a $C$-torsion theory.
\end{prop}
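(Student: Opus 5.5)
We follow the argument of \cite[Lemma 11.?]{bayer2021stability} (the valuative criterion for Quot spaces), adapted to the locally constant setting. Let $\spec K\ra\quot_R^\sigma(E)$ be a $K$-point lifting $\spec K\ra\spec C\ra\spec R$. It is a quotient $q_K:E_K\twoheadrightarrow Q_K$ in $\aaa_K=\cal P_K(0,1]$. By Definition \ref{defstabf}(a), the sluicing on $\dd_C$ restricts to those on $\dd_K$ and $\dd_{C/\mathfrak m_C}$, so $\aaa_C$ restricts compatibly. The restriction $\iota_\xi^*:\dd_C\ra\dd_K$ to the generic point is a localization in the sense of Definition \ref{defloc}. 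Corollary \ref{coropen} therefore provides a lift $\hat q:E_C\ra \hat Q$ in $\aaa_C$ with $\iota_\xi^*\hat q=q_K$, and $\hat q$ may be chosen surjective because $q_K$ is. Here $E_C$ is the base change of the flat object $E$, which lies in $\aaa_C$ and is $C$-flat.

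The first step is to replace $\hat Q$ by a $C$-flat quotient. Since $\aaa_C$ has a $C$-torsion theory, there is a short exact sequence $0\ra \hat Q_{tor}\ra\hat Q\ra\hat Q^{tf}\ra0$ with $\hat Q_{tor}\in\aaa_{C,tor}$ and $\hat Q^{tf}\in\aaa_{C,tf}$. Set $Q:=\hat Q^{tf}$ and let $q$ be the composite $E_C\twoheadrightarrow\hat Q\twoheadrightarrow Q$, which is surjective in $\aaa_C$. A torsion object is the pushforward from some $W=\spec C/(\pi)^n$, so its restriction to $K$ vanishes. Since $\iota_\xi^*$ is $t$-exact, this gives $\iota_\xi^*Q\cong\iota_\xi^*\hat Q=Q_K$, and $q$ still restricts to $q_K$. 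By Lemma \ref{lmtf}, $Q$ is $C$-flat.

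The second step is to check that $q$ defines a $C$-point of $\quot_R^\sigma(E)$. By the Remark following Proposition \ref{propquot}, it suffices that $q:E_C\twoheadrightarrow Q$ be a surjection in $\aaa_C$ with $Q$ $C$-flat. For completeness, we also check the fibrewise condition at the closed point $p$. Since $Q$ is $C$-flat, Lemma \ref{lminstab}(2) gives $Q_p\in\aaa_p$. Right exactness of $H^0_{\aaa_p}(j^*(-))$, coming from the right $t$-exactness of $j^*$ on $\aaa_C$ via Lemma \ref{lmcurve}, shows that $E_p=H^0(E_p)\ra H^0(Q_p)=Q_p$ is surjective. At the generic point the map is $q_K$, which is surjective by assumption.

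We expect the main obstacle to be the lifting step. One must make sure that Corollary \ref{coropen} applies to the localization $C\ra K$, which requires the base-changed $t$-structure on $\Ind(\dd_C)$ to be ind-noetherian. Proposition \ref{lmbasechange} and the subsequent Remark give this because $C$ is essentially of finite type; since $\aaa_C$ comes from the width-one sluicing, one should really use the noetherian $t$-structures from Definition \ref{defsluicing}(b) here. One must also make sure that the lift can be taken as a quotient of $E_C$ itself, not merely of some lift $\hat E$ of $E_K$. To handle the latter, we would take $\hat E=E_C$ in the proof of Corollary \ref{coropen}, clearing denominators via $\hhom_C(E_C,\hat Q)\otimes_C K\cong\hhom_K(E_K,Q_K)$. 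We would then replace $\hat Q$ by the image of the resulting map $E_C\ra\hat Q$, which is still a lift of $Q_K$ by exactness of $\iota_\xi^*$.
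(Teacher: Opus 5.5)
Your proposal is correct and follows essentially the same route as the paper: lift $q_K$ to a surjection $E_C\twoheadrightarrow \hat Q$ in $\aaa_C$ via Corollary \ref{coropen}, replace $\hat Q$ by its torsion-free quotient using the $C$-torsion theory (which leaves the generic fibre unchanged), and conclude $C$-flatness from Lemma \ref{lmtf}. Your extra steps — taking $\hat E=E_C$, clearing denominators, and passing to the image of $E_C\to\hat Q$ — make explicit a point the paper glosses over, namely that a lift of a surjection over $K$ need not itself be surjective.
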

\begin{proof}
We have to show that given $[\varphi:E_K\twoheadrightarrow Q_K]\in\quot_R^{\sigma}(E)(K)$ there exists a surjective morphism $\hat{\varphi}:E_C\twoheadrightarrow Q_C$ in $\aaa_C$ extending $\varphi$. Corollary \ref{coropen} together with $t$-exactness for the pullback functor induced by an open embedding grants the existence of such a surjective $\hat{\varphi}$. To conclude one needs to show that $Q_C$ is flat. By the assumption on $\aaa_C$ having a $C$-torsion theory, up replacing $Q_C$ with $Q_C^{tf}$ one can assume that $Q_C$ is torsion-free. Thus, we conclude by Lemma \ref{lmtf} that $Q_C$ can be chosen to be flat.
\end{proof}

We report a key lemma from \cite{bayer2021stability}.

\begin{lm}[\cite{bayer2021stability}, Lemma 11.21]\label{lmclosed}
Let $f:X\rightarrow Y$ be a morphism of algebraic spaces such that
\begin{itemize}
  \item $Y$ is Nagata,
  \item $f$ is of finite type,
  \item $f$ satisfies the strong existence part of the valuative criterion of properness with respect to any essentially locally of finite type morphism $\spec(C)\rightarrow Y$, $C$ a DVR.
\end{itemize}
Then $f$ is universally closed.
\end{lm}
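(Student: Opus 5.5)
The idea is to reduce to a single valuation ring, prove the lemma there by a chain of reductions, and then recover universal closedness for an arbitrary closed subset by a topological argument. Since the statement is local on $Y$, I work étale locally and assume $Y$ is an affine Nagata scheme. A base change $Y'\ra Y$ of finite type keeps $Y'$ Nagata. Finite type and the strong existence part of the valuative criterion are both stable under such base change: a DVR mapping essentially of finite type to $Y'$ is essentially of finite type over $Y$. So it suffices to show that $f$ is closed after every finite type base change.

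To reduce further to affine space, note that a general base change is a limit of finite type ones; this is standard, as in \cite[Tag 05JX]{stacks-project}. The remaining point is that a quasi-compact morphism of finite type is universally closed as soon as $X\times\mathbb{A}^n_Y\ra\mathbb{A}^n_Y$ is closed for all $n$. The affine spaces $\mathbb{A}^n_Y$ are again Nagata and of finite type over $Y$. Hence I only need to prove that $f$ itself is closed under the hypotheses.

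For closedness I use the standard criterion for quasi-compact morphisms: $f$ is closed if and only if specializations lift along $f$. Let $T\subset X$ be closed, and take $y'\rightsquigarrow y$ in $Y$ with $y'=f(x')$ for some $x'\in T$. I want a point $x\in T$ with $f(x)=y$. Since $f$ is of finite type, $f(T)$ is constructible by Chevalley. So I may replace $x'$ by a point that is closed in its fibre; then $\kappa(x')/\kappa(y')$ is a finite extension.

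Next I choose a DVR $C$ together with $\spec C\ra \overline{\{y'\}}\subset Y$ sending the generic point to $y'$, the closed point to $y$, and with fraction field $K$ finite over $\kappa(y')$. This is possible because $Y$ is Nagata: the integral closure of the local ring of $\overline{\{y'\}}$ at $y$, taken in a finite extension of $\kappa(y')$, is a finite module. Localizing it at a height one prime over $y$ gives a Noetherian normal local domain of dimension one, hence a DVR. In dimension greater than one I first pass to a suitable chain of specializations, or blow up, so that $y$ has codimension one. The resulting $C$ is essentially of finite type over $Y$. I arrange $K\supset\kappa(x')$, which gives a $K$-point of $T\subset X$ over $\spec K\ra Y$.

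The strong existence part then provides, possibly after a finite extension of $C$ of the allowed type, a lift $\spec C\ra X$. Its closed point maps to $y$, and it lies in $T$ because $T$ is closed and contains the image of the generic point. This proves closedness, and with the reductions above, universal closedness.

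The main obstacle is the construction of the DVR $C$ with all the required properties at once: essentially of finite type over $Y$, dominating the specialization $y'\rightsquigarrow y$, and with fraction field finite over $\kappa(y')$. This is exactly where the Nagata hypothesis enters, through the finiteness of normalization. My first attempt will be to reduce to $\dim\oo_{\overline{\{y'\}},y}=1$ by choosing intermediate points of codimension one, using that $Y$ is Noetherian and catenary-free arguments suffice since specialization chains of length one can be composed.
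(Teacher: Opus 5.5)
The paper does not prove this lemma. It quotes it from \cite{bayer2021stability} and uses it as a black box, so there is no internal argument to compare with. Your proof is correct, and it is the natural argument.

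The reduction via \cite[Tag 05JX]{stacks-project} (and its analogue for algebraic spaces) to closedness of $X\times\mathbb{A}^n_Y\ra\mathbb{A}^n_Y$ is legitimate. The hypotheses are stable under this base change: $\mathbb{A}^n_Y$ is again Nagata, and a DVR essentially of finite type over $\mathbb{A}^n_Y$ is essentially of finite type over $Y$.

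Closedness then reduces to stability of $f(T)$ under specialization. You test this with a DVR obtained by normalizing a one-dimensional local domain in a finite extension. Compared with the general Noetherian valuative criterion, where Krull--Akizuki gives DVRs that need not be essentially of finite type, your route shows exactly where the Nagata hypothesis enters. Finiteness of the integral closure is what puts $C$ in the class of DVRs the hypothesis allows.

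Two points need tightening.

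First, $X$ is only an algebraic space, so fix an \'etale surjection $\pi\colon U\ra X$ from an affine scheme.
\begin{itemize}
\item $f(T)$ is the image of the closed set $\pi^{-1}(T)\subset U$ under the finite type morphism $U\ra Y$ of Noetherian schemes. Hence it is constructible, and so it is closed iff it is stable under specialization. This, rather than lifting of specializations, is what you actually prove.
\item A closed point of the fibre of $\pi^{-1}(T)$ over $y'$ supplies the $K$-point of $T$ with $K$ finite over $\kappa(y')$.
\end{itemize}

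Second, drop the blow-up alternative and state the chain argument explicitly.
\begin{itemize}
\item Refine a chain of primes in $\oo_{\overline{\{y'\}},y}$ to a saturated one. This is possible because the dimension is finite, and then each link $y_i\rightsquigarrow y_{i+1}$ has $\dim\oo_{\overline{\{y_i\}},y_{i+1}}=1$.
\item Each such local ring is a Nagata, hence Japanese, domain. Its integral closure $B$ in $L=\kappa(u_i)$ is finite, semilocal, normal and one-dimensional, and all its maximal ideals lie over the maximal ideal. So $B_{\mathfrak n}$ is a DVR essentially of finite type over $Y$ dominating the link.
\item Apply this link by link, at each stage re-choosing a point $u_i\in\pi^{-1}(T)$ closed in its fibre over $y_i$. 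No catenarity is needed.
\end{itemize}
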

\begin{defi}
For $\phi\in(0,1)$, we denote $\quot_R^{\sigma,\phi}(E)$ the subfunctor of $\quot_R^{\sigma}(E)$ assigning to $\spec(T)\rightarrow\spec (R)$ the set $$\quot_R^{\sigma,\phi}(E)(T):=\{E_T\twoheadrightarrow Q\in\quot_R^{\sigma}(E)(T)\,|\,\phi^+(Q_t)\geq\phi\hbox{ for all }t\in\spec(T)\}.$$
\end{defi}
\begin{prop}\label{propuniclosed}
Assuming properness for $\sigma$, $\quot_R^{\sigma, \phi}(E)$ is an algebraic space of finite type over $\spec(R)$ and the structure morphism $\quot_R^{\sigma, \phi}(E)\rightarrow\spec(R)$ is universally closed.
\end{prop}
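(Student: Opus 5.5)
We follow the strategy of \cite[Proposition 11.22]{bayer2021stability}: first show that $\quot_R^{\sigma,\phi}(E)$ is an open subspace of $\quot_R^{\sigma}(E)$ that is bounded, so of finite type by Proposition \ref{propquot}; then obtain universal closedness from Lemma \ref{lmclosed}, using the valuative criterion already established for $\quot_R^{\sigma}(E)$.

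\emph{Algebraicity and finite type.} By Lemma \ref{lmphase}, for a family $E_T\twoheadrightarrow Q$ the function $t\mapsto\phi^+(Q_t)$ is upper semicontinuous and constructible. Hence the locus where $\phi^+(Q_t)\geq\phi$ is constructible and stable under specialization, hence closed. So $\quot_R^{\sigma,\phi}(E)\subset\quot_R^{\sigma}(E)$ is a closed subfunctor, and Proposition \ref{propquot} shows it is an algebraic space locally of finite type.

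For quasi-compactness we use boundedness. Every quotient $Q_t$ of $E_t$ in $\aaa_t=\cal P_t(0,1]$ has its HN factors with phases in $(0,1]$, and $\phi^+(Q_t)\geq\phi$. Since $Q_t$ is a quotient of $E_t$, the central charge $Z(Q_t)$ lies in the HN polygon of $E_t$, which is bounded uniformly in $t$ because $v(E_t)$ is constant and the support property holds. Consequently the mass of $Q_t$ is bounded, and the support property (Remark \ref{rmksuppprop}(c)) leaves only finitely many possible classes $v(Q_t)$. Condition \ref{condition2} of Theorem \ref{thmhlr}, or Corollary \ref{corinterval}, then says the $Q_t$ range over a bounded family in $\mm_\sigma$. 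The fibre of $\quot\to\mm_\sigma$ over a finite type chart is an open subspace of the finitely presented abelian cone $\underline{\Hom}$ from Proposition \ref{prophom}. This gives finite type over $R$.

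\emph{Universal closedness.} $\spec R$ is Nagata because it is excellent, so Lemma \ref{lmclosed} applies once we verify the strong existence part of the valuative criterion for DVRs $C$ essentially of finite type. Given $E_K\twoheadrightarrow Q_K$, the preceding proposition extends it to $E_C\twoheadrightarrow Q_C$, with $Q_C=(\hat Q)^{tf}$ flat. We use the $C$-torsion theory on $\aaa_C$, which follows from Ind-noetherianity of $\aaa_C$ (itself coming from Definition \ref{defstabf}(a)).

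It remains to check that $\phi^+(Q_{C/\mathfrak m_C})\geq\phi$. This holds because $\phi^+$ increases under specialization \cite[Lemma 2.27]{halpernleistner2025spaceaugmentedstabilityconditions}, so $\phi^+(Q_{\kappa})\geq\phi^+(Q_K)\geq\phi$. The closedness already proved gives the same conclusion.

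\emph{Main obstacle.} The hardest step is the boundedness argument, namely controlling the possible classes of quotients uniformly in $t$. We would bound $|Z(Q_t)|$ via the HN polygon of $E_t$, using Definition \ref{defHNP} and the fact that $E$ is flat with locally constant $v$. If this is insufficient, we would instead bound the mass $m(Q_t)\leq m(E_t)$ through a Quot-type phase inequality.
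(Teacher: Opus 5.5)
Your boundedness step fails, and boundedness is the core of the proposition. For a quotient $E_t\twoheadrightarrow Q_t$ in $\aaa_t$, only $\Im Z(Q_t)\le\Im Z(E_t)$ is controlled. Indeed $Z(Q_t)=Z(E_t)-Z(K)$ for the kernel $K$, and the polygon of Definition \ref{defHNP} only constrains subobjects with $\mu(F)>\mu(E_t)$. So nothing stops $Z(Q_t)$ from running off towards the negative real axis.

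Concretely, take $R=k$, $\mathcal{D}=D^b(\mathbb{P}^1)$ with heart $\mathrm{Coh}(\mathbb{P}^1)$ and $Z=-\deg+i\,\mathrm{rk}$ (the standard proper stability condition). Take $E=\mathcal{O}^{\oplus 2}$ and the quotients $\mathcal{O}^{\oplus 2}\twoheadrightarrow\mathcal{O}(n)$ given by $(x^n,y^n)$. Then $m(\mathcal{O}(n))=|{-n}+i|\to\infty$ while $m(E)=2$. So neither "$Z(Q_t)$ lies in a bounded polygon" nor your fallback $m(Q_t)\le m(E_t)$ holds.

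Worse, $\phi^+(\mathcal{O}(n))\to 1$. Under the condition $\phi^+(Q_t)\geq\phi$ that you are working with, every $\mathcal{O}(n)$ with $n\gg0$ gives a point of $\quot_R^{\sigma,\phi}(E)$. These have pairwise distinct classes, hence lie in distinct connected components. That space is therefore not quasi-compact, so no argument of your shape can give finite type.

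The missing ingredient is an upper bound $\phi(Z(Q_t))\le\phi<1$ on the phase of the central charge of the quotient. This is the inequality the paper's proof actually runs on. Its bounding triangle needs it, and so does Step 1 of Theorem \ref{thmmain}, where quotients with $\phi(W(Q))\le\phi$ are fed into $\quot_R^{\sigma,\phi+\epsilon}$. Combine it with a uniform lower bound: $\phi_E^-$ is constructible on the noetherian space $\spec R$, so it has a minimum $\phi_0>0$. Every HN factor of a quotient of $E_t$ then has phase at least $\phi_0$.

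With both bounds, $Z(Q_t)$ lies in the triangle with vertex $0$ and the two points of phases $\phi$ and $\phi_0$ on the line $\Im=\Im Z(E)$. Each stable factor then has central charge in a bounded parallelogram. The support property gives finite sets $\Gamma\subset\Gamma'\subset\Lambda$, and $\quot_R^{\sigma,\phi}(E)$ maps with finite-type fibres to $\bigsqcup_{v\in\Gamma'}\mm_\sigma([\phi_0,1])(v)$. Your $\underline{\Hom}$ argument is fine for the fibres. The target is of finite type by Corollary \ref{corinterval}, which is where properness enters.

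Because the condition only involves the locally constant $Z(v(Q_t))$, it is open and closed. The valuative criterion therefore passes from $\quot_R^{\sigma}(E)$ without your semicontinuity and specialization arguments.

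Separately, the $C$-torsion theory on $\aaa_C$ does not follow from Definition \ref{defstabf}(a). A sluicing only sandwiches noetherian $t$-structures between $\mathcal{D}(>a)$ and $\mathcal{D}(>b-1)$; it does not make $\mathcal{P}_C(0,1]$ noetherian. You need an argument such as Lemma \ref{lmtorsiontheory} instead.
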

\begin{proof}
The proof follows that of \cite[Lemma 21.21]{bayer2021stability}. Due to Definition \ref{defstabf}(2), since the central charge factors through a locally constant function $v:|\mm|\rightarrow\Lambda$ the forgetful morphism $$\quot_R^{\sigma,\phi}(E)\rightarrow\quot_R^{\sigma}(E)$$
is representable by open immersions, the latter being an algebraic space locally of finite type over $R$. The function $\phi_E^-$, being constructible on  the noetherian topological space $\spec(R)$, has a minimum $\phi_0>0$. As a result, for any $T$-point $E_T\twoheadrightarrow Q$ the phase of every stable factor of the restriction $Q_t$ is $\geq\phi_0$. Observe also that we have $\Im Z(Q_t)\leq\Im Z(E_t)$ and $\phi(Q_t)\geq \phi$ for every $t\in T$ and that this construction is compatible with base change. This means that the central charge of any stable factor of $Q_t$, for all $t\in T$, lies in the parallelogram in the complex plane with angles $\pi\phi_0$ and $\pi$ and vertices $0$ and $z$, defined as the intersection between the ray \{$\R_{\geq0} e^{i\pi\phi}\}$ and the line $\{\Im(z)=\Im(Z(E))\}$. By the support property and Remark \ref{rmksuppprop}(b), there is a finite set $\Gamma\subset\Lambda$ for the possible classes of the stable factors of $Q_t$ for $t\in T$. Since the central charge of $Q_t$ lies in the triangle with vertices $0,\,z$, and $w$, where $w$ is the complex number of phase $\phi_0$ such that $\Im(w)=\Im(Z(E))$, the set of sums of classes in $\Gamma$ with central charge lying in this triangle is another finite set $\Gamma'\subset\Lambda$. Thus, the forgetful morphism $\quot_R^{\ssigma,\phi}(E)\rightarrow\mm_{\ssigma}$ actually factors as $$\quot_R^{\sigma,\phi}(E)\xrightarrow{\iota}\bigsqcup_{v\in\Gamma'}\mm_{\sigma}^{(\phi_0,1]}(v)\rightarrow\mm_{\sigma}.$$
As in Proposition \ref{propquot}, the morphism $\iota$ is of finite type, and since $\mm_{\sigma}{(\phi_0,1]}(v)$ is finite type over $R$ for each $v\in\Lambda$ by Theorem \ref{thmhlr}(2), we get that $\quot_R^{\sigma,\phi}(E)$ is such.
The morphism $\quot_R^{\sigma}(E)\rightarrow\spec(R)$ satisfies the existence part of the valuative criterion for properness, and since $\sigma$ is locally constant on $\mm$ so does $\quot_R^{\sigma, \phi}(E)\rightarrow\spec(R)$. Finally, we conclude by  applying Lemma \ref{lmclosed} to the morphism $\quot_R^{\sigma, \phi}(E)\rightarrow\spec(R)$.
\end{proof}
\subsection{Relation with Harder--Narasimhan structures}
In view of Theorem \ref{thmmain}, we want to specify which hypotheses on the datum of a locally constant stability condition grant the existence of a sluicing on $\dd_C$, for $C$ a DVR over $R$ verifying item (a) of Definition \ref{defstabf}. To be more precise, we want to prove the following.
\begin{thm}\label{thmluicing}
    Let $\dd$ be a smooth and proper linear category over a Noetherian $k$-algebra $R$ of finite global dimension. Let $C$ be a DVR whose fraction field $K$ and residue field $C/\mathfrak{m}_C$ are residue fields of $R$. Suppose we have two slicings
    \begin{equation}\label{eqhn}
    \{\cal P_K(\phi)\}_{\phi\in\R} \hbox{ on }\dd_K\quad\hbox{and}\quad\{\cal P_{C/\mathfrak{m}_C}(\phi)\}_{\phi\in\R}\hbox{ on }\dd_{C/\mathfrak{m}_C},
    \end{equation}
    a locally constant $v:|\mm|\ra\Lambda$, and $Z:\Lambda\ra\C$ as in Proposition \ref{propsluicingstab} such that openness flatness and openness of stability hold. Suppose moreover that there exists a heart of a $t$-structure $\cal A_C$ on $\dd_C$ whose restriction to $\dd_K$ and $\dd_{C/\mathfrak{m}_C}$ equals $\cal P_K(0,1]$ and $\cal P_{C/\mathfrak{m}_C}(0,1]$ respectively. Then there exists a unique slicing $\{\cal P_C(\phi)\}_{\phi\in\R}$ on $\dd_C$ which restricts to the ones in \eqref{eqhn}.
\end{thm}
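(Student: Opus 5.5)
We construct the slicing on $\dd_C$ from the given heart $\aaa_C$ and the central charge, following the strategy of \cite[Part III]{bayer2021stability}, where a Harder--Narasimhan structure on a curve is built from fibrewise stability conditions. First we define a stability function on $\aaa_C$. For $E\in\aaa_C$ we use the torsion pair $(\aaa_{C,tor},\aaa_{C,tf})$, which exists by the noetherianity of the base-changed $t$-structure (Proposition \ref{lmbasechange} and the subsequent remark), and we set $Z_C(E):=Z(v(E_K))$. Torsion objects have $E_K=0$, so they receive central charge zero. We therefore do not look for Harder--Narasimhan filtrations of torsion objects in $\aaa_C$ itself. Instead we declare objects of $\aaa_{C,tor}$ to lie in the filtered piece determined by their support: they are pushforwards from $\dd_{C/\pi^n}$, and by Lemma \ref{lmcurve} they have finite filtrations with graded pieces pushed forward from $\dd_{C/\mathfrak m_C}$. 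Their phases are then measured by $\cal P_{C/\mathfrak m_C}$. Concretely, we will define $\dd_C(>\phi)$ as the full subcategory of $E\in\dd_C$ with $E_K\in\cal P_K(>\phi)$ and $E_{C/\mathfrak m_C}\in\cal P_{C/\mathfrak m_C}(>\phi)$, and define $\dd_C(\le\phi)$ analogously. Since base change to $K$ and to the closed point is (right) $t$-exact for the heart $\aaa_C$, the shifted hearts $\aaa_C[n]$ give these subcategories for integer $\phi$.

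The central step is to show that for every $\phi$ the pair $(\dd_C(>\phi),\dd_C(\le\phi))$ is a $t$-structure, which amounts to producing truncation triangles. For $E\in\aaa_C$ torsion free, hence $C$-flat by Lemma \ref{lmtf}, we take the Harder--Narasimhan filtration of $E_K$ with respect to $\cal P_K$ and extend each step to $\aaa_C$ using Corollary \ref{coropen}. We saturate each extended subobject, replacing it by its preimage of torsion in the quotient, so that all subquotients remain torsion free and hence flat. This yields a filtration whose generic fibre is the HN filtration. The difficulty is that the special fibres of the subquotients need not be $\cal P_{C/\mathfrak m_C}$-semistable of the correct phase. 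We correct this by the standard elementary-modification procedure: compare the first HN factor of the special fibre with the destabilizing quotient, and replace $E_1$ by the kernel of $E_1\to i_*(\text{quotient})$, or enlarge it symmetrically. We make the procedure terminate using Proposition \ref{propuniclosed}. The relevant Quot spaces $\quot^{\sigma,\phi}(E)$ are universally closed and of finite type, and they satisfy the valuative existence property proved above. Hence the limit quotient exists and is unique, and the support property guarantees that only finitely many numerical classes occur.

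Once truncation triangles exist for torsion-free objects and for torsion objects (the latter via $\cal P_{C/\mathfrak m_C}$ and the closure properties of Corollary \ref{corsupport}), the extension property of $t$-structures and the torsion pair give them for all of $\aaa_C$, and then for all of $\dd_C$ through the cohomology filtration. Orthogonality, namely $\Hom(\dd_C(>\phi),\dd_C(\le\phi))=0$, follows from the projection formula (Lemma \ref{lmprojformula}): $\hhom_C(A,B)$ is a perfect $C$-complex whose fibres at $K$ and at the closed point vanish in degree $0$ and below. Nakayama's lemma then gives $H^0=0$. Axioms (a) and (c) of a sluicing, and the restriction to the given slicings, are immediate from the fibrewise definition. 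For axiom (b) we adapt Remark \ref{rmksluicing}(iii): we choose a rational perturbation of $Z$ which, by openness of stability and local constancy of $v$, changes neither $\aaa_C$ nor the fibrewise data in a neighbourhood, and then use Proposition \ref{lmbasechange} to obtain noetherian intermediate hearts.

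Uniqueness follows because a slicing on $\dd_C$ restricting to the given ones has $\cal P_C(>\phi)$ contained in our $\dd_C(>\phi)$ and $\cal P_C(\le\phi)$ contained in our $\dd_C(\le\phi)$, since base change is exact. Two $t$-structures related by such inclusions on both sides must coincide. We expect the main obstacle to be the termination of the elementary-modification process, that is, producing flat HN filtrations over $C$ whose special fibres are HN-compatible. This is exactly where openness of flatness and stability and the properness of Quot from Proposition \ref{propuniclosed} must enter.
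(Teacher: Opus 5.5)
Your route differs from the paper's. The paper never writes down the $t$-structures on $\dd_C$ directly. Instead it proves that the hybrid charge $Z_C=(Z_K,Z_{tor})$ has the HN property on $\aaa_C$, using the $C$-torsion theory of Lemma \ref{lmtorsiontheory}, the torsion HN filtrations of Proposition \ref{lmtor} and semistable reduction (Theorem \ref{thmssred}), all combined in Corollary \ref{corHN}. It then turns this into a slicing through the Bridgeland-type correspondence of Proposition \ref{propheart}. A fibrewise construction with Langton-type truncations could be a viable alternative, but as written your central object is not a $t$-structure.

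\textbf{The coaisle is wrong.} Derived restriction $i_p^*$ to the closed point $p=\spec(C/\mathfrak{m}_C)$ is only right $t$-exact, so it detects the aisle $\dd_C(>\phi)$ but not the coaisle. Take $0\neq F\in\mathcal{P}_{C/\mathfrak{m}_C}(\phi)$ with $\phi\in(0,1]$.
\begin{itemize}
\item Then $i_{p*}F\in\aaa_C$. By Lemma \ref{lmcurve}, $H^{-1}(i_p^*i_{p*}F)\cong F$, with cohomology taken in $\mathcal{P}_{C/\mathfrak{m}_C}(0,1]$. So $i_p^*i_{p*}F$ receives a nonzero map from $F[1]$, which has phase $\phi+1$, and $i_{p*}F\notin\dd_C(\le\phi)$ in your sense.
\item In particular $\aaa_C\not\subset\dd_C(\le 1)$, so your claim that the shifted hearts give these subcategories for integer $\phi$ fails.
\item Moreover $\Hom(A,i_{p*}F)=\Hom(i_p^*A,F)=0$ for every $A\in\dd_C(>\phi)$. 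A truncation triangle $A\to i_{p*}F\to B$ would therefore make $i_{p*}F$ a direct summand of some $B\in\dd_C(\le\phi)$, which is impossible.
\item Your uniqueness argument uses the same false exactness of base change to get $\mathcal{P}_C(\le\phi)\subset\dd_C(\le\phi)$.
\end{itemize}

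\textbf{How to repair it.} Test the coaisle with $i_p^!\cong i_p^*[-1]$, which is left $t$-exact because $i_{p*}$ is $t$-exact. The correct condition is $E_K\in\mathcal{P}_K(\le\phi)$ and $i_p^*E\in\mathcal{P}_{C/\mathfrak{m}_C}(\le\phi+1)$; equivalently, take the right orthogonal of your aisle. With this correction your uniqueness argument goes through. Orthogonality, however, no longer follows from Nakayama alone. The closed fibre now only kills $H^{<0}$ of $\hhom_C(A,B)$. You must add that $H^0$ of a perfect complex in $D^{\geq 0}$ over a DVR is torsion-free, hence zero because it vanishes over $K$.

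\textbf{Inputs asserted but not proved.}
\begin{itemize}
\item The torsion pair $(\aaa_{C,tor},\aaa_{C,tf})$, which you need to saturate lifts, cannot be taken from Proposition \ref{lmbasechange}. In the theorem $\aaa_C$ is an arbitrary heart on $\dd_C$, not a base change of a noetherian heart on $\dd$. It need not be noetherian; compare Proposition \ref{propnoethcurve}, which requires $Z$ to be rational. This is why the paper proves the $C$-torsion theory separately in Lemma \ref{lmtorsiontheory}, using openness of flatness.
\item Having $t$-structures for all $\phi$ does not yet give a slicing. You still need truncations of torsion objects supported on $C/\pi^n$. Knowing the graded pieces $\pi^jT/\pi^{j+1}T$ and the closure properties of Corollary \ref{corsupport} does not by itself produce them; the paper uses the induction of Proposition \ref{lmtor}.
\item You also need every object of $\dd_C$ to have a finite HN filtration. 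The paper gets this from the HN property of $Z_C$ together with the support property, and your sketch does not address it.
\end{itemize}
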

Recall first that by Remark \ref{rmksuppprop} a locally constant $v:|\mmm|\to\Lambda$ extends naturally to a group homomorphism $v:K_0(\dd_\kappa)\to\Lambda$ for each residue field $\kappa$ of $R$. Let $p:=\spec(\kappa)\rightarrow\spec R$ be the corresponding point, and observe that for any infinitesimal thickening $W$ of $p$ there is an isomorphism of Grothendieck groups $K_0(\dd_W)\xrightarrow{\sim}K_0(\dd_p).$ In particular, there is an immediate extension of $v$ given by
\begin{align*}
    v_W:K_0&(\dd_W)\to\Lambda\\
    E&\mapsto\ell(W)\,v(E),
\end{align*}
where $\ell(W)$ is the length of $W$. In particular, if $E=i_{W*}(E')$, we define $v_{tor}(E):=v_W(E')$. Thus, if $C$ is a DVR over $R$ such that its fraction field $K$ and its residue field $C/\mathfrak{m}_C$ are both residue fields of $R$ we have two induced group homomorphisms 
$$v_K:K_0(\dd_K)\to \Lambda\quad\text{and}\quad v_{tor}:K_0(\dd_{tor})\to\Lambda,$$
such that for every $E\in\dd_C$ and for each infinitesimal thickening $W$ of $p:=\spec(C/\mathfrak{m}_C)$ 
$$v_K(i_K^*E)=\frac{1}{\ell(W)}v_{tor}(i_{W*}i_W^* E).$$
Let $\aaa_C$ be the heart of a nondegenerate $t$-structure on $\dd_C$. Define the following central charge $$Z_C(E):=\begin{cases}
Z(v_K(i_K^*(E)) & \text{ if } i_K^*(E)\neq0\\
Z(v_{tor}(E)) & \text{ otherwise}.
\end{cases}$$
As a result, for $0\neq E\in\dd_C$ we have a notion of \emph{slope} given by 
$$\mu_C(E):=\begin{cases}
    -\frac{\Re(Z_C(E))}{\Im(Z_C(E))}&\text{ if } \Im(Z_C(E))\neq 0 \\
    +\infty&\text{ otherwise},
\end{cases}$$
and that of a \emph{phase} $\phi_C(E):=\frac{1}{\pi}\arg(Z_C(E))\in(0,1]$.
\begin{defi}
    An object $E\in\aaa_C$ is said to be $C$-\emph{semistable} if for all proper subobjects $0\neq F\hookrightarrow E$ we have $\mu_C(F)\leq\mu_C(E)$. 
\end{defi}
\begin{rmk}
    By the see-saw property of $Z_C$ an object $0\neq E\in\aaa_C$ is $C$-semistable if and only if for all proper $0\neq A\hookrightarrow E$ we have $\mu_C(A)\leq\mu_C(E/A)$.
\end{rmk}
\begin{defi}
    The function $Z_C$ has the Harder--Narasimhan property (HN property) if every $0\neq E \in \aaa_C$ admits a unique filtration $$0=E_0\hookrightarrow E_1\hookrightarrow\dots\hookrightarrow E_m=E$$
    such that for every $i=1,\dots,m$ the object $E_i/E_{i-1}$ is $C$-semistable and $$\phi_C(E_{i+1}/E_i)>\phi_C(E_i/E_{i-1}).$$
\end{defi}
\begin{prop}\label{propheart}
The datum of a local heart $\aaa_C$ of a bounded $t$-structure on $\dd_C$ together with a central charge $Z_C=(Z_K,Z_{tor})$ with the HN property on $\aaa_K$ and $\aaa_{tor}$ respectively is equivalent to that of a local slicing $\{\cal P_C(\phi)\}_{\phi\in\R}$ on $\dd_C$ such that for $0\neq E\in\cal P_C(\phi)$ we have that either $i_K^*(E)\neq 0$ and $Z_K(i_K^*(E))\in\R_{>0}e^{i\pi\phi}$ or $E\in \dd_{C\mhyphen tor}$ and $Z_{tor}(E)\in\R_{>0}e^{i\pi\phi}$.
\end{prop}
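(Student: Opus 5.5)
The plan is to adapt the proof of Bridgeland's Lemma \ref{lmslicingheart} (\cite[Proposition 5.3]{bridgeland2007stability}) to the relative central charge $Z_C$. The point to watch throughout is that $Z_C$ is not additive on $K_0(\dd_C)$. It is additive on short exact sequences in $\aaa_C$ whose terms are all torsion, and, via $v_K\circ i_K^*$, on those whose terms all have nonzero generic fibre. In a mixed sequence $0\ra T\ra E\ra F\ra 0$ with $T$ torsion, one has $Z_C(E)=Z_C(F)$. I will therefore work with the modified notion of a \emph{local} slicing, in which the phase of $E$ is read off from $Z_K$ or from $Z_{tor}$ according to whether $i_K^*E\neq 0$, and I will check that every step only uses additivity inside one of these two regimes.

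\emph{From $(\aaa_C,Z_C)$ to a slicing.} For $\phi\in(0,1]$, let $\cal P_C(\phi)$ be the $C$-semistable objects of $\aaa_C$ of phase $\phi$, together with $0$. Extend to all $\phi$ by $\cal P_C(\phi+n):=\cal P_C(\phi)[n]$. The steps are as follows.
\begin{enumerate}
\item \emph{Hom-vanishing.} Take $A\in\cal P_C(\phi_1)$ and $B\in\cal P_C(\phi_2)$ with $\phi_1>\phi_2$ in $(0,1]$, and a nonzero $f:A\ra B$. Compare the phases of $\im f$ as a quotient of $A$ and as a subobject of $B$, using the see-saw remark.
\begin{itemize}
\item If $\im f$ is torsion, then $B$ has a torsion subobject, and local semistability of a non-torsion $B$ forces its torsion part to have phase at most $\phi_2$.
\item I plan to use Corollary \ref{corsupport} to see that torsion subobjects and quotients stay torsion. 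This is what makes the dichotomy well behaved.
\end{itemize}
For the remaining cases (shifts, and $\phi_1-\phi_2>1$) the vanishing follows from the $t$-structure axioms, since $\aaa_C$ is a heart.
\item \emph{HN filtrations.} Every object of $\dd_C$ has a finite filtration by its $\aaa_C$-cohomology objects, because the $t$-structure is bounded. Refine each cohomology object by its $Z_C$-HN filtration, which exists by hypothesis. Concatenate these refinements, with phases shifted by the cohomological degree, exactly as in Bridgeland's argument.
\item \emph{Compatibility with $Z_K$, $Z_{tor}$.} For $E\in\cal P_C(\phi)$, the required ray condition is immediate from the definition of $Z_C$.
\end{enumerate}

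\emph{From a slicing to $(\aaa_C,Z_C)$.} Conversely, set $\aaa_C:=\cal P_C(0,1]$. The proof that this is the heart of a bounded $t$-structure is Bridgeland's argument verbatim, since it only uses Hom-vanishing and HN filtrations. The central charge conditions are given by hypothesis. HN filtrations for $Z_C$ in $\aaa_C$ are obtained by restricting the slicing filtration; uniqueness follows from Hom-vanishing. Locality is the remaining requirement: for each open $U\subset\spec C$ the restricted subcategories must form a slicing on $\dd_U$. For $U=\spec K$ this means $i_K^*\cal P_C(\phi)\subset\cal P_K(\phi)$ after discarding torsion. I would check this using the exactness of $i_K^*$ (Lemma \ref{lmloc}) and Corollary \ref{coropen}, which lifts morphisms from $\dd_K$, so that $\aaa_C$ restricts to a heart on $\dd_K$ and semistability descends. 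Finally, I would show that the two constructions are mutually inverse. Both directions are determined by the set of semistable objects of phase in $(0,1]$, so this is formal.

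\emph{Main obstacle.} I expect the main obstacle to be Hom-vanishing and the uniqueness of HN filtrations in the mixed case. Here $E$ has nonzero generic fibre but contains a torsion subobject $T$ with $Z_C(E/T)=Z_C(E)$, so the usual see-saw inequality degenerates into an equality. My first attempt would be to show that local semistability forces $T=0$, or that it forces $\phi_{tor}(T)\le\phi_K(E)$. I would then use the torsion pair $(\aaa_{C,tor},\aaa_{C,tf})$ (cf. Definition \ref{defCtorsion}) to split off torsion canonically at the start of the HN filtration. If this works, the HN filtration of any $E\in\aaa_C$ is the torsion filtration pieces ordered by phase, followed by the HN filtration of $E^{tf}$. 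That description also settles uniqueness.
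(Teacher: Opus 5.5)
Your skeleton matches the paper's proof, which likewise transports Bridgeland's Proposition 5.3. There, for $\phi\in(0,1]$, $\cal P_C(\phi)$ is the set of $C$-semistable objects of phase $\phi$, and in the converse direction $\aaa_C=\cal P_C(0,1]$. The gap is in the mixed case, which you misdiagnose.

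\textbf{The degenerate case and Hom-vanishing.} A torsion subobject $T$ of a non-torsion $E$ is harmless: $\mu_C(T)\le\mu_C(E/T)=\mu_C(E)$ is a genuine constraint. The degenerate case is a non-torsion $A'\subset E$ with torsion cokernel. There $Z_C(A')=Z_C(E)$, so the subobject definition of $C$-semistability says nothing about torsion quotients, and Hom-vanishing fails. Here is an example.
\begin{itemize}
\item Take $A\in\aaa_C$ torsion-free with $i_K^*A$ semistable of phase $\phi_1$.
\item Suppose $i_\kappa^*A$, which has the same class by local constancy of $v$, is unstable, with last HN factor $Q$ of phase $\phi_2<\phi_1$.
\item Every nonzero subobject of $A$ has nonzero generic fibre, so $A$ is $C$-semistable in the subobject sense.
\item $i_{\kappa*}Q$ is $C$-semistable of phase $\phi_2$.
\item Yet $A\twoheadrightarrow i_{\kappa*}i_\kappa^*A\twoheadrightarrow i_{\kappa*}Q$ is nonzero.
\end{itemize}
The fix is to take the see-saw condition, $\mu_C(A')\le\mu_C(E/A')$ for all $0\neq A'\subsetneq E$, as the \emph{definition}. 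It is strictly stronger exactly in this case. It forces torsion quotients of a non-torsion semistable object to have phase at least its own, so torsion-free semistable objects have semistable special fibre. With this definition, your two-sided estimate on the phase of $\im f$ closes in every case.

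\textbf{Your HN filtration description.} Your proposed resolution (torsion pieces first, then the HN filtration of $E^{tf}$) is wrong for this structure. Torsion objects of $\cal P_C(\phi)$ carry their $Z_{tor}$-phase in $(0,1]$, so they cannot all go first.
\begin{itemize}
\item Let $E=T\oplus F$, with $T$ torsion semistable of phase $0.2$ and $F$ torsion-free with both fibres semistable of phase $0.8$. The HN filtration is $F\subset E$, with the torsion last.
\item Lifting the HN filtration of $i_K^*E$ by saturation produces pieces whose special fibres need not be semistable. By the example above, such pieces are not in $\cal P_C$.
\end{itemize}
Existence of $Z_C$-HN filtrations on $\aaa_C$ needs Langton-type semistable reduction and interleaving of torsion pieces by phase. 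It is not a formal consequence of the HN property on $\aaa_K$ and $\aaa_{tor}$. The paper proves it separately, in Theorem \ref{thmssred} and Corollary \ref{corHN}, using the torsion pairs $(\cal T^\beta,\cal F^\beta)$ in $\aaa_{tor}$, and only then applies this proposition.

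So your Step 2 is sound only if the HN property on all of $\aaa_C$ is the explicit hypothesis, as the paper's appeal to Bridgeland tacitly assumes. Uniqueness then follows from Hom-vanishing, and no torsion-splitting argument is needed. Also, the locality check belongs to the heart-to-slicing direction; in the converse direction the slicing is local by assumption.
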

\begin{proof}
    A local slicing $\cal P_C$ on $\dd_C$ gives a $C$-local heart $\aaa_C:=\cal P_C(0,1]$. Clearly, the pair $Z_C:=(Z_K,Z_{tor})$ defines a stability function on $\aaa_C$. To conclude, it suffices to observe that the existence of HN filtration with respect to the slicing $\cal P_C$ gives the HN property for $Z_C$. Let $E\in\cal P_C(\phi)$ for $\phi\in(0,1]$. The existence of a maximal destabilizing subobject for $E$ with respect to $Z_C$, together with the locality of $\cal P_C$, would contradict the existence of HN filtration in either $\aaa_K$ or $\aaa_{tor}$. Conversely, for $\phi\in(0,1]$ define $\cal P_C(\phi)$ to be the full subcategory of $Z_C$-stable objects $E$ of phase $\phi_C(E)=\phi$ and $$\cal P_C(m+\phi):=\cal P_C(\phi)[m]$$
    for $m\in\Z$. The remaining part of the proof goes exactly as in \cite[Lemma 5.3]{bridgeland2007stability}
\end{proof}
\begin{rmk}
The data in Proposition \ref{propheart} is called a \emph{Harder--Narasimhan} structure, defined originally in \cite{bayer2021stability}. It is part of the definition of a \emph{flat family of stability conditions} introduced in that work. The approach is quite similar to that of \cite{halpernleistner2025spaceaugmentedstabilityconditions} - it will turn out at the end of this section that the two are equivalent.
\end{rmk}
We have the following version of Langton's semistable reduction theorem. Observe that no assumptions are needed other than openness of stability and generic flatness for the heart $\aaa_C$. In particular, we do not assume our stack $\mm_\sigma$ to admit a $\Theta$-stratification by Harder--Narasimhan filtrations yet.
\begin{thm}[Semistable reduction]\label{thmssred}
    Let $C$ be a DVR over $R$. Denote $K$ and $\kappa=C/\mathfrak{m}_C$ its field of fractions and its residue field respectively, and let $(\pi)=\mathfrak{m}_C$ be a uniformizer. Assume further that openness of stability and generic flatness holds for $\aaa_C$. Let $E_K$ be a semistable object in $\aaa_K$. There exists a $C$-semistable object $E\in\aaa_C$ such that $i_K^*(E)\cong E_K$.
\end{thm}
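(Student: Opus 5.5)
We follow the classical Langton argument, in the form given in \cite{bayer2021stability}, adapted to the heart $\aaa_C$.

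\textbf{Initial lift.} By Corollary \ref{coropen} applied to the open immersion $\spec K\hookrightarrow\spec C$, the object $E_K$ lifts to some object of $\aaa_C$. Replacing it by its torsion-free quotient, which exists by the $C$-torsion theory (the base-changed $t$-structure is Ind-noetherian), we obtain a torsion-free lift $E^{(0)}\in\aaa_C$ with $i_K^*E^{(0)}\cong E_K$. By Lemma \ref{lmtf}, $E^{(0)}$ is $C$-flat. Its restriction $E^{(0)}_\kappa$ therefore lies in $\aaa_\kappa=\cal P_\kappa(0,1]$, and by local constancy of $v$ it satisfies $v(E^{(0)}_\kappa)=v(E_K)$. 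Hence $Z_C$ is constant along the family.

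\textbf{Reduction to the special fibre.} For a $C$-flat, torsion-free $E$, every torsion subobject is zero. We will show that a subobject $F\hookrightarrow E$ with $i_K^*F$ destabilizing contradicts the semistability of $E_K$, using the saturation argument together with the fact that $\mu_C(F)=\mu_K(i_K^*F)$. Hence $C$-semistability of $E$ is equivalent to $\sigma_\kappa$-semistability of $E_\kappa$. If $E^{(0)}_\kappa$ is semistable, we are done.

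\textbf{Elementary modifications.} Suppose instead that $E^{(0)}_\kappa$ is unstable. Let $B\subset E^{(0)}_\kappa$ be its maximal destabilizing subobject, the first HN factor in $\aaa_\kappa$, and set $G:=E^{(0)}_\kappa/B$. Define $E^{(1)}:=\ker(E^{(0)}\twoheadrightarrow i_{\kappa*}G)$.

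\begin{itemize}
\item By Corollary \ref{corsupport}, $i_{\kappa*}G$ is torsion, so $i_K^*E^{(1)}\cong E_K$.
\item As a subobject of a torsion-free object, $E^{(1)}$ is torsion-free, hence flat.
\item Lemma \ref{lmcurve} applied to $I=(\pi)$ gives the exact sequence $0\to G\to E^{(1)}_\kappa\to B\to 0$.
\end{itemize}

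Iterating this produces a chain $E^{(0)}\supset E^{(1)}\supset\cdots$. The standard Langton invariants are the phase $\phi^+$ and $\Im Z$ of the maximal destabilizing subobject of $E^{(n)}_\kappa$, and $\HNP(E^{(n)}_\kappa)$ from Definition \ref{defHNP}. One checks that these are non-increasing along the chain. By the support property (Remark \ref{rmksuppprop}(c)), the classes of HN factors of the $E^{(n)}_\kappa$ lie in a finite set, since their central charges lie in a bounded parallelogram determined by $Z(v(E_K))$. Consequently the invariants take only finitely many values and must stabilize.

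\textbf{Main obstacle.} The hard step is excluding an infinite stationary chain. In that situation the maximal destabilizing subobjects $B_n$ eventually satisfy $B_{n+1}\cong B_n$, and the composite maps $B_{n+1}\to G_n$ vanish. Following Langton, we would assemble the kernels into a compatible system of quotients $E^{(0)}_{C/\pi^n}\twoheadrightarrow Q_n$ that are flat over $C/\pi^n$; flatness is verified via Lemma \ref{lmlocalflatness2}. This yields a point of $\quot^{\sigma}_R(E^{(0)})$ over $\hat C$. Using Assumption \ref{assexistence} (Grothendieck existence) together with the algebraicity of the Quot space from Proposition \ref{propquot}, this system algebraizes to a quotient $E^{(0)}_{\hat C}\twoheadrightarrow Q$ whose generic fibre destabilizes $E_K\otimes\hat K$. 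Generic semistability is preserved under field extension, because openness of stability and flat pullback are $t$-exact, so this contradicts the semistability of $E_K$.

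The delicate point is algebraization without a $\Theta$-stratification. We plan to rely only on the representability of $\underline{\Hom}$ from Proposition \ref{prophom} and on the openness of flatness and of stability.
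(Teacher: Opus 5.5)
Your proposal is essentially the paper's argument: Langton-type elementary modifications at the closed point, termination because the support property leaves only finitely many possible Harder--Narasimhan data, and, in the stationary case, a compatible system of quotients flat over $C/\pi^n$ that the Quot space turns into a destabilizing quotient over a DVR extension of $C$. The differences are minor: you modify along the maximal destabilizing subobject instead of the minimal destabilizing quotient, and you first pass to a torsion-free (hence flat) lift, which the paper needs for $\quot$ and for the Harder--Narasimhan filtration of the special fibre but never constructs.

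Two corrections are needed. First, the torsion-free quotient should be justified by Lemma \ref{lmtorsiontheory}, whose proof does not use this theorem. Ind-noetherianity is not available, since the heart $\aaa_C$ of a sluicing need not be noetherian. Second, your saturation argument shows that every torsion-free lift of a semistable $E_K$ is already $C$-semistable for $Z_C$ as defined. It does not show that $C$-semistability is equivalent to semistability of $E_\kappa$, so the modifications are only needed for the stronger, intended conclusion that the special fibre is semistable.
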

\begin{proof}
    By Lemma \ref{lmloc} there exists $F\in\dd_C$ such that $i_K^*F\cong E$, and by $t$-exactness of $i_K^*$ the object $F$ must lie in $\aaa_C$. Let $Q$ be the minimal destabilizing quotient of $i_\kappa^*(F)$, which exists by hypothesis. Namely, we have a short exact sequence 
    \begin{equation}\label{eqlang}
    0\rightarrow i_{\kappa*}D\rightarrow i_{\kappa*}i_\kappa^*F\rightarrow i_{k*}Q\rightarrow 0
    \end{equation}
    in $\aaa_C$. Define $F':=\ker(F\twoheadrightarrow i_{\kappa*}i_\kappa^*F\twoheadrightarrow i_{\kappa*}Q)\subset F\in\aaa_C$, which is well defined by $t$-exactness of $i_\kappa$. Observe furthermore that $i_K^*F'\cong i_K^*F\cong E$ and $F/F'\cong i_{\kappa*}Q$. Since $F/\pi F\cong i_{\kappa*}i_\kappa^* F$ and $\pi\cdot F\subset F'\subset F$, \eqref{eqlang} can be rewritten as $$0\rightarrow i_{\kappa*} D\rightarrow F/\pi\cdot F\rightarrow F/F'\rightarrow0,$$
    which in turn implies $i_{\kappa*}D\cong F'/\pi\cdot F$. Moreover, since $F'/\pi\cdot F'\cong i_{\kappa*}i_{\kappa}^*F'$ we have a surjection $i_{\kappa*}i_{\kappa}^*F'\twoheadrightarrow F'/\pi\cdot F\cong i_{\kappa*}D$, hence the following short exact sequence 
    \begin{equation}\label{eqlang2}
        0\rightarrow i_{\kappa*}Q\rightarrow i_{\kappa*}i_{\kappa}^*F'\rightarrow i_{\kappa*}D\rightarrow 0.
    \end{equation}
    Again, we have the short exact sequence $$0\ra i_{\kappa*}D'\ra i_{\kappa*}i_\kappa^*F'\ra i_{\kappa*}Q'\ra0,$$
    where $D'$ is the last term in the HN filtration of $i_{\kappa}^*E'$. 
    
    Consider the Harder--Narasimhan polygons $\HNP(i_\kappa^*F)$ and $\HNP(i_\kappa^*F')$. We first claim that there is an inclusion $\HNP(i_\kappa^*F')\subset\HNP(i_\kappa^*F)$. Indeed, if $S\hookrightarrow i_\kappa^* F'$ is a subobject, we have a commutative diagram as follows
    $$\begin{tikzcd}
        0\arrow[r]&T\arrow[r,hook]\arrow[d,hook]&S\arrow[r,two heads]\arrow[d,hook]&P\arrow[r]\arrow[d,hook]&0\\
        0\arrow[r]&D'\arrow[r,hook]& i_\kappa^*F'\arrow[r, two heads]& Q'\arrow[r]&0.
    \end{tikzcd}
    $$
    Since $Q'$ is a subobject of $i_\kappa^*F$, $Z_\kappa(P)\in\HNP(i_\kappa^*F)$. Moreover, since the object $D'$ is semistable, $\mu_\kappa(T)\leq\mu_\kappa(D')$. Observing that $Z_\kappa(S)=Z_\kappa(P)+Z_\kappa(T)$ we have that $Z_\kappa(S)\in\HNP(i_\kappa^*F)$, hence $\HNP(i_\kappa^*F')\subset\HNP(i_\kappa^*F)$. We claim next that if $\HNP(i_\kappa^*F')=\HNP(i_\kappa^*F)$ then the short exact sequence in \eqref{eqlang2} is split. Let $K:=Q\times_{i_\kappa^*F'} D'$ be the kernel of the composition $Q\ra i_\kappa^* F'\ra Q'$, and observe that it sits in the short exact sequence $$0\ra K\ra D'\ra D'/K\ra0.$$ 
    Now, if $K\neq 0$, by semistability of $Q$ we have the inequalities $$ \mu_\kappa(K)\leq\mu_\kappa(Q)<\mu_\kappa(i_\kappa^* F)=\mu_\kappa(i_\kappa^*F')\leq\mu_\kappa(D').$$ 
    By the see-saw property for $Z_\kappa(-)$, it follows that $\mu_\kappa(D'/K)\geq\mu_\kappa(D')$. By construction, from $Z(D'/K)\in\HNP(i_\kappa^*F')$ one gets that $\Re(Z(D'/K))<\Re(Z(D'))$. Thus, $\mu_k(K)>\mu_k(Q)$, contradicting the stability of $Q$. As a result, $K=0$. This observation, together with the fact that $Z(Q)=Z(Q')$, implies that $Q\cong Q'$ is an isomorphism splitting \eqref{eqlang2}. By the support property, there are finitely many Harder--Narasimhan polygons contained in $\HNP(i_\kappa^*F)$. In particular, if $i_\kappa^*F'$ is not stable, up to iterating the argument, we end up with an infinite filtration of $F$
    $$\dots\subset F^{(n)}\subset\dots\subset F^{(2)}\subset F^{(1)}\subset F^{(0)}= F$$
    whose terms $i_\kappa^*F^{(n)}$ have the same Harder--Narasimhan polygon for all $n\geq0$. As shown in the previous discussion, each restriction is split, i.e. $i_\kappa^*F^{(n)}\cong Q\oplus D$ for every $n\geq0$, so every quotient $F/F^{(n)}$ satisfies $i_\kappa^*(F/F^{(n)})\cong Q$. Looking at the following commutative diagram with exact rows
    \begin{equation}\label{eqlangton}
    \begin{tikzcd}
& (F/F^{(n+1)}) \otimes_{C/\pi^{n+1}} \pi^n \arrow[r] \arrow[d, "\cong"] 
    & F/F^{(n+1)} \arrow[r] \arrow[d, "\id"] 
    & (F/F^{(n+1)}) \otimes_{C/\pi^{n+1}} C/\pi^n \arrow[r] \arrow[d, dashed, "\cong"'] 
    & 0\\
0\arrow[r]&F^{(n)}/F^{(n+1)} \arrow[r] 
    & F/F^{(n+1)} \arrow[r] 
    & F/F^{(n)} \arrow[r] 
    & 0
\end{tikzcd}
\end{equation}
we end up with an isomorphism $F/F^{(n)}\cong (F/F^{(n+1)})\otimes_{C/\pi^{n+1}}C/\pi^n$ by the five-lemma. Next, we claim that $F/F^{(n+1)}$ is a flat object over $C/\pi^{n+1}$. We proceed by induction, the case $n=0$ holding by Lemma \ref{lminstab}(a). The inductive case is achieved as an application of Lemma \ref{lmlocalflatness}. Indeed, by the previous discussion $(F/F^{(n+1)})\otimes_{C/\pi^{n+1}}C/\pi^n\cong F/F^{(n)}$ is flat by the inductive hypothesis. Moreover, the natural morphism $$(F/F^{(n+1)}) \otimes_{C/\pi^{n+1}}  (\pi^n/\pi^{n+1})\ra F/F^{(n+1)}$$ corresponds to the first map in the second row of \eqref{eqlangton}, which is injective.

By assumption, since openness of flatness holds for $\aaa_C$ the Quot functor $\quot_C^{\sigma,\phi}(F)$ gives an algebraic space locally of finite type over $C$. Thus, we end up with a sequence of compatible lifts 
$$
\begin{tikzcd}
    \spec\kappa\arrow[r,"(i_\kappa^*F\twoheadrightarrow Q)"]\arrow[d, hook] & \quot_C^{\sigma,\phi}(E)\arrow[d]\\
    \spec(C/\pi^n)\arrow[r, hook]\arrow[ur, dashed]&\spec C.
\end{tikzcd}
$$
In particular, there exists an extension of DVR's $C\ra C'$ and an object $(F_{C'}\twoheadrightarrow \tilde{Q})\in\quot_C^{\sigma,\phi}(F)(C')$. Restricting to the fraction field $K'$, by exactness of $i_{K'}^*$, we have a destabilizing quotient $i_{K'}^*F\twoheadrightarrow i_{K'}^*\tilde{Q}$, contradicting the stability of $E_K$.
\end{proof}

\begin{lm}\label{lmtorsiontheory}
    Keep the hypotheses as above, then $\aaa_C$ admits a $C$-torsion theory.
\end{lm}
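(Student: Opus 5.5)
To show that $(\aaa_{C,tor},\aaa_{C,tf})$ is a torsion pair in $\aaa_C$, I would check the two axioms directly. First, orthogonality: $\Hom(T,F)=0$ for $T$ torsion and $F$ torsion-free. Second, every $E\in\aaa_C$ should sit in a short exact sequence $0\to E_{tor}\to E\to E^{tf}\to 0$.

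Orthogonality is formal. If $f:T\to F$ is nonzero, then $\im f$ is a quotient of $T$, hence torsion by Corollary \ref{corsupport} (for the appropriate thickening $W$). It is also a nonzero subobject of $F$, contradicting torsion-freeness. So the content lies in producing the maximal torsion subobject of each $E\in\aaa_C$.

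For existence I would use the ind-noetherian structure available under the hypotheses of Theorem \ref{thmluicing}. By Proposition \ref{lmbasechange} and the remark after Definition \ref{defCtorsion}, the base-changed $t$-structure on $\Ind(\dd_C)$ is ind-noetherian whenever the $t$-structure on $\dd$ is. The subtlety is that $\aaa_C$ here is only assumed to be a heart restricting to $\cal P_K(0,1]$ and $\cal P_{C/\mathfrak m_C}(0,1]$, not a noetherian one. So I would argue noetherianity of ascending chains of torsion subobjects directly. Let $T_1\subset T_2\subset\cdots\subset E$ be torsion subobjects, each $T_i$ supported on some $W_i=\spec C/\pi^{n_i}$. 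I would split the argument in two steps.

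(i) The union of all torsion subobjects is killed by a single power $\pi^N$. Consider $\pi^n\otimes E\to E$ and its kernels $\Ann(\pi^n;E)$ from Lemma \ref{lmcurve}. By that lemma these kernels are $i_{W*}H^{-1}_{\aaa_W}(E_W)$, so their classes are controlled by $v_{tor}$. Each $\Ann(\pi^n;E)$ is a torsion subobject, and the chain $\Ann(\pi;E)\subset\Ann(\pi^2;E)\subset\cdots$ exhausts all torsion subobjects. To see that it stabilizes, I would use the central charge $Z_C$ together with the support property: $\Im Z_{tor}$ of the successive quotients is nonnegative and bounded above by a quantity determined by $i_\kappa^*E$ (through $H^{-1}$ of the derived fibre). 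In addition, the support property (Remark \ref{rmksuppprop}(c)) allows only finitely many classes with bounded central charge. A strictly increasing chain would therefore eventually contradict finiteness, or force $Z_{tor}=0$ on a nonzero object.

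(ii) Once $\pi^N$ kills every torsion subobject, all of them lie in $i_{W*}\aaa_W$ with $W=\spec C/\pi^N$. Then $E_{tor}:=\Ann(\pi^N;E)$ is the maximal torsion subobject. The quotient $E/E_{tor}$ is torsion-free, because a torsion subobject of it would pull back, via Corollary \ref{corsupport} and closure under extensions, to a larger torsion subobject of $E$.

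I expect step (i), the stabilization of $\Ann(\pi^n;E)$, to be the main obstacle. If the central-charge bound proves awkward, I would instead invoke Theorem \ref{thmssred} and generic flatness. Openness of flatness gives $E|_U$ flat on an open $U$, which should let one bound the length of torsion through Lemma \ref{lmlocalflatness2} applied to $E\otimes C/\pi^n$ and the tower of isomorphisms there. Concretely, for $n\gg0$ the graded pieces of $\pi^iE/\pi^{i+1}E$ must stabilize; otherwise their classes in $\Lambda$ would give infinitely many distinct values of $Z_\kappa$ in a bounded region, contradicting the support property.
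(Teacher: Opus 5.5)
There is a genuine gap in step (i), which is where the whole content of the lemma sits. Your orthogonality argument and step (ii) are fine. You are also right that the ind-noetherian remark does not apply to $\aaa_C$.

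Via multiplication by $\pi$, the layers $\Ann(\pi^{n+1};E)/\Ann(\pi^n;E)$ form a chain of monomorphisms ending in $\Ann(\pi;E)=i_{\kappa*}H^{-1}_{\aaa_\kappa}(i_\kappa^*E)$. Suppose their classes range over a finite set. That only shows these monomorphisms are eventually isomorphisms, i.e.\ the layers stabilize to a fixed object; it does not show they vanish. A chain with constant nonzero layers contradicts neither finiteness of classes nor $Z_{tor}\neq 0$; the model is $K/C$ over $C$, where $\Ann(\pi^n)=\pi^{-n}C/C$. To get a contradiction you would need a bound on $Z_{tor}(\Ann(\pi^n;E))$ itself. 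Nothing in your data gives one, because $Z_C$ is not additive across the torsion/non-torsion split, so $E$ does not bound its torsion part.

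The finiteness claim is also not justified as stated. A bound on $\Im Z$ does not bound $Z$: subobjects of a fixed object can have unbounded real part, e.g.\ $Z(\mathcal{O}(-n))=n+i$ for $\mathcal{O}(-n)\subset\mathcal{O}$ on a curve. The paper gets two-sided control by noting that the kernels $K_j$ of $E/\pi E\twoheadrightarrow\pi^{j+1}E/\pi^{j+2}E$ have the same class as the quotients $Q_j$ of $\Ann(\pi;E)$.

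Your fallback has the same defect. Stabilization of $\pi^iE/\pi^{i+1}E$ is exactly the paper's first step, but it does not by itself bound the torsion. Generic flatness on an open subset of $\spec C$ says nothing at the closed point.

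The missing idea is an input that uses that $E$ is a compact object of a proper category and that $\mm_\sigma$ is algebraic. The abstract data of hearts, central charges and support property cannot rule out such divisible torsion. The paper's argument runs as follows.
\begin{enumerate}
\item Once the tower has stabilized, replace $E$ by $\pi^mE$. Lemma \ref{lmlocalflatness2} then makes $F_j:=\Ann(\pi^j;E)$ flat over $C/\pi^j$ and compatible in $j$.
\item Openness of flatness turns the $F_j$ into a formal point of $\mm_\sigma$. Artin approximation gives a DVR $C\to C'$ and a $C'$-flat $F$ restricting to every $F_j$.
\item $\cHom_{C'}(F,E_{C'})$ is perfect, and its generic $H^{-1}$ vanishes since both objects lie in the heart. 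Hence $H^{-1}$ of its reductions modulo $\pi^j$ has length bounded independently of $j$.
\item These reductions contain $\Hom(F_j,F_j)$, which contains $C/\pi^j\cdot\id$ because $\pi^{j-1}F_j\cong F_j/\pi F_j\neq 0$. This contradicts the bound unless $F_j=0$.
\item So $\pi^mE$ is torsion-free, and $\ker\bigl(E\twoheadrightarrow(\pi^mE)\otimes(\pi)^{-m}\bigr)$ is the maximal torsion subobject.
\end{enumerate}
Your step (i) needs an argument of this kind: algebraize the stabilized torsion tower and contradict the finiteness of $\cHom$.
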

\begin{proof}
    It is enough to prove that given $E\in\aaa_C$ there exists $m\in\Z_{\geq0}$ such that $\pi^m\cdot E$ is torsion-free. If that is the case, then $$T:=\ker(E\twoheadrightarrow(\pi^m\cdot E)\otimes(\pi)^{-m})$$ 
    is the maximal torsion subobject of $E$, hence $\aaa_C$ has a $C$-torsion theory. For every $j\geq0$, consider the short exact sequence 
    \begin{equation}\label{equniform}
    0\ra (\pi)^{j+1}\cdot E\ra (\pi)^j\cdot E\ra (\pi)^j\cdot E/(\pi)^{j+1}\cdot E\ra0.
    \end{equation}
    By Lemma \ref{lmcurve}, applying $i_{\kappa*}i_\kappa^*$ to \eqref{equniform} and taking the long cohomology sequence with respect to the heart $\aaa_C$ we have an exact sequence of the form 
    \begin{equation}\label{equniform2}
        0\ra\Ann((\pi)^{j+1};E)\ra\Ann((\pi)^j;E)\xrightarrow{\alpha_j} (\pi)^j\cdot E/(\pi)^{j+1}\cdot E\ra (\pi)^{j+1}\cdot E/(\pi^{j+2}\cdot E)\ra 0.
    \end{equation}
    In particular, by composing the last arrow for $j\geq0$ there is an infinite sequence of surjections in $\aaa_C$ as follows 
    \begin{equation}\label{equniform3}
    E/(\pi)\cdot E\twoheadrightarrow (\pi)\cdot E/(\pi)^2\cdot E\twoheadrightarrow\dots\twoheadrightarrow(\pi)^j\cdot E/(\pi)^{j+1}\cdot E\twoheadrightarrow(\pi)^{j+1}\cdot E/(\pi)^{j+2}\cdot E\twoheadrightarrow\dots
    \end{equation}
    Next, we show that the sequence above must stabilize. Indeed, consider the objects
    \begin{equation*}
    \begin{split}
        K_j&:=\ker(E/\pi\cdot E\twoheadrightarrow (\pi)^{j+1}\cdot E/(\pi)^{j+2}\cdot E),\\
        Q_j&:=\coker(\Ann((\pi)^{j+1};E)\ra\Ann((\pi);E)),\\
        I_j&:=\im(\alpha_j).
    \end{split}
    \end{equation*}
    By Lemma \ref{lmcurve}, $\Ann((\pi);E)\cong H_{\aaa_\kappa}^{-1}(i_\kappa^*E)$, so we have that $\mu_\kappa(Q_j)\geq\mu_\kappa^-(H_{\aaa_\kappa}^{-1}(i_\kappa^*E))$ for all $j\geq0$. Moreover, we have the two short exact sequences $$0\ra K_j\ra K_{j+1}\ra I_{j+1}\ra 0\quad\text{and}\quad0\ra I_{j+1}\ra Q_{j+1}\ra Q_j\ra0$$
    together with an isomorphism $Q_0\cong I_0\cong K_0$, so that by induction $K_j$ and $Q_j$ have the same class in $K_0(\aaa_\kappa)$ and the claim follows by the support property. 

    Up to replacing $E$, we may assume that all the maps in \eqref{equniform3} are isomorphisms. Hence, by Lemma \ref{lmlocalflatness2} $F_j:=\Ann((\pi)^j;E)$ is flat over $C/(\pi)^j$ for all $j\geq0$. Lastly, we claim that $F_j\neq0$ leads to a contradiction. Since we assume $\sigma$ to be proper, the heart $\aaa_C$ verifies openness of flatness and the stack $\mm_\sigma$ is algebraic and locally of finite presentation. Thus, the objects $F_j$ correspond to a sequence of compatible morphisms $$\spec(C/(\pi)^j)\ra\mm_\sigma,\quad j\geq0.$$
    By Artin approximation, there exists an extension of DVR's $C\ra C'$ and an object $F\in\mm_\sigma(C')$ restricting to $F_j$ at each $C/(\pi)^j$. Consider the mapping object $\cHom_{C'}(F,E_{C'})\in\Perf(C')$. Then, as $E,\, F\in\aaa_C$, we have that 
    \begin{equation}\label{equniform4}
    \Hom_{\aaa_{K(C')}}(F_{K(C')},E_{K(C')}[-1])\cong H_{std}^{-1}(K(C')\otimes\cHom_{C'}(F,E_{C'}))=0,
    \end{equation}
    where $H_{std}^\bullet(-):D_{qc}(C'\mhyphen\Mod)\ra C'\Mod$ denotes the cohomology functor with respect to the standard $t$-structure on $D_{qc}(C'\mhyphen\Mod)$. However, by construction and Lemma \ref{lmcurve} we have $$H_{std}^{-1}(R/(\pi)^j\otimes\cHom_{C'}(F,E_{C'}))\cong \Hom_{\aaa_{C}}(F_j, F_j)\neq 0,$$
    contradicting \eqref{equniform4}.
\end{proof}
\begin{prop}\label{lmtor}
    Assume the pair $(\aaa_p,Z_p)$ is a stability condition on $\dd_p$. Then the function $Z_{tor}:=Z\circ v_{tor}$ on $\aaa_{tor}$ has the HN property.
\end{prop}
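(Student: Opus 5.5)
Every torsion object of $\aaa_C$ is built out of objects pushed forward from the closed point $p=\spec(C/\mathfrak{m}_C)$. The plan is to transport HN filtrations from $\aaa_p$ along this structure.

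\emph{Step 1: the central charge on torsion objects.} Let $E\in\aaa_{tor}$, so $E\cong i_{W*}E'$ for some thickening $W=\spec C/(\pi)^n$. Applying Lemma \ref{lmcurve} repeatedly gives the $\pi$-adic filtration
\[
0=\pi^n E\subset \pi^{n-1}E\subset\dots\subset \pi E\subset E .
\]
By Corollary \ref{corsupport} its graded pieces $\pi^jE/\pi^{j+1}E$ are $p$-torsion, that is, of the form $i_{p*}G_j$ with $G_j\in\aaa_p$. From the definition of $v_W$ via length and additivity of $v$, the first check is
\[
Z_{tor}(E)=\sum_j Z_p(G_j),
\qquad Z_{tor}(i_{p*}G)=Z_p(G).
\]
Consequently $Z_{tor}$ is a stability function on $\aaa_{tor}$, and its image is contained in $Z_p(K_0(\aaa_p))$, which is discrete in the relevant sectors by the support property of $(\aaa_p,Z_p)$.

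\emph{Step 2: finiteness of filtrations.} I want to follow Bridgeland's criterion \cite[Proposition 2.4]{bridgeland2007stability}. It requires that there is no infinite chain of subobjects $\dots\subset E_{j+1}\subset E_j\subset\dots\subset E_1$ with $\phi(E_{j+1})>\phi(E_j/E_{j+1})$, and no infinite chain of quotients $E_1\twoheadrightarrow E_2\twoheadrightarrow\dots$ with $\phi(K_j)>\phi(E_{j+1})$, where $K_j$ is the kernel. Since $\aaa_{tor}$ is closed under subobjects and quotients (Corollary \ref{corsupport}), both chains stay inside $\aaa_{tor}$, and in fact inside the $W$-torsion objects for the fixed $W$ killing $E_1$.

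For $W$-torsion $F$, the filtration of Step 1 gives $Z_{tor}(F)=\sum_j Z_p(G_j)$, so $\Im Z_{tor}$ is additive and nonnegative. Both $\Im Z_{tor}$ and $|Z_{tor}|$ are bounded by sums over the $\pi$-graded pieces of $E_1$. The quantities $Z_{tor}(E_j)$ therefore lie in a bounded region of the discrete set $\sum Z_p(\Lambda)$. By Remark \ref{rmksuppprop}(c), only finitely many classes can occur, once the support property is applied to the HN factors in $\aaa_p$ of the graded pieces. Bridgeland's usual argument then rules out infinite chains.

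\emph{Step 3: the alternative via noetherianity.} A more robust route is to use that $\aaa_C$, hence $\aaa_{tor}$, is noetherian (Proposition \ref{lmbasechange}). The quotient-chain condition then follows from noetherianity together with discreteness of $\Im Z_{tor}$, exactly as in \cite[Lemma 2.4]{bridgeland2007stability}. The subobject-chain condition follows from the support-property finiteness of Step 2.

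The main obstacle is Step 2. Sub- and quotient objects of $i_{W*}E'$ in $\aaa_C$ need not be pushforwards from $p$, so $Z_{tor}$ must be controlled through the $\pi$-adic filtration. The crux is showing that the possible classes $v_{tor}(F)$ of subobjects $F\subset E$ form a finite set within any bounded region. I would try to prove this by bounding the HN polygons of the graded pieces $G_j$ of $F$ by those of $E$, in the spirit of the HN-polygon comparison in the proof of Theorem \ref{thmssred}.
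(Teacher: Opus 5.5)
\paragraph{A genuine gap in Step 2.} Your Step 2 is where the proof has to happen, and as written it does not go through.

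The assertion that the values $Z_{tor}(E_j)$ along a chain of subobjects lie in a bounded region is false in general. Only $\Im Z_{tor}$ is bounded on subobjects of a fixed object; $\Re Z_{tor}$ is not. Already in a single fibre, think of $\mathcal{O}(-n)\subset\mathcal{O}$ on a curve with $Z=-\deg+i\,\mathrm{rk}$.

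Nor is $Z_p(\Lambda)$ discrete in general. The support property (Remark \ref{rmksuppprop}(c)) only gives finiteness of classes of \emph{semistable} objects with bounded $|Z|$. It says nothing directly about classes of arbitrary subobjects, or of the $\pi$-graded pieces of subobjects. That finiteness is exactly what you leave open, so Bridgeland's chain conditions are never actually verified.

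\paragraph{Step 3 does not rescue this.} Here $\aaa_C$ is just some $C$-local heart restricting to $\cal P_K(0,1]$ and $\cal P_{C/\mathfrak{m}_C}(0,1]$. Proposition \ref{lmbasechange} concerns base change of a bounded noetherian $t$-structure, so it does not apply. The paper obtains noetherianity of $\aaa_C$ only when $Z$ factors through $\Q[i]$ (Proposition \ref{propnoethcurve}), and for general $Z$ the imaginary part $\Im Z_{tor}$ need not be discrete. Dropping the noetherian hypothesis is precisely the point of working with sluicings. At best, Step 3 handles the algebraic case.

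\paragraph{The missing idea.} Do not aim for global finiteness of subobject classes. Instead, construct minimal-phase semistable quotients directly, by induction on the order $n$ of the thickening killing $E$.

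\begin{itemize}
\item Both $E/\pi E$ (killed by $\pi$, hence in $i_{p*}\aaa_p$) and $(\pi E)\otimes(\pi)^{-1}$ (killed by $\pi^{n-1}$) are quotients of $E$.
\item Each has an HN filtration by induction. The base case is the hypothesis on $(\aaa_p,Z_p)$.
\item For any quotient $E\twoheadrightarrow Q$ in $\aaa_{tor}$, the object $Q/\pi Q$ is a quotient of $E/\pi E$, and $(\pi Q)\otimes(\pi)^{-1}$ is a quotient of $(\pi E)\otimes(\pi)^{-1}$.
\item By the see-saw property, $\phi(Q)\geq\min\{\phi^-(E/\pi E),\phi^-((\pi E)\otimes(\pi)^{-1})\}$.
\item Hence the semistable quotient of one of these two pieces realizing this minimum is a minimal-phase semistable quotient of $E$.
\end{itemize}

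This is the paper's route. It then iterates on the kernel to produce a filtration whose semistable factors have increasing phase, and argues termination from the support property. Without an argument of this kind, or a real proof of the finiteness you flag as the crux, your proof is incomplete.
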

\begin{proof}
   Given $E\in\aaa_{tor}$ , we proceed by induction on the length $n$ of the support of $E$, the case  $n=1$ holding by assumption. Both $E/\pi\cdot E$ and $(\pi\cdot E)\otimes(\pi)^\vee$ are quotients of $E$ and the length of their support is strictly less than that of $E$, hence they admit HN filtrations. Choose $Q_0$ among the maximal destabilizing quotients of $E/\pi\cdot E$ and $(\pi\cdot E)\otimes(\pi)^{-1}$ to be the one of phase $$\phi(Q_0)=\min\{\phi_{tor}^-(E/\pi\cdot E),\phi_{tor}^-((\pi\cdot E)\otimes(\pi)^{-1})\}.$$
   By the see-saw property, $Q_0$ is of minimal phase among quotients of $E$. Arguing as above, we can find a semistable quotient $Q_2$ of $E_i:=E/Q_0$ of minimal phase, and letting $E_2:=E_1/Q_1$ we see that $\phi(E_1/E_2)\geq\phi(E/E_1)$ by the see-saw property. In this way we end up with a decreasing filtration 
   $$\dots\subset E_i\subset E_{i-1}\subset\dots\subset E_2 \subset E_1\subset E,$$
   with semistable factors of ascending slope. The central charge $Z_{tor}$ of any quotient lies within the parallelogram with edges of angles $\pi$ and $\pi\phi^-(E)$ end with endpoints $0$ and $Z_C(E)$. By the support property, there are finitely many classes for $v_{tor}(E_i/E_{i+1})$ and and an index $\hat{i}$ with $\phi(E_i/E_{i+1})=1$ for all $i\geq\hat{i}$. This means that $E_{\hat{i}}$ is itself semistable with $\phi(E_{\hat{i}})=\pi$. As a result, the filtration terminates and, up to identifying quotients of the same slope, $E_\bullet$ gives the HN filtration of $E$.
\end{proof}

\begin{cor}\label{corHN}
 If the heart $\aaa_C$ satisfies openness of stability the central charge $Z_C=(Z_K,Z_{tor})$ has the HN property.
\end{cor}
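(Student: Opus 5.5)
We establish the Harder--Narasimhan property for $Z_C=(Z_K,Z_{tor})$ on $\aaa_C$ by combining the torsion pair of Lemma \ref{lmtorsiontheory}, the HN property on torsion objects of Proposition \ref{lmtor}, and the HN property on $\aaa_K$ coming from the stability condition $\cal P_K$. Take $0\neq E\in\aaa_C$. By Lemma \ref{lmtorsiontheory} there is a short exact sequence $0\ra E_{tor}\ra E\ra E^{tf}\ra 0$ with $E_{tor}\in\aaa_{C,tor}$ and $E^{tf}$ torsion-free, hence $C$-flat by Lemma \ref{lmtf}. The plan is to filter each piece separately and then concatenate.

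\textbf{Step 1: the torsion-free part.} Consider $i_K^*E^{tf}\in\aaa_K$ and its HN filtration $0=G_0\subset G_1\subset\dots\subset G_n=i_K^*E^{tf}$ with respect to $Z_K$. By Corollary \ref{coropen} and $t$-exactness of $i_K^*$, each $G_i$ lifts to a subobject of $E^{tf}$; we take the saturation
$$\tilde G_i:=\ker\bigl(E^{tf}\ra (E^{tf}/\hat G_i)^{tf}\bigr).$$
Then each $\tilde G_i$ and each quotient $\tilde G_i/\tilde G_{i-1}$ is torsion-free. Moreover, $i_K^*(\tilde G_i/\tilde G_{i-1})$ is the $i$-th HN factor $G_i/G_{i-1}$, which is $Z_K$-semistable. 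Openness of stability together with Theorem \ref{thmssred} should then show that a torsion-free object whose generic fibre is semistable is $C$-semistable. The argument runs as follows. Any subobject $F\subset \tilde G_i/\tilde G_{i-1}$ is torsion-free, and either $i_K^*F\neq 0$, in which case $\mu_C(F)=\mu_K(i_K^*F)\leq\mu_C$ of the quotient; or $i_K^*F=0$, in which case $F$ is torsion, and this is impossible.

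\textbf{Step 2: concatenation.} Take the HN filtration of $E_{tor}$ in $\aaa_{tor}$ from Proposition \ref{lmtor} and splice it with the preimages in $E$ of the $\tilde G_i$. The phases need to decrease across the junction between the two pieces.

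This junction is the main obstacle. The slopes of the torsion factors are not a priori larger than those of the torsion-free factors; for instance, a torsion factor of phase smaller than $\phi(G_1)$ would break monotonicity. I would resolve this as in Bridgeland's proof via maximal destabilizing subobjects rather than by naive concatenation.

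Concretely, I would show that $\aaa_C$ satisfies the two chain conditions of \cite[Proposition 2.4]{bridgeland2007stability} for $Z_C$:
\begin{itemize}
\item there are no infinite chains of subobjects $\dots\subset E_{j+1}\subset E_j\subset\dots\subset E$ with $\phi_C(E_{j+1})>\phi_C(E_j/E_{j+1})$;
\item there are no infinite chains of quotients $E\twoheadrightarrow\dots\twoheadrightarrow E_j\twoheadrightarrow E_{j+1}$ with $\phi_C(K_j)>\phi_C(E_{j+1})$, where $K_j$ denotes the kernel of $E_j\twoheadrightarrow E_{j+1}$.
\end{itemize}
Such a chain restricts under the exact functor $i_K^*$ to a chain in $\aaa_K$, which must stabilize by the HN property of $\cal P_K$. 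Beyond that point all successive kernels or cokernels are torsion, and they are $\pi^m$-torsion for a uniform $m$ obtained from Lemma \ref{lmtorsiontheory}. The chain then takes place inside a fixed thickening $\aaa_W$. There the support property for $v_{tor}$ gives finitely many classes whose central charges lie in the relevant parallelogram, exactly as in the proofs of Proposition \ref{lmtor} and Corollary \ref{corinterval}, and this forces stabilization. Given these chain conditions, Bridgeland's argument produces the HN filtration, and its uniqueness follows from the usual $\Hom$-vanishing between semistable objects of decreasing phase.
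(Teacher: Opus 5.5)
\paragraph{Gap: the chain-condition route fails.} Condition (a) is false for $Z_C$. Take torsion-free $F,G\in\aaa_C$ with $\phi_K(F_K)>\phi_K(G_K)$, and put $E:=F\oplus G$ and $E_j:=F\oplus\pi^jG$.

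\begin{itemize}
\item Since $G$ is torsion-free, multiplication by $\pi$ is injective on it, and $G$ is $C$-flat by Lemma \ref{lmtf}. Hence $E_j/E_{j+1}\cong\pi^jG/\pi^{j+1}G\cong i_{\kappa*}i_\kappa^*G$.
\item By local constancy of $v$, this gives $\phi_C(E_j/E_{j+1})=\phi_K(G_K)$.
\item On the other hand $i_K^*E_{j+1}=F_K\oplus G_K$, so $\phi_C(E_{j+1})=\phi_K(F_K\oplus G_K)>\phi_K(G_K)$ for every $j$.
\end{itemize}

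This is an infinite descending chain of exactly the forbidden type. The same happens for the Langton-type chain $\tilde F+\pi^jE$ attached to any torsion-free $E$ with $E_K$ unstable.

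The step that fails is ``the chain then takes place inside a fixed thickening $\aaa_W$''. Here every successive quotient is even killed by $\pi$, but the cumulative quotients $E_0/E_j\cong G/\pi^jG$ are killed by no fixed power of $\pi$. Lemma \ref{lmtorsiontheory} only bounds the torsion \emph{subobject} of a single object, not torsion quotients along a chain. Because $Z_C$ of a non-torsion object ignores torsion, $Z_C(E_j)$ never moves while $v_{tor}(E_0/E_j)=j\,v(G_K)$ is unbounded. So there is no bounded parallelogram, and the support property gives nothing.

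The underlying issue is that \cite[Proposition 2.4]{bridgeland2007stability} concerns an additive stability function on $K(\aaa)$, and its proof runs on the see-saw property. $Z_C$ is not additive: $Z_C(E_{j+1})=Z_C(E_j)$ although $E_j/E_{j+1}\neq 0$.

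\paragraph{What is missing, and what the paper does.} You need a construction built from the generic HN filtration and semistable reduction, not from finiteness of chains. The paper produces the maximal destabilizing subobject directly. Set $\mu:=\mu_K^+(E_K)$.

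\begin{itemize}
\item If $E_{tor}\neq0$ and $\mu_{tor}^+(E_{tor})>\mu$, it takes the maximal destabilizing subobject of $E_{tor}$ from Proposition \ref{lmtor}.
\item Otherwise it lifts the maximal destabilizing subobject of $E_K$ to some $F\subset E$. It then takes the $C$-semistable model $F_C\subset F$ given by Theorem \ref{thmssred}.
\item It uses the torsion pair $(\cal T^\mu,\cal F^\mu)$ in $\aaa_{tor}$ to find $F_C\subset\tilde F\subset F$ with $\mu_C^+(F/\tilde F)<\mu\le\mu_C^-(\tilde F/F_C)$, and checks that $\tilde F$ is semistable and maximal.
\item The process terminates because the HN length of $E_K$ or of $E_{tor}$ drops at each step.
\end{itemize}

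Splitting the torsion at the generic slope $\mu$ is exactly the fix for the junction problem you correctly spotted in Step 2.

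Two symptoms in your write-up point the same way.
\begin{itemize}
\item You never use the hypothesis of openness of stability; in the paper it enters through Theorem \ref{thmssred}.
\item Your Step 1 claim, that a torsion-free object with semistable generic fibre is automatically $C$-semistable, holds only for the literal subobject definition. The paper actually works with the see-saw form of semistability: see the remark after the definition, and ``by stability of $F_C$'' in its proof. In that form, torsion quotients of smaller phase destabilize; otherwise Theorem \ref{thmssred} would reduce to choosing any torsion-free lift.
\end{itemize}
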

\begin{proof}
    Given $E\in\aaa_C$ we can construct directly a maximal destabilizing subobject. Let $\mu:=\mu_K^+(E)$ be the maximal slope of $i_K^*E$. If $E_{tor}\neq 0$ and $\mu_{tor}^+(E)>\mu$ then $F_{tor}\hookrightarrow E_{tor}$, the maximal destabilizing subobject of $E_{tor}$, is a maximal destabilizing subobject for $E$. Indeed, for a subobject $S\hookrightarrow E$ either $\mu_K(i_K^*S)\leq\mu$ or $S\hookrightarrow E$ factors through $E_{tor}$ and we have that $\mu_{tor}(S)\leq\mu_{tor}(F_{tor})$ by construction. 
    
    If instead $E_{tor}=0$ or $\mu_{tor}^+(E_{tor})\leq \mu$, consider the maximal destabilizing subobject of $i_K^*E$ - by Lemma \ref{lmloc} we may assume that it is of the form $i_K^*F$ for $F\in\aaa_C$. Since $\aaa_C$ has a $C$-torsion theory, we may also assume that $E_{tor}\cong F_{tor}$. Let $F_C\in\aaa_C$ be the semistable extension of $F$ obtained by Theorem \ref{thmssred}. Since $\aaa_C$ admits a $C$-torsion theory, we may assume that $F_{tor}\cong E_{tor}$. The stability function $Z_{tor}$ on $\aaa_{tor}$ has the HN property, so the subcategories 
    \begin{align*}
        \cal T^\beta&:=\{E\in\aaa_{tor}\,|\,E\text{ is }\mu_{tor}\text{-semistable},\,\mu_{tor}(E)>\beta\},\\
        \cal F^\beta&:=\{E\in\aaa_{tor}\,|\,E\text{ is }\mu_{tor}\text{-semistable},\,\mu_{tor}(E)\leq\beta\}
    \end{align*}
    form a torsion pair in $\aaa_{tor}$. In particular, in the case $\beta=\mu$ it follows that there exists an object $F_C\subset\tilde{F}\subset F$ such that $\mu_C^+(F/\tilde{F})<\mu\leq \mu_C^-(\tilde{F}/F_C)$. Observe that $\tilde{F}$ must be semistable. Indeed, if $\mu_C(S)>\mu_C(\tilde{F}/S)$ for some subobject $S\hookrightarrow \tilde{F}$ then $S$ cannot be torsion, as it would contradict $\mu_{tor}^+(E_{tor})\leq \mu$, so $\tilde{F}/S$ must be torsion with $\mu>\mu_{tor}(\tilde{F}/S)$. In this case, in the exact sequence 
    $$ 0\ra F_C/(F_C\times_{\tilde{F}} S)\ra \tilde{F}/S\ra\tilde{F}/(F_C+S)\ra0$$
    we have that $\mu_C(F_C/(F_C\times_{\tilde{F}} S))\geq\mu$ by stability of $F_C$ and $\mu_C(\tilde{F}/(\tilde{F}+S))\geq \mu$, contradicting the see-saw property of $\mu_C$. In the exact sequence $$0\ra F/\tilde{F}\ra E/\tilde{F}\ra E/F\ra 0$$
    we have that by construction $\mu_C^+(F/\tilde{F})<\mu$ and that $E/F$ is torsion-free, so $\mu_C(T)=\mu_K(T)\leq\mu$ for every subobject $T\hookrightarrow E/F$. Lastly, we observe that the process of finding a maximal destabilizing quotient must terminate because at each step the length of the HN filtration of either $E_K$ or $E_{tor}$ must decrease.
\end{proof}
\subsubsection*{Proof of Theorem \ref{thmluicing}}
Since openness of stability holds by Corollary \ref{corHN}, the central charge $Z_C$ has the HN property. As a result, we may apply Proposition \ref{propheart}.
\qed
\\

We end the section with the following. 
\begin{prop}\label{propnoethcurve}
    Suppose that $Z:\Lambda\rightarrow\C$ factors through $\Q[i]$, then the heart $\aaa_C$ is a noetherian abelian category. 
\end{prop}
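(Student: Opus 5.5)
The strategy is the one of \cite[Proposition 5.0.1]{AbramovichPolishchuk+2006+89+130}, adapted to the relative heart $\aaa_C$ with the stability function $Z_C=(Z_K,Z_{tor})$. Suppose a strictly increasing chain
$$E_1\subsetneq E_2\subsetneq\dots\subset E$$
in $\aaa_C$ does not stabilize; we want a contradiction. Equivalently, we study the quotients $E\twoheadrightarrow Q_n:=E/E_n$ and show that only finitely many distinct ones can occur. The argument has two steps: first the generic fibre, then the torsion part.

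\emph{Generic fibre.} Applying the exact functor $i_K^*$ gives a chain in $\aaa_K=\cal P_K(0,1]$. Since $Z$ factors through $\Q[i]$, the restricted stability condition on $\dd_K$ is algebraic, so $\aaa_K$ is noetherian by \cite[Proposition 5.0.1]{AbramovichPolishchuk+2006+89+130}. Hence $i_K^*E_n$ is eventually constant. After discarding finitely many terms, each quotient $E_{n+1}/E_n$ satisfies $i_K^*(E_{n+1}/E_n)=0$, i.e.\ it is a torsion object, supported on thickenings of the closed point.

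\emph{Torsion part.} Lemma \ref{lmtorsiontheory} gives $\aaa_C$ a $C$-torsion theory. We compare $E_n$ with its saturation $E_n^{sat}$, defined as the preimage in $E$ of the torsion part of $E/E_n$. Since $i_K^*E_n$ is constant, the saturations $E_n^{sat}$ are all equal to a fixed $F\subset E$. So we may replace $E$ by $F$ and assume that every $F/E_n$ is torsion and nonzero, with $F/E_{n}\twoheadrightarrow F/E_{n+1}$ strict surjections of torsion objects.

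Now $Z_{tor}=Z\circ v_{tor}$ takes values in $\frac1N\Z[i]$ for a fixed $N$, because $v_{tor}$ is $\ell(W)\cdot v$ on $W$-torsion and $Z(\Lambda)\subset\Q[i]$ is finitely generated. Moreover $\Im Z_{tor}\ge0$ on torsion objects, and $\Im Z_{tor}(F/E_n)$ is non-increasing in $n$ with values in the discrete set $\frac1N\Z_{\ge0}$, so it stabilizes. Thereafter the kernels $E_{n+1}/E_n$ have $\Im Z_{tor}=0$, so they are semistable torsion objects of phase $1$, each with $\Re Z_{tor}\le -\frac1N$.

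\emph{Main obstacle.} It remains to rule out infinitely many phase-$1$ torsion kernels. Here $\Re Z_{tor}(F/E_n)$ need not be bounded a priori, since $F/E_n$ may have phase-$1$ factors. I would bound it using the structure of torsion objects. Lemma \ref{lmcurve} and Lemma \ref{lmtorsiontheory} give $\pi^m\cdot F$ torsion-free, so all the quotients $F/E_n$ are quotients of $F/\pi^{M}F$ for a uniform $M$. The idea is to show that any torsion quotient of $F$ is killed by a fixed power $\pi^{M}$. Indeed, if $F\twoheadrightarrow T$ is torsion, then $\pi^jF\to\pi^jT$ is surjective; once $\pi^jT\neq0$ for all $j$, Lemma \ref{lmtorsiontheory}'s argument (the tower \eqref{equniform3} stabilizing and the Artin approximation contradiction) forces the image of $\pi^jF$ in $T$ to vanish for large $j$. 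I expect proving this uniform annihilation to be the crux.

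Given uniform annihilation, every $F/E_n$ is a quotient of $F/\pi^MF=i_{W*}H^0_{\aaa_W}(F_W)$ with $W=\spec C/\pi^M$. Then $v_{tor}(F/E_n)=M\,v(\cdot)$ ranges over classes whose central charges lie in the bounded parallelogram determined by $Z_{tor}(F/\pi^MF)$ and the phases in $(0,1]$. By the support property (Remark \ref{rmksuppprop}(c)) only finitely many classes occur, while the strict surjections strictly decrease $\Re Z_{tor}$ by at least $\frac1N$. So the chain stabilizes, which contradicts the assumption and proves that $\aaa_C$ is noetherian.
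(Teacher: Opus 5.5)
Your overall route is the paper's: noetherianity of $\aaa_K$ from \cite[Proposition 5.0.1]{AbramovichPolishchuk+2006+89+130}, then an argument in the torsion part. Your saturation step, which uses the torsion theory of Lemma \ref{lmtorsiontheory}, is a useful addition. It justifies that the tail of the chain lives inside a single torsion object, which the paper's one-line appeal to ``noetherianity of $\aaa_{tor}$'' leaves implicit.

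The genuine problem is the paragraph you call the main obstacle. There you propose to show that every torsion quotient of $F$ is killed by one fixed power $\pi^M$, and this is false. Take $R=C$ and $\dd=\Perf(C)$ with the standard heart, so that $\aaa_C$ is the category of finitely generated $C$-modules. Then $F=C$ is torsion-free, and $C\twoheadrightarrow C/\pi^j$ is a torsion quotient for every $j$. Lemma \ref{lmtorsiontheory} and its Artin-approximation argument concern a single object: $\pi^m\cdot E$ is torsion-free for some $m$. They give no uniformity over quotients. Likewise, ``$\pi^mF$ is torsion-free, hence every $F/E_n$ is a quotient of $F/\pi^MF$'' does not follow. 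As written, your proof therefore rests on a false lemma.

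You do not need that lemma. In your own setup, for $n\geq n_0$ every $F/E_n$ is a quotient of the single torsion object $T:=F/E_{n_0}=(E/E_{n_0})_{tor}$. This object is killed by some $\pi^M$, and by Corollary \ref{corsupport} so are all its subobjects and quotients. So the tail $E_n/E_{n_0}$ is an ascending chain of subobjects of $T$ in $\aaa_{tor}$.

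Now finish as Abramovich--Polishchuk do, using the HN property of $Z_{tor}$ from Proposition \ref{lmtor}:
\begin{enumerate}
\item Once $\Im Z_{tor}$ has stabilized at some $n_1$, the objects $E_n/E_{n_1}$ have $\Im Z_{tor}=0$, hence are of phase one.
\item They therefore lie in the maximal phase-one subobject $G$ of $F/E_{n_1}$ (its first HN factor, or $0$), and $G/(E_n/E_{n_1})$ is again of phase one.
\item Hence $Z_{tor}(E_n/E_{n_1})$ is a real number in $[Z_{tor}(G),0]$ that drops by at least $\frac1N$ at each strict step, so the chain stops.
\end{enumerate}
This is the bound you were missing: the relevant quantities are controlled by the fixed object $G$, not by a uniform annihilator of all torsion quotients of $F$.

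Two smaller corrections. First, the support property of Remark \ref{rmksuppprop}(c) only bounds classes with $Q(u)\geq 0$, i.e.\ semistable ones. It does not directly give finitely many classes for the non-semistable quotients $F/E_n$; what you should use is discreteness of the values of $Z_{tor}$ together with a bound like the one above. Second, the kernels $E_{n+1}/E_n$ have $Z_{tor}\in\R_{<0}$, so $\Re Z_{tor}(F/E_n)$ increases, not decreases, along the chain.
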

\begin{proof}
   By \cite[Proposition 5.0.1]{AbramovichPolishchuk+2006+89+130} we know the claim to hold for the cateories $\aaa_K$ and $\aaa_{tor}$. Let $E\in\aaa_C$, and consider an infinite flag $$E_0\hookrightarrow E_1\hookrightarrow E_2\hookrightarrow\dots\hookrightarrow E.$$
   Looking at the induced morphisms in $\aaa_K$ we see that the sequence stabilizes. As a result we may assume that each $F_i:=\coker(E_i\hookrightarrow E_{i+1})$ is a torsion object. The claim follows by noting that the cokernel sequence must stabilize as well by noetherianity of $\aaa_{tor}$.
\end{proof}
\subsection{The stability manifold}
We are ready to present the main result of this section, namely a structural statement for the set of locally constant stability conditions $\stab(\dd/R)$ which are proper. In this section we fix the topology on $\stab(\dd/R)$ to be the coarsest one such that the forgetful map
\begin{align*}
\stab(\dd/R)&\rightarrow\stab(\dd_t)\\
\sigma&\mapsto\sigma_t
\end{align*}
is continuous for every $t\in\spec(R)$.
\begin{thm}\label{thmmain}
The topological space $\stab(\dd/R)^\circ$ of \emph{proper} locally constant stability conditions, if non empty  has the structure of a complex manifold and the map $$\stab(\dd/R)^\circ\rightarrow\Hom(\Lambda,\C)$$
sending a locally constant stability condition to its central charge $Z:\Lambda\ra\C$ is a local isomorphism.\\
In particular, if $\sigma\in\stab(\dd/R)$ satisfies the conditions in item (2) of Theorem (\ref{thmhlr}) so do all $\sigma'$ in the connected component of $\stab(\dd/R)$ containing $\sigma$.
\end{thm}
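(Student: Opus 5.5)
The plan is to reduce to Bridgeland's deformation theorem (Theorem \ref{thmdefo}) fibrewise and then reassemble a locally constant stability condition from the fibrewise deformations using Theorem \ref{thmluicing}. Fix a proper $\sigma=(\{\cal P_\kappa\},v,Z)\in\stab(\dd/R)^\circ$ with support constant $C>0$ (equivalently, a quadratic form $Q$ on $\Lambda_\R$ negative definite on $\ker Z$, by Remark \ref{rmksuppprop}(c)). The key point is that the support property is \emph{uniform}: the same $Q$ works for every fibre $\sigma_\kappa$, because $v$ and $Z$ are global. Hence the neighbourhood $U_Z\subset\Hom(\Lambda,\C)$ of Theorem \ref{thmdefo} depends only on $Q$ and not on $\kappa$. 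Concretely, for $W\in\Hom(\Lambda,\C)$ with $\|W-Z\|_\sigma<\sin(\pi\epsilon)$ for a small fixed $\epsilon<1/8$, each $\sigma_\kappa$ deforms uniquely to $\tau_\kappa=(W,\cal Q_\kappa)$ with $d(\cal P_\kappa,\cal Q_\kappa)<\epsilon$. This gives a candidate family of width-one sluicings $\{\cal Q_\kappa\}$ (Remark \ref{rmksluicing}(iii)) with the same $v$ and central charge $W$, and the support condition (b) of Definition \ref{defstabf} holds with a slightly smaller constant.

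The second step is to verify condition (a) of Definition \ref{defstabf} for the deformed data. For each DVR $C$, the condition $d(\cal P_\kappa,\cal Q_\kappa)<\epsilon$ gives the tilting relation
\[
\cal P_C(>\epsilon)\subset\cal Q_C(>0)\subset\cal P_C(>-\epsilon).
\]
I therefore construct the heart $\cal Q_C(0,1]$ as a tilt of $\aaa_C=\cal P_C(0,1]$ with respect to the torsion pair cut out by $\cal Q$-phase $>\!0$. The torsion pair is constructed on the interval heart $\cal P_C(-\epsilon,1-\epsilon]$ using the $C$-HN filtrations of Corollary \ref{corHN}/Proposition \ref{propheart}. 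Locality of the tilt over $C$ follows because the restrictions to $K$ and to torsion objects are exactly the fibrewise tilts. Then Theorem \ref{thmluicing} produces a unique slicing on $\dd_C$ restricting to $\cal Q_K$ and $\cal Q_{C/\mathfrak m_C}$, provided openness of flatness and of stability hold for $\tau$.

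The third step, which I expect to be the main obstacle, is properness of $\tau$, i.e.\ condition \ref{condition2} or (1) of Theorem \ref{thmhlr}. I would use the mass-hom formulation (1). Since $d(\cal P_\kappa,\cal Q_\kappa)<\epsilon$ and $\|W-Z\|$ is small, the standard Bridgeland estimates give $m_\tau(E)\ge c\,m_\sigma(E)$ for a constant $c>0$ independent of $\kappa$ and $E$. Here I use that each $\sigma$-HN factor has $\tau$-phases in an interval of width $2\epsilon$, and that $|W(u)|\ge(1-\sin\pi\epsilon)|Z(u)|$ on $\sigma$-semistable classes. Consequently $\dim\Hom(G_k,E)\le f(m_\sigma(E))\le f'(m_\tau(E))$ with $f'(x)=\sup_{y\le x/c}f(y)$. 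The hypothesis on $G$ (bounded mass and phase spread) likewise transfers. Theorem \ref{thmhlr} then gives properness, hence openness of flatness and stability, which is what Theorem \ref{thmluicing} needs. I expect the main technical issue to be ordering these steps without circularity: the mass-hom bound must be established fibrewise before $\tau$ is known to be locally constant. This is acceptable since (1) is a fibrewise statement.

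Finally, the map $\tau\mapsto W$ is injective near $\sigma$ by the fibrewise uniqueness in Theorem \ref{thmdefo}, and it is continuous and open for the topology induced by the maps to $\stab(\dd_t)$. So it is a local homeomorphism onto an open subset of $\Hom(\Lambda,\C)$, and the complex structures pulled back via overlapping charts agree. The connected-component statement follows because the set of proper conditions is then open. It is also closed: if $\sigma'$ lies in the closure, apply the same argument centred at a nearby proper $\sigma$ with $\sigma'$ inside its uniform deformation ball.
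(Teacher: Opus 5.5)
\textbf{There is a genuine gap: the circularity you flag is not actually resolved.} Theorem~\ref{thmhlr} is stated for $\sigma\in\stab(\dd/R)$, so it presupposes condition (a) of Definition~\ref{defstabf}. Its conclusions (openness of flatness and of the semistable loci) rest on how $\phi^\pm$ behave under specialization, and that is where the DVR sluicings enter (see the proof of Lemma~\ref{lmphase}). So the fact that hypothesis (1) is fibrewise does not help: you cannot obtain openness of flatness and stability for $\tau$ before knowing that $\tau$ satisfies (a).

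Yet your verification of (a) runs through Theorem~\ref{thmluicing}. Its hypotheses are exactly openness of flatness and of stability for $\tau$; they are used in the semistable reduction of Theorem~\ref{thmssred} and in Lemma~\ref{lmtorsiontheory}. The tilt construction has the same problem. The $C$-HN filtrations of Corollary~\ref{corHN} that you want for the torsion pair need openness of stability for $\tau$. They also presuppose a $W$-HN theory on $\dd_C$, which is the thing being built: $W$ is not a stability function on $\cal P_C(-\epsilon,1-\epsilon]$, and the inclusions $\cal P_C(>\epsilon)\subset\cal Q_C(>0)$ refer to $\cal Q_C$ before it exists. Finally, because you allow arbitrary complex deformations, the hearts change, and you have no source at all for openness of flatness of $\cal Q_\kappa(0,1]$ other than the circular appeal to Theorem~\ref{thmhlr}.

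The paper breaks the loop in two ways your proposal lacks.
\begin{enumerate}
\item It uses the $\widetilde{GL}_2^+(\R)$-action and the reduction of \cite{bayer2019short} to consider only real variations $W=Z+u\circ p$. Then the hearts $\cal P_\kappa(0,1]$ and $\cal P_C(0,1]$ are unchanged. No tilt is needed on $\dd_C$, and openness of flatness for $\tau$ is inherited from $\sigma$.
\item It proves openness of $\tau$-semistability directly from the properness of $\sigma$, not from Theorem~\ref{thmhlr} applied to $\tau$. By Lemma~\ref{lmslicings}, $\tau$-semistable objects of phase $\phi$ lie in the open substack $\mm_\sigma([\phi-\epsilon,\phi+\epsilon])$. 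The $\tau$-unstable locus is the image of finitely many components of $\quot_R^{\sigma,\phi+\epsilon}$, which is closed by Proposition~\ref{propuniclosed}. Boundedness comes from Corollary~\ref{corinterval} in the same way.
\end{enumerate}
Only then does the paper apply Theorem~\ref{thmluicing} to get the slicing on $\dd_C$. It upgrades this to a sluicing using an algebraic approximation $W'$ and Proposition~\ref{propnoethcurve}, and finally checks condition (2) of Theorem~\ref{thmhlr} for $\tau$.

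Your mass comparison $m_\tau\ge c\,m_\sigma$, which transfers the mass-hom bound, is a reasonable and arguably cleaner way to do that last properness step once $\tau\in\stab(\dd/R)$ is known. It cannot, however, supply the inputs that Theorem~\ref{thmluicing} needs.
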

Let $\sigma\in\stab(\dd/R)$. By Remark \ref{rmksuppprop}(b), for each field $k$ over $R$ there is an induced stability condition on $\dd_k$. Since we have a $\widetilde{GL}_2^+(\R)$-action on $\stab(\dd_{\kappa(s)})$ for all $s\in\spec (R)$ we can apply the reduction steps provided by \cite{bayer2019short}. As a result, we can treat only real variations of the central charge $Z:\Lambda\ra\C$. The upshot of this step is that the hearts $\cal P_{\kappa(s)}((0,1])$  for every $s\in\spec R$ and $\cal P_C((0,1])$ for every DVR $C$ over $R$ remain unchanged. This means that \emph{openness of flatness} will hold for the deformed stability conditions. 

Indeed, one can assume by \cite[Section 7]{bayer2019short} that the quadratic form $Q$ has signature $(2, \operatorname{rk}( \Lambda)-2)$. Let $||\cdot||$ be the norm associated to $-Q|_{\ker Z}$, and let $p:\Lambda_\R\ra\ker Z$ be the orthogonal projection with respect to $Q$. By \cite[Lemma 4.2]{bayer2019short}, up to the action of $\operatorname{GL}_2^+(\R)$ we can assume that $$Q(v)=|Z(v)|^2-||p(v)||^2.$$
As a consequence, one can only consider deformations of the central charge of the form $W=Z+u\circ p$, where $u:\ker Z\ra\R$ is a linear map with operator norm $||u||<\delta$. Under these hypotheses, we have that $|W(v)-Z(v)|\leq||u||\cdot|Z(v)|$. 

For each $s\in\spec R$, applying Theorem \ref{thmdefo} to the stability condition $\sigma_{\kappa(s)}=(Z,\cal P_{\kappa(s)})$, we get a stability condition $\tau_{\kappa(s)}:=(W,\cal Q_{\kappa(s)})\in\stab(\dd_{\kappa(s)})$. By Proposition \ref{propsluicingstab}, this gives the items (1), (2), and (3) of Definition \ref{defstabf}. We need to check the remaining conditions (a) and (b). 

Let $\phi\in(0,1)$, and set $\epsilon\in[0,1/2]$ to be such that $\frac{\sin(\pi\epsilon)}{\sin(\pi\phi)}=||u||$ so that $0<\phi-\epsilon<\phi+\epsilon<1$. 
\begin{lm}[\cite{bayer2021stability}, Lemma 22.3]\label{lmslicings}
    For all $s\in\spec R$ it holds that $\cal Q_{\kappa(s)}(\phi)\subset\cal P_{\kappa(s)}[\phi-\epsilon,\phi+\epsilon]$.
\end{lm}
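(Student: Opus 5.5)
The inclusion is a statement about a single fibre: for fixed $s\in\spec R$ it only involves the stability conditions $\sigma_{\kappa(s)}=(Z,\cal P_{\kappa(s)})$ and $\tau_{\kappa(s)}=(W,\cal Q_{\kappa(s)})$ on $\dd_{\kappa(s)}$. So I would argue fibrewise, following the deformation argument of \cite{bayer2019short} (as in \cite[Lemma 22.3]{bayer2021stability}), with one added check: the constants must not depend on $s$. They don't. The quadratic form $Q$, the norm $||\cdot||$, the projection $p$ and the perturbation $u$ all live on $\Lambda$. By Remark \ref{rmksuppprop}, every $\sigma_{\kappa(s)}$ satisfies the support property with respect to the same $Q$, because $v$ is locally constant and $Z$ is global. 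Hence the estimate $|W(v(E))-Z(v(E))|\leq ||u||\cdot|Z(v(E))|$ holds for every $\sigma_{\kappa(s)}$-semistable $E$, uniformly in $s$.

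Fix $s$ and $0\neq E\in\cal Q_{\kappa(s)}(\phi)$. I would first recall how $\cal Q_{\kappa(s)}$ is built in \cite{bayer2019short}: along the straight path $Z_t=Z+t\,u\circ p$, $t\in[0,1]$, the quadratic form $Q$ stays negative definite on $\ker Z_t$ and the same estimate holds. The slicing is then obtained from $\cal P_{\kappa(s)}$ by small steps, each using the \cite[Lemma 7.5]{bridgeland2007stability}-type construction. In each step, $\cal Q(\phi)$ consists of the $W$-semistable objects of phase $\phi$ inside a thin quasi-abelian category $\cal P(\phi-\eta,\phi+\eta)$.

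The key estimate is on the $\cal P_{\kappa(s)}$-HN factors $A_1,\dots,A_n$ of $E$, with phases $\psi_1>\dots>\psi_n$. I would prove $\psi_1\leq\phi+\epsilon$ and $\psi_n\geq\phi-\epsilon$. Suppose $\psi_1>\phi+\epsilon$. The maximal destabilising piece $A_1\to E$ is $\cal P$-semistable, so $W(A_1)$ lies in the disc of radius $||u||\,|Z(A_1)|$ around $Z(A_1)$. That disc is contained in the cone of phases $(\psi_1-\epsilon',\psi_1+\epsilon')$, where $\sin(\pi\epsilon')=||u||$. Because $\frac{\sin(\pi\epsilon)}{\sin(\pi\phi)}=||u||$ gives $\sin(\pi\epsilon')\leq\sin(\pi\epsilon)$, we get $\epsilon'\leq\epsilon$. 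The $W$-phase of the subobject $A_1$ (working in the quasi-abelian category from the previous paragraph) is then strictly bigger than $\phi$, which contradicts the $W$-semistability of $E$. The argument for the last factor $A_n$ is dual, using the quotient $E\to A_n$. This gives $E\in\cal P_{\kappa(s)}[\phi-\epsilon,\phi+\epsilon]$.

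The main obstacle is the iteration. In \cite{bayer2019short} the path is cut into pieces, and a priori the errors add up along the way. To avoid this, I would not compose step bounds. Instead, I would apply the estimate above directly between the endpoints $Z$ and $W$, using only that $E$ is $W$-semistable and that its $\cal P$-HN factors are $Z$-semistable; this is exactly what \cite[Lemma 22.3]{bayer2021stability} does. If that direct approach fails, the fallback is the global bound $d(\cal P,\cal Q)<\epsilon$ from \cite[Proposition 5.8]{bayer2019short}, together with the characterisation \eqref{eqmetric}. Both arguments use only the estimate on $|W-Z|$ and the fibrewise support property, so the conclusion holds for all $s\in\spec R$ simultaneously.
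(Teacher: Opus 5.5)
The overall strategy matches the paper: you bound the first and last $\mathcal{P}_{\kappa(s)}$-HN factors of a $W$-semistable $E$ and contradict $W$-semistability. However, your quantitative step is wrong. From $\sin(\pi\epsilon)=\|u\|\sin(\pi\phi)$ and $\sin(\pi\epsilon')=\|u\|$ you get $\sin(\pi\epsilon')\geq\sin(\pi\epsilon)$, i.e.\ $\epsilon'\geq\epsilon$, with equality only at $\phi=1/2$. Your claimed inequality runs the other way. So the disc argument only yields $\psi_1\leq\phi+\epsilon'$ and $\psi_n\geq\phi-\epsilon'$, which is strictly weaker than the stated inclusion.

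The isotropic disc estimate throws away exactly the information that produces the factor $\sin(\pi\phi)$: the perturbation $W-Z=u\circ p$ is \emph{real-valued}. Using this, write $W(A_1)=Z(A_1)+r$ with $r\in\mathbb{R}$ and $|r|\leq\|u\|\,|Z(A_1)|$. Since $\operatorname{Im}W(A_1)=\operatorname{Im}Z(A_1)$, the $W$-phase decreases in $r$, so $\phi_W(A_1)\geq f(\psi_1)$, where $f(\psi):=\frac{1}{\pi}\arg\bigl(e^{i\pi\psi}+\|u\|\bigr)$ is increasing for $\|u\|<1$. By the law of sines, the horizontal segment from $e^{i\pi(\phi+\epsilon)}$ to the ray $\mathbb{R}_{>0}e^{i\pi\phi}$ has length $\sin(\pi\epsilon)/\sin(\pi\phi)=\|u\|$. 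Hence $f(\phi+\epsilon)=\phi$, and $\phi\geq\phi_W(A_1)\geq f(\psi_1)$ forces $\psi_1\leq\phi+\epsilon$. The last factor is handled the same way with $-\|u\|$. This is the paper's argument: the relevant sine is that of the target phase $\phi$, and your disc bound replaces it by $1$.

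There is a second problem: you never pin down where the comparison $\phi_W(A_1)\leq\phi$ takes place. In your iterated picture, the thin quasi-abelian categories are taken with respect to intermediate slicings, not $\mathcal{P}_{\kappa(s)}$. So $A_1\to E$ need not be a strict subobject in a category in which $E$ is $W$-semistable. Your ``direct endpoint'' version does not say which category it uses either.

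No iteration is needed. Because the variation is real, the heart $\mathcal{P}_{\kappa(s)}(0,1]=\mathcal{Q}_{\kappa(s)}(0,1]$ is unchanged, as the paper stresses just before the lemma. For $\phi\in(0,1)$, $E$ is therefore a $W$-semistable object of this fixed abelian heart. Its $\mathcal{P}_{\kappa(s)}$-HN filtration is a chain of subobjects there, so $\phi_W(A_1)\leq\phi\leq\phi_W(A_n)$ is immediate.

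Your fallback through a global bound on $d(\mathcal{P},\mathcal{Q})$ cannot give the statement either. A bound depending only on $\|u\|$ is uniform in $\phi$, whereas the stated $\epsilon$ tends to $0$ as $\phi\to0$ or $\phi\to1$. Your remark that all constants are independent of $s$ is correct.
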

\begin{proof}
    Consider an object $E\in\cal Q_{\kappa(s)}$, and consider its first Harder-Narasimhan factor $A\hookrightarrow E$. By the support property $Q(A)\geq 0$, and $$\phi\geq\phi(W(A))\geq \phi(Z(A)+||u||\cdot|Z(A)|.$$
    By our choice of $\epsilon$, the right hand side equals $\phi$ for $\phi(Z(A))=\phi+\epsilon$. It follows that $\phi^+(E)=\phi(Z(A))\leq \phi+\epsilon$. The same argument for the maximal destabilizing quotient gives $\phi^-(E)\geq\phi-\epsilon$.
\end{proof}
\subsubsection*{Proof of Theorem \ref{thmmain}} We prove the theorem in several steps. If $\stab(\dd/R)^\circ\neq\emptyset$ there exists a proper locally constant $\sigma\in\stab(\dd/R)$. 

Consider the collection $\tau:=\tau_{\kappa(s)}=(W,\cal Q_{\kappa(s)})\in\stab(\dd_{\kappa(s)})$, obtained as in the previous discussion. Since from the very construction of $\tau$ every $\tau_{\kappa(s)}$ satisfies the support property with respect to the same quadratic form as $\sigma$, condition (b) of Definition \ref{defstabf} follows automatically. 
    \subsubsection*{Step 1 - Openness deforms}  Since $\sigma$ verifies openness of stability, $\mmm_{\sigma}([\phi-\epsilon,\phi+\epsilon])$ is an open substack of $\mmm$. To conclude, it suffices to show that for every $v\in K_0(\dd)$ the substack $$\mmm_{\tau}^{ss}(v)\hookrightarrow\mmm_{\sigma}([\phi-\epsilon,\phi+\epsilon])$$ is open. Let $T\in R\alg$ and let $E\in\mmm_{\sigma}([\phi-\epsilon,\phi+\epsilon])(T)$; in particular $\phi^\pm(E_t)\in[\phi-\epsilon,\phi+\epsilon]$ for all $t\in \spec T$. By Lemma \ref{lmslicings}, any $W$-semistable quotient $E_t\ra Q$ such that $\phi(W(Q))\leq\phi$ satisfies $\phi(Z(Q))\leq\phi+\epsilon$ so gives an object of $\quot_R^{\sigma,\phi+\epsilon}(T)$. By the support property and the local constancy of $v:|\mmm|\ra\Lambda$ the condition $\phi(W(Q))\leq\phi$ picks out finitely many connected components of $\quot_R^{\sigma,\phi+\epsilon}$, and the union of their images in $T$ is exactly the unstable locus. By Proposition \ref{propuniclosed} and properness of $\sigma$, this locus is closed by universal closedness of the Quot functors.
    \subsubsection*{Step 2 - Boundedness is preserved} To see that boundedness of families of semistable objects of a fixed class $v\in\Lambda$ is preserved within $\stab(\dd/R)$ it suffices to observe that for $v\in\Lambda$ we have $\mmm_{\tau}(v)^{ss}\subset\mmm_{\ssigma}([\phi-\epsilon,\phi+\epsilon])(v)$, which is bounded by Corollary \ref{corinterval}. Thus, families of $\tau$-semistable objects of class $v$ are bounded if and only if they are bounded with respect to $\sigma$. 
    \subsubsection*{Step 3 - Sluicings deform} From the previous discussion, since we are considering only real variations of the central charge, we have that for any DVR $C$ over $R$ such that $K$ and $C/\mathfrak{m}_C$ are both residue fields of $R$ there is a local heart given by $\cal P_C(0,1]$, restricting to $\cal Q_K(0,1]$ and $\cal Q_{C/\mathfrak{m}_C}$ respectively. It follows by Theorem \ref{thmluicing} that there is a slicing on $\dd_C$, denoted as $\{\cal Q_C(\phi)\}_{\phi\in\R}$, inducing those on $\dd_K$ and $\dd_{C/\mathfrak{m}_C}$. We need to show that this datum defines a sluicing on $\dd_C$ as in Definition \ref{defstabf}(a). Indeed, let $w\leq1$. Since $\Lambda$ is assumed to have finite rank, we can choose $$\epsilon'<\min\biggl\{\frac{1-w}{2},\epsilon\biggr\}$$
    and a $W':\Lambda\ra\C$ factoring through $\Q[i]$. This induces a stability condition $\tau_{\kappa(s)}'=(\cal Q_{\kappa(s)}',W')$ on $\dd_{\kappa(s)}$ for all $s\in\spec(R)$ with noetherian $t$-structures. By Lemma \ref{lmslicings} this means that $$d(\cal Q_{\kappa(s)}',\cal Q_{\kappa(s)})<\epsilon'\quad\text{for all }s\in\spec(R).$$ As shown in the previous step, $\tau'$ verifies openness of flatness. Hence, Lemma \ref{lmtorsiontheory} implies that the central charge $W_C:=(W_K,W_{tor})$ induces a $C$-torsion theory on $\cal Q_C(\phi,\phi+1]$ and $\cal Q_C'(\phi,\phi+1]$ for every $\phi\in\R$. As a result, the bounds $d(\cal Q_K,\cal Q_K')<\epsilon'$ and $d(\cal Q_{C/\mathfrak{m}_C},\cal Q_{C/\mathfrak{m}_C}')<\epsilon'$ imply that $d(\cal Q_C,\cal Q_C')<\epsilon'$.  By Proposition \ref{propnoethcurve} the $t$-structures $(\cal Q_C'(>\phi),\cal Q_C'(\leq \phi))$ are noetherian. As in Remark \ref{rmksluicing}(iii), from the bounds above we have that  for all $0<b-a<w$ and $\psi\in(b-1+\epsilon', a-\epsilon')$ there are inclusions $$\cal Q_C(>b-1)\subset\cal Q_C'(>\psi)\subset\cal Q_C(>a).$$
    Hence, the collection of $t$-structures $\{\cal Q_C(>\phi),\cal Q(\leq\phi)\}$ defines a sluicing of width 1 on $\dd_C$. 
    \subsubsection*{Step 4 - Properness is preserved} Lastly, we need to show that condition (2) from Theorem \ref{thmhlr} is satisfied for $\tau$. We have seen that $\mm_\tau$ is locally of finite presentation (hence has affine diagonal by \cite[Lemma 7.20]{alper2023existence}), and that for every $v\in\Lambda$ the substack $\mm_\tau^{ss}(v)\subset\mm$ is open and bounded. Let $C>0$ be a positive constant. By the support property there are finitely many classes $w\in\Lambda$ such that $|Z(w)|<C$ and the stack $\mm_\tau^{ss}(w)$ is nonempty. As a result, the set $\cal S$ of semistable points in $|\mm|$ with phase in any interval $(a,b]$ and mass $< C$ is bounded. Again, by the support property we have also that $$\inf_{E\in \cal S} |Z(v(E))|=\min_{E\in \cal S}|Z(v(E))|>0.$$ 
    In particular, there exists an integer $N\in\Z_{\geq 0}$ such that the length of the Harder--Narasimhan filtration of any object $E$ with $m_\tau(E)<C $ is less than $N$. Consider the stack $\filt^{un}(\mm)$ of unweighted filtered points of $\mm$.  It is the union of the stacks $\filt(\mm)_n$ parametrising filtrations of length $n$ and an algebraic higher derived stack, locally of finite presentation over $R$ (see \cite[Proposition 2.34]{halpernleistner2025spaceaugmentedstabilityconditions}). There is the quasi compact grading morphism 
    \begin{equation}\label{eqgr}
    \operatorname{gr}:\filt^{un}(\mm)_n\ra\mm^n
    \end{equation}
    sending $$(E_1\ra\dots\ra E_n)\mapsto(E_1,\cone(E_1\ra E_2),\dots,\cone(E_{n-1}\ra E_n)),$$
    and the forgetful morphism 
    \begin{equation}\label{eqforg}
    \begin{split}\filt^{un}(\mm)&\ra\mm\\(E_1\ra\dots\ra E_m)&\mapsto E_m.
    \end{split}
    \end{equation}
    For every $n\in\Z_{\geq 0}$, the subset of the topological space $|\filt^{un}(\mm)_n|$ parameterizing HN filtrations of an object of mass $< C$  with length $n$ and phases contained in $(a, b]$ is contained in the preimage under \eqref{eqgr} of the subset of $|\mm^n|$ parameterizing $n$-tuples of semistable objects with increasing phase in $(a, b]$ and mass $< C$. Thus, this set in $|\filt^{un}(\mm)|$ is bounded by the previous discussion and so does its image in $|\mm|$ under the forgetful morphism in \eqref{eqforg}. 
    \qed\\

    From the proof above we have the following important corollary.
\begin{cor}\label{corcons}
    Suppose we have a proper stability condition $\sigma_\kappa$ on $\dd_\kappa$ for every residue field $\kappa$ of $R$, a locally constant $v:|\mm|\ra\Lambda$ and a $C$-local heart $\aaa_C$ for every DVR $C$ essentially of finite type over $R$ with the generic flatness property. Suppose also that $\aaa_C$ restricts to $\cal P_K(0,1]$ and $\cal P_{C/\mathfrak{m}_C}(0,1]$, where $K$ is the fraction field of $C$. Then there exists a unique locally constant stability condition $\sigma\in\stab(\dd/R)$ which is proper and restricts to $\sigma_K$ and $\sigma_{C/\mathfrak{m}_C}$.
\end{cor}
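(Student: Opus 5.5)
The plan is to assemble the data required by Definition \ref{defstabf} directly from the hypotheses. Properness will then be transported from the fibrewise stability conditions by the arguments of Steps 1, 2 and 4 in the proof of Theorem \ref{thmmain}. Items (1), (2), (3) of Definition \ref{defstabf} are essentially given. By Proposition \ref{propsluicingstab}, each $\sigma_\kappa=(\cal P_\kappa,Z)$ is equivalent to a sluicing of width $1$ on $\dd_\kappa$ together with the central charge $Z$. The map $v$ is locally constant and additive, and $Z$ is fixed. Condition (b) is the fibrewise support property with a uniform constant. I would take the constant and the quadratic form $Q$ to be those of a single $\sigma_\kappa$. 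This is legitimate because $Z$ is locally constant, so the classes and charges involved are the same across $\spec R$; the point to verify is that the hypotheses give one common $Q$, and I would make this part of the reading of the statement. So the real content is condition (a), together with uniqueness and properness.

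For condition (a), fix a DVR $C$ whose fraction field $K$ and residue field $C/\mathfrak{m}_C$ are residue fields of $R$. The given $C$-local heart $\aaa_C$ restricts to $\cal P_K(0,1]$ and $\cal P_{C/\mathfrak{m}_C}(0,1]$. Generic flatness for $\aaa_C$ is assumed, and openness of stability holds fibrewise because each $\sigma_\kappa$ is proper. Hence Theorem \ref{thmluicing} applies: through Theorem \ref{thmssred}, Lemma \ref{lmtorsiontheory}, Proposition \ref{lmtor} and Corollary \ref{corHN}, the stability function $Z_C=(Z_K,Z_{tor})$ has the HN property on $\aaa_C$. It therefore yields a unique slicing $\cal P_C$ on $\dd_C$ restricting to $\cal P_K$ and $\cal P_{C/\mathfrak{m}_C}$. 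The uniqueness asserted in the corollary comes from this uniqueness of $\cal P_C$, since the fibrewise data are fixed.

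Upgrading the slicing $\cal P_C$ to a sluicing of width $1$ follows Step 3 of the proof of Theorem \ref{thmmain}. Using density of rational central charges, I choose $W'$ with values in $\Q[i]$ close to $Z$, and deform each $\sigma_\kappa$ to $\tau'_\kappa$ with $d(\cal Q'_\kappa,\cal P_\kappa)<\epsilon'$ by Lemma \ref{lmslicings}. I then apply Theorem \ref{thmluicing} again to obtain $\cal Q'_C$, which has noetherian hearts by Proposition \ref{propnoethcurve}. The $C$-torsion theory of Lemma \ref{lmtorsiontheory} lets me compare $\cal P_C$ and $\cal Q'_C$ separately on the generic fibre and on the torsion part, giving $d(\cal P_C,\cal Q'_C)<\epsilon'$. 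Remark \ref{rmksluicing}(iii) then supplies the sandwiched noetherian $t$-structures required in item (b) of Definition \ref{defsluicing}.

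Finally, properness of the resulting $\sigma$ should follow from properness of the $\sigma_\kappa$ as in Steps 1, 2 and 4 of the proof of Theorem \ref{thmmain}. Openness of the semistable loci comes from the universal closedness of the Quot spaces (Proposition \ref{propuniclosed}). Boundedness of semistable objects of class $v$ follows from Corollary \ref{corinterval} and the support property, and boundedness in condition (2) of Theorem \ref{thmhlr} follows by the $\filt^{un}(\mm)$ argument. I expect this step to be the main obstacle. Those arguments use a global proper $\sigma$ to get algebraicity of $\mm_\sigma$ and openness of flatness, whereas here properness is only known fibre by fibre. I would first try to obtain openness of flatness over $\spec R$ from the generic flatness hypothesis together with the DVR hearts $\aaa_C$, using Lemma \ref{lminstab}(b) and Lemma \ref{lmphase}. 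I would then run the Quot-space argument over $R$, checking the valuative criterion only along the DVRs $C$, where the torsion theory is available.
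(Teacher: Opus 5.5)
Your route is the paper's: the corollary is read off from the proof of Theorem \ref{thmmain}. Theorem \ref{thmluicing} and Step 3 supply condition (a) of Definition \ref{defstabf} and uniqueness, and Steps 1, 2, 4 supply properness. There is, however, a genuine gap, and it occurs earlier than where you place it.

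You invoke Theorem \ref{thmluicing} on the grounds that openness of stability ``holds fibrewise because each $\sigma_\kappa$ is proper''. That is not enough, because the proof of Theorem \ref{thmluicing} runs through three results that all concern families over $\spec C$ linking the generic and special fibres:
\begin{itemize}
\item Theorem \ref{thmssred}, which needs an algebraic Quot space over $C$ via Proposition \ref{propquot};
\item Lemma \ref{lmtorsiontheory}, which uses Artin approximation into an algebraic $\mm_\sigma$ over $C$;
\item Corollary \ref{corHN}, which assumes openness of stability for $\aaa_C$.
\end{itemize}
Properness of $\sigma_K$ and of $\sigma_{C/\mathfrak{m}_C}$ taken separately does not exclude a $C$-flat $E\in\aaa_C$ with $E_{C/\mathfrak{m}_C}$ semistable and $E_K$ unstable. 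In Theorem \ref{thmmain} these inputs are produced \emph{before} Step 3, by Step 1 applied to an already proper global $\sigma$. Here no such $\sigma$ exists, and the same problem affects the rational deformation $\tau'$ in your sluicing paragraph.

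The repair in your last paragraph is circular, for three reasons.
\begin{itemize}
\item Lemma \ref{lmphase} and Proposition \ref{propuniclosed} both assume condition (2) of Theorem \ref{thmhlr} for the very $\sigma$ you are building. Proposition \ref{propuniclosed} needs $\mm_\sigma(\phi_0,1](v)$ to be of finite type over $R$.
\item Boundedness over $\spec R$ does not follow from boundedness in each fibre, since nothing makes the bounding families uniform in $\kappa$. The same non-uniformity affects the common quadratic form in (b), which you already flagged.
\item In the paper's sense, generic flatness \emph{is} openness of flatness over $R$. So it can only be ``used'' to get the latter if it is already a relative hypothesis.
\end{itemize}

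The argument goes through only if you read ``proper'' and ``generic flatness'' as \emph{relative} hypotheses on the collection $\{\sigma_\kappa\}$. These are: openness of flatness and of semistability in families over $R$-algebras, boundedness over $R$ of semistable objects of each class, and a uniform support constant. They are exactly the conditions of a flat family in Proposition \ref{propcomparison}, and this is the reading under which the paper's ``from the proof above'' is valid.

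With that reading, the order of steps is as follows.
\begin{enumerate}
\item Take the relative inputs as given at the outset.
\item Apply Theorem \ref{thmluicing} to each $C$.
\item Upgrade to a sluicing as in Step 3. Step 1 now applies to the given data and supplies openness for $\tau'$.
\item Obtain condition (2) of Theorem \ref{thmhlr} from the $\filt^{un}(\mm)$ argument of Step 4.
\end{enumerate}
No rerun of the Quot-space argument is needed. Under the purely fibrewise reading, neither your argument nor the paper's yields the conclusion.
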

\begin{rmk}\label{rmkGLaction}
    As in the absolute case, the complex manifold $\stab(\dd/R)$ carries an action by $\widetilde{GL}_2^+(\R)$, the universal cover of $GL_2(\R)^+$. Recall that $\widetilde{GL}_2^+(\R)$ consists of pairs $(g,A)$, with $g:\R\ra\R$ a monotone increasing function with $g(\phi+1)=g(\phi)+1$ and $A\in GL^+(\R)$. Then, given $(g,A)\in\widetilde{GL}_2^+(\R)$ and a locally constant $\sigma$ we define 
    \begin{align*}
        A\,.Z&:=A\circ Z:\Lambda\ra\C,\\
        g\,.\cal P_{\kappa(s)}(\phi)&:=\cal P_{\kappa(s)}(g(\phi))\quad\text{for all }s\in\spec R,\\
        g\,.\cal P_C(\phi)&:=\cal P_C(g(\phi))\quad C\text{ a DVR over } R.
    \end{align*}
    Clearly, the property that the $t$-structures $(P_{\kappa(s)}(>\phi),P_{\kappa(s)}(\leq\phi))$ and $(\cal P_C(>\phi),\cal P_C(\leq\phi))$ define a sluicing on $\dd_{\kappa(s)}$ and $\dd_C$ respectively is preserved.
\end{rmk}
We emphasize another consequence of the previous theorem. Recall that by Theorem \ref{thmhlr} the condition that $$\mm_\sigma^{ss}(v)\hookrightarrow\mm_\sigma{(\phi,\phi+1]}\hookrightarrow\mm$$
are open and $\mm_\sigma^{ss}(v)$ admits a proper good moduli space is equivalent to the existence of a function $f:\R_{>0}\ra\Z_{\geq 0}$ such that 
\begin{equation}\label{eqmasshom}
\dim\Hom(G_k,E)\leq f(m(E))
\end{equation}
for all $E\in\dd_k$, where $k$ is a field of finite type over $R$ and $G$ is classical generator of $\dd$.
\begin{cor}
    If $\sigma\in\stab(\dd/R)$ verifies the mass-hom bound in \eqref{eqmasshom} so do all the $\sigma'$ in the same connected component of $\stab(\dd/R)$ containing $\sigma$.
\end{cor}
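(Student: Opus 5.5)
The corollary follows from Theorem \ref{thmmain} combined with the equivalence in Theorem \ref{thmhlr}. Fix a classical generator $G\in\dd$ satisfying the boundedness hypothesis of Theorem \ref{thmhlr}. First, I will show that for $\sigma$, the mass-hom bound \eqref{eqmasshom} is equivalent to condition (2) of Theorem \ref{thmhlr}, i.e. to $\sigma$ being proper in the sense of the Remark following Theorem \ref{thmhlr}. Second, I will show that properness is constant on connected components of $\stab(\dd/R)$. Third, I will apply Theorem \ref{thmhlr} again, now to $\sigma'$, to recover the mass-hom bound for $\sigma'$.

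The heart of the argument is the middle step. Define $\stab(\dd/R)^\circ\subset\stab(\dd/R)$ to be the subset of proper conditions, and let $\Sigma$ be the connected component containing $\sigma$. I will show that $\stab(\dd/R)^\circ\cap\Sigma$ is both open and closed in $\Sigma$; since it contains $\sigma$, it is then all of $\Sigma$.

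\emph{Openness.} This is the content of Steps 1--4 in the proof of Theorem \ref{thmmain}. For $\tau=(W,\cal Q)$ with $W=Z+u\circ p$ and $\|u\|<\delta$, after the $\widetilde{GL}_2^+(\R)$-reduction of Remark \ref{rmkGLaction}, these steps show that $\tau$ is again proper. Such $\tau$ form a neighbourhood of $\sigma$ because the central charge map is a local homeomorphism.

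\emph{Closedness.} Suppose $\sigma''\in\Sigma$ lies in the closure of $\stab(\dd/R)^\circ$. Pick a proper $\sigma_1$ close enough to $\sigma''$ that $\sigma''$ lies in the deformation neighbourhood of $\sigma_1$ constructed in the proof of Theorem \ref{thmmain}. Then $\sigma''$ is proper by the same four steps. Here I must check that the size $\delta$ of this neighbourhood can be chosen locally uniformly, depending only on the quadratic form $Q$ and not on properness. By Theorem \ref{thmdefo}, $\delta$ is governed by the support property data. Near $\sigma''$ one can use a common quadratic form, since $\sigma''$ and $\sigma_1$ have the same $Q$ once they are close. So the neighbourhood of $\sigma_1$ contains $\sigma''$.

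I expect this uniformity to be the main obstacle, because the neighbourhood must be controlled by the support property alone. Once it is settled, the conclusion follows. The hypothesis on $G$ in Theorem \ref{thmhlr}, namely the bound on $m(G_K)$ and on $\phi^+-\phi^-$, must also be checked for $\sigma'$. Along a path in $\Sigma$ the masses change by a bounded factor, by the estimate $|W(v)-Z(v)|\le\|u\|\,|Z(v)|$. The phases move by at most $\epsilon$ by Lemma \ref{lmslicings}, so finitely many deformation steps keep both suprema finite. Theorem \ref{thmhlr}(1) then gives a function $f'$ with $\dim\Hom(G_k,E)\le f'(m_{\sigma'}(E))$.
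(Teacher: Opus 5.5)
Your proposal is correct and follows the paper's route: the paper also turns the mass-hom bound into properness via the equivalence (1)$\Leftrightarrow$(2) of Theorem \ref{thmhlr}, uses the invariance of properness on connected components stated in Theorem \ref{thmmain}, and turns properness back into the bound. Your open--closed argument, and your check that the hypothesis on $G$ still holds for $\sigma'$, just make explicit what the paper leaves implicit in the ``in particular'' clause of Theorem \ref{thmmain}.
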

The last conclusion we are able to draw is a comparison between the datum of a proper locally constant stability condition an that of a \emph{flat family of stability conditions} as defined in \cite{bayer2021stability}. First, recall its definition within our context.
\begin{defi}[\cite{bayer2021stability}, \cite{pertusi2026noncommutative}]
    A \emph{flat family of stability conditions} on $\dd$ over $R$ is a collection of stability conditions $\ssigma=\{(Z_t,\cal P_t)\}_{t\in\spec R}$ on $\dd_t$ for every point $t\in\spec R$ such that:
    \begin{enumerate}
        \item the central charges $\{Z_t\}_{t\in\spec R}$ are locally constant in the sense of Definition \ref{defilocconstantmap},
        \item the set of points in $|\mm|$ which are $\ssigma$-semistable is open,
        \item semistable objects of a fixed class $v\in \Lambda$ are bounded,
        \item For every DVR $C$ essentially of finite type over $R$ there exists a Harder--Narasimhan structure on $\dd_C$ inducing $\sigma_K$ and $\sigma_{C/\mathfrak{m}_C}$.
    \end{enumerate}
\end{defi}
\begin{prop}\label{propcomparison}
    A proper locally constant stability condition on $\dd$ as in Definition \ref{defstabf} is equivalent to a flat family of stability conditions.
\end{prop}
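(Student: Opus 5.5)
The proof compares the two definitions condition by condition, checking each direction separately. The dictionary is as follows:
\begin{itemize}
\item a sluicing of width $1$ on $\dd_\kappa$, together with $Z$ and the support-property inequality of Definition \ref{defstabf}(b), corresponds to a stability condition $(Z_t,\cal P_t)$ on each fibre, by Proposition \ref{propsluicingstab};
\item the additive, locally constant $v:|\mm|\ra\Lambda$ composed with $Z$ gives exactly the locally constant central charges of item (1) of a flat family, by Remark \ref{rmksuppprop}(a);
\item openness of semistability and boundedness of semistable objects of fixed class, items (2) and (3), are the content of condition (3) of Theorem \ref{thmhlr}, and Corollary \ref{corinterval} handles boundedness;
\item item (4), a Harder--Narasimhan structure over every DVR, corresponds to condition (a) of Definition \ref{defstabf}, via Proposition \ref{propheart} and Theorem \ref{thmluicing}.
\end{itemize}

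Starting from a proper locally constant $\sigma$, items (1)--(3) follow immediately from the dictionary, properness being used through Theorem \ref{thmhlr}. For (4), take a DVR $C$ essentially of finite type over $R$. Condition (a) gives a sluicing on $\dd_C$, and its heart $\cal P_C(0,1]$ is $C$-local and restricts to $\cal P_K(0,1]$ and $\cal P_{C/\mathfrak{m}_C}(0,1]$. Properness gives openness of stability and generic flatness. Lemma \ref{lmtorsiontheory} then supplies a $C$-torsion theory, Proposition \ref{lmtor} and Corollary \ref{corHN} give the HN property for $Z_C=(Z_K,Z_{tor})$, and Proposition \ref{propheart} converts this into a Harder--Narasimhan structure. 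Uniqueness in Theorem \ref{thmluicing} shows that this structure is the one induced by the sluicing.

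Conversely, start from a flat family $\ssigma$. Each $\sigma_t$ gives a sluicing of width $1$ by Remark \ref{rmksluicing}(iii). Since the central charges are locally constant, they assemble into $v$ and $Z$ as in Definition \ref{defstabf}(2),(3). The support-property constant in (b) must be uniform in $t$; the plan is to take a single quadratic form, as is implicit in the local constancy of $Z$ via $v$. For (a), the Harder--Narasimhan structure on $\dd_C$ gives, by Proposition \ref{propheart}, a slicing $\{\cal P_C(\phi)\}$ with $C$-local hearts restricting correctly. The remaining point is that the $t$-structures $(\cal P_C(>\phi),\cal P_C(\leq\phi))$ form a \emph{sluicing}, that is, they satisfy the noetherian approximation property (b) of Definition \ref{defsluicing}. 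I would reproduce Step 3 of the proof of Theorem \ref{thmmain}:
\begin{itemize}
\item choose a nearby rational central charge $W'$ with $\epsilon'<(1-w)/2$;
\item deform the family by real variations, which leave the hearts unchanged, so that openness, flatness and the DVR hearts persist;
\item use Proposition \ref{propnoethcurve} to obtain noetherian $t$-structures $\cal Q'_C(>\psi)$;
\item use the distance estimates of Lemma \ref{lmslicings} on the fibres to sandwich $\cal Q_C'(>\psi)$ between $\cal P_C(>b-1)$ and $\cal P_C(>a)$.
\end{itemize}
Finally, properness: openness and boundedness of semistable loci together with openness of flatness give condition (3) of Theorem \ref{thmhlr}, or (2) via the HN-filtration boundedness argument of Step 4 of Theorem \ref{thmmain}.

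The main obstacle is this last direction, in two places. First, the fibrewise slicing distances must control the distance of the slicings on $\dd_C$; the plan is to use the $C$-torsion theory to split any object of $\dd_C$ into its torsion part, which is governed by the closed fibre, and its torsion-free part, which is governed by the generic fibre. Second, the hypothesis of Theorem \ref{thmhlr} on a classical generator $G$ with bounded mass and phase width over all finite type fields has to be checked. I would try to deduce it from constructibility of $\phi^\pm_G$ (Lemma \ref{lmphase}) and local constancy of the HN classes on a stratification of $\spec R$.
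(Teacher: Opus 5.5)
Your proposal follows the paper's proof essentially step for step. In the forward direction you get the fibrewise stability conditions from the sluicings together with condition (b), use properness for openness and boundedness of the semistable loci, and combine condition (a) with Theorem \ref{thmluicing} to obtain the Harder--Narasimhan structures. In the converse you use Proposition \ref{propsluicingstab}, local constancy and the (uniform) support property, Proposition \ref{propheart}, and Step 3 of the proof of Theorem \ref{thmmain} to upgrade the $C$-local slicing to a sluicing.

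Of the two obstacles you flag in the converse, the paper handles the control of $d(\mathcal{Q}_C,\mathcal{Q}'_C)$ in Step 3 by exactly the torsion/torsion-free splitting you propose. It does not address the recovery of properness of the resulting $\sigma$ (including the classical-generator hypothesis of Theorem \ref{thmhlr}) at all: its converse stops once the sluicing on $\dd_C$ is obtained.
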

\begin{proof}
    Let $\sigma$ be a proper, locally constant stability condition. In general, by \cite[Lemma 2.26]{halpernleistner2025spaceaugmentedstabilityconditions} the datum of a sluicing on $\dd_k$ for every residue field $k$ of $R$, a locally constant $v:|\mm|\ra \Lambda$ and condition (b) in Definition \ref{defstabf} imply the existence of a stability condition on $\dd_k$ for any field $k$ over $R$. This observation, together with the properness assumption on $\sigma$, grants openness of the semistable locus in $|\mm|$ and boundedness of semistable objects of a fixed class $v\in\Lambda$. Lastly, condition (a) in Definition \ref{defstabf} together with Theorem \ref{thmluicing} imply the existence of a Harder--Narasimhan structure on $\dd_C$ for any DVR $C\ra R$ essentially of finite type. 
    
    Conversely, given $\ssigma$ a flat family of stability conditions, by \cite[Proposition 2.14]{halpernleistner2025spaceaugmentedstabilityconditions} there is an induced sluicing of width 1 on $\dd_k$ for every residue field of $R$. The local constancy of the central charges translates to the existence of a locally constant $v:|\mm|\ra\Lambda$ and the support property for $\ssigma$ gives condition (b) in Definition \ref{defstabf}. Let $C\ra R$ be a DVR essentially of finite type. To a Harder--Narasimhan structure corresponds a $C$-local slicing on $\dd_C$ restricting to $\cal P_K$ and $\cal P_{C/\mathfrak{m}_C}$. Step 3 from the proof of Theorem \ref{thmmain} shows that this datum induces a sluicing on $\dd_C$ of weight one, restricting to the ones given in $\dd_K$ and $\dd_{C/\mathfrak{m}_C}$ respectively.
\end{proof}
\subsection{A possible compactification}
This last section is purely speculative, as the author does not have any supporting evidence. 

In the work \cite{halpern2023stability} the notion of a quasi-convergent path is introduced. Consider a pretriangulated dg-category $\cal C$ over a field. For $\sigma\in\stab(\cal C)$ and an object $0\neq E\in\cal C$ define the average phase as $$\bar{\phi}_\sigma(E):=\frac{1}{m_\sigma(E)}\sum_i\phi(\HN^i(E))\cdot|Z_\sigma(\HN^i(E))|,$$
where $\HN^i(E)$ denotes the $i$-th Harder--Narasimhan factor of $E$, letting us define the following function  $$\ell_\sigma(E):=\log m_\sigma(E)+i\pi\bar{\phi}_\sigma(E).$$
Given a continuous path $\sigma_\bullet:[0,+\infty)\ra\stab(\cal C)$ we say that an object $0\neq E\in\cal C$ is limit semistable if $\lim_{t\to\infty}\phi_{\sigma_t}^+(E)-\phi_{\sigma_t}^-(E)=0$. 
\begin{defi}[\cite{halpern2023stability}, Definition 2.8]
    A continuous path $\sigma_\bullet:[0,+\infty)\ra\stab(\cal C)$ is \emph{quasi-convergent} if 
    \begin{itemize}
        \item every $0\neq E\in\cal C$ has a filtration $0=E_0\ra E_i\ra\dots\ra E_{n-1}\ra E_n=E$ such that $G_i:=\cone(E_{i-1}\ra E_i)$ is limit semistable for all $i$ and $$\liminf_{t\to\infty}\phi_{\sigma_t}(G_{i+1})-\phi_{\sigma_t}(G_i)>0,$$
        \item for any pair of limit semistable objects $E$ and $F$ the limit $$\lim_{t\to\infty}\frac{\ell_{\sigma_t}(E)-\ell_{\sigma_t}(F)}{1+|\ell_{\sigma_t}(E)-\ell_{\sigma_t}(F)|}$$
        exists.
    \end{itemize}
\end{defi}
One of main results of \cite{halpern2023stability} is that from a quasi-convergent path one can extract an equivalence relation on objects of $\cal C$ inducing a corresponding semiorthogonal decomposition for the category $\cal C=\langle\cal C_1,\dots,\cal C_m\rangle$.

We propose the following generalization to the relative context. Let $\dd$ be an $R$-linear, smooth and proper category.
\begin{defi}
    A continuous path $\sigma_\bullet:[0,+\infty)\ra\stab(\dd/R)$ of locally constant stability conditions is \emph{relative quasi-convergent} if for all $s\in\spec R$ the corresponding stability conditions $\sigma_{s,\bullet}$ are quasi-convergent.
\end{defi}
\begin{rmk}
    The local constancy of the $\sigma_t$'s in $\stab(\dd/R)$ for all $t\in[0,+\infty)$ seems to impose sufficient rigidity to the notion of a relative quasi convergent path. However, it is not clear if more constraints should be added to the definition above in order for the next conjecture to hold.
\end{rmk}
\begin{conj}\label{conjaugmented}
    A relative quasi-convergent path induces an equivalence relation $\sim^i$ on $\dd$ and a corresponding $R$-linear semiorthogonal decomposition $$\dd=\langle\dd_1,\dots,\dd_m\rangle.$$ Moreover, the complex manifold $\stab(\dd/R)^\circ$ admits a compactification $\aaa\stab(\dd/R)$ by locally constant \emph{augmented} stability conditions for which the quasi-convergent paths in $\stab(\dd/R)$ are convergent. 
\end{conj}
Families of $R$-linear semiorthogonal decompositions are well understood by work of Belmans--Okawa--Ricolfi \cite{belmans2020moduli}. They show that semiorthogonal decompositions of the category of perfect complexes on a smooth and proper scheme over a base form an algebraic space over the base. In fact, it holds that given $f:\cal X\ra S$ a smooth and proper morphism with $S$ an excellent scheme, then for any integer $i\geq 2$ the functor 
\begin{equation*}
    \begin{split}
       \operatorname{SOD}_f^i:(\sch/S)&\ra(\set)\\
        T\mapsto\bigg\{&\begin{array}{c}
            \text{$T$-linear~semiorthogonal} \\
            \text{decompositions~}\Perf (\cal X_T) = \langle \aaa_{1},\dots,\aaa_i \rangle
          \end{array}\bigg\}
    \end{split}
\end{equation*}
gives an algebraic space which is étale over the base $S$. Given the conjecture above it would be natural to imagine for a smooth and proper morphism $f:\cal X\ra S$ a correspondence between relative quasi-convergent paths $\sigma_\bullet:[0,+\infty)\ra\stab(\Perf(\cal X)/S)$ and the collection of the spaces above $\{\operatorname{SOD}_f^i\}_{i\geq2}$. 
\bibliographystyle{alpha} 
\bibliography{ref}
\end{document}